\long\def\forget#1{}
\newcommand{\lang}[1]{\mbox{#1}}
\newcommand{\comment}[1]{}{
\immediate\write16{}
\immediate\write16{Warning: There was still a comment . . . }
\immediate\write16{}}
\def\?{\ 
{\bf\color{red}???}\ 
\immediate\write16{}
\immediate\write16{Warning: There was still a question mark . . . }
\immediate\write16{}}
\theoremstyle{plain}
\newtheorem{theorem}{Theorem}[section]
\newtheorem{lemma}[theorem]{Lemma}
\newtheorem{corollary}[theorem]{Corollary}
\newtheorem{proposition}[theorem]{Proposition}
\theoremstyle{definition}
\newtheorem{definition}[theorem]{Definition}
\newtheorem{definition-theorem}[theorem]{Definition-Theorem}
\newtheorem{definition-remark}[theorem]{Definition-Remark}
\newtheorem{point}[theorem]{}
\newtheorem{example}[theorem]{Example}
\newtheorem{remark}[theorem]{Remark}
\theoremstyle{remark}
\newcounter{zahl}
\def\theenumi{(\alph{enumi})}
\def\p@enumii{\theenumi}
\newcommand{\DS}{\displaystyle}
\newcommand{\TS}{\textstyle}
\newcommand{\SC}{\scriptstyle}
\newcommand{\SSC}{\scriptscriptstyle}
\newcommand{\cG}{\mathcal{G}}
\DeclareMathOperator{\Aut}{Aut}
\DeclareMathOperator{\Et}{\acute{E}t}
\DeclareMathOperator{\Frob}{Frob}
\DeclareMathOperator{\Gal}{Gal}
\DeclareMathOperator{\GL}{GL}
\DeclareMathOperator{\Hom}{Hom}
\newcommand{\CHom}{{\cal H}om}
\DeclareMathOperator{\Ind}{Ind}
\DeclareMathOperator{\Int}{Int}
\DeclareMathOperator{\Isom}{Isom}
\DeclareMathOperator{\PGL}{PGL}
\DeclareMathOperator{\Quot}{Frac}
\DeclareMathOperator{\Rep}{Rep}
\DeclareMathOperator{\Res}{Res}
\DeclareMathOperator{\SL}{SL}
\DeclareMathOperator{\Spec}{Spec}
\DeclareMathOperator{\Spf}{Spf}
\DeclareMathOperator{\Var}{V}
\newcommand{\ad}{{\rm ad}}
\newcommand{\alg}{{\rm alg}}
\newcommand{\dom}{{\rm dom}}
\DeclareMathOperator{\diag}{diag}
\DeclareMathOperator{\equi}{equi}
\newcommand{\et}{{\rm \acute{e}t\/}}
\newcommand{\fppf}{{\it fppf\/}}
\newcommand{\fpqc}{{\it fpqc\/}}
\DeclareMathOperator{\id}{\,id}
\DeclareMathOperator{\im}{im}
\renewcommand{\mod}{{\rm\,mod\,}}
\newcommand{\red}{{\rm red}}
\DeclareMathOperator{\rk}{rk}
\newcommand{\topol}{{\rm top}}
\DeclareMathOperator{\whtimes}{\mathchoice
            {\wh{\raisebox{0ex}[0ex]{$\DS\times$}}}
            {\wh{\raisebox{0ex}[0ex]{$\TS\times$}}}
            {\wh{\raisebox{0ex}[0ex]{$\SC\times$}}}
            {\wh{\raisebox{0ex}[0ex]{$\SSC\times$}}}}
\renewcommand{\phi}{\varphi}
\renewcommand{\epsilon}{\varepsilon}
\newcommand{\Bmu}{\mbox{$\raisebox{-0.59ex}{$l$}\hspace{-0.16em}\mu\hspace{-0.91em}\raisebox{-0.95ex}{\scalebox{2}{$\color{white}.$}}\hspace{-0.59em}\raisebox{+0.78ex}{\scalebox{2}{$\color{white}.$}}\hspace{0.46em}$}{}} 
\newcommand{\BOne} {{\mathchoice{\hbox{\rm1\kern-2.7pt l\kern.9pt}}
                              {\hbox{\rm1\kern-2.7pt l\kern.9pt}}
                              {\hbox{\scriptsize\rm1\kern-2.3pt l\kern.4pt}}
                              {\hbox{\scriptsize\rm1\kern-2.4pt l\kern.5pt}}}}
\newcommand{\BA}{{\mathbb{A}}}
\newcommand{\BD}{{\mathbb{D}}}
\newcommand{\BF}{{\mathbb{F}}}
\newcommand{\BG}{{\mathbb{G}}}
\newcommand{\BL}{{\mathbb{L}}}
\newcommand{\BN}{{\mathbb{N}}}
\newcommand{\BO}{{\mathbb{O}}}
\newcommand{\BP}{{\mathbb{P}}}
\newcommand{\BZ}{{\mathbb{Z}}}
\newcommand{\CA}{{\cal{A}}}
\newcommand{\CC}{{\cal{C}}}
\newcommand{\CalD}{{\cal{D}}}
\newcommand{\CE}{{\cal{E}}}
\newcommand{\CF}{{\cal{F}}}
\newcommand{\CG}{{\cal{G}}}
\newcommand{\CH}{{\cal{H}}}
\newcommand{\CI}{{\cal{I}}}
\newcommand{\CJ}{{\cal{J}}}
\newcommand{\CL}{{\cal{L}}}
\newcommand{\CM}{{\cal{M}}}
\newcommand{\CN}{{\cal{N}}}
\newcommand{\CO}{{\cal{O}}}
\newcommand{\CS}{{\cal{S}}}
\newcommand{\CT}{{\cal{T}}}
\newcommand{\CU}{{\cal{U}}}
\newcommand{\CV}{{\cal{V}}}
\newcommand{\CW}{{\cal{W}}}
\newcommand{\CX}{{\cal{X}}}
\newcommand{\CY}{{\cal{Y}}}
\newcommand{\CZ}{{\cal{Z}}}
\newcommand{\FF}{{\mathfrak{F}}}
\newcommand{\FG}{{\mathfrak{G}}}
\newcommand{\FM}{{\mathfrak{M}}}
\newcommand{\scrH}{{\mathscr{H}}}
\let\setminus\smallsetminus
\newcommand{\open}{^\circ}
\newcommand{\dual}{^{\SSC\lor}}
\newcommand{\mal}{^{\SSC\times}}
\newcommand{\ul}[1]{{\underline{#1}}}
\newcommand{\ol}[1]{{\overline{#1}}}
\newcommand{\wh}[1]{{\widehat{#1}}}
\newcommand{\wt}[1]{{\widetilde{#1}}}
\newcommand{\invlim}[1][]{\ifthenelse{\equal{#1}{}}
{\DS \lim_{\longleftarrow}}
{\DS \lim_{\underset{#1}{\longleftarrow}}}
}
\newcommand{\dirlim}[1][]{\ifthenelse{\equal{#1}{}}
{\DS \lim_{\longrightarrow}}
{\DS \lim_{\underset{#1}{\longrightarrow}}}
}
\newcommand{\dbl}{{\mathchoice{\mbox{\rm [\hspace{-0.15em}[}}
                              {\mbox{\rm [\hspace{-0.15em}[}}
                              {\mbox{\scriptsize\rm [\hspace{-0.15em}[}}
                              {\mbox{\tiny\rm [\hspace{-0.15em}[}}}}
\newcommand{\dbr}{{\mathchoice{\mbox{\rm ]\hspace{-0.15em}]}}
                              {\mbox{\rm ]\hspace{-0.15em}]}}
                              {\mbox{\scriptsize\rm ]\hspace{-0.15em}]}}
                              {\mbox{\tiny\rm ]\hspace{-0.15em}]}}}}
\newcommand{\dpl}{{\mathchoice{\mbox{\rm (\hspace{-0.15em}(}}
                              {\mbox{\rm (\hspace{-0.15em}(}}
                              {\mbox{\scriptsize\rm (\hspace{-0.15em}(}}
                              {\mbox{\tiny\rm (\hspace{-0.15em}(}}}}
\newcommand{\dpr}{{\mathchoice{\mbox{\rm )\hspace{-0.15em})}}
                              {\mbox{\rm )\hspace{-0.15em})}}
                              {\mbox{\scriptsize\rm )\hspace{-0.15em})}}
                              {\mbox{\tiny\rm )\hspace{-0.15em})}}}}
\newcommand{\dotBD}{\vbox{\hbox{\kern2pt\bf.}\vskip-4.5pt\hbox{$\BD$}}}
\DeclareMathOperator{\QIsog}{QIsog}
\DeclareMathOperator{\Nilp}{\CN \!{\it ilp}}
\DeclareMathOperator{\Sets}{\CS \!{\it ets}}
\DeclareMathOperator{\Gr}{\CG r}
\def\ulI{{{\underline{I\!}\,}{}}}
\def\ulHZ{{\underline{\hat Z\!}\,}{}}
\def\olB{{\,\overline{\!B}}}
\def\olT{{\overline{T}}}
\def\s{\sigma^\ast}
\def\longto{\longrightarrow}
\def\into{\hookrightarrow}
\def\onto{\mbox{$\kern2pt\to\kern-8pt\to\kern2pt$}}
\def\isoto{\stackrel{}{\mbox{\hspace{1mm}\raisebox{+1.4mm}{$\SC\sim$}\hspace{-3.5mm}$\longrightarrow$}}}
\def\longinto{\lhook\joinrel\longrightarrow}
\newbox\mybox
\def\arrover#1{\mathrel{
       \setbox\mybox=\hbox spread 1.4em{\hfil$\scriptstyle#1$\hfil}
       \vbox{\offinterlineskip\copy\mybox
             \hbox to\wd\mybox{\rightarrowfill}}}}
\newcommand{\ppsi}{\delta}
\newcommand{\RZ}{\CM_{\ul\BL}^{\hat{Z}}}
\newcommand{\BaseOfD}{\BF}
\newcommand{\BaseFldOfLocSht}{k}
\newcommand{\BaseFldInSectUnif}{k}
\newcommand{\AlgClFld}{k}
\newcommand{\ArbitraryFld}{k}
\newcommand{\genericG}{P}
\DeclareMathOperator{\Sht}{Sht}
\newcommand{\Vect}{V\!ect}
\DeclareMathOperator{\SpaceFl}{\CF\ell}
\newcommand{\tauGlob}{\tau}
\newcommand{\tauLoc}{\hat\tau}
\newcommand{\charsect}{s}
\DeclareMathOperator{\AbSh}{\CA {\it b}-\CS {\it h}}
\newcommand{\CorEtIsTrivial}{Corollary~2.9}
\newcommand{\PropRigidityLocal}{Proposition~2.11}
\newcommand{\ThmModuliSpX}{Theorem~4.4}
\newcommand{\EqDecency}{Equation~(4.4)}
\newcommand{\RemBdIsFormSch}{Remark~4.10}
\newcommand{\RemDecent}{Remark~4.15}
\newcommand{\RemJb}{Remark~4.16}
\newcommand{\ThmRRZSp}{Theorem~4.18}
\newcommand{\CorQC}{Corollary~4.26}
\newcommand{\quotisfstack}{Proposition~4.27}
\newcommand{\PropTateEquiv}{Proposition~3.4}
\newcommand{\PropTateEquivP}{Proposition~3.6}
\newcommand{\UnboundedUnif}{Chapter~5}
\newcommand{\SectGlobalLocalFunctor}{Section~5.2}
\newcommand{\DefGlobalLocalFunctor}{Definition~5.4}
\newcommand{\RemTateF}{Remark~5.6}
\newcommand{\PropLocalIsogeny}{Proposition~5.7}
\newcommand{\RemLocalIsogeny}{Remark~5.8}
\newcommand{\PropGlobalRigidity}{Proposition~5.9}
\begin{document}

\author{Esmail Arasteh Rad and Urs Hartl\footnote{Both authors acknowledge support by the Deutsche Forschungsgemeinschaft (DFG) in form of the research grant HA3002/2-1, the SFB's 478 and 878, and Germany's Excellence Strategy EXC 2044 –390685587, Mathematics M\"unster: Dynamics--Geometry--Structure.}}

\date{\today}

\title{Uniformizing The Moduli Stacks of Global $\FG$-Shtukas\\\comment{Do spellcheck!}}

\maketitle

\begin{abstract}
\noindent
This is the second in a sequence of articles, in which we explore moduli stacks of global $\FG$-shtukas, the function field analogs for Shimura varieties. Here $\FG$ is a flat affine group scheme of finite type over a smooth projective curve $C$ over a finite field. Global $\FG$-shtukas are generalizations of Drinfeld shtukas and analogs of abelian varieties with additional structure. We prove that the moduli stacks of global $\FG$-shtukas are algebraic Deligne-Mumford stacks separated and locally of finite type. They generalize various moduli spaces used by different authors to prove instances of the Langlands program over function fields. In the first article we explained the relation between global $\FG$-shtukas and local $\BP$-shtukas, which are the function field analogs of $p$-divisible groups. Here $\BP$ is the base change of $\FG$ to the complete local ring at a point of $C$. When $\BP$ is smooth with connected reductive generic fiber we proved the existence of Rapoport-Zink spaces for local $\BP$-shtukas. In the present article we use these spaces to (partly) uniformize the moduli stacks of global $\FG$-shtukas for smooth $\FG$ with connected fibers and reductive generic fiber. This is our main result. It has applications to the analog of the Langlands-Rapoport conjecture for our moduli stacks.

\noindent
{\it Mathematics Subject Classification (2000)\/}: 
11G09,  
(11G18,  
14L05)  
\end{abstract}

\tableofcontents

%
%

\thispagestyle{empty}

\bigskip
\section{Introduction}
\setcounter{equation}{0}

Since Drinfeld's celebrated work \cite{Drinfeld, Drinfeld77, Drinfeld1, Drinfeld89} it is clear that moduli spaces of shtukas are the function field analog of Shimura varieties and of similar importance in arithmetic algebraic geometry. To give their definition let $\BF_q$ be a finite field with $q$ elements, let $C$ be a smooth projective geometrically irreducible curve over $\BF_q$, and let $\FG$ be a flat affine group scheme of finite type over $C$. Fix an integer $n>0$. A \emph{global $\FG$-shtuka} $\ul\CG$ over an $\BF_q$-scheme $S$ is a tuple $(\CG,\charsect_1,\ldots,\charsect_n,\tauGlob)$ consisting of a $\FG$-torsor $\CG$ over $C_S:=C\times_{\BF_q}S$, an $n$-tuple of (characteristic) sections $(\charsect_1,\ldots,\charsect_n)\in C^n(S)$ called \emph{legs}, and a Frobenius connection $\tauGlob$ defined outside the graphs $\Gamma_{\charsect_i}$ of the legs $\charsect_i$, that is, an isomorphism $\tauGlob\colon\s \CG|_{C_S\setminus \cup_i \Gamma_{\charsect_i}}\isoto \CG|_{C_S\setminus \cup_i \Gamma_{\charsect_i}}$ where $\s=(\id_C \times \Frob_{q,S})^\ast$. More generally, we also consider iterated global $\FG$-shtukas; see Definition~\ref{Global Sht}.

In Theorem~\ref{nHisArtin} of this article we show that the moduli stack $\nabla_n^{\ul\omega}\scrH^1_D(C,\FG)$ of global $\FG$-shtukas, after imposing suitable boundedness conditions (by $\ul\omega$) and $D$-level structures, is a Deligne-Mumford stack separated and locally of finite type over $(C\setminus D)^n$. (See the end of this introduction for an explanation of our notation for the moduli stack of global $\FG$-shtukas.) Dropping the boundedness condition by $\ul\omega$ we obtain $\nabla_n\scrH^1_D(C,\FG)=\dirlim\nabla_n^{\ul\omega}\scrH^1_D(C,\FG)$ as an ind-algebraic Deligne-Mumford stack; see Definition~\ref{DefIndAlgStack}. Spelling out the Riemann-Hilbert correspondence for function fields, together with the Tannakian philosophy, one sees that $\nabla_n^{\ul\omega}\scrH^1_D(C,\FG)$ may play the same role that Shimura varieties play for number fields. More specifically one can hope that the Langlands correspondence for function fields is realized on its cohomology. On the other hand this analogy can be viewed as an attempt to build a bridge between the geometric Langlands program and the arithmetic Langlands program, where the role of global $\FG$-shtukas is played by abelian varieties (together with additional structures), respectively $D$-modules. 

Note that our moduli stack $\nabla_n^{\ul\omega}\scrH^1_D(C,\FG)$ generalizes the space of $F$-sheaves $FSh_{D,r}$ which was considered by Drinfeld~\cite{Drinfeld1} and Lafforgue~\cite{LafforgueL02} in their proof of the Langlands correspondence for $\FG=\GL_2$ (resp.\ $\FG=\GL_r$), and its generalization $FBun$ by Varshavsky~\cite{Var}. It likewise generalizes the moduli stacks $\CE\ell\ell_{C,\mathscr{D},I}$ of Laumon, Rapoport and Stuhler~\cite{LRS}, their generalizations by L.~Lafforgue~\cite{LafforgueL97}, Ng\^o~\cite{Ngo06} and Spie{\ss}~\cite{Spiess10}, the spaces $\text{Cht}_{\ul\lambda}$ of Ng\^o and Ng\^o Dac~\cite{NgoNgo,NgoDac13}, and the spaces $\AbSh^{r,d}_H$ of the second author~\cite{Har1}; see Remark~\ref{RemDrinfeldVarshavsky}. Varshavsky's stacks $FBun$ were recently used by V.~Lafforgue~\cite{Lafforgue12} to prove Langlands parameterization for split groups over function fields. Lafforgue~\cite[\S\,12.3.2]{Lafforgue12} also indicates the generalization to non-split groups and a corresponding generalization of Varshavsky's moduli stacks of shtukas. Our Theorem~\ref{nHisArtin} gives a way to justify some of his assertions; see Remark~\ref{RemDrinfeldVarshavsky}. The proof of Theorem~\ref{nHisArtin} is similar to Varshavsky's algebraicity result of his stack $FBun$. But beyond this, our focus in the present article is quite different from Varshavsky's.

Namely, our approach to study the moduli stack of global $\FG$-shtukas is to relate it to certain moduli spaces for local objects, called local $\BP$-shtukas. More precisely, let $A_\nu\cong\BF_\nu\dbl\zeta\dbr$ be the completion of the local ring $\CO_{C,\nu}$ at a closed point $\nu\in C$, let $Q_\nu$ be its fraction field, and let $\BP=\BP_\nu:=\FG\times_C \Spec A_\nu$ and $\genericG_\nu=\FG\times_C\Spec Q_\nu$. A \emph{local $\BP_\nu$-shtuka} over a scheme $S\in\Nilp_{A_\nu}$ is a pair $\ul\CL = (\CL_+,\tauLoc)$ consisting of an $L^+\BP_\nu$-torsor $\CL_+$ on $S$ and an isomorphism of the $L\genericG_\nu$-torsors $\tauLoc\colon  \hat{\sigma}^\ast \CL \isoto\CL$. Here $L\genericG_\nu$ (resp.\ $L^+\BP_\nu$) denotes the group of loops (resp.\ positive loops) of $\BP_\nu$ (see Chapter~\ref{LoopsAndSht}), $\CL$ denotes the $L\genericG_\nu$-torsor associated with $\CL_+$, and $\hat{\sigma}^\ast \CL$ denotes the pullback of $\CL$ under the absolute $\BF_\nu$-Frobenius endomorphism $\Frob_{(\#\BF_\nu),S} \colon  S \to S$. Moreover, $\Nilp_{A_\nu}$ denotes the category of $A_\nu$-schemes on which the uniformizer $\zeta$ of $A_\nu$ is locally nilpotent. Local $\BP_\nu$-shtukas can be viewed as function field analogs of $p$-divisible groups; see also \cite{H-V,HV2}.

If $\BP_\nu$ is smooth over $A_\nu$ we proved in \cite[\ThmModuliSpX]{AH_Local} for a fixed local $\BP_\nu$-shtuka $\ul\BL$ over a field $k\in\Nilp_{A_\nu}$ that the Rapoport-Zink functor
\begin{eqnarray*}
\CM_{\ul\BL}\colon  \Nilp_{k\dbl\zeta\dbr} &\longto&  \Sets\\
S &\longmapsto & \big\{\text{Isomorphism classes of }(\ul{\CL},\bar{\delta})\colon\;\text{where }\ul{\CL}~\text{is a local $\BP_\nu$-shtuka}\\ 
&&~~~~ \text{over $S$ and }\bar{\delta}\colon  \ul{\CL}_{\bar{S}}\to \ul\BL_{\bar{S}}~\text{is a quasi-isogeny  over $\bar{S}$}\big\},
\end{eqnarray*}
where $\bar S=\Var(\zeta)\subset S$, is representable by an ind-scheme, ind-quasi-projective over $\Spf k\dbl\zeta\dbr$; see also Theorem~\ref{ThmModuliSpX}. To obtain a formal scheme locally formally of finite type, as in the analog for $p$-divisible groups, one has to assume that the generic fiber $P_\nu$ is connected reductive, and one has to impose a \emph{bound $\hat Z$ on the Hodge polygon}, that is on the relative position of $\hat\sigma^*\CL_+$ and $\CL_+$ under $\tauLoc$; see Definition~\ref{DefBDLocal}. By \cite[\ThmRRZSp]{AH_Local} the bounded Rapoport-Zink functor $\RZ$ is representable by a formal scheme locally formally of finite type over $\Spf R_{\hat Z}$, where $R_{\hat Z}$ is the reflex ring of the bound $\hat{Z}$; see Definition~\ref{DefBDLocal} and Theorem~\ref{ThmRRZSp}.

Consider a fixed $n$-tuple of pairwise different characteristic places $\ul\nu=(\nu_1,\ldots,\nu_n)$ of $C$ and the formal stack $\nabla_n\scrH^1(C,\FG)^{\ul\nu}$, which is obtained by taking the formal completion of the stack $\nabla_n\scrH^1(C,\FG)$ at $\ul\nu\in C^n$. This means we let $A_{\ul\nu}$ be the completion of the local ring $\CO_{C^n,\ul\nu}$, and we consider global $\FG$-shtukas only over schemes $S$ whose characteristic morphism $S\to C^n$ factors through $\Nilp_{A_\ul\nu}$. Recall that with an abelian variety over a scheme in $\Nilp_{\BZ_p}$ one can associate its $p$-divisible group. In the analogous situation for global $\FG$-shtukas we associated in \cite[\DefGlobalLocalFunctor]{AH_Local} a tuple $(\wh\Gamma_{\nu_i}(\ul\CG))_i$ of local $\BP_{\nu_i}$-shtukas $\wh\Gamma_{\nu_i}(\ul\CG)$ with a global $\FG$-shtuka $\ul\CG$ in $\nabla_n\scrH^1(C,\FG)^{\ul\nu}(S)$. The relation between global $\FG$-shtukas and local $\BP$-shtukas was thoroughly explained in \cite[\UnboundedUnif]{AH_Local}.

As was pointed out in \cite{H-V} the true analogs of $p$-divisible groups are bounded local $\BP$-shtukas. Nevertheless in Chapter~\ref{UnboundedUnif} we prove that the product $\prod_i\CM_{\wh\Gamma_{\nu_i}(\ul\CG)}$ can be regarded as a uniformization space for $\nabla_n\scrH^1(C,\FG)^{\ul\nu}$ already in the unbounded situation. Namely, in Theorem~\ref{Uniformization1} we construct the uniformization morphism $\prod_i\CM_{\wh\Gamma_{\nu_i}(\ul\CG)}\,\to\,\nabla_n\scrH^1(C,\FG)^{\ul\nu}$ and show that it is ind-proper and formally \'etale. However, it will gain its full strength and provide a true (partial) uniformization only after we fix an $n$-tuple $\ulHZ=(\hat Z_i)_i$ of bounds and consider global $\FG$-shtukas $\ul\CG$ whose associated local $\BP_{\nu_i}$-shtuka $\wh\Gamma_{\nu_i}(\ul\CG)$ is bounded by $\hat Z_i$ for all $i=1,\ldots,n$. We let $R_{\hat Z_i}=\kappa_i\dbl\xi_i\dbr$ be the reflex ring of the bound $\hat Z_i$, and we set $R_{\ul{\hat Z}}:=\kappa\dbl\xi_1,\ldots,\xi_n\dbr$, where $\kappa$ is the compositum of all the $\kappa_i$ in an algebraic closure of $\BF_q$. We also introduce the notion of a \emph{rational $H$-level structure} for a compact open subgroup $H\subset \FG (\BA^{\ul\nu})$ on a global $\FG$-shtuka $\ul\CG$ using the Tannakian theory of Tate modules in Chapter~\ref{GalRepSht}. Here $\BA^{\ul\nu}$ is the ring of adeles of $C$ outside $\ul\nu$ and we assume that $\FG$ is smooth over $C$ with connected fibers. We denote by $\nabla_n^H\scrH^1(C,\FG)^{\ul\nu}$ the stack of global $\FG$-shtukas with $H$-level structure fibered in groupoids over $\Nilp_{A_{\ul\nu}}$, and by $\nabla_n^{H,\ulHZ}\scrH^1(C,\FG)^{\ul\nu}$ the closed ind-substack of $\nabla_n^H\scrH^1(C,\FG)^{\ul\nu}\whtimes_{A_{\ul\nu}}\Spf R_{\ul{\hat Z}}$ consisting of global $\FG$-shtukas bounded by $\ulHZ$. Both stacks are ind-algebraic Deligne-Mumford stacks, ind-separated and locally of ind-finite type; see Theorem~\ref{ThmLSGGsht1} and Remark~\ref{Rem5.2}. To describe our main uniformization result let $\ul\CG_0$ be a fixed global $\FG$-shtuka in $\nabla_n^{H,\ulHZ}\scrH^1(C,\FG)^{\ul\nu}(\BaseFldInSectUnif)$ where the field $\BaseFldInSectUnif\in\Nilp_{R_{\ul{\hat Z}}}$ is an algebraic closure of $\BF_q$. Let $(\ul\BL_i)_i:=\wh{\ul\Gamma}(\CG_0)$ be the associated $n$-tuple of local $\BP_{\nu_i}$-shtukas, and let $I_{\ul\CG_0}\!(Q)$ denote the group $\QIsog_\BaseFldInSectUnif(\ul\CG_0)$ of quasi-isogenies of $\ul\CG_0$; see Definition~\ref{quasi-isogenyGlob}. Let $\breve\CM_{\ul\BL_i}^{\hat{Z}_i}$ denote the base change of $\CM_{\ul\BL_i}^{\hat{Z}_i}$ to $\breve R_{\hat Z_i}:=\BaseFldInSectUnif\dbl\xi\dbr$, and set $\breve R_{\ul{\hat Z}}:=\BaseFldInSectUnif\dbl\xi_1,\ldots,\xi_n\dbr$. If $\FG$ is smooth over $C$ with connected fibers and reductive generic fiber, we construct in Theorem~\ref{Uniformization2} the uniformization morphism
\begin{equation}\label{EqUnifIntro}
\Theta\colon  I_{\ul\CG_0}\!(Q) \big{\backslash}\bigl(\prod_i \breve\CM_{\ul\BL_i}^{\hat{Z}_i}\times \Isom^{\otimes}(\omega^\circ,\check{\CV}_{\ul{\CG}_0})/H\bigr) \;\longto\; \nabla_n^{H,\ulHZ}\scrH^1(C,\FG)^{\ul\nu}\whtimes_{R_{\ulHZ}} \Spf\breve R_{\ulHZ}
\end{equation}
and in addition we prove that it induces an isomorphism after passing to the formal completion along its image. Note that the reduced subscheme of the Rapoport-Zink space for local $\BP$-shtukas is an affine Deligne-Lusztig variety; see Theorem~\ref{ThmRRZSp}. Thus as a consequence of the uniformization theorem one can relate the rational points in the quasi-isogeny class of $\ul\CG_0$ inside the moduli stack of global $\FG$-shtukas to the rational points of certain affine Deligne-Lusztig varieties. This has applications to the analog of the Langlands-Rapoport conjecture for $\nabla_n^{H,\ulHZ}\scrH^1(C,\FG)^{\ul\nu}$ which we discuss in a future article. The latter analog aims to give a description of the points of $\nabla_n^{H,\ulHZ}\scrH^1(C,\FG)^{\ul\nu}$ with values in finite fields, which is functorial in all the data, including the curve $C$. The general functoriality properties in dependence of $(C,\FG,H,\ulHZ)$ are studied by Breutmann~\cite{BreutmannFunctoriality}. For this reason we like to carry all the data along in the notation. Moreover, for the classifying space of $\FG$-torsors we prefer Behrend's~\cite{Beh} notation $\scrH^1(C,\FG)$ instead of the symbol $\text{Bun}_\FG$. Our notation $\nabla_n\scrH^1(C,\FG)$ (instead of $\FG\text{-Sht}$) for the space of global $\FG$-shtukas indicates that $\FG$-shtukas consist of a $\FG$-torsor together with a Frobenius connection. It is reminiscent of Drinfeld's original inspiration by the Riemann-Hilbert correspondence, and the relation to the geometric Langlands program. 

Let us add that similar uniformization results were previously obtained by Drinfeld~\cite{Drinfeld2} who uniformized the moduli spaces of Drinfeld modules at $\infty$, by the second author~\cite{Har1} who uniformized the moduli stacks $\AbSh^{r,d}_H$ of abelian $\tau$-sheaves at $\infty$, and by Blum and Stuhler~\cite{Blum-Stuhler} and Hausberger~\cite{Hausberger} who uniformized the moduli spaces $\CE\ell\ell_{C,\mathscr{D},I}$ of $\mathscr{D}$-elliptic sheaves from \cite{LRS} at $\infty$, respectively at finite places. Spie{\ss}~\cite{Spiess10} related the latter two uniformizations to each other. The analogous situation for Shimura varieties was developed by \v{C}erednik~\cite{Cerednik}, Drinfeld~\cite{Drinfeld2}, Rapoport and Zink~\cite{RZ}, Boutot and Zink~\cite{BZ95}, Varshavsky~\cite{Varshavsky98}, and Kudla and Rapoport~\cite{KudlaRapoport}.

\bigskip\noindent
{\bfseries\large Acknowledgements.}
We are grateful to Alain Genestier who suggested to the second author already many years ago to investigate the uniformization Theorem~\ref{Uniformization2} for smooth group schemes. We also thank Vincent Lafforgue for important remarks and his explanations on his current application of moduli spaces of $\FG$-shtukas to Langlands parameterization. Moreover, we thank Jochen Heinloth, Linus Kramer, Tu\^an Ng\^o Dac, Timo Richarz and Eva Viehmann for helpful discussions, and the referee for his careful reading and many valuable remarks.

\subsection*{Notation and Conventions}\label{Notation and Conventions}
Throughout this article we denote by
\begin{tabbing}
$\genericG_\nu:=\FG\times_C\Spec Q_\nu,$\; \=\kill
$\BF_q$\> a finite field with $q$ elements,\\[1mm]
$C$\> a smooth projective geometrically irreducible curve over $\BF_q$,\\[1mm]
$Q:=\BF_q(C)$\> the function field of $C$,\\[1mm]
$\nu$\> a closed point of $C$, also called a \emph{place} of $C$,\\[1mm]
$\BF_\nu$\> the residue field at the place $\nu$ on $C$,\\[1mm]
$A_\nu$\> the completion of the stalk $\CO_{C,\nu}$ at  $\nu$,\\[1mm]
$Q_\nu:=\Quot(A_\nu)$\> its fraction field,\\[1mm]
$n\in\BN_{>0}$\> a positive integer,\\[1mm]
$\ul\nu:=(\nu_i)_{i=1\ldots n}$\> an $n$-tuple of closed points of $C$,\\[1mm]
$\BO^{\ul\nu}:=\prod_{\nu\notin\ul\nu}A_\nu$\> \parbox[t]{0.775\textwidth}{the ring of integral adeles of $C$ outside $\ul\nu$,}\\[1mm]
$\BA^{\ul\nu}:=\BO^{\ul\nu}\otimes_{\CO_C}Q$\> \parbox[t]{0.775\textwidth}{the ring of adeles of $C$ outside $\ul\nu$,}\\[1mm]
$\BaseOfD$ \> a finite field containing $\BF_q$,\\[1mm]
$\BD_R:=\Spec R\dbl z \dbr$ \> \parbox[t]{0.775\textwidth}{the spectrum of the ring of formal power series in $z$ with coefficients in an $\BaseOfD$-algebra $R$,
}\\[2mm]
When $R= \BaseOfD$ we drop the subscript $R$ from the notation of $\BD_R$.
\end{tabbing}

\noindent
For a formal scheme $\wh S$ we denote by $\Nilp_{\wh S}$ the category of schemes over $\wh S$ on which an ideal of definition of $\wh S$ is locally nilpotent. We  equip $\Nilp_{\wh S}$ with the \'etale topology. We also denote by
\begin{tabbing}
$\genericG_\nu:=\FG\times_C\Spec Q_\nu,$\; \=\kill
$A_{\ul\nu}$\> the completion of the local ring $\CO_{C^n,\ul\nu}$ of $C^n$ at the closed point $\ul\nu=(\nu_i)$,\\[1mm]
$\Nilp_{A_{\ul\nu}}:=\Nilp_{\Spf A_{\ul\nu}}$\> \parbox[t]{0.775\textwidth}{the category of schemes over $C^n$ on which the ideal defining the closed point $\ul\nu\in C^n$ is locally nilpotent,}\\[2mm]
$\Nilp_{\BaseOfD\dbl\zeta\dbr}$\> \parbox[t]{0.775\textwidth}{the category of $\BD$-schemes $S$ for which the image of $z$ in $\CO_S$ is locally nilpotent. We denote the image of $z$ by $\zeta$ since we need to distinguish it from $z\in\CO_\BD$.}\\[2mm]
$\FG$\> a flat affine group scheme of finite type over $C$, \\
$\BP_\nu:=\FG\times_C\Spec A_\nu$ \> the base change of $\FG$ to $\Spec A_\nu$,\\[1mm]
$\genericG_\nu:=\FG\times_C\Spec Q_\nu$ \> the generic fiber of $\BP_\nu$ over $\Spec Q_\nu$,\\[1mm]
$\BP$\> a smooth affine group scheme of finite type over $\BD=\Spec\BaseOfD\dbl z\dbr$,\\[1mm] 
$\genericG:=\BP\times_{\BD}\Spec\BaseOfD\dpl z\dpr$\> the generic fiber of $\BP$ over $\Spec\BaseOfD\dpl z\dpr$.
\end{tabbing}

\noindent
Let $S$ be an $\BF_q$-scheme. We denote by $\sigma_S \colon  S \to S$ its $\BF_q$-Frobenius endomorphism which acts as the identity on the points of $S$ and as the $q$-power map on the structure sheaf. Likewise we let $\hat{\sigma}_S\colon S\to S$ be the $\BaseOfD$-Frobenius endomorphism of an $\BaseOfD$-scheme $S$. We set
\begin{tabbing}
$\genericG_\nu:=\FG\times_C\Spec Q_\nu,$\; \=\kill
$C_S := C \times_{\Spec\BF_q} S$, \> and \\[1mm]
$\sigma := \id_C \times \sigma_S$.
\end{tabbing}

Let $H$ be a sheaf of groups (for the \fppf-topology) on a scheme $X$. In this article a (\emph{right}) \emph{$H$-torsor} (also called an \emph{$H$-bundle}) on $X$ is a sheaf $\CG$ for the \fppf-topology on $X$ together with a (right) action of the sheaf $H$ such that $\CG$ is isomorphic to $H$ over an \fppf-covering of $X$. Here $H$ is viewed as an $H$-torsor by right multiplication.

%
%

\section{$\FG$-Bundles}\label{SectGBundles}
\setcounter{equation}{0}

Let $\BF_q$ be a finite field with $q$ elements, let $C$ be a smooth projective geometrically irreducible curve over $\BF_q$, and let $\FG$ be a flat affine group scheme of finite type over $C$. From Chapter~\ref{LoopsAndSht} on we are mainly interested in the situation where $\FG$ is a smooth group scheme. Furthermore, we assume in Chapter~\ref{GalRepSht} that all fibers of $\FG$ are connected (and smooth), and in Chapter~\ref{Uniformization Theorem} that its generic fiber is reductive. In the following we use the term \emph{vector bundle} for a locally free sheaf of finite rank. To construct useful representations of $\FG$ on vector bundles over $C$ we consider the following condition on a pair $(H,G)$ of group schemes of finite type over $C$ such that $H\into G$ is a closed subgroup scheme and flat over $C$.
\begin{eqnarray}\label{EqCondQuotient}
\text{There is a scheme $Y$ affine and of finite type over $C$ with an action}\\\text{$G\times_CY\to Y$ of $G$ and a $G$-equivariant open immersion $G/H\into Y$.}\nonumber
\end{eqnarray}
If $(H,G)$ satisfies \eqref{EqCondQuotient} then the quotient $G/H$ is a scheme quasi-affine and of finite type over $C$. We do not know whether the converse is true. Note that for a quasi-affine scheme $X$ of finite type over a field $k$ it is in general false that $\Gamma(X,\CO_{X})$ is a finitely generated $k$-algebra; see the discussion on \href{http://mathoverflow.net/questions/209443/is-the-affine-closure-of-a-quasi-affine-variety-again-a-variety}{http:/\!/mathoverflow.net/questions/209443/}. Therefore we do not know whether we can take $Y=\ul\Spec_C\,\CO_{G/H}$ in condition \eqref{EqCondQuotient}. For this reason we need the following

\begin{proposition}\label{PropQAffineQuot}
Let $H\into G$ be a closed immersion of group schemes of finite type over $C$ and let $H$ be flat over $C$. 
\begin{enumerate}
\item \label{PropQAffineQuot_A}
The quotient $G/H$ is representable by a scheme separated and of finite type over $C$. It is flat (respectively smooth) over $C$ if and only if $G$ is. 
\item \label{PropQAffineQuot_B}
Let $G'\into G$ be a closed immersion of group schemes such that $G'$ is flat over $C$ and $H\into G$ factors through $G'$. If the schemes $G/G'$ and $G'/H$ are (quasi-)affine over $C$ then also $G/H$ is (quasi-)affine over $C$. Moreover, if $(H,G')$ satisfies \eqref{EqCondQuotient} and $G/G'$ is affine over $C$ then also $(H,G)$ satisfies \eqref{EqCondQuotient}.
\item \label{PropQAffineQuot_D}
Let $G$ be affine and smooth over $C$ such that all its geometric fibers are reductive groups and $G/G\open\to C$ is finite, where $G\open\subset G$ is the open normal subgroup scheme whose fiber $(G\open)_x$ over any $x\in C$ is the connected component of $G_x$; see \cite[VI$_{\text{B}}$, Th\'eor\`eme~3.10]{SGA3}. Let $H$ be smooth over $C$. Then $G/H$ is affine over $C$ if and only if all geometric fibers of $H$ are reductive and $H/H\open\to C$ is finite.
\end{enumerate}
\end{proposition}

\begin{proof}
\ref{PropQAffineQuot_A} was proved in \cite[Th\'eor\`eme~4.C]{Anantharaman73} and \cite[VI$_{\rm B}$, Proposition~9.2(x,xi,xii)]{SGA3}. 

\medskip\noindent
\ref{PropQAffineQuot_B} The property of being \mbox{(quasi-)}affine is \fpqc-local on the base by \cite[IV$_2$, Proposition~2.7.1]{EGA}. We apply the base change $G\to G/G'$ which is faithfully flat by \cite[VI$_{\rm B}$, Proposition~9.2(xi)]{SGA3} to the morphism $G/H \to G/G'$ to obtain $(G/H)\times_{G/G'}G \to G$. This morphism equals the projection $(G'/H)\times_C G\to G$ under the isomorphism $\alpha\colon(G/H)\times_{G/G'}G\isoto (G'/H)\times_C G$, $(g_1H,g_2)\mapsto (g_2^{-1}g_1H,g_2)$ with inverse $(g'H,g_2)\mapsto(g_2g'H,g_2)$. Therefore $G/H \to G/G'$ is \mbox{(quasi-)}affine and by \cite[II, Proposition~5.1.10(ii)]{EGA} this proves that $G/H$ is (quasi-)affine over $C$.

Let moreover $Y'$ be a scheme affine and of finite type over $C$ with an action $G'\times_CY'\to Y'$ of $G'$ and a $G'$-equivariant open immersion $\iota\colon G'/H\into Y'$. Set $U:=G/G'$ and let $p\colon U':=G\onto G/G'=U$ be the quotient morphism. Then $G\times_CG'\isoto U'\times_UU'=:U''$ under the map $(g_1,g')\mapsto(g_1,g_1g')$ and $U''':=U'\times_UU'\times_UU'\isoto G\times_CG'\times_CG'$. Let $p_i\colon U''\to U'$, respectively $p_{ij}\colon U'''\to U''$ be the projections onto the $i$-th, respectively $i$-th and $j$-th component and set $q:=p_1\circ p=p_2\circ p$. That is $p_1(g_1,g')=g_1$ and $p_2(g_1,g')=g_1g'$. We consider the scheme $Y'\times_CG$ which is affine and of finite type over $G=U'$ with the induced open immersion
\[
\iota'\circ\alpha\colon\; p^*(G/H)\;:=\;(G/H)\times_{G/G'}G\;\isoto\; (G'/H)\times_C G\;\longinto\; Y'\times_CG\,, 
\]
where $\iota':=\iota\times\id_G$. Consider the $U''$-isomorphism
\begin{eqnarray}\label{EqDescendPhi}
\phi\colon\; p_2^*(Y'\times_CG) = (Y'\times_CG)\times_{G,p_2}U'' & \isoto & (Y'\times_CG)\times_{G,p_1}U'' = p_1^*(Y'\times_CG) \,,\nonumber\\[1mm]
(y',g_2,g_1,g_2) & \longmapsto & \bigl((g_1^{-1}g_2).\:\!y',g_1,g_1,g_2\bigr)\,. \nonumber
\end{eqnarray}
It satisfies the cocycle condition $p_{13}^*\phi=p_{12}^*\phi\circ p_{23}^*\phi$ over $U'''$. Therefore $Y'\times_CG$ descends to a $U$-scheme $Y\to U=G/G'$ with a $U'$-isomorphism $\gamma\colon Y\times_UU'\isoto Y'\times_CG$ satisfying $p_1^*\gamma=\phi\circ p_2^*\gamma$ by \cite[\S\,6.1, Theorem~6]{BLR}. Moreover, $Y$ is affine and of finite type over $G/G'$ by \cite[IV$_2$, Proposition~2.7.1]{EGA}. In particular, $Y\to C$ is affine and of finite type because $G/G'\to C$ is assumed to be affine. Since $\phi$ satisfies $p_1^*(\iota'\alpha)=\phi\circ p_2^*(\iota'\alpha)\colon\,q^*(G/H)\to Y'\times_CG\times_CG'$ the inclusion $\iota'\alpha$ descends to a morphism $G/H\to Y$ by \cite[\S\,6.1, Theorem~6]{BLR} which is an open immersion by \cite[IV$_2$, Proposition~2.7.1]{EGA}. 

To define a $G$-action on $Y$ we consider $G\times_C G/G'$ as a scheme over $U=G/G'$ via the morphism $\bar\mu\colon(g_1,g_2G')\mapsto g_1g_2G'$. In this way the $G\times_C G/G'$-scheme $G\times_CY$ becomes a $U$-scheme and we want to construct a $U$-morphism $G\times_CY\to Y$. We construct it over $U'$ as the composition in the top row of the following diagram
\[
\xymatrix  @R=0.4pc @C+0.6pc {
(g_1,y,g_2) \ar@{|->}[r] & (g_1,y,g_1^{-1}g_2) \ar@{|->}[r] & (g_1,y',g_1^{-1}g_2) \ar@{|->}[r] & (y',g_2)
\\
(G\underset{C}{\times}Y)\underset{\bar\mu,U}{\times}U' \ar[r]_\sim \ar[dddd]_{pr_{U'}} & G\underset{C}{\times}(Y\underset{U}{\times}U') \ar[r]_\sim^{\id_G\times\gamma} \ar[dddd]_{\mu\circ(pr_1,pr_3)} & G\underset{C}{\times}Y'\underset{C}{\times}G \ar[r]^{\quad\id_{Y'}\times\mu} \ar[dddd]_{\mu\circ(pr_1,pr_3)} & Y'\underset{C}{\times}G \ar[dddd]_{pr_G} \ar[r]_\sim^{\gamma^{-1}} & Y\underset{U}{\times}U' \ar[dddd]_{pr_{U'}} \\
\\
\\
\\
U' \ar@{=}[r] & G \ar@{=}[r] & G \ar@{=}[r] & G \ar@{=}[r] & U'.\!\!}
\]
One checks that this morphism is compatible with the descent datum $\phi$ over $U''$, because it is given by the action of $G$ on $Y'\times_CG$ on the second factor, whereas $\phi$ is given by the action of $G'$ on the first factor. By \cite[\S\,6.1, Theorem~6]{BLR} we obtain an action $G\times_CY\to Y$ of $G$ on $Y$ and one also checks over $U'$ that the inclusion $G/H\to Y$ is $G$-equivariant. Therefore $(H,G)$ satisfies condition \eqref{EqCondQuotient}.

\medskip\noindent
\ref{PropQAffineQuot_D} was proved by Alper~\cite[Theorems~9.4.1 and 9.7.6]{Alper14}.
\end{proof}

\begin{proposition}\label{PropGlobalRep}
Let $\FG$ be a flat affine group scheme of finite type over $C$.
\begin{enumerate}
\item \label{PropGlobalRep_A}
There is a faithful representation $\FG\into\GL(\CV)$ for a vector bundle $\CV$ on $C$ such that the quotient $\GL(\CV)/\FG$ is a quasi-affine scheme over $C$. Moreover, the pair $\bigl(\FG,\GL(\CV)\bigr)$ satisfies \eqref{EqCondQuotient}.
\item \label{PropGlobalRep_B}
There is a faithful representation $\rho\colon\FG\hookrightarrow\GL(\CV)$ for a vector bundle $\CV$ on $C$ together with an isomorphism $\alpha\colon\wedge^\topol\CV\isoto\CO_C$ such that $\rho$ factors through $\SL(\CV):=\ker\bigl(\det\colon\!\!\GL(\CV)\to\GL(\wedge^\topol\CV)\bigr)$ and the quotients $\SL(\CV)/\FG$ and $\GL(\CV)/\FG$ are quasi-affine schemes over $C$. Moreover, the pairs $\bigl(\FG,\SL(\CV)\bigr)$ and $\bigl(\FG,\GL(\CV)\bigr)$ satisfy \eqref{EqCondQuotient}.
\item \label{PropGlobalRep_C} 
Let $\FG$ be smooth. Then the quotients in \ref{PropGlobalRep_A} and \ref{PropGlobalRep_B} are affine over $C$ if and only if all geometric fibers of $\FG$ over $C$ are reductive and $\FG/\FG\open\to C$ is finite.
\end{enumerate}
\end{proposition}

\noindent
{\it Remark.} The assumption that $\FG$ is smooth is necessary for the conclusion in \ref{PropGlobalRep_C} that the fibers of $\FG$ are reductive. This can be seen from the flat affine non-reductive group scheme $\Bmu_q$ which is the kernel of the $q$-Frobenius endomorphism of the multiplicative group $\BG_m$ and its faithful representation on $\BG_m$ with $\BG_m/\Bmu_q=\BG_m$.

\begin{proof}
\ref{PropGlobalRep_A} can be proved as in \cite[Proposition~1.3]{PR2} and \cite[Example(1), page~504]{Heinloth}. Namely, the argument of \cite[Proposition~1.3]{PR2} works over any covering $C=U_1\cup\ldots\cup U_r$ by affine open subsets $U_i\subset C$. For each $i$ it first produces a faithful representation $\FG\times_CU_i\into\GL(\CN_i)$ on a finitely generated $\CO_{U_i}$-submodule $\CN_i\subset\CO_\FG|_{U_i}$ of the ``regular representation'' $\FG\to\GL(\CO_\FG)$. By Lemma~\ref{LemmaExtendRep}\ref{LemmaExtendRep_B} below we can extend this representation to a representation $\FG\to\GL(\CF_i)$ on a vector bundle $\CF_i\subset\CO_\FG$ over all of $C$ with $\CF_i|_{U_i}=\CN_i$. The direct sum $\CW:=\CF_1\oplus\ldots\oplus\CF_r$ is a faithful representation of $\FG$ over $C$.

Let $G=\GL(\CW)$. In the next step \cite[Proposition~1.3]{PR2} construct for each $i$ a representation $\rho_i\colon G\times_CU_i\to\GL(\CV_i)$ on a vector bundle $\CV_i\subset\wedge^{\ell_i}\CO_G|_{U_i}$ of some exterior power of $\CO_G$ and a line bundle $\CL_i\subset\CV_i$ such that for any $U_i$-scheme $\pi\colon S\to U_i$ one has 
\[
\FG(S)\;=\;\bigl\{\,g\in\GL(\CW)(S)\colon \exists\,\chi_i(g)\in \CO_S(S)\mal\text{ with }\rho_i(g)(v)=\chi_i(g)v\;\forall\,v\in\Gamma(S,\pi^*\CL_i)\,\bigr\}\,.
\]
By Lemma~\ref{LemmaExtendRep}\ref{LemmaExtendRep_B} below we extend $\CV_i$ and $\CL_i$ to representations $\rho_i\colon\GL(\CW)\to\GL(\wt\CV_i)$ and $\chi_i\colon\FG\to\GL(\wt\CL_i)=\BG_m$ on vector bundles $\wt\CV_i$ and $\wt\CL_i$ over all of $C$ with $\wt\CL_i\subset\wt\CV_i$ and $\wt\CV_i|_{U_i}=\CV_i$ and $\wt\CL_i|_{U_i}=\CL_i$. Next \cite{PR2} consider the representations $(\id,\chi_i)\colon\FG\into\GL(\CW)\times_C\BG_m$ and $\rho_i\otimes(-1)\colon\GL(\CW)\times_C\BG_m\to \GL(\wt\CV_i\otimes_{\CO_C}\wt\CL_i\dual)$, $(g,\lambda_i)\mapsto\rho_i(g)\otimes\lambda_i^{-1}$. Consider the inclusion map $h_i\colon\wt\CL_i\into\wt\CV_i\,\in\,\Gamma\bigl(C,\CHom_{\CO_C}(\wt\CL_i,\wt\CV_i)\bigr)=\Gamma(C,\wt\CV_i\otimes_{\CO_C}\wt\CL_i\dual)$ and let $H_i\subset\GL(\CW)\times_C\BG_m$ be the stabilizer of the section $h_i$, that is, $H_i(S)=\bigl\{\,(g,\lambda_i)\in\GL(\CW)(S)\times\BG_m(S)\colon(\rho_i(g)\otimes\lambda_i^{-1})\cdot h_i=h_i\,\bigr\}$ for any $C$-scheme $S$. It is a closed subgroup scheme with $\FG\subset H_i$ and $\FG\times_CU_i=H_i\times_CU_i$.

Now we consider the group morphisms $(\id,\chi_1,\ldots,\chi_r)\colon\FG\into\GL(\CW)\times_C\BG_m^r$ and 
\begin{eqnarray*}
{\TS\bigoplus\limits_i}\,\rho_i\otimes(-1)\colon\GL(\CW)\times_C\BG_m^r & \longto & \GL\bigl({\TS\bigoplus}_i\wt\CV_i\otimes_{\CO_C}\wt\CL_i\dual\bigr)\,,\\[-2mm]
(g,\lambda_1,\ldots,\lambda_r) & \longmapsto & {\TS\bigoplus}_i(\rho_i(g)\otimes\lambda_i^{-1})\,,
\end{eqnarray*}
and the section $\bigoplus_ih_i\in{\TS\bigoplus}_i\wt\CV_i\otimes_{\CO_C}\wt\CL_i\dual$. By working on the covering $C=U_1\cup\ldots\cup U_r$ we compute that the stabilizer of $\bigoplus_ih_i$ in $\GL(\CW)\times_C\BG_m^r$ equals $\FG$. Let $Y$ be the reduced closure of the orbit of $\bigoplus_ih_i$ in the geometric vector bundle $\wt V$ associated to the locally free sheaf $\bigoplus_i\wt\CV_i\otimes_{\CO_C}\wt\CL_i\dual$. Then $Y$ is an affine scheme of finite type over $C$, and the quotient $(\GL(\CW)\times_C\BG_m^r)/\FG$, which is reduced by Proposition~\ref{PropQAffineQuot}\ref{PropQAffineQuot_A}, is open in $Y$ by the usual argument that orbits are open in their closure.

We show that $Y$ carries an action of $\GL(\CW)\times_C\BG_m^r$ compatible with the action on the quotient $(\GL(\CW)\times_C\BG_m^r)/\FG$. The preimage of $Y$ under the morphism $d\colon\GL(\CW)\times_C\BG_m^r\times_C\wt V\to\wt V$ is closed and contains $\GL(\CW)\times_C\BG_m^r\times_C(\GL(\CW)\times_C\BG_m^r)/\FG$. To show that this preimage contains $\GL(\CW)\times_C\BG_m^r\times_CY$ it suffices to show that the latter is irreducible and reduced. Reducedness follows from the smoothness of $\GL(\CW)\times_C\BG_m^r$ over $C$ by \cite[\S\,2.3, Proposition~9]{BLR}. Since $Y$ is irreducible and reduced it is flat over $C$ by \cite[Proposition~III.9.7]{Hartshorne}. Conversely the flatness of $\GL(\CW)\times_C\BG_m^r\times_CY$ over $C$ shows that it equals the closure of its fiber over the generic point $\eta\in C$. This fiber is irreducible, because $Y_\eta$ is irreducible and $(\GL(\CW)\times_C\BG_m^r)_\eta$ is geometrically irreducible. This proves that $\GL(\CW)\times_C\BG_m^r\times_CY$ is irreducible and maps to $Y$ under the action $d$. In particular, the pair $\bigl(\FG,\GL(\CW)\times_C\BG_m^r\bigr)$ satisfies condition~\eqref{EqCondQuotient}.

So for $\CV:=\CW\oplus\CO_C^{\oplus r}$ the pair $\bigl(\FG,\GL(\CV)\bigr)$ also satisfies condition~\eqref{EqCondQuotient} by Proposition~\ref{PropQAffineQuot}\ref{PropQAffineQuot_D} and \ref{PropQAffineQuot_B} applied to the closed immersions $\FG\into\GL(\CW)\times_C\BG_m^r\into\GL(\CW\oplus\CO_C^{\oplus r})$.

\medskip\noindent
\ref{PropGlobalRep_B}
To pass to $\SL$, we take a representation $\FG\into\GL(\CV_0)$ as in \ref{PropGlobalRep_A} and we consider $\CV:=\CV_0\oplus \wedge^\topol\CV_0\dual $ and the closed immersion $\id\oplus\det\dual\colon \GL(\CV_0) \into \GL(\CV)$ which factors through $\SL(\CV)$. Applying Proposition~\ref{PropQAffineQuot}\ref{PropQAffineQuot_D} and \ref{PropQAffineQuot_B} to the closed immersions of group schemes $\FG\into\GL(\CV_0)\into\SL(\CV)\into\GL(\CV)$ shows that the pairs $\bigl(\FG,\SL(\CV)\bigr)$ and $\bigl(\FG,\GL(\CV)\bigr)$ satisfy condition~\eqref{EqCondQuotient}.

\medskip\noindent
\ref{PropGlobalRep_C} This follows from Proposition~\ref{PropQAffineQuot}\ref{PropQAffineQuot_D}.
\end{proof}

\begin{lemma}\label{LemmaExtendRep}
Let $\CE$ be a quasi-coherent sheaf of $\CO_C$-modules on $C$. Let $U\subset C$ be an open subscheme and let $\CN\subset\CE|_U$ be a finitely generated $\CO_U$-submodule.
\begin{enumerate}
\item \label{LemmaExtendRep_A}
There is a finitely generated $\CO_C$-submodule $\CH\subset\CE$ with $\CH|_U=\CN$.
\item \label{LemmaExtendRep_B}
Assume that $\CE$ is a flat $\CO_C$-module. Let $\FG\to\GL(\CE)$ be a representation (possibly of infinite dimension) and assume that $\CN$ is $\FG\times_CU$-invariant. Then there is a finitely generated flat $\FG$-invariant $\CO_C$-submodule $\CF\subset\CE$ with $\CN=\CF|_U$.
\end{enumerate}
\end{lemma}

\begin{proof}
If $U=C$ there is nothing to do. Otherwise we choose $U'=\Spec A\subset C$ open such that $C=U\cup U'$. By Riemann-Roch $\wt U:=U\cap U'$ is affine with $\CO_C(\wt U)=A[\tfrac{1}{a}]$ for an element $a\in A$.

\medskip\noindent
\ref{LemmaExtendRep_A} Let $(m_i)$ be a finite set of generators of the $A[\tfrac{1}{a}]$-module $\CN|_{\wt U}$. After multiplying the $m_i$ by suitable powers of $a$ they lie in $\CE|_{U'}$ and generate a finite $\CO_{U'}$-submodule $\CN'\subset\CE|_{U'}$ which by construction satisfies $\CN'|_{\wt U}=\CN|_{\wt U}$. Glueing $\CN$ with $\CN'$ over $\wt U$ produces the desired finitely generated submodule $\CH$.

\medskip\noindent
\ref{LemmaExtendRep_B} By \ref{LemmaExtendRep_A} we may choose a finitely generated $\CO_C$-submodule $\CH\subset\CE$ with $\CH|_U=\CN$. Let $d_\CE\colon\CE\to\CO_\FG\otimes_{\CO_C}\CE$ be the co-multiplication of the representation $\FG\to\GL(\CE)$. For each open affine subset $V\subset C$ let $\CF(V):=\{\,x\in\Gamma(V,\CE)\colon d_\CE(x)\in\Gamma(V,\CO_\FG\otimes_{\CO_C}\CH)\,\}$. Then $\CF(V)$ is $\FG\times_CV$-invariant and contained in $\CH(V)$ by \cite[\S\,1.5, Proposition~1]{Serre68}. Since $\CF(V)$ equals the kernel of the homomorphism $\Gamma(V,\CE)\xrightarrow{\,d_\CE}\Gamma(V,\CO_\FG\otimes_{\CO_C}\CE)\onto\Gamma(V,\CO_\FG\otimes_{\CO_C}\CE/\CH)$ the $\CF(V)$ form a sheaf $\CF\subset\CH$ of $\CO_C$-submodules. It follows that $\CF$ is finitely generated and $\FG$-invariant. Since $\CE$ is flat, whence $\CO_C$-torsion free, the same holds for $\CF$ because $C$ is a smooth curve. Moreover, $\CN\subset\CF|_U\subset\CH|_U=\CN$ by the $\FG\times_CU$-invariance of $\CN$.
\end{proof}

\begin{definition}\label{DefD-LevelStr}
We let $\scrH^1(C,\FG)$ denote the category fibered in groupoids over the category of $\BF_q$-schemes, such that the objects over $S$, $\scrH^1(C,\FG)(S)$, are $\FG$-torsors (also called $\FG$-bundles) over $C_S$ and morphisms are isomorphisms of $\FG$-torsors.

Let $D$ be a proper closed subscheme of $C$. A \emph{$D$-level structure} on a $\FG$-bundle $\CG$ on $C_S$ is a trivialization $\psi\colon \CG\times_{C_S}{D_S}\isoto \FG\times_C D_S$ along $D_S:=D\times_{\BF_q}S$. Let $\scrH^1_D(C,\FG)$ denote the stack classifying $\FG$-bundles with $D$-level structure, that is, $\scrH^1_D(C,\FG)$ is the category fibered in groupoids over the category of $\BF_q$-schemes, which assigns to an $\BF_q$-scheme $S$ the category whose objects are
$$
Ob\bigl(\scrH^1_D(C,\FG)(S)\bigr):=\left\lbrace (\CG,\psi)\colon \CG\in \scrH^1(C,\FG)(S),\, \psi\colon \CG\times_{C_S}{D_S}\isoto \FG\times_C D_S \right\rbrace,
$$ 
and whose morphisms are those isomorphisms of $\FG$-bundles that preserve the $D$-level structure.
\end{definition}

While we adopt the notation of Behrend~\cite{Beh}, the stack $\scrH^1(C,\FG)$ is also often denoted $\text{Bun}_\FG$. The following theorem is well known when $\FG$ is a parahoric group scheme over $C$; see Definition~\ref{DefParahoric} below. A proof is given for example by Heinloth~\cite[Proposition~1]{Heinloth}.

\begin{theorem}\label{Bun_G}
Let $\FG$ be a flat affine group scheme of finite type over the curve $C$. Then the stack $\scrH^1(C,\FG)$ is an Artin-stack locally of finite type over $\BF_q$. It admits a covering by connected open substacks of finite type over $\BF_q$. If $\FG$ is smooth over $C$ then $\scrH^1(C,\FG)$ is smooth over $\BF_q$.
\end{theorem}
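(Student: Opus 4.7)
The plan is to reduce to the classical case of $\SL$-bundles via the faithful representation from Proposition~\ref{PropGlobalRep}. Choose $\rho\colon\FG\hookrightarrow\SL(\CV_0)$ with $\SL(\CV_0)/\FG$ quasi-affine. Pushforward along $\rho$ gives a morphism of stacks
\[
\rho_*\colon\scrH^1(C,\FG)\longto\scrH^1(C,\SL(\CV_0)).
\]
Over a test scheme $S$ with an $\SL(\CV_0)$-torsor $\CE$ on $C_S$, the $2$-fiber is the sheaf of reductions of structure group, i.e.\ of sections of the associated fiber bundle $\CE\times^{\SL(\CV_0)}(\SL(\CV_0)/\FG)\to C_S$. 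Because $\SL(\CV_0)/\FG$ is quasi-affine over $C$, this associated bundle is a quasi-affine $C_S$-scheme. Sections of a quasi-affine scheme over the proper base $C_S/S$ are representable by a scheme over $S$ (first use Grothendieck's representability for sections of an $\FG$-equivariant affine closure, then cut out the open condition that the section factors through the quasi-affine open). Hence $\rho_*$ is representable by schemes.

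Next, the target $\scrH^1(C,\SL(\CV_0))$ is the classical moduli stack of rank-$r$ vector bundles with trivialized determinant, which is well known to be a smooth Artin stack locally of finite type over $\BF_q$. Since $\rho_*$ is representable, $\scrH^1(C,\FG)$ is an Artin stack locally of finite type over $\BF_q$. For smoothness I would use deformation theory directly: for a square-zero thickening $S\hookrightarrow S'$ with ideal $\CI$, the obstruction to lifting a $\FG$-torsor $\CG$ on $C_S$ lies in $H^2\bigl(C_S,\ad(\CG)\otimes_{\CO_S}\CI\bigr)$, which vanishes because $C_S\to S$ has relative dimension $1$. Smoothness of $\FG$ (hypothesis of Definition~\ref{DefParahoric}) ensures that $\ad(\CG)$ is a locally free sheaf of $\CO_{C_S}$-modules, so the cohomological vanishing applies. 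Thus $\scrH^1(C,\FG)$ is formally smooth, hence smooth.

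For the exhaustive covering by connected open substacks of finite type, I would combine Harder--Narasimhan style boundedness for the underlying vector bundle with a degree decomposition. For each real $N>0$, the locus of $\SL(\CV_0)$-bundles whose Harder--Narasimhan polygon is bounded by $N$ is an open substack of finite type in $\scrH^1(C,\SL(\CV_0))$, and the union over $N$ exhausts the stack; pulling back along the representable morphism $\rho_*$ yields an exhaustive cover of $\scrH^1(C,\FG)$ by open substacks of finite type. These can be refined to be connected by fixing the component class in $\pi_0\bigl(\scrH^1(C,\FG)\bigr)$ (for parahoric $\FG$ this set of components is discrete and parametrizes the degree/Kottwitz invariant).

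The main obstacle is the last step: quasi-affineness (rather than affineness) of $\SL(\CV_0)/\FG$ in the ramified parahoric case forces a more delicate boundedness argument than in the reductive case, and the production of finitely-presented connected open substacks requires knowing that the component space of $\scrH^1(C,\FG)$ is sufficiently well behaved. This is essentially the content of Heinloth's work on $\FG$-bundles for Bruhat--Tits groups, which I would cite to close this step rather than reprove.
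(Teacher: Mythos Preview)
Your proposal is correct and follows essentially the same route as the paper: reduce via a faithful representation with quasi-affine quotient, show the induced morphism on moduli stacks is schematic and of finite presentation by representing the fiber as sections of a quasi-affine scheme over the projective base $C_S$, deduce algebraicity from the vector-bundle case, and obtain smoothness from $H^2$-vanishing on a relative curve. The only differences are cosmetic: the paper works with $\GL(\CV)$ rather than $\SL(\CV_0)$ (so that the target is literally $\Vect_C^r$ and no extra determinant condition is needed), packages your ``sections of a quasi-affine scheme'' argument into a separate Theorem~\ref{r-q-a} and Lemma~\ref{LemmaBehrend_Quasi-affine}, and for the exhaustive covering by connected finite-type opens simply cites Wang~\cite{Wang} rather than sketching a Harder--Narasimhan argument.
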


The theorem can be proved following the argument of Behrend~\cite{Beh}. Since we will need the technique in Theorem~\ref{nHisArtin} below, we indicate Behrend's argument and thereby generalize his Propositions~4.4.4 and 4.4.1 and 4.4.5.

\begin{theorem}\label{r-q-a} Let $\CV$ be a vector bundle over $C$ and let $\rho\colon\FG\hookrightarrow \GL(\CV)$ be a closed subgroup scheme flat over $C$, such that $\bigl(\FG,\GL(\CV)\bigr)$ satisfies condition~\eqref{EqCondQuotient}. Then the natural morphism of stacks over $\BF_q$
$$
\rho_\ast\colon \scrH^1(C,\FG) \to \scrH^1(C,\GL(\CV))
$$ 
is representable by a morphism of schemes which is quasi-affine and of finite presentation. 
\end{theorem}

\begin{proof}
Let $p_S\colon C_S\to S$ be the projection map and view it as a morphism of big \'etale sites $\Et(C_S) \to \Et(S)$. For any scheme $Y$ over $C_S$ let $p_{S\ast} (Y)$ denote the sheaf which sends an $S$-scheme $T$ to $\Hom_{C_S}(C_T,Y)$. Let $\CG$ be a $\GL(\CV)$-bundle in $\scrH^1(C,\GL(\CV))(S)$. By \cite[Proposition~4.2.3]{Beh} we have the following 2-cartesian diagram of stacks
\begin{equation}\label{EqBehrend}
\CD
p_{S\ast}\left(\CG/\FG_S\right)@>>>S\\
@VVV @V{\CG}VV\\
\scrH^1(C,\FG)@>>> \scrH^1(C,\GL(\CV)).
\endCD 
\end{equation}
(See the proof of Proposition~\ref{HeckeisQP} for more details on this diagram.) We must show that $p_{S\ast}\left(\CG/\FG_S\right)$ is a quasi-affine $S$-scheme of finite presentation. We claim that there exists a scheme $Y$ affine and of finite presentation over $C_S$ and a quasi-compact open immersion of $C_S$-schemes $\CG/\FG_S\into Y$. We will prove this over a Zariski covering $U'\lang{$\to C_S$}$ of $C_S$ and then glue. In this way we can assume that there is a trivialization $\alpha\colon\CG\times_{C_S}U'\isoto\GL(\CV)\times_{C}U'$. Here a Zariski covering suffices, because every $\GL(\CV)$-torsor is trivial Zariski locally by Hilbert~90, see \cite[Proposition~III.4.9]{Milne}. Since $\bigl(\FG,\GL(\CV)\bigr)$ satisfies condition~\eqref{EqCondQuotient} there is a scheme $Y_0$ affine and of finite presentation over $C$ with an action $\GL(\CV)\times_CY_0\to Y_0$ of $\GL(\CV)$ and a $\GL(\CV)$-equivariant open immersion $\iota\colon\GL(\CV)/\FG\into Y_0$ which automatically is quasi-compact. We consider the quasi-compact open immersion $\iota\alpha\colon(\CG/\FG_S)\times_{C_S}U'\isoto(\GL(\CV)/\FG)\times_CU'\into Y_0\times_CU'$. Let $p_i\colon U'':=U'\times_{C_S}U'\to U'$ be the projection onto the $i$-th factor. Over $U''$ the automorphism $p_2^*\alpha\circ p_1^*\alpha^{-1}$ of the trivial $\GL(\CV)$-torsor is given by multiplication with an element $g\in\GL(\CV)(U'')$ satisfying the cocycle condition on $U''':=U'\times_{C_S}U'\times_{C_S}U'$. The isomorphism $g\colon p_1^*(Y_0\times_CU')\isoto p_2^*(Y_0\times_CU')$ allows to glue $Y_0\times_CU'$ to a $C_S$-scheme $Y$ which is affine and of finite presentation over $C_S$. By construction, the open immersion $\iota\alpha$ glues to a quasi-compact open immersion $\CG/\FG_S\into Y$ of $C_S$-schemes. Thus the theorem follows from the next lemma.
\end{proof}

\begin{lemma}\label{LemmaBehrend_Quasi-affine}
Let $X$ be a projective scheme over a field $\ArbitraryFld$. Let $p$ denote the structure morphism $p\colon  X \to \Spec \ArbitraryFld$. Let $S$ be a $\ArbitraryFld$-scheme, let $Y$ be a scheme affine and of finite presentation over $X_S:=X\times_\ArbitraryFld S$, and let $\wt Y\into Y$ a quasi-compact open immersion. Then ${p_{S}}_\ast\wt Y$ is a quasi-affine $S$-scheme of finite presentation.
\end{lemma}

\begin{proof}
Since $Y$ is of finite presentation over $X_S$ and $\wt Y\into Y$ is quasi-compact we may reduce to the case that $S$ is of finite type over $\ArbitraryFld$ and in particular noetherian; see \cite[IV$_3$, Proposition~8.9.1]{EGA}. By \cite[Proposition~4.4.1]{Beh} the sheaf $\CA:=p_{S\ast}Y$ is representable by an affine $S$-scheme of finite presentation. Consider the universal $X_S$-morphism $X_\CA \to Y$ corresponding to $\id_\CA\in \CA(\CA)$. Let $Z\hookrightarrow X_\CA$ be the complement of $X_\CA\times_Y \wt Y$ in $X_\CA$. By properness of $X$, $Z$ maps to a closed subscheme of $\CA$. Let $\CU$ denote the complement of $p_\CA(Z)$ in $\CA$. It is quasi-compact because we assumed that $S$ and hence $\CA$ are noetherian. In particular, $\CU$ is a quasi-affine $S$-scheme of finite presentation. 

We claim that $\CU$ represents $p_{S\ast}\wt Y$. To see this, first observe that the open immersion $X_\CU \hookrightarrow X_\CA\times_Y \wt Y$ gives a morphism $X_\CU\to \wt Y$ which induces a $\CU$-point in $(p_{S\ast}\wt Y)(\CU)$. Hence it is enough to check that for any scheme $T$ we have the inclusion $(p_{S\ast}\wt Y)(T)\subseteq \CU(T)$. Any point of $(p_{S\ast}\wt Y)(T)$ is a morphism $X_T \to \wt Y$. Composing with $i\colon \wt Y\to Y$ induces a $T$ valued point $\alpha$ of $\CA$. We have to show that $\alpha \colon  T\to \CA$ factors through the open subscheme $\CU$. One can easily check this on the level of topological spaces. Namely, if $T\times_\CA p_\CA(Z)$ is non-empty then so is $X_T\times_{X_\CA} Z$, which is a contradiction, since $Z$ is defined as the complement of $X_\CA\times_Y \wt Y$ in $X_\CA$. This shows that $\CU$ represents $p_{S\ast}\wt Y$.
\end{proof}

For the proof of Theorem~\ref{Bun_G} we use the following 

\begin{remark}\label{RemVect}
Let $\CV$ be a vector bundle on $C$ of rank $r$. The stack $\scrH^1(C,\GL(\CV))$ is isomorphic to the stack $\Vect_C^r$ whose $S$-valued points parameterize vector bundles of rank $r$ on $C_S$. For a $\GL(\CV)$-torsor $\CG$ over $C_S$ we let $\CV(\CG)$ denote the associated vector bundle over $C_S$. By \cite[Th\'eor\`eme 4.6.2.1]{L-M} the stack $\Vect^r_C$ is an Artin-stack locally of finite type over $\BF_q$ and by \cite[Theorem~1.0.1]{Wang} it admits a covering by connected open substacks of finite type over $\BF_q$. (Note that even though \cite[Th\'eor\`eme 4.6.2.1]{L-M} states that $\Vect^r_C$ is of finite type, all that is true and proved is that it is \emph{locally} of finite type.)
\end{remark}

\begin{proof}[Proof of Theorem~\ref{Bun_G}.]
By Proposition~\ref{PropGlobalRep}\ref{PropGlobalRep_A} there is a closed embedding $\FG\hookrightarrow\GL(\CV)$ such that $\bigl(\FG,\GL(\CV)\bigr)$ satisfies \eqref{EqCondQuotient}. By Theorem~\ref{r-q-a} and Remark~\ref{RemVect} the stack  $\scrH^1(C,\FG)$ is an Artin-stack locally of finite type over $\BF_q$ and admits a covering by connected open substacks of finite type over $\BF_q$. Finally the smoothness of $\scrH^1(C,\FG)$ follows from the vanishing of the second cohomology of coherent sheaves on a curve; compare \cite[Corollary~4.5.2]{Beh} or \cite[\S\,6]{Wang}.
\end{proof}

\begin{remark}\label{RemCoveringH1}
A covering of $\scrH^1(C,\FG)$ by connected open substacks of finite type over $\BF_q$ can be constructed as follows. Let $\rho\colon\FG\hookrightarrow \GL(\CV)$ be a representation with quasi-affine quotient which factors through $\SL(\CV)$ as in Proposition~\ref{PropGlobalRep}\ref{PropGlobalRep_B} such that $\bigl(\FG,\GL(\CV)\bigr)$ satisfies \eqref{EqCondQuotient}, and let $\mu=(\mu_1\ge\ldots\ge\mu_r)$ with $\mu_1+\ldots+\mu_r=0$ be a dominant cocharacter of $\SL_r$, where $r=\rk\CV$. We use the notation of Remark~\ref{RemVect}. Let $\scrH^1(C,\FG)^{\le\mu}$ be the sub-stack of $\scrH^1(C,\FG)$ consisting of those $\FG$-bundles $\CG$ on $C_S$ for $\BF_q$-schemes $S$ such that for any geometric point $s\in S$ and for any filtration $(0)=\CF_0\subset\ldots\subset\CF_r=\CV(\rho_*\CG)_s$ of the vector bundle $\CF:=\CV(\rho_*\CG)_s$ on $C_s$ by sub-vector bundles $\CF_i$ with line bundles as subquotients (such a filtration is called a \emph{$B$-structure}) the degree of the line bundle $\CF_i/\CF_{i-1}$ is at most $\mu_i$. Since $\rho$ factors through $\SL(\CV)$ there is a canonical isomorphism $\alpha_\CG\colon\wedge^r\CV(\rho_*\CG)\isoto\CO_{C_S}$ and therefore the condition that $\deg(\CF_i/\CF_{i-1})\le\mu_i$ for all $i$ is equivalent to the condition 
\begin{equation}\label{EqBunMu}
\TS\deg\bigotimes\limits_{i=1}^r(\CF_i/\CF_{i-1})^{\otimes\lambda_i}\;\le\;\sum\limits_{i=1}^r\mu_i\lambda_i\quad\text{for all integers }\lambda_1\ge\ldots\ge\lambda_r\text{ with }\lambda_1+\ldots+\lambda_r=0\,.
\end{equation}
It follows from \cite[Lemma~A.3]{Var}, that the stack $\scrH^1(C,\GL_r)^{\le\mu}$ defined by condition \eqref{EqBunMu} (which for arbitrary vector bundles $\CF$ is not equivalent to $\deg(\CF_i/\CF_{i-1})\le\mu_i$ for all $i$) is an open substack of $\scrH^1(C,\GL_r)\cong Vect_C^r$. Its intersections with the connected components of $\scrH^1(C,\GL_r)$ are Artin stacks of finite type over $\BF_q$ by \cite[Lemma~3.1(a)]{Var}. Thus also $\scrH^1(C,\FG)^{\le\mu}$ is an open substack of $\scrH^1(C,\FG)$ and an Artin stack of finite type over $\BF_q$, being the preimage of $\scrH^1(C,\GL_r)^{\le\mu}$ under the morphism of stacks $\rho_\ast\colon \scrH^1(X,\FG) \to \scrH^1(X,\GL(\CV))$ from Theorem~\ref{r-q-a}, which factors through the connected component on which the vector bundles have degree zero; compare \cite[\S\,2.1.1]{B-D} or \cite[Lemma~2.2]{Var}.

Moreover, if the closed subscheme $D\subset C$ is large enough compared to $\mu$ then the preimage $\scrH^1_D(C,\GL_r)^{\le\mu}$ of $\scrH^1(C,\GL_r)^{\le\mu}$ in $\scrH^1_D(C,\GL_r)$ is a scheme over $\BF_q$ with quasi-projective connected components by \cite[Lemma~3.1(a)]{Var}. Therefore also $\scrH^1_D(C,\FG)^{\le\mu}$ is a quasi-projective scheme over $\BF_q$ by Theorem~\ref{r-q-a}.
\end{remark}

\section{Moduli Stacks of Global $\FG$-Shtukas}\label{SectGlobalGShtukas}
\setcounter{equation}{0}

\begin{definition}\label{Hecke}
Let $I$ be a finite set and let $I=I_1\cup\ldots\cup I_k$ with $I_j\cap I_{j'}=\emptyset$ for $j\ne j'$ be a partition of $I$. We write $\ulI:=(I_1,\ldots,I_k)$. We let $Hecke_{\FG,D,\,\ulI}$ be the stack fibered in groupoids over the category of $\BF_q$-schemes, whose $S$ valued points are tuples $\bigl((\charsect_i)_{i\in I},\,(\CG_j,\psi_j)_{j=0,\ldots,k},\,(\tauGlob_j)_{j=1,\ldots,k}\,\bigr)$ where
\begin{itemize}
\item[] $\charsect_i \in (C\setminus D)(S)$ for all $i\in I$ are sections, called \emph{legs} (or \textit{characteristic sections}), 
\item[] $(\CG_j,\psi_j)$ for all $j=0,\ldots,k$ are objects in $\scrH^1_D(C,\FG)(S)$, and
\item[] $\tauGlob_j\colon  \CG_{j-1}|_{{C_S}\setminus\bigcup_{i\in I_j}\Gamma_{\charsect_i}}\isoto \CG_j|_{{C_S}\setminus\bigcup_{i\in I_j}\Gamma_{\charsect_i}}$ for all $j=1,\ldots k$ are isomorphisms preserving the $D$-level structures, that is, $\psi_j\circ\tauGlob_j|_{D_S}=\psi_{j-1}$. Here $\Gamma_{\charsect_i}\subset C_S$ denotes the graph of the section $\charsect_i$.
\end{itemize}
If $D=\emptyset$ we will drop it from the notation. For each $j=0,\ldots,k$ we denote by $pr_j\colon Hecke_{\FG,D,\,\ulI}\to \scrH^1_D(C,\FG)$ the morphism sending $\bigl((\charsect_i)_{i\in I},\,(\CG_j,\psi_j)_{j=0,\ldots,k},\,(\tauGlob_j)_{j=1,\ldots,k}\,\bigr)$ to $(\CG_j,\psi_j)$. If $k=1$ and $I=I_1=\{1,\ldots,n\}$ we also write $Hecke_{\FG,D,n}:=Hecke_{\FG,D,(I)}:=Hecke_{\FG,D,\,\ulI}$. In general there is a canonical isomorphism
\begin{equation}\label{EqHeckeProduct}
Hecke_{\FG,D,\,\ulI}\isoto Hecke_{\FG,D,(I_1)}\;\underset{pr_1,\scrH^1_D(C,\FG),pr_0}{\times}\;\ldots\;\underset{pr_1,\scrH^1_D(C,\FG),pr_0}{\times}\;Hecke_{\FG,D,(I_k)}
\end{equation}
sending the tuple $\bigl((\charsect_i)_{i\in I},\,(\CG_j,\psi_j)_{j=0,\ldots,k},\,(\tauGlob_j)_{j=1,\ldots,k}\,\bigr)\in Hecke_{\FG,D,\,\ulI}(S)$ to the tuple in the $j$-th factor $\bigl((\charsect_i)_{i\in I_j},\,(\CG_{j-1},\psi_{j-1}),\,(\CG_j,\psi_j),\,\tauGlob_j\,\bigr)\in Hecke_{\FG,D,(I_j)}(S)$.
\end{definition}

\begin{definition}\label{ker}
Assume that we have two morphisms $f,g\colon X\to Y$ of schemes or stacks. We denote by $\equi(f,g\colon X\rightrightarrows Y)$ the pull back of the diagonal under the morphism $(f,g)\colon X\to Y\times_\BZ Y$, that is $\equi(f,g\colon X\rightrightarrows Y)\,:=\,X\times_{(f,g),Y\times Y,\Delta}Y$ where $\Delta=\Delta_{Y/\BZ}\colon Y\to Y\times_\BZ Y$ is the diagonal morphism.
\end{definition}

\begin{definition}\label{Global Sht}
Keep the notation of Definition~\ref{Hecke}. We define the \emph{moduli stack} $\nabla_\ulI\scrH^1_D(C,\FG)$ \emph{of (iterated) global $\FG$-shtukas with $D$-level structure} to be the preimage on $Hecke_{\FG,D,\,\ulI}$ of the graph of the Frobenius morphism on $\scrH^1(C,\FG)$. In other words
$$
 \nabla_\ulI\scrH^1_D(C,\FG):=\equi(\sigma_{\scrH^1_D(C,\FG)} \circ pr_k,\,pr_0\colon  Hecke_{\FG,D,\,\ulI}\rightrightarrows \scrH^1_D(C,\FG)).
$$ 
If $k=1$ and $I=I_1=\{1,\ldots,n\}$ we also write $\nabla_n\scrH^1_D(C,\FG):=\nabla_\ulI\scrH^1_D(C,\FG)$. Each object $\ul{\cG}$ in $Ob(\nabla_\ulI\scrH^1_D(C,\FG)(S))$ is called an \emph{(iterated) global $\FG$-shtuka with $D$-level structure over $S$} and the corresponding sections $(\charsect_i)_{i\in I}$ are called the \textit{legs} (or \textit{characteristic sections}) of $\ul\CG$, or of $S$. 

More explicitly, a global $\FG$-shtuka $\ul\CG\in\nabla_\ulI\scrH^1_D(C,\FG)(S)$ with $D$-level structure over an $\BF_q$-scheme $S$ is an object $\ul\CG=\bigl((\charsect_i)_{i\in I},\,(\CG_j,\psi_j)_{j=0,\ldots,k},\,(\tauGlob_j)_{j=1,\ldots,k}\,\bigr)$ of $Hecke_{\FG,D,\,\ulI}(S)$ together with an isomorphism $\tauGlob_0\colon\sigma^*(\CG_k,\psi_k)\isoto(\CG_0,\psi_0)$ in $\scrH^1_D(C,\FG)$, that is an isomorphism $\tau_0\colon\sigma^*\CG_k\isoto\CG_0$ of $\FG$-bundles satisfying $\psi_0\circ\tau_0|_{D_S}=\sigma^*\psi_k$.

If $k=1$ and $I=I_1=\{1,\ldots,n\}$ a global $\FG$-shtuka $\ul\CG\in\nabla_n\scrH^1_D(C,\FG)(S)$ with $D$-level structure over an $\BF_q$-scheme $S$ is a tuple $\ul\CG=(\charsect_1,\ldots,\charsect_n,\CG,\psi,\tauGlob)$ consisting of an $n$-tuple of legs $(\charsect_1,\ldots,\charsect_n)\in (C\setminus D)^n(S)$, a $\FG$-bundle $\CG:=\CG_1$ over $C_S$, a trivialization $\psi:=\psi_1\colon \CG\times_{C_S}{D_S}\isoto \FG\times_C D_S$, and an isomorphism $\tauGlob:=\tau_1\circ\tau_0\colon  \s \CG|_{C_S\setminus \cup_i \Gamma_{\charsect_i}}\isoto \CG|_{C_S\setminus \cup_i \Gamma_{\charsect_i}}$ with $\psi\circ\tauGlob|_{D_S}=\s(\psi)$. 

If $D=\emptyset$ we drop $\psi$ from $\ul\CG$ and write $\nabla_\ulI\scrH^1(C,\FG)$, resp.\ $\nabla_n\scrH^1(C,\FG)$ for the \emph{stack of global $\FG$-shtukas}. If $k=1$ and $I=I_1=\{1,\ldots,n\}$ we will sometimes fix the legs $(\charsect_1,\ldots,\charsect_n)\in C^n(S)$ and simply call $\ul\CG=(\CG,\tauGlob)$ a global $\FG$-shtuka over $S$.
\end{definition}

Global $\FG$-shtukas can be viewed as function field analogs of abelian varieties with extra structure. This inspires the following notions of quasi-isogenies.

\begin{definition}\label{quasi-isogenyGlob}
Let $k=1$ and $I=I_1=\{1,\ldots,n\}$. Consider a scheme $S$ together with legs $\charsect_i\colon S\to C$ for $i=1,\ldots,n$ and let $\ul{\CG}=(\CG,\tau)$ and $\ul{\CG}'=(\CG',\tau')$ be two global $\FG$-shtukas over $S$ with the same legs $\charsect_i$. A \emph{quasi-isogeny} from $\ul\CG$ to $\ul\CG'$ is an isomorphism $f\colon\CG|_{C_S \setminus E_S}\isoto \CG'|_{C_S \setminus E_S}$ satisfying $\tau'\sigma^{\ast}(f)=f\tau$, where $E$ is some effective divisor on $C$. We denote the \emph{group of quasi-isogenies} from $\ul\CG$ to itself by $\QIsog_S(\ul\CG)$.
\end{definition}

\bigskip

In order to obtain algebraic substacks locally of finite type of $Hecke_{\FG,D,\,\ulI}$ and $\nabla_\ulI\scrH^1_D(C,\FG)$ one has to bound the relative position of $\CG_{j-1}$ and $\CG_j$ under the isomorphism $\tauGlob_j$. For the rest of this section we fix a faithful representation $\rho\colon  \FG \to \SL(\CV_0)\subset\GL(\CV_0)$ for some vector bundle  $\CV_0$ of rank $r$, with quasi-affine quotient $\SL(\CV_0)\slash \FG$ as in Proposition~\ref{PropGlobalRep}\ref{PropGlobalRep_B} such that the pairs $\bigl(\FG,\SL(\CV_0)\bigr)$ and $\bigl(\FG,\GL(\CV_0)\bigr)$ satisfy \eqref{EqCondQuotient}. In Remark~\ref{RemIndepOfRep} we explain to what extent our results depend on the choice of $\rho$.

\begin{remark}\label{RemRelAffineGrass}
We consider the induced morphisms of stacks
$$
\xymatrix @R=0pc {
\scrH^1(C,\FG) \ar[r]^{\TS\rho_*\quad} & \scrH^1(C,\SL(\CV_0)) \ar[r] & \scrH^1(C,\GL(\CV_0)) \ar[r]^{\qquad\sim} & \Vect_C^r\\
\CG \ar@{|->}[r] & \rho_*\CG \ar@{|->}[rr] & & \CV(\rho_*\CG)\,,
}
$$ 
see Remark~\ref{RemVect}. Since $\rho$ factors through $\SL(\CV_0)$ there is a canonical isomorphism $\alpha_\CG\colon\wedge^r\CV(\rho_*\CG)\isoto\CO_{C_S}$. Conversely, every pair $(\CV,\alpha)$ where $\CV$ is a rank $r$ vector bundle on $C_S$ and $\alpha\colon\wedge^r\CV\isoto\CO_{C_S}$ is an isomorphism is induced from an $\SL(\CV_0)$-torsor over $C_S$.
\end{remark}

As an auxiliary tool to prove the representability of $Hecke_{\FG,D,\,\ulI}$ and $\nabla_\ulI\scrH^1_D(C,\FG)$ we introduce the following stack.

\begin{definition}\label{DefRelAffineGrass}
Keep the notations of Remark~\ref{RemRelAffineGrass}. We define the \emph{relative affine Grassmannian} $\CG r_{\FG,I,r}$ as the stack over $C^I\times_{\BF_q}\scrH^1(C,\FG)$ which parametrizes tuples $\bigl((\charsect_i)_{i\in I},\CG, \CV',\alpha',\phi\bigr)$, where
$$
\bigl((\charsect_i)_{i\in I},\CG, \CV'\bigr)\in C^I\times_{\BF_q} \scrH^1(C,\FG)\times_{\BF_q} \Vect_C^r,
$$ 
$\alpha'\colon\wedge^r\CV'\isoto\CO_{C_S}$ is an isomorphism and $\phi\colon\CV(\rho_*\CG)|_{C_S\setminus\bigcup_{i\in I}\Gamma_{\charsect_i}}\isoto\CV'|_{C_S\setminus\bigcup_{i\in I}\Gamma_{\charsect_i}}$ is an isomorphism between the vector bundle $\CV(\rho_*\CG)$ associated with $\rho_\ast \CG$ and $\CV'$ outside the graphs $\bigcup_{i\in I}\Gamma_{\charsect_i}$ such that $\alpha_\CG=\alpha'\circ\wedge^r\phi$ on $C_S\setminus\cup\Gamma_{\charsect_i}$. In particular $\wedge^r\phi=(\alpha')^{-1}\circ\alpha_\CG$ extends to an isomorphism $\wedge^r\phi\colon\wedge^r\CV(\rho_*\CG)\isoto\wedge^r\CV'$ on all of $C_S$. If $\CS l'$ is the $\SL(\CV_0)$-torsor associated with $(\CV',\alpha')$ then $\phi$ induces an isomorphism $\phi\colon\rho_*\CG|_{C_S\setminus\bigcup_{i\in I}\Gamma_{\charsect_i}}\isoto\CS l'|_{C_S\setminus\bigcup_{i\in I}\Gamma_{\charsect_i}}$.

Note that for every $j\in\{1,\ldots,k\}$ the morphism  $\rho_\ast$ yields a morphism $Hecke_{\FG,\,\ulI}\to \CG r_{\FG,I_j,r}$, sending the tuple $\bigl((\charsect_i)_{i\in I},\,(\CG_j)_j,\,(\tauGlob_j)_j\bigr)$ to the tuple $\bigl((\charsect_i)_{i\in I_j},\CG_{j-1},\CV(\rho_\ast\CG_j),\alpha_{\CG_j},\CV(\rho_*\tauGlob_j)\bigr)$ where $\alpha_{\CG_j}\colon\wedge^r\CV(\rho_*\CG_j)\isoto\CO_{C_S}$ is the canonical isomorphism induced from the fact that $\rho$ factors through $\SL(\CV_0)$. The morphism $Hecke_{\FG,\,\ulI}\to \CG r_{\FG,I_j,r}$ factors through the $j$-th component in \eqref{EqHeckeProduct}.
\end{definition}

Let $\ul{\omega}:=(\omega_i)_{i\in I}$ be a tuple of coweights of $\SL_r$ given as $\omega_i\colon x\mapsto\diag(x^{\omega_{i,1}},\ldots,x^{\omega_{i,r}})$ for integers $\omega_{i,1}\ge\ldots\ge\omega_{i,r}$ with $\omega_{i,1}+\ldots+\omega_{i,r}=0$ for all $i$. (The inequality means that all $\omega_i$ are dominant with respect to the Borel subgroup of upper triangular matrices.) Let $\CG r_{\FG,I,r}^{\ul\omega}$ denote the substack of $\CG r_{\FG,I,r}$ defined by the condition that the isomorphism $\phi$ is \emph{bounded} by $\ul\omega$, that is satisfies
\begin{eqnarray}\label{EqBounded1}
&\bigwedge^\ell_{C_S}\phi\bigl(\CV(\rho_*\CG)\bigr)\;\subset\;\Bigl(\bigwedge^\ell_{C_S} \CV'\Bigr)\bigl(\sum_{i\in I}(-\omega_{i,r-\ell+1}-\ldots-\omega_{i,r})\!\cdot\!\Gamma_{s_i}\bigr)\\
&\text{for all $1\le \ell\le r$ with equality for $\ell=r$} \nonumber
\end{eqnarray}
where the notation $\bigl(\wedge^\ell_{C_S} \CV'\bigr)(\sum_{i\in I}(-\omega_{i,r-\ell+1}-\ldots-\omega_{i,r})\cdot\Gamma_{s_i})$ means that we allow poles of order $-\omega_{i,r-\ell+1}-\ldots-\omega_{i,r}$ along the Cartier divisor $\Gamma_{s_i}$ on $C_S$; compare \cite[Lemma~4.3]{H-V}. Note that the condition for $\ell=r$ is equivalent to the requirement that $\wedge^r\phi$ is an isomorphism on all of $C_S$, which in turn is equivalent to the condition that $\alpha_\CG=\alpha'\circ\wedge^r\phi$ for an isomorphism $\alpha'\colon\wedge^r\CV'\isoto\CO_{C_S}$. By Cramer's rule (e.g.~\cite[III.8.6, Formulas (21) and (22)]{BourbakiAlgebra}) condition \eqref{EqBounded1} is equivalent to 
\begin{eqnarray}\label{EqBounded2}
&\bigwedge^\ell_{C_S}\phi^{-1}(\CV')\;\subset\;\Bigl(\bigwedge^\ell_{C_S} \CV(\rho_*\CG)\Bigr)\bigl(\sum_i(\omega_{i,1}+\ldots+\omega_{i,\ell})\!\cdot\!\Gamma_{s_i}\bigr)\\
&\text{for all $1\le \ell\le r$ with equality for $\ell=r$}\nonumber
\end{eqnarray}
Again the condition for $\ell=r$ is equivalent to the condition that $\alpha_\CG=\alpha'\circ\wedge^r\phi$ for an isomorphism $\alpha'\colon\wedge^r\CV'\isoto\CO_{C_S}$.

\begin{proposition}\label{RelativGrProj/BunG}
The relative affine Grassmannian $\CG r_{\FG,I,r}^{\ul\omega}$ is relatively representable over $C^I\times_{\BF_q}\scrH^1(C,\FG)$ by a projective morphism of schemes.
\end{proposition}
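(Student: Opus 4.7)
The plan is to reduce to the case $\FG=\SL(\CV_0)$, where the problem becomes one about modifications of vector bundles, and then to identify (bounded pieces of) the relative affine Grassmannian with a closed subscheme of Grothendieck's Quot scheme for a coherent sheaf supported on a finite closed subscheme of the base.

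\textbf{Reduction to the $\SL$-case.} The parametrized data $(\CV',\alpha',\phi)$ of $\CG r_{\FG,n,r}^{\ul\omega}$ depends on the $\FG$-torsor $\CG$ only through the associated rank $r$ vector bundle with trivialized determinant, i.e., only through $\rho_*\CG\in\scrH^1(C,\SL(\CV_0))$. Hence $\CG r_{\FG,n,r}^{\ul\omega}$ is the base change along $(\rho_*,\id)\colon\scrH^1(C,\FG)\times C^n\to\scrH^1(C,\SL(\CV_0))\times C^n$ of the analogously defined space $\CG r_{\SL(\CV_0),n,r}^{\ul\omega}$. Since relative representability and projectivity are stable under base change, it suffices to treat $\FG=\SL(\CV_0)$.

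\textbf{Setting up the Quot scheme.} Fix a test scheme $T$ with a morphism to $\scrH^1(C,\SL(\CV_0))\times C^n$, corresponding to a rank $r$ vector bundle $\CV$ with trivialized determinant on $C_T$ and sections $s_i\colon T\to C$. Set $D:=\sum_i\omega_{i,1}\Gamma_{s_i}\geq D':=\sum_i\omega_{i,r}\Gamma_{s_i}$ as Cartier divisors on $C_T$. The $j=1$ cases of \eqref{EqBounded1} and \eqref{EqBounded2} say respectively that $\phi$ extends to an injection $\CV(D')\hookrightarrow\CV'$ and that $\phi^{-1}$ extends to an injection $\CV'\hookrightarrow\CV(D)$; setting $\wt\CV':=\phi^{-1}(\CV')\subset\CV(D)$ we obtain the sandwich $\CV(D')\subset\wt\CV'\subset\CV(D)$. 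Thus the triple $(\CV',\alpha',\phi)$ is equivalent to the data of a locally free rank $r$ subsheaf $\wt\CV'$ of $\CV(D)$ containing $\CV(D')$, equivalently a quotient $\CV(D)/\CV(D')\twoheadrightarrow\CV(D)/\wt\CV'$ of the coherent $\CO_{C_T}$-module $\CV(D)/\CV(D')$. The latter is supported on $\cup_i\Gamma_{s_i}$, which is finite over $T$; hence by Grothendieck's theorem the Quot scheme $\mathrm{Quot}(\CV(D)/\CV(D')/C_T/T)$ is a disjoint union of projective $T$-schemes, indexed by Hilbert polynomial.

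\textbf{Cutting out the bounded locus.} The $j=r$ bound (equivalent to the existence of $\alpha'$) fixes the length of $\CV(D)/\wt\CV'$ and thereby selects a single projective component of the Quot scheme. The intermediate bounds for $j=2,\ldots,r-1$ become closed conditions on this component: each asserts that the meromorphic sheaf $\wedge^j\wt\CV'$, viewed inside $\wedge^j\CV(D)$, lies in a prescribed coherent subsheaf, cut out by the vanishing of finitely many sections of a coherent quotient. The universal $\wt\CV'$ so obtained is automatically locally free of rank $r$: it is $T$-flat (being the kernel of the universal flat quotient), sits between the locally free rank $r$ sheaves $\CV(D')$ and $\CV(D)$, and has torsion-free geometric fibers on the smooth relative curve $C_T/T$ of generic rank $r$; $T$-flatness together with local freeness on geometric fibers yields local freeness. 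Consequently $\CG r_{\SL(\CV_0),n,r}^{\ul\omega}|_T$ is realized as a closed subscheme of a projective $T$-scheme, hence is projective over $T$.

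The main technical obstacle is the third step: verifying that each $\wedge^j$-bound is genuinely a closed condition on the Quot scheme and that local freeness of the universal $\wt\CV'$ persists in families. The identification of $\wt\CV'$ with $(\CV',\alpha',\phi)$, including the recovery of $\alpha'$ from the $j=r$ bound, is routine once the $j=1$ sandwich has been set up.
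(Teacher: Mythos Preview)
Your proof is correct and follows essentially the same approach as the paper: sandwich $\phi^{-1}(\CV')$ between $\CV(\rho_*\CG)(\sum_i\omega_{i,r}\Gamma_{s_i})$ and $\CV(\rho_*\CG)(\sum_i\omega_{i,1}\Gamma_{s_i})$, and realize the bounded relative Grassmannian as a closed subscheme of the Quot scheme of the quotient sheaf $\CF$, with the $j=r$ condition fixing the constant Hilbert polynomial $r\sum_i\omega_{i,1}$. Your explicit reduction to the $\SL$-case is harmless but unnecessary (the paper works directly with $\CV(\rho_*\CG)$), and your extra paragraph checking local freeness of the universal $\wt\CV'$ is a detail the paper leaves implicit.
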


\begin{proof}
We look at the fiber of $\CG r_{\FG,I,r}^{\ul\omega}\to C^I\times_{\BF_q}\scrH^1(C,\FG)$ over an $S$-valued point $((\charsect_i)_{i\in I},\CG)$ in $(C^I\times_{\BF_q}\scrH^1(C,\FG))(S)$. Then \eqref{EqBounded1} and \eqref{EqBounded2} imply that \mbox{$\CV(\rho_*\CG)(\sum_{i\in I}\omega_{i,1}\!\cdot\!\Gamma_{s_i})/\phi^{-1}(\CV')$} must be a quotient of the sheaf
$$
\TS\CF\;:=\;\CV(\rho_*\CG)(\sum_{i\in I}\omega_{i,1}\!\cdot\!\Gamma_{s_i})\big/\CV(\rho_*\CG)(\sum_{i\in I}\omega_{i,r}\!\cdot\!\Gamma_{s_i})
$$
supported on the effective relative Cartier divisor $X:=\sum_{i\in I}(\omega_{i,1}-\omega_{i,r})\cdot\Gamma_{s_i}$ inside $C_S$. Note that $X$ is a finite flat $S$-scheme. From the case $\ell=r$ in \eqref{EqBounded1} and \eqref{EqBounded2} we also obtain the isomorphism \mbox{$\alpha':=\alpha_\CG\circ(\wedge^r\phi)^{-1}\colon\wedge^r\CV'\isoto\CO_{C_S}$}. Therefore $\CG r_{\FG,I,r}^{\ul\omega}\times_{(C^I\times_{\BF_q}\scrH^1(C,\FG))}S$ is represented by a closed subscheme of Grothendieck's Quot-scheme ${\rm Quot}^\Phi_{\CF/X/S}$, see \cite[n$\open$221, Th\'eor\`eme~3.1]{FGA} or \cite[Theorem~2.6]{AltmanKleiman}, for constant Hilbert polynomial $\Phi=r\cdot\sum_{i\in I}\omega_{i,1}$; compare \cite[p.~5]{HartlHellmann}. 
\end{proof}

\begin{definition}
The stack $Hecke_{\FG,\,\ulI}^{\ul\omega}$ is defined by the cartesian diagram
$$
\xymatrix {
Hecke_{\FG,\,\ulI}^{\ul\omega} \ar[r]\ar[d] & \CG r_{\FG,I_1,r}^{\ul\omega}\times_{\BF_q}\ldots\times_{\BF_q}\CG r_{\FG,I_k,r}^{\ul\omega} \ar[d] \\
Hecke_{\FG,\,\ulI} \ar[r] & \CG r_{\FG,I_1,r}\times_{\BF_q}\ldots\times_{\BF_q}\CG r_{\FG,I_k,r}\;.
}
$$
\end{definition}

\begin{proposition}\label{HeckeisQP}
Let $\rho\colon  \FG \to \SL(\CV_0)$ be a faithful representation as above with quasi-affine (resp.\ affine) quotient $\SL(\CV_0)\slash \FG$. Then the morphism $Hecke_{\FG,(I)}\to\CG r_{\FG,I,r}$ is represented by a locally closed and quasi-compact (resp.\ a closed) immersion. In particular, the morphism of stacks $Hecke_{\FG,\,\ulI}^{\ul\omega}\,\to\,C^I\times_{\BF_q}\scrH^1(C,\FG)$ sending $\bigl((\charsect_i)_{i\in I},\,(\CG_j)_{j=0,\ldots,k},\,(\tauGlob_j)_{j=1,\ldots,k}\,\bigr)$ to $\bigl((\charsect_i)_{i\in I},\CG_0\bigr)$ is relatively representable by a morphism of schemes which is quasi-compact and quasi-projective, and even projective if there is a representation $\rho$ with affine quotient $\SL(\CV_0)\slash \FG$. 
\end{proposition}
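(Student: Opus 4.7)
The plan is to show first that $Hecke_{\FG,n}\to\CG r_{\FG,n,r}$ is a monomorphism and then to represent its fibers as closed subschemes of the relative scheme of sections supplied by Lemma~\ref{LemmaBehrend_Quasi-affine}. The ``In particular'' assertion will follow by base change along the projective morphism $\CG r_{\FG,n,r}^{\ul\omega}\to\scrH^1(C,\FG)\times_{\BF_q}C^n$ of Proposition~\ref{RelativGrProj/BunG}.

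Fix an $S$-valued point $(\CG,\CV',\alpha',(s_i),\phi)$ of $\CG r_{\FG,n,r}$, write $\CS l'$ for the $\SL(\CV_0)$-torsor associated with $(\CV',\alpha')$, and set $U:=C_S\setminus\bigcup_i\Gamma_{s_i}$. A lift to $Hecke_{\FG,n}$ is a pair $(\CG',\tau)$ where $\CG'$ is a $\FG$-torsor with $\rho_*\CG'=\CS l'$ and $\tau\colon\CG|_U\isoto\CG'|_U$ is a $\FG$-equivariant isomorphism inducing $\phi$ after applying $\rho_*$. Giving such a $\CG'$ is equivalent to giving a section $\sigma\colon C_S\to\CS l'/\FG$ of the quasi-affine $C_S$-scheme $\CS l'/\FG$, and since $\rho$ is faithful the iso $\tau$ is uniquely determined by $\phi$ when it exists, existing exactly when $\sigma|_U$ agrees with the section $\wt\sigma_U$ of $(\CS l'/\FG)|_U$ transported from the tautological $\FG$-reduction of $\rho_*\CG|_U$ via $\phi$. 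Since $\CS l'/\FG\to C_S$ is separated (quasi-affineness of $\SL(\CV_0)/\FG$ transports fpqc-locally) and $U$ is schematically dense in $C_S$ (its complement being a relative Cartier divisor in the smooth $C_S/S$), such a section $\sigma$ is unique when it exists, and hence $Hecke_{\FG,n}\to\CG r_{\FG,n,r}$ is a monomorphism.

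To promote this monomorphism to a (locally closed) immersion, I would apply Lemma~\ref{LemmaBehrend_Quasi-affine} to $\CS l'/\FG\to C_S$, obtaining a quasi-affine (resp.\ affine, when $\SL(\CV_0)/\FG$ is affine) $S$-scheme $\CP:=p_{S*}(\CS l'/\FG)$ of finite presentation parametrising sections of $\CS l'/\FG$. Inside $\CP$, the condition $\sigma|_U=\wt\sigma_U$ cuts out the desired fiber of $Hecke_{\FG,n}\to\CG r_{\FG,n,r}$. To see that this is a closed subscheme, I consider the two sections of $\CS l'/\FG$ over $U\times_S\CP$, namely the restriction of the universal $\sigma$ and the pullback of the fixed $\wt\sigma_U$; their equalizer $E\subset U\times_S\CP$ is closed because $\CS l'/\FG\to C_S$ is separated. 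The image in $\CP$ of the open complement $W:=(U\times_S\CP)\setminus E$ then cuts out the ``bad locus'' (where $\sigma|_U\ne\wt\sigma_U$); as $C\to\Spec\BF_q$ is smooth and hence flat, the projection $C\times_{\BF_q}\CP\to\CP$ is flat, hence open, so this image is open in $\CP$ and the desired locus is closed. Since this construction is functorial in $S\to\CG r_{\FG,n,r}$, it represents $Hecke_{\FG,n}\times_{\CG r_{\FG,n,r}}S$ as a closed subscheme of $\CP$, and the composition with the quasi-affine (resp.\ affine) $\CP\to\CG r_{\FG,n,r}$ yields a quasi-compact locally closed (resp.\ closed) immersion.

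The most delicate step is the final argument that the image of $W$ in $\CP$ is open, which relies on openness of the projection $C_\CP\to\CP$ via flatness of $C\to\Spec\BF_q$. Once the immersion property is in hand, the ``In particular'' conclusion is formal: by base change, $Hecke_{\FG,n}^{\ul\omega}=Hecke_{\FG,n}\times_{\CG r_{\FG,n,r}}\CG r_{\FG,n,r}^{\ul\omega}$ is a quasi-compact (resp.\ closed) immersion over $\CG r_{\FG,n,r}^{\ul\omega}$, which is in turn projective over $\scrH^1(C,\FG)\times_{\BF_q}C^n$ by Proposition~\ref{RelativGrProj/BunG}; the composition is therefore quasi-compact quasi-projective (resp.\ projective) as claimed.
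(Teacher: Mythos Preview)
Your setup is correct and coincides with the paper's: a lift of $(\CG,\CV',\alpha',(s_i),\phi)$ to $Hecke_{\FG,n}$ is precisely a section of $\CS l'/\FG$ over $C_S$ extending the section $\wt\sigma_U$ over $U$ determined by $(\CG,\phi)$, and uniqueness (hence the monomorphism property) follows from separatedness and schematic density of $U$.

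The divergence is in how you establish the immersion property. The paper does \emph{not} go through the section space $\CP=p_{S*}(\CS l'/\FG)$ of Lemma~\ref{LemmaBehrend_Quasi-affine}; instead it invokes Lemma~\ref{LemmaExtendSect}, which says directly that for a (quasi-)affine $Y\to C_S$ and a section $x\colon U\to Y$, the locus in $S$ over which $x$ extends to $C_S$ is a (locally) closed and quasi-compact subscheme of $S$. Applied to $Y=\CS l'/\FG$ and $x=\wt\sigma_U$, this gives the fiber $Hecke_{\FG,n}\times_{\CG r_{\FG,n,r}}S$ at once.

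Your alternative route has a genuine gap. The equalizer argument shows only that the locus $\{\sigma|_U=\wt\sigma_U\}$ is a closed \emph{subset} of $\CP$: the complement of the (open) image of $W$. It does not produce a scheme structure representing the functor. For a non-reduced test scheme $T\to\CP$ the condition $\sigma_T|_{U_T}=(\wt\sigma_U)_T$ is strictly stronger than ``$T$ maps set-theoretically into that closed subset'', so you have not shown that a closed subscheme of $\CP$ represents the fiber. Furthermore, even granting a closed subscheme $F\hookrightarrow\CP$, the composite $F\to\CP\to S$ is quasi-affine of finite type and a monomorphism, but you do not justify why this forces it to be a (locally closed) immersion into $S$; that step needs an argument, and the natural one (clearing denominators along the Cartier divisor $\sum_i\Gamma_{s_i}$ and reducing to vanishing on a finite flat $S$-scheme) is precisely the content of Lemma~\ref{LemmaExtendSect}. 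In short, your detour through $\CP$ does not avoid the key lemma the paper uses; it hides the missing step inside an unproven representability claim.
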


\begin{proof}
Note that for any $\SL(\CV_0)$-torsor $\CS l$ over a $C$-scheme $U$ there is an isomorphism of stacks 
\[
[C/\FG]\times_{\rho_*,[C/\SL(\CV_0)],\,\CS l}\,U \isoto\CS l/\FG
\]
where $[C/\FG]$ denotes the stack classifying $\FG$-torsors over $C$-schemes. This isomorphism was already used in diagram \eqref{EqBehrend} for $\GL(\CV)$ instead of $\SL(\CV_0)$. It is given as follows. Let $\CG$ be a $\FG$-torsor over a $U$-scheme $T$ and let $\wt\phi\colon\rho_*\CG\isoto\CS l_T$ be an isomorphism of $\SL(\CV_0)$-torsors over $T$. We choose an \fppf-covering $T'\to T$ and a trivialization $\alpha\colon\FG_{T'}\isoto\CG_{T'}$. We consider the induced trivialization $\rho_*\alpha\colon\SL(\CV_0)_{T'}\isoto\rho_*\CG_{T'}$ and let $1_{T'}\in\SL(\CV_0)(T')$ be the identity section. Then the section $\ol{\wt\phi_{T'}\circ\rho_*\alpha(1_{T'})}\in (\CS l/\FG)(T')$ descends to a section in $(\CS l/\FG)(T)$ which is independent of our choices.

We now consider a morphism $S\to\CG r_{\FG,I,r}$ given by a tuple $((\charsect_i)_{i\in I},\CG,\CV',\alpha',\phi)$ over a scheme $S$. We let $\CS l'$ be the $\SL(\CV_0)$-torsor over $C_S$ associated with the pair $(\CV',\alpha')$ by Remark~\ref{RemRelAffineGrass}, and we let $\wt\phi\colon\rho_*\CG|_{C_S\setminus\cup\Gamma_{\charsect_i}}\isoto\CS l'|_{C_S\setminus\cup\Gamma_{\charsect_i}}$ be the isomorphism induced by $\phi$ on $C_S\setminus\cup_{i\in I}\Gamma_{\charsect_i}$. By the above, the pair $(\CG,\wt\phi)$ defines a section $x\in(\CS l'/\FG)(C_S\setminus\cup_{i\in I}\Gamma_{s_i})$. Since $\CS l'$ is isomorphic to $\SL(\CV_0)$ \fppf-locally on $C_S$ and $\SL(\CV_0)/\FG$ is \mbox{(quasi-)}affine over $C_S$, also $\CS l'\slash \FG$ is \mbox{(quasi-)}affine over $C_S$ by \cite[IV$_2$, Proposition~2.7.1]{EGA}. So by Lemma~\ref{LemmaExtendSect} below there exists a (locally) closed (and quasi-compact) immersion $S'\into S$ such that $x$ extends over $S'$ to a section $x'\in(\CS l'/\FG)(C_{S'})$. Again the section $x'$ corresponds to a $\FG$-torsor $\CG'$ over $C_{S'}$ and an isomorphism $\psi\colon\rho_*\CG'\isoto\CS l'$ of $\SL(\CV_0)$-torsors over $C_{S'}$. The equality of sections $x|_{C_{S'}\setminus\cup\Gamma_{s_i}}=x'|_{C_{S'}\setminus\cup\Gamma_{s_i}}$ induces an isomorphism $\tauGlob\colon\CG|_{C_{S'}\setminus\cup\Gamma_{s_i}}\isoto\CG'|_{C_{S'}\setminus\cup\Gamma_{s_i}}$ with $\rho_*\tauGlob=\psi^{-1}\wt\phi$. Therefore $S'$ represents the fiber product $Hecke_{\FG,(I)}\times_{\CG r_{\FG,I,r}}S$. This implies that the morphism $Hecke_{\FG,(I)}\to\CG r_{\FG,I,r}$ is represented by a (locally) closed (quasi-compact) immersion. 

Finally, Proposition~\ref{RelativGrProj/BunG} implies that $Hecke_{\FG,(I_j)}^{\ul\omega}$ is \mbox{(quasi-)}projective over $C^{I_j}\times_{\BF_q}\scrH^1(C,\FG)$ for all $j=1,\ldots,k$. So the \mbox{(quasi-)}projectivity of $Hecke_{\FG,n}^{\ul\omega}\,\to\,C^I\times_{\BF_q}\scrH^1(C,\FG)$ follows from the fact that
\begin{equation}\label{EqBoundedHeckeProduct}
Hecke_{\FG,\,\ulI}^{\ul\omega}\isoto Hecke_{\FG,(I_1)}^{\ul\omega}\;\underset{pr_1,\scrH^1(C,\FG),pr_0}{\times}\;\ldots\;\underset{pr_1,\scrH^1(C,\FG),pr_0}{\times}\;Hecke_{\FG,(I_k)}^{\ul\omega}
\end{equation}
under the isomorphism \eqref{EqHeckeProduct}.
\end{proof}

\begin{lemma}\label{LemmaExtendSect}
Let $Y$ be a (quasi-)affine scheme over $C_S$. Let $D$ be an effective relative Cartier divisor on $C_S$ over $S$ and set $U:=C_S\setminus D$. Let $x\colon U\to Y$ be a section. Then there is a (locally) closed (quasi-compact) immersion $S'\into S$ such that for any $S$-scheme $T$ the section $x_{_T}\colon U\times_ST\to Y\times_ST$ extends uniquely to a section $x'_{_T}\colon C_T\to Y\times_ST$ if and only if the structure morphism $T\to S$ factors through $S'$.
\end{lemma}

\begin{proof}
We first assume that $Y$ is affine over $C_S$. By the uniqueness the question is local on $S$. So we can assume that $S$ and $T$ are affine and that there is an affine open subset $\Spec A\subset C_S$ containing $D$. Then $Y\times_{C_S}\Spec A$ and $Y\times_{C_S}\Spec A\times_S T$ are affine, say of the form $\Spec B$ and $\Spec B_T$, respectively. Let $\CI\subset A$ be the invertible ideal defining $D$, which is of finite presentation by \cite[I$_{\rm new}$, Corollaire~1.4.3]{EGA}. Then the section $x$ corresponds to an $A$-morphism $x^*\colon B\to \Gamma(\Spec A\setminus D,\CO_{C_S})$ by \cite[I, Proposition~2.2.4]{EGA}. For each element $b\in B$ there is a positive integer $m_b$ such that $\CJ_b:=x^*(b)\cdot \CI^{m_b}\subset A$ by \cite[I, Th\'eor\`eme~1.4.1]{EGA}. The section $x_{_T}$ extends uniquely to a section $x'_{_T}\colon C_T\to Y\times_ST$ if and only if $x^*(B_T)\subset A\otimes_{\CO_S}\CO_T\subset \Gamma(\Spec A\setminus D,\CO_{C_S})\otimes_{\CO_S}\CO_T$. Since $\CI$ is an invertible ideal and $B_T=B\otimes_{\CO_S}\CO_T$, this is the case if and only if $\CJ_b\otimes_{\CO_S}\CO_T\subset \CI^{m_b}\otimes_{\CO_S}\CO_T$ for all $b\in B$, that is, if and only if the image of $\CJ_b\otimes_{\CO_S}\CO_T$ in $(A/\CI^{m_b})\otimes_{\CO_S}\CO_T$ is zero for all $b\in B$. Now $A/\CI^{m_b}$ is the structure sheaf of the relative Cartier divisor $m_b\cdot D$. Since the scheme $m_b\cdot D$ is finite flat and of finite presentation over $S$ the $\CO_S$-module $A/\CI^{m_b}$ is finite locally free by \cite[I$_{\rm new}$, Proposition~6.2.10]{EGA}. Therefore by \cite[I$_{\rm new}$, Lemma~9.7.9.1]{EGA} there exists a closed subscheme $S'_b\into S$ such that the image of $\CJ_b\otimes_{\CO_S}\CO_T$ in $(A/\CI^{m_b})\otimes_{\CO_S}\CO_T$ is zero if and only if the structure morphism $T\to S$ factors through $S'_b$. Taking $S'$ as the scheme theoretic intersection of the $S'_b$ for all $b\in B$ proves the lemma when $Y$ is affine over $C_S$.

If $Y$ is quasi-affine let $\pi\colon Y\to C_S$ be the structure morphism and set $\ol Y:=\ul\Spec_{C_S}\pi_*\CO_{Y}$. Then $Y\into\ol Y$ is a quasi-compact open subscheme by \cite[II, Proposition 5.1.2(b)]{EGA} and hence the complement $\ol Y\setminus Y$ is a finitely presented closed subscheme of $\ol Y$. By the above there is a closed subscheme $S'$ of $S$ over which the section $x\in Y(U)$ extends uniquely to a section $x'\in\ol Y(C_{S'})$, because $\ol Y$ is affine over $C_S$. The preimage $Z:=(\ol Y\setminus Y)\times_{\ol Y\!,\,x'}C_{S'}$ is a finitely presented closed subscheme of $C_{S'}$ and its image in $S'$ is closed and of finite presentation. The latter can be seen as follows. Since $C_{S'}$ and $Z$ are of finite presentation over $S'$ we may compute the image of $Z$ in $S'$ by reducing to the case that $S'$ is noetherian, see \cite[IV$_3$, Proposition~8.9.1]{EGA}, and then every closed subscheme of $S'$ is of finite presentation. Let $S''$ be the complement of this image. Then $S''\into S'$ is a quasi-compact open immersion and for any $S'$-scheme $T$ the structure morphism $T\to S'$ factors through $S''$ if and only if the section $x'_{_T}\colon C_T\to \ol Y\times_ST$ factors through $Y\times_ST$. In other words, $S''$ represents the question whether $x$ extends uniquely to a section $x''\in Y(C_S)$.
\end{proof}

The criterion for $Hecke_{\FG,\,\ulI}^{\ul\omega}$ to be projective over $C^I\times_{\BF_q}\scrH^1(C,\FG)$ given in Proposition~\ref{HeckeisQP} is only sufficient. To give a necessary and sufficient criterion we recall the following

\begin{definition}\label{DefParahoric}
A smooth affine group scheme $\FG$ over $C$ is called a \emph{parahoric} \lang{(\emph{Bruhat-Tits})} \emph{group scheme} if
\begin{enumerate}
\item
 all geometric fibers of $\FG$ are connected and the generic fiber of $\FG$ is reductive over $\BF_q(C)$,
\item 
for any ramification point $\nu$ of $\FG$ (i.e. those points $\nu$ of $C$, for which the fiber above $\nu$ is not reductive) the group scheme $\BP_\nu :=\FG\times_C\Spec A_\nu$ is a parahoric group scheme over $A_\nu$, as defined by Bruhat and Tits \cite[D\'efinition~5.2.6]{BT84}; see also \cite{H-R}.
\end{enumerate}
\end{definition}

Note that by \cite[\S\S\,4.6 and 5.1.9]{BT84} every connected reductive group over $Q$ has (in general many) integral models over $C$ which are parahoric group schemes.

\begin{proposition}\label{HeckeisProper}
The group scheme $\FG$ is parahoric if and only if the representable morphism of stacks $Hecke_{\FG,\,\ulI}^{\ul\omega}\,\to\,C^I\times_{\BF_q}\scrH^1(C,\FG)$ sending $\bigl((\charsect_i)_{i\in I},\,(\CG_j)_{j=0,\ldots,k},\,(\tauGlob_j)_{j=1,\ldots,k}\,\bigr)$ to $\bigl((\charsect_i)_{i\in I},\CG_0\bigr)$ is projective. 
\end{proposition}

\begin{proof}
To prove that the morphism is projective it suffices by Proposition~\ref{HeckeisQP} to show that it is proper. For this we use the valuative criterion for properness \cite[Th\'eor\`eme~7.10]{L-M}. So let $R$ be a discrete valuation ring with fraction field $K$. Let $\bigl((s_i)_i,\CG_0\bigr)$ be an $R$-valued point of $C^I\times_{\BF_q}\scrH^1(C,\FG)$ and let $\bigl((s_i)_i,(\CG_j)_j,(\tau_j)_j\bigr)$ be a $K$-valued point of $Hecke_{\FG,\,\ulI}^{\ul\omega}$ mapping to $\bigl((s_i)_i,\CG_0\bigr)$. We must extend this $K$-valued point to an $R$-valued point. By looking at the decomposition \eqref{EqBoundedHeckeProduct} it suffices to treat the case with $k=1$ and $I=I_1=\{1,\ldots,n\}$. If $s_i=s_{i'}$ for some $i,i'\in I$ we can replace $I$ by $I\setminus\{i'\}$ and $\omega_i$ by $\omega_i+\omega_{i'}$. In this way we can assume that all $s_i\in C(R)$ are pairwise different. Since $C$ is separated over $\BF_q$ this implies that all the induced $s_{i,K}\in C(K)$ are pairwise different. We now consider $Hecke_{\FG,\,\ulI'}^{\ul\omega}$ with $I'=I=\{1,\ldots,n\}$ and $I'_j=\{j\}$ for $j=1,\ldots,n$ and the morphism of schemes
\begin{eqnarray*}
Hecke_{\FG,\,\ulI'}^{\ul\omega}\,\times_{C^I\times_{\BF_q}\scrH^1(C,\FG)}\,\Spec R & \longto & Hecke_{\FG,\,(I)}^{\ul\omega}\,\times_{C^I\times_{\BF_q}\scrH^1(C,\FG)}\,\Spec R\,\\[2mm]
\bigl((\charsect_i)_i,\,(\CG_{j})_{j=0,\ldots,n},\,(\tau_{j})_{j=1,\ldots,n}\bigr) & \longmapsto & \bigl((\charsect_i)_i,\,(\CG_0,\CG_n),\,\tau:=\tau_n\circ\ldots\circ\tau_1\bigr).
\end{eqnarray*}
Since all $s_{i,K}\in C(K)$ are pairwise different, our $K$-valued point $(\CG_0,\CG_n,\tau)$ of $Hecke_{\FG,\,(I)}^{\ul\omega}$ lifts to a unique $K$-valued point of $Hecke_{\FG,\,\ulI'}^{\ul\omega}$ by letting $\CG_{j}$ be the $\FG$-bundle that coincides with $\CG_0$ on $C_K\setminus\bigcup_{i\le j}\Gamma_{\charsect_{i,K}}$ and with $\CG_n$ on $C_K\setminus\bigcup_{i> j}\Gamma_{\charsect_{i,K}}$. To prove the properness of $Hecke_{\FG,\,(I)}^{\ul\omega}$ it therefore suffices to prove the properness of $Hecke_{\FG,\,\ulI'}^{\ul\omega}$, and by looking at the decomposition \eqref{EqBoundedHeckeProduct} again, it suffices to prove the properness of $Hecke_{\FG,(I'_j)}^{\ul\omega}$ which has only one leg. By \cite[Proposition~2.0.9]{EsmailSomayeh_LocModel} (respectively \cite[Lemma~4.1]{Var} for constant split reductive $\FG$) the base change $Hecke_{\FG,(I'_j)}^{\ul\omega}\times_{C\times_{\BF_q}\scrH^1(C,\FG)}\,\Spec R$ is isomorphic to the fiber over $s_j\in C(R)$ of the \emph{BD-Grassmannian} $\Gr(\FG,C)$ from \cite[Definition~1.4]{Richarz13}. The latter is proper over $C$ by \cite[Theorem~1.18]{Richarz13} if $\FG$ is parahoric.

Conversely, if the morphism of stacks is projective we fix a point $x\in C(L)$ for $L=\BF_q^\alg$, and we consider the fiber of $Hecke_{\FG,\,\ulI}^{\ul\omega}$ over the $L$-valued point $\bigl((\charsect_i)_i,\FG_{C_L}\bigr)$ of $C^I\times_{\BF_q}\scrH^1(C,\FG)$ where all the $\charsect_i$ are equal to $x$ and $\FG_{C_L}$ is the trivial $\FG$-bundle. Then $(\CG_0=\ldots=\CG_{k-1}=\FG_{C_L},\tau_1=\ldots=\tau_{k-1}=\id)$ is a closed $L$-valued point of the first $k-1$ factors of the decomposition \eqref{EqBoundedHeckeProduct}. It defines a closed subscheme isomorphic to the fiber of $Hecke_{\FG,(I_k)}^{\ul\omega}$ over $\bigl((\charsect_i)_i,\FG_{C_L}\bigr)$. Therefore the latter is proper over $\Spec L$. The limit of this fiber for varying $\ul\omega$ is ind-proper and equals the fiber of the BD-Grassmannian $\Gr(\FG,C)$ over $x$. It now follows from \cite[Theorem~1.18]{Richarz13} that $\FG$ is parahoric.
\end{proof}

\begin{definition}\label{DefNablaOmegaH}
We denote by $\nabla_{\ulI}^{\ul\omega}\scrH^1(C,\FG)$ the pull back of $Hecke_{\FG,\,\ulI}^{\ul\omega}$ under the morphism of stacks $\nabla_\ulI\scrH^1(C,\FG) \to Hecke_{\FG,\,\ulI}$. We also denote by $\nabla_{\ulI}^{\ul\omega}\scrH^1_D(C,\FG)$ the pull back of $\nabla_\ulI^{\ul\omega}\scrH^1(C,\FG)$ under the (forgetful) morphism $\nabla_\ulI\scrH^1_D(C,\FG) \to \nabla_\ulI\scrH^1(C,\FG)$.
\end{definition}

We want to prove that the stack $\nabla_\ulI\scrH^1_D(C,\FG)$ is an ind-algebraic stack. We do not intend to give a general treatment of ind-algebraic stacks here. We just make the following tentative 

\newcommand{\SSS}{S}
\newcommand{\TTT}{T}
\begin{definition}\label{DefIndAlgStack}
Let $\TTT$ be a scheme.
\begin{enumerate}
\item \label{DefIndAlgStack_A}
By an \emph{inductive system of Deligne-Mumford stacks over $\TTT$} we mean an inductive system $(\CC_a, i_{ab})$ indexed by a countable directed set $A$, such that each $\CC_a$ is a Deligne-Mumford stack over $\TTT$ and $i_{ab}\colon\CC_a\into\CC_b$ is a closed immersion of stacks for all $a\le b$ in $A$.
\item \label{DefIndAlgStack_B}
A stack $\CC$ over $\TTT$ is an \emph{ind-DM-stack over $\TTT$} (or \emph{ind-algebraic Deligne-Mumford stack over $\TTT$}) if there is an inductive system of Deligne-Mumford stacks $(\CC_a, i_{ab})$ over $\TTT$ together with morphisms $j_a:\CC_a\to\CC$ satisfying $j_b\circ i_{ab}=j_a$ for all $a\le b$, such that for all quasi-compact $\TTT$-schemes $S$ and all objects $c\in \CC(S)$ there is an $a\in A$, an object $c_a\in\CC_a(S)$ and an isomorphism $j_a(c_a)\cong c$ in $\CC$. In this case we say that \emph{$\CC$ is the inductive limit of $(\CC_a,i_{ab})$} and we write $\CC=\dirlim\CC_a$.
\item \label{DefIndAlgStack_C}
If in \ref{DefIndAlgStack_B} all $\CC_a$ are locally of finite type (resp.\ separated) over $\TTT$ we say that $\CC$ is \emph{locally of ind-finite type} (resp.\ \emph{ind-separated}) \emph{over $\TTT$}.
\end{enumerate}
\end{definition}

\begin{theorem}\label{nHisArtin}
Let $\FG$ be a flat affine group scheme of finite type over the curve $C$ and let $D$ be a proper closed subscheme of $C$. Then the stack $\nabla_\ulI^{\ul\omega}\scrH^1_D(C,\FG)$ is a Deligne-Mumford stack locally of finite type and separated over $(C\setminus D)^I$. It is relatively representable over $(C\setminus D)^I\times_{\BF_q}\scrH^1(C,\FG)$ by a separated morphism of finite type of algebraic spaces. In particular $\nabla_\ulI\scrH^1_D(C,\FG)=\dirlim\nabla_\ulI^{\ul\omega}\scrH^1_D(C,\FG)$ is an ind-DM-stack over $(C\setminus D)^I$ which is ind-separated and locally of ind-finite type. The forgetful morphism $F\colon\nabla_\ulI\scrH^1_D(C,\FG)\rightarrow \nabla_\ulI\scrH^1(C,\FG)\times_{C^I}(C\setminus D)^I$ is relatively representable by an affine \'etale morphism of schemes and above its image it is a torsor under the finite group $\FG(D)$. If $\FG\times_CD\to D$ is smooth, then $F$ is finite. If the fibers of $\FG$ over the points in $D$ are smooth and connected, then $F$ is surjective.
\end{theorem}

\noindent
{\itshape Remark.} If the fibers of $\FG$ over the points in $D$ are not smooth and connected, $F$ may fail to be surjective; see Example~\ref{ExFNotSurj}.

\begin{proof}[Proof of Theorem~\ref{nHisArtin}]
We first show that the morphism 
\[
F\colon\nabla_\ulI\scrH^1_D(C,\FG)\;\longto\; \nabla_\ulI\scrH^1(C,\FG)\times_{C^I}(C\setminus D)^I
\]
is relatively representable by an affine \'etale morphism of schemes, which is in addition finite if $\FG\times_CD\to D$ is smooth. Let $S$ be a scheme and consider a morphism $S\to\nabla_\ulI\scrH^1(C,\FG)\times_{C^I}(C\setminus D)^I$ given by a global $\FG$-shtuka $\bigl((\charsect_i)_{i\in I},(\CG_j)_j,(\tauGlob_j)_j\bigr)$. For an $S$-scheme $T$ the $T$-valued points of the fiber product $\nabla_\ulI\scrH^1_D(C,\FG)\times_{\nabla_\ulI\scrH^1(C,\FG)}S$ equals the set of isomorphisms of $\FG$-torsors $(\psi_j\colon \CG_j\times_{C_S}{D_T}\isoto \FG\times_C D_T)_j$ satisfying $\psi_j\circ\tauGlob_j|_{D_T}=\psi_{j-1}$ for $j=1,\ldots,k$ and $\psi_0\circ\tauGlob_0|_{D_T}=\sigma^*(\psi_k)$. Since the characteristic morphism $S\to C^I$ factors through $(C\setminus D)^I$, the isomorphism $\tauGlob:=\tauGlob_k\circ\ldots\circ\tauGlob_0$ induces an isomorphism $\tauGlob|_{D_S}\colon\sigma^*(\CG_k\times_{C_S}D_S)\isoto\CG_k\times_{C_S}D_S$. In particular $\psi_k\circ\tau|_{D_T}=\sigma^*\psi_k$ and the other $\psi_j$ are uniquely determined by $\psi_k$. Let $\pi_S\colon D_S\to S$ be the structure morphism. Then $\psi_k$ is a section over $T$ of the the sheaf $Y:=\pi_{S*}\ul\Isom(\CG_k\times_{C_S}{D_S}\,,\,\FG\times_C D_S)$, which is representable by an affine scheme of finite presentation over $S$ by \cite[Corollary~4.4.2]{Beh}. The condition $\psi_k\circ\tau|_{D_T}=\sigma^*\psi_k$, and thus the fiber product $\nabla_\ulI\scrH^1_D(C,\FG)\times_{\nabla_\ulI\scrH^1(C,\FG)}S$ is representable by the scheme $\equi(\id_Y\circ\tau,\sigma_Y\colon Y\rightrightarrows Y)$, see Definition~\ref{ker}, which is a closed subscheme of finite presentation of $Y$, because $Y$ is affine and of finite presentation over $S$. This shows that $F$ is relatively representable by an affine morphism of schemes of finite presentations.

To show that $F$ is \'etale let $T$ be an affine $S$-scheme and let $j\colon\bar T\into T$ be a closed subscheme defined by an ideal $J$ with $J^2=(0)$. Then the morphism $\sigma_T$ factors through $j\colon \bar{T}\to T$ as
\begin{equation}\label{EqSigmaFactors}
\sigma_T = j \circ \sigma'\colon  T \xrightarrow{\:\,\sigma'\,} \bar{T} \xrightarrow{\;\; j\:\,} T\,,
\end{equation}
where $\sigma'$ is the identity on the underlying topological space $|\bar T|=|T|$ and on the structure sheaf this factorization is given by
\begin{eqnarray*}
\TS\CO_T \enspace \xrightarrow{\enspace j^\ast\;} & \CO_{\bar T} & \TS\xrightarrow{\enspace \sigma'{}^\ast\;} \enspace \CO_T\\
\TS b\quad \mapsto\;\: & b \mod J& \TS\;\:\mapsto\quad b^q\,.
\end{eqnarray*}
Note that $\sigma_{\bar T}=\sigma'\circ j\colon \bar T\to\bar T$. Any $\bar\psi_k$ defined over $\bar T$ with $\bar\psi_k\circ\tau|_{D_{\bar T}}=\sigma^*\bar\psi_k$ lifts uniquely to $\psi_k:=\sigma'{}^*\bar\psi_k\circ\tau|_{D_T}^{-1}$ with $j^*\psi_k=j^*\sigma'{}^*\bar\psi_k\circ j^*\tau|_{D_T}^{-1}=\sigma_{\bar T}^*\bar\psi_k\circ\tau|_{D_{\bar T}}^{-1}=\bar\psi_k$ and $\psi_k\circ\tau|_{D_T}=\sigma'{}^*\bar\psi_k=\sigma'{}^*j^*\psi_k=\sigma^*\psi_k$. This shows that $F$ is \'etale, and hence faithfully flat over its image. 

If $\FG_D:=\FG\times_CD \to D$ is smooth we show that $\nabla_\ulI\scrH^1_D(C,\FG)\times_{\nabla_\ulI\scrH^1(C,\FG)}S$ is finite over $S$. By \cite[IV$_2$, Proposition~2.7.1]{EGA} this may be checked over an \'etale covering $\wt S\to S$. By \fpqc-descent \cite[IV$_4$, Corollaire 17.7.3]{EGA} the scheme $\CG_k\times_{C_S}D_S$ is smooth over $D_S$ and the Weil restriction $\Res_{D_S/S}(\CG_k\times_{C_S}D_S)$ is a smooth $S$-scheme by \cite[\S\,7.6, Theorem~4 and Proposition~5]{BLR}. Therefore, there exists an \'etale covering $\wt S\to S$ and a section in $\Res_{D_S/S}(\CG_k\times_{C_S}D_S)(\wt S)=(\CG_k\times_{C_S}D_S)(D_{\wt S})$. This section yields a trivialization $\alpha\colon\FG\times_CD_{\wt S}\isoto\CG_k\times_{C_S}D_{\wt S}$. The Weil restriction $\Res_{D/\BF_q}\FG_D$ is a smooth affine group scheme over $\BF_q$ by \cite[\S\,7.6, Theorem~4 and Proposition~5]{BLR}. Therefore, the morphism $-\id+\Frob_q\colon\Res_{D/\BF_q}\FG_D\to\Res_{D/\BF_q}\FG_D$ given by $g\mapsto g^{-1}\cdot\sigma(g)$ is finite \'etale. The composition $\alpha^{-1}\circ\tau|_{D_{\wt S}}\circ\sigma^*\alpha$ equals multiplication with an element $\tilde g\in\FG(D_{\wt S})=(\Res_{D/\BF_q}\FG_D)(\wt S)$. There is an isomorphism 
\[
\xymatrix @C+1pc @R=0.5pc {
\nabla_\ulI\scrH^1_D(C,\FG)\times_{\nabla_\ulI\scrH^1(C,\FG)}\wt S \ar[r]^{\TS\sim\qquad\quad} & \Res_{D/\BF_q}\FG_D\underset{\TS-\id+\Frob_q,\Res_{D/\BF_q}\FG_D}{\times}\wt S\,,\\
\psi_k \text{ with }\psi_k\circ\tau|_{D_{\wt S}}=\sigma^*\psi_k\ar@{|->}[r] & g:=\psi_k\circ\alpha\text{ with }g^{-1}\sigma^*g=\tilde g\qquad\qquad\quad
}
\]
This proves that $F$ is finite.

If in addition the fibers of $\FG$ over the points of $D$ are connected (and smooth), the Weil restriction $\Res_{D/\BF_q}\FG_D$ is connected by \cite[Proposition~A.5.9]{CGP}. Then Lang's Theorem \cite[Corollary on p.~557]{Lang} for $\Res_{D/\BF_q}\FG_D$ shows that $-\id+\Frob_q$ is surjective, and hence $F$ is surjective.

We show that $F\colon\nabla_\ulI\scrH^1_D(C,\FG)\rightarrow \nabla_\ulI\scrH^1(C,\FG)\times_{C^I}(C\setminus D)^I$ is a torsor over its image under the finite group $\FG(D)$, which we consider as a finite \'etale group scheme $\ul{\FG(D)}$ over $\Spec\BF_q$. Indeed, $g\in\FG(D)$ acts on $\nabla_\ulI\scrH^1_D(C,\FG)$ by sending $\psi_j$ to $g\circ\psi_j$. Here we observe that $g$ is invariant under $\sigma$, and hence $\sigma^*(g\circ\psi_k)=g\circ\sigma^*\psi_k$. The product $\nabla_\ulI\scrH^1_D(C,\FG)\times_{\nabla_\ulI\scrH^1(C,\FG)}\nabla_\ulI\scrH^1_D(C,\FG)$ classifies data $\bigl((\charsect_i)_{i\in I},(\CG_j)_j,(\tauGlob_j)_j,(\psi_j)_j,(\psi'_j)_j\bigr)$ over $\BF_q$-schemes $S$ where $(\psi_j)_j$ and $(\psi'_j)_j$ are two $D$-level structures on the global $\FG$-shtuka $\bigl((\charsect_i)_{i\in I},(\CG_j)_j,(\tauGlob_j)_j\bigr)$. This means that $\psi_j,\psi'_j\colon\CG_j\times_{C_S}D_S\isoto\FG\times_CD_S$ are isomorphisms of torsors with $\psi_j\circ\tauGlob_j|_{D_S}=\psi_{j-1}$ for $j=1,\ldots,k$ and $\psi_0\circ\tauGlob_0|_{D_S}=\sigma^*(\psi_k)$ and similarly for $\psi'_j$. In particular, $\psi'_k\psi_k^{-1}$ is an automorphism of $\FG\times_C D_S$ given by left translation with an element $g\in\FG(D_S)$. Since $\psi'_k\psi_k^{-1}=\ldots=\psi'_0\psi_0^{-1}=\sigma^*(\psi'_k\psi_k^{-1})$ we have $g=\sigma^*(g)\in\ul{\FG(D)}(S)$. This shows that $\nabla_\ulI\scrH^1_D(C,\FG)\times_{\nabla_\ulI\scrH^1(C,\FG)}\nabla_\ulI\scrH^1_D(C,\FG)\;\cong\; \ul{\FG(D)}\times_{\BF_q}\nabla_\ulI\scrH^1_D(C,\FG)$ and $\nabla_\ulI\scrH^1_D(C,\FG)\rightarrow \nabla_\ulI\scrH^1(C,\FG)\times_{C^I}(C\setminus D)^I$ is a $\FG(D)$-torsor over its image. 

To prove that the stack $\nabla_\ulI^{\ul\omega}\scrH^1(C,\FG)$ is an Artin stack locally of finite type over $C^I$ we observe that its defining morphism \mbox{$\nabla_\ulI^{\ul\omega}\scrH^1(C,\FG)\to Hecke_{\FG,\,\ulI}^{\ul\omega}$} is representable, separated and of finite type, because it arises by base change from the diagonal morphism $\Delta_{\scrH^1(C,\FG)\,/\BF_q}\colon\scrH^1(C,\FG)\to\scrH^1(C,\FG)\times_{\BF_q}\scrH^1(C,\FG)$. The latter is representable, separated and of finite type by \cite[Lemma~4.2]{L-M}, because $\scrH^1(C,\FG)$ is algebraic (Theorem~\ref{Bun_G}). Note that in general $\Delta_{\scrH^1(C,\FG)\,/\BF_q}$, and hence also \mbox{$\nabla_\ulI^{\ul\omega}\scrH^1(C,\FG)\to Hecke_{\FG,\,\ulI}^{\ul\omega}$} is not a closed immersion, and not even proper. Since $Hecke_{\FG,\,\ulI}^{\ul\omega}$ is quasi-projective and quasi-compact over $C^I\times_{\BF_q}\scrH^1(C,\FG)$ by Proposition~\ref{HeckeisQP}, it follows that $\nabla_\ulI^{\ul\omega}\scrH^1_D(C,\FG)$ is relatively representable over $(C\setminus D)^I\times_{\BF_q}\scrH^1(C,\FG)$ by a separated morphism of finite type of algebraic spaces. In particular it is an Artin stack locally of finite type over $(C\setminus D)^I$. 

We prove that $\nabla_\ulI^{\ul\omega}\scrH^1(C,\FG)$ is Deligne-Mumford and separated over $C^I$. This means that the diagonal morphism $\Delta\colon\nabla_\ulI^{\ul\omega}\scrH^1(C,\FG)\longto\nabla_\ulI^{\ul\omega}\scrH^1(C,\FG)\times_{C^I}\nabla_\ulI^{\ul\omega}\scrH^1(C,\FG)$ is unramified and proper; see \cite[Th\'eor\`eme~8.1 and D\'efinition~7.6]{L-M}. By the algebraicity of $\nabla_\ulI^{\ul\omega}\scrH^1(C,\FG)$ over $C^I$ the diagonal $\Delta$ is already relatively representable by algebraic spaces, separated and of finite type; see \cite[Lemme~4.2]{L-M}. Consider an $S$-valued point of \mbox{$\nabla_\ulI^{\ul\omega}\scrH^1(C,\FG)\times_{C^I}\nabla_\ulI^{\ul\omega}\scrH^1(C,\FG)$} given by two global $\FG$-shtukas $\ul\CG$ and $\ul\CG'$ over a $C^I$-scheme $S$. The base change of $\Delta$ to $S$ is represented by the algebraic space $\ul\Isom_S(\ul\CG,\ul\CG')$. Being unramified means that whenever $S$ is affine and $j\colon\bar S\into S$ is a closed subscheme defined by an ideal $J$ with $J^2=(0)$, then every isomorphism $\bar f$ between $\ul\CG_{\bar S}$ and $\ul\CG'_{\bar S}$ over $\bar S$ comes from at most one isomorphism $f$ between $\ul\CG$ and $\ul\CG'$ over $S$. Such an isomorphism is a tuple $f=(f_0,\ldots,f_k)$ of isomorphisms $f_j\colon\CG_j\isoto\CG_j'$ compatible with the maps $\tau_j$ of $\ul\CG$ and $\tau'_j$ of $\ul\CG'$. So assume that there are two isomorphisms $f,f'$ over $S$ with $j^*f=\bar f=j^*f'$. Since $J^2 = (0)$ the morphism $\sigma_S$ factors through $j\colon \bar{S}\to S$ as $\sigma_S=j\circ\sigma'$ like in \eqref{EqSigmaFactors}. From the diagram
$$
\CD
\CG_k|_{C_S\setminus\cup\Gamma_{\charsect_i}} @>{f_k}>\cong>\CG_k'|_{C_S\setminus\cup\Gamma_{\charsect_i}}\\
@A{\tau_k\circ\ldots\circ\tau_0}A{\cong}A @A{\tau'_k\circ\ldots\circ\tau'_0}A{\cong}A\\
\sigma_S^*\CG_k|_{C_S\setminus\cup\Gamma_{\charsect_i}} @>{\sigma'^*(j^*f_k)}>\cong>\sigma_S^*\CG_k'|_{C_S\setminus\cup\Gamma_{\charsect_i}}
\endCD
$$
we obtain that $f_k$ and $f_k'$ coincide on $C_S\setminus \bigcup_{i\in I}\Gamma_{\charsect_i}$ and likewise $f_j=(\tau'_k\circ\ldots\circ\tau'_{j+1})^{-1}\circ f_k\circ\tau_k\circ\ldots\circ\tau_{j+1}$ and $f'_j$ coincide on $C_S\setminus \bigcup_{i\in I}\Gamma_{\charsect_i}$ for all $j$. Since $\CG_j'$ is separated over $C_S$ and $\bigcup_{i\in I}\Gamma_{\charsect_i}$ is a Cartier divisor on $C_S$ we have $f_j=f_j'$ for all $j$ on all of $C_S$ by \cite[Lemma~3.11]{BreutmannFunctoriality}. This shows that the diagonal $\Delta$ is unramified and $\nabla_\ulI^{\ul\omega}\scrH^1(C,\FG)$ is Deligne-Mumford.

To prove that $\nabla_\ulI^{\ul\omega}\scrH^1(C,\FG)$ is separated over $C^I$ we choose a representation $\rho:\FG\into\GL(\CV)$ as in Proposition~\ref{PropGlobalRep}\ref{PropGlobalRep_A} such that the pair $\bigl(\FG,\GL(\CV)\bigr)$ satisfies \eqref{EqCondQuotient},  and consider the commutative diagram 
\[
\xymatrix {
\nabla_\ulI^{\ul\omega}\scrH^1(C,\FG) \ar[r] \ar[d] & \nabla_\ulI^{\ul\omega}\scrH^1(C,\GL(\CV)) \ar[d] \\
C^I\times_{\BF_q}\scrH^1(C,\FG) \ar[r] & C^I\times_{\BF_q}\scrH^1(C,\GL(\CV))\,.
}
\]
The vertical morphisms and the lower horizontal morphism are separated by what we have proved above and by Theorem~\ref{r-q-a}. By \cite[Remarque 7.8.1(3)]{L-M} also the upper horizontal morphism is separated, and it suffices to prove the statement for $\FG=\GL(\CV)$. We use the equivalence $\scrH^1(C,\GL(\CV))\isoto\Vect_C^r$ from Remark~\ref{RemVect} under which $\ul\CG$ corresponds to $(M_j,\tauGlob_j)_{j=0,\ldots,k}$ consisting of locally free sheaves $M_j$ of rank $r$ on $C_S$ and isomorphisms $\tauGlob_j\colon M_{j-1}\isoto M_j$ on $C_S\setminus\bigcup_{i\in I_j}\Gamma_{\charsect_i}$ for all $j=1,\ldots,k$ and an isomorphism $\tauGlob_0\colon\sigma^*M_k\isoto M_0$ on $C_S$. Similarly $\ul\CG'$ corresponds to $(M'_j,\tauGlob'_j)_j$. We now use the valuative criterion for properness \cite[Th\'eor\`eme~7.10]{L-M} to show that $\ul\Isom_S\bigl((M_j,\tauGlob_j)_j,(M'_j,\tauGlob'_j)_j\bigr)$ is proper over $S$. We may assume that $S$ is the spectrum of a discrete valuation ring $R$ with fraction field $K$, uniformizer $\pi$, and residue field $k$. Let $f_j\colon M_j\otimes_RK\isoto M'_j\otimes_RK$ with $\tauGlob'_j\circ f_{j-1}=f_j\circ\tauGlob_j$ for all $j$ and $\tauGlob'_0\circ\sigma^*f_k=f_0\circ\tauGlob_0$ be isomorphisms over $C_K$. We view $f_j$ and its inverse as sections over $C_K$ of the locally free sheaves $M'_j\otimes M_j\dual$ and $M_j\otimes(M'_j)\dual$. We have to show that both extend uniquely to $C_R$. The local ring $\CO_{C_R,\eta}$ at the generic point $\eta$ of the special fiber $C_k$ is a discrete valuation ring with uniformizer $\pi$. It suffices to extend $f_j$ to $\CO_{C_R,\eta}$, because then $f_j$ is defined on an open set whose complement has codimension $2$, and therefore $f_j$ extends to all of $C_R$ by \cite[Discussion after Corollary~11.4]{Eisenbud}. To extend $f_k$ to $\CO_{C_R,\eta}$ we choose bases of $M_{k,\eta}$ and $M'_{k,\eta}$ and write $f_k = \pi^m\tilde f_k$ with an $r\times r$ matrix $\tilde f_k\in(\CO_{C_R,\eta})^{r\times r}\setminus(\pi\CO_{C_R,\eta})^{r\times r}$. Now the equation $\pi^m\tilde f_k \tauGlob_k\ldots\tauGlob_0 = \tauGlob'_k\ldots\tauGlob'_0 \,\sigma^*f_k = \pi^{qm} \tauGlob'_k\ldots\tauGlob'_0 \,\sigma^*\tilde f_k$ shows that $m$ must be zero, because all $\tauGlob_j$ and $\tauGlob'_j$ belong to $\GL_r(\CO_{C_R,\eta})$. Therefore $f_k$ and with it all $f_j$ extend uniquely to $C_R$ and this proves that $\nabla_\ulI^{\ul\omega}\scrH^1(C,\FG)$ is separated over $C^I$.

To prove that $\nabla_\ulI\scrH^1_D(C,\FG)$ is an ind-DM-stack, we consider the countable set of tuples $\ul\omega=(\omega_i)_{i\in I}$ of coweights of $\SL_r$ which are dominant with respect to the Borel subgroup of upper triangular matrices. We make this set into a directed set by equipping it with the partial order $\ul\omega\preceq\ul\omega'$ whenever $\omega_i\preceq\omega'_i$ for all $i$ in the Bruhat order, that is $\omega'_i-\omega_i$ is a positive linear combination of positive coroots of $\SL_r$. When $\ul\omega$ runs through this directed set, the $\nabla_\ulI^{\ul\omega}\scrH^1_D(C,\FG)$ form an inductive system of algebraic stacks which are separated and locally of finite type over $(C\setminus D)^I$. Indeed, we have to show that $\nabla_\ulI^{\ul\omega}\scrH^1_D(C,\FG)\to\nabla_\ulI^{\ul\omega'}\scrH^1_D(C,\FG)$ is a closed immersion for $\ul\omega\preceq\ul\omega'$. This follows from the fact that condition~\eqref{EqBounded1} for $\ul\omega$ on $\Gr^{\ul\omega'}_{\FG,I_j,r}$ is equivalent to the vanishing of the image of $\bigwedge^\ell_{C_S}\phi\bigl(\CV(\rho_*\CG)\bigr)$ inside the finite locally free sheaf 
\[
\Bigl(\bigwedge^\ell_{C_S} \CV'\Bigr)\bigl(\sum_{i\in I_j}(-\omega'_{i,r-\ell+1}-\ldots-\omega'_{i,r})\!\cdot\!\Gamma_{s_i}\bigr)\Big/\Bigl(\bigwedge^\ell_{C_S} \CV'\Bigr)\bigl(\sum_{i\in I_j}(-\omega_{i,r-\ell+1}-\ldots-\omega_{i,r})\!\cdot\!\Gamma_{s_i}\bigr)
\]
on $\Gr^{\ul\omega'}_{\FG,I_j,r}$. Therefore condition~\eqref{EqBounded1} defines a closed immersion $\Gr^{\ul\omega}_{\FG,I_j,r}\into\Gr^{\ul\omega'}_{\FG,I_j,r}$ by \cite[I$_{\rm new}$, Lemma~9.7.9.1]{EGA} for all $j$. Therefore, its base change $\nabla_\ulI^{\ul\omega}\scrH^1_D(C,\FG)\to\nabla_\ulI^{\ul\omega'}\scrH^1_D(C,\FG)$ is a closed immersion.

We prove that $\nabla_\ulI\scrH^1_D(C,\FG)=\dirlim\nabla_\ulI^{\ul\omega}\scrH^1_D(C,\FG)$. For this purpose let $\bigl((\charsect_i)_i,(\CG_j)_j,(\tauGlob_j)_j\bigr)$ be a global $\FG$-shtuka in $\nabla_\ulI\scrH^1_D(C,\FG)(S)$ for a quasi-compact scheme $S$ and let $\bigl((\charsect_i)_{i\in I_j},\CG_{j-1},\CV'_j,\alpha'_j,\phi_j\bigr)\in\CG r_{\FG,I_j,r}(S)$ be the induced $S$-valued point of $\CG r_{\FG,I_j,r}$ for all $j$. Since $S$ is quasi-compact there is for each $i\in I_j$ an integer $N_i\ge0$ with $\phi_j(\CV(\rho_*\CG_{j-1}))\subset\CV'_j(\sum_{i\in I_j}N_i\!\cdot\!\Gamma_{\charsect_i})$ by \cite[I, Th\'eor\`eme~1.4.1]{EGA}. We set $\omega_{i,\ell}:=-N_i$ for all $i\in I$ and $\ell=2,\ldots,r$ and $\omega_{i,1}:=(r-1)N_i$. Then the tuples $\bigl((\charsect_i)_{i\in I_j},\CG_{j-1},\CV'_j,\alpha'_j,\phi_j\bigr)$ satisfy condition~\eqref{EqBounded1}. It follows that $\bigl((\charsect_i)_i,(\CG_j)_j,(\tauGlob_j)_j\bigr)\in\nabla_\ulI^{\ul\omega}\scrH^1_D(C,\FG)(S)$ and the theorem is proved.
\end{proof}

In the course of the proof we have established the following corollary which we formulate for later reference.

\begin{corollary}\label{CorAutFinite}
Let $\ul\CG$ and $\ul\CG'$ be global $\FG$-shtukas of the same characteristic over an $\BF_q$-scheme $S$. Then the sheaf of sets on $S_\fpqc$ given by $\ul\Isom_S(\ul\CG,\ul\CG')\colon T\mapsto\Isom_T(\ul\CG_T,\ul\CG'_T)$ is representable by a scheme which is finite and unramified over $S$. In particular the group of automorphisms $\Aut_S(\ul\CG)$ of $\ul\CG$ is finite.
\end{corollary}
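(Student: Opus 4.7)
The plan is to derive this corollary directly from what was established during the proof of Theorem~\ref{nHisArtin}, where essentially all the needed properties of the diagonal morphism of $\nabla_n^{\ul\omega}\scrH^1(C,\FG)$ were verified.

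First, I would reduce to the case that $S$ is quasi-compact. Indeed, representability and finiteness are local on the base in the \fpqc-topology, and the formation of $\ul\Isom_S$ is compatible with base change, so working locally we may assume $S$ is affine and in particular quasi-compact. Then by the last paragraph of the proof of Theorem~\ref{nHisArtin}, both $\ul\CG$ and $\ul\CG'$ are bounded by some common $\ul\omega$, so the pair $(\ul\CG,\ul\CG')$ defines a morphism
$$
(\ul\CG,\ul\CG')\colon S\longto \nabla_n^{\ul\omega}\scrH^1(C,\FG)\times_{C^n}\nabla_n^{\ul\omega}\scrH^1(C,\FG).
$$
By construction, $\ul\Isom_S(\ul\CG,\ul\CG')$ is the base change of the diagonal morphism $\Delta$ of $\nabla_n^{\ul\omega}\scrH^1(C,\FG)$ over $C^n$ along $(\ul\CG,\ul\CG')$.

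Next, I would invoke the properties of $\Delta$ that were established in the proof of Theorem~\ref{nHisArtin}: $\Delta$ is relatively representable by algebraic spaces, separated and of finite type (from algebraicity via \cite[Lemme~4.2]{L-M}), and moreover it is unramified (by rigidity of quasi-isogenies \cite[\PropGlobalRigidity]{AH_Local}) and proper (verified via the valuative criterion by reducing, through the representation $\rho\colon\FG\into\GL(\CV)$ with quasi-affine quotient and separatedness of the resulting morphism of stacks, to the case $\FG=\GL(\CV)$ and then extending the isomorphism $f$ across the generic point of the special fiber using the $\tauGlob$-equivariance $\pi^m f_0\tauGlob=\pi^{qm}\tauGlob'\sigma^*f_0$). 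Pulling back to $S$, we obtain that $\ul\Isom_S(\ul\CG,\ul\CG')$ is a proper and unramified algebraic space of finite type over $S$.

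Finally, an unramified morphism is quasi-finite, and a proper quasi-finite morphism of algebraic spaces is finite; moreover a finite morphism from an algebraic space is schematic, so $\ul\Isom_S(\ul\CG,\ul\CG')$ is a finite unramified $S$-scheme. Applying this in the special case $\ul\CG=\ul\CG'$ gives that the group scheme $\Aut_S(\ul\CG)$ is finite over $S$; when $S$ is the spectrum of a field this means it is a finite group. The only subtlety to handle carefully is the patching from the local (quasi-compact) situation to arbitrary $S$, but since representability by a finite scheme is \fpqc-local on $S$, this glues without trouble.
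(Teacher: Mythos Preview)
Your proposal is correct and follows essentially the same route as the paper: both deduce the corollary from the properties of the diagonal established in the proof of Theorem~\ref{nHisArtin} (representability by algebraic spaces, unramifiedness via rigidity of quasi-isogenies, properness via the valuative criterion), and then conclude that proper plus unramified forces finiteness and hence schematicity. The only difference is that you spell out the reduction to quasi-compact $S$ in order to place both shtukas in some $\nabla_n^{\ul\omega}\scrH^1(C,\FG)$, while the paper leaves this implicit in its reference to the proof of Theorem~\ref{nHisArtin}.
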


\begin{proof} We have seen in the proof of Theorem~\ref{nHisArtin} that $\ul\Isom_S(\ul\CG,\ul\CG')$ is an algebraic space which is unramified and proper over $S$. In particular it is finite and affine over $S$ and hence a scheme; compare \cite[Lemma~4.2]{L-M}.
\end{proof}

\begin{remark}\label{RemnHNotAScheme}
Note that the corollary says that the diagonal morphism 
\[
\Delta\colon\nabla_\ulI^{\ul\omega}\scrH^1(C,\FG)\longto\nabla_\ulI^{\ul\omega}\scrH^1(C,\FG)\times_{C^I}\nabla_\ulI^{\ul\omega}\scrH^1(C,\FG)
\]
is representable by a finite and unramified morphism of schemes. However, this does not imply that for large enough level $D$ the stack $\nabla_\ulI^{\ul\omega}\scrH^1_D(C,\FG)$ is a scheme. In general it is not even an algebraic space. By \cite[Proposition~4.4]{L-M} the latter holds if and only if the diagonal $\Delta$ is a closed immersion on $\nabla_\ulI^{\ul\omega}\scrH^1_D(C,\FG)$. This is not the case because $\nabla_\ulI^{\ul\omega}\scrH^1(C,\FG)$ is not quasi-compact and on it the fibers of $\Delta$ may grow arbitrarily. In particular, the order of the finite group $\Aut_S(\ul\CG)$ from the corollary is not bounded on $\nabla_\ulI^{\ul\omega}\scrH^1(C,\FG)$.

For example let $C=\BP^1$, let $\infty\in C(\BF_q)$ with $C\setminus\{\infty\}=\Spec \BF_q[t]$, and let $0$ be the point where $t=0$. Let $\FG=\GL_2$ and consider the global $\FG$-shtuka over $S=\Spec\BF_q$ with $I=I_1=\{1,2\}$ and legs $\charsect_1\colon S\to\{0\}\subset C$ and $\charsect_2\colon S\to\{\infty\}\subset C$, which is given by the vector bundle $M=M_1=\CO_{\BP^1}(m)\oplus\CO_{\BP^1}(0)$ and the Frobenius $\tau=\tau_1\circ\tau_0=t\cdot\id_M\colon\sigma^*M=M\isoto M$ over $C_S\setminus\{0,\infty\}$. For any fixed level $D=\Spec\BF_q[t]/(a)$ with $a\in\BF_q[t]\setminus(t)$ we can choose $m\ge\deg_t(a)$ and the automorphism $f=\left(\begin{smallmatrix} 1 & a\\ 0 & 1\end{smallmatrix}\right)$ of $(M,\tau)$ which also is compatible with any $D$-level structure $\psi$ on $(M,\tau)$ over a field extension of $\BF_q$. This shows that there is no level $D$ that is able to kill the automorphisms of every global $\FG$-shtuka.

Nevertheless $\nabla_\ulI^{\ul\omega}\scrH^1_D(C,\FG)$ is locally the quotient of a scheme separated and of finite type over $\BF_q$ by a finite group whose order, however, grows arbitrarily as one moves on $\nabla_\ulI^{\ul\omega}\scrH^1_D(C,\FG)$. Indeed, by Remark~\ref{RemCoveringH1} the stack $\scrH^1(C,\FG)$ can be covered by connected open substacks $\scrH^1(C,\FG)^{\le\mu}$ of finite type over $\BF_q$, and the preimage $\nabla_\ulI^{\ul\omega}\scrH^1_D(C,\FG)^{\le\mu}$ of $\scrH^1(C,\FG)^{\le\mu}$ in $\nabla_\ulI^{\ul\omega}\scrH^1_D(C,\FG)$ is separated and of finite type over $(C\setminus D)^I$ by Theorem~\ref{nHisArtin}. On this preimage the automorphism group is bounded and can be killed by suitably enhancing the level $D$ to some $D'$. Then $\nabla_\ulI^{\ul\omega}\scrH^1_D(C,\FG)^{\le\mu}$ is the quotient by the finite group $\ker(\FG(D')\to\FG(D))$ of $\nabla_\ulI^{\ul\omega}\scrH^1_{D'}(C,\FG)^{\le\mu}$. The latter is an algebraic space separated and of finite type over $\BF_q$ by \cite[Corollaire~8.1.1]{L-M}. It is even a quasi-projective scheme over $(C\setminus D')^I$, because it is constructed as in the proof of Theorem~\ref{nHisArtin} from the quasi-projective scheme $\scrH^1_{D'}(C,\FG)^{\le\mu}$; see Remark~\ref{RemCoveringH1}.
\end{remark}

\begin{example}\label{ExFNotSurj}
We show that the morphism $F\colon\nabla_\ulI\scrH^1_D(C,\FG)\rightarrow \nabla_\ulI\scrH^1(C,\FG)\times_{C^I}(C\setminus D)^I$ does not have to be surjective for non-connected groups $\FG$. For example let $G_0=\bigl\{\left(\begin{smallmatrix} * & 0 \\ 0 & * \end{smallmatrix}\right)\bigr\}\coprod\bigl\{\left(\begin{smallmatrix} 0 & * \\ * & 0 \end{smallmatrix}\right)\bigr\}\subset\GL_2$ be the normalizer of the diagonal torus and let $\FG:=G_0\times_{\BF_q}C$. Let $C=\BP^1$ and consider two points $\infty\in\BP^1$ and $0=\Var(t)\in\BP^1$, where $\BP^1\setminus\{\infty\}=\Spec\BF_q[t]$, and let $D=\Var(t-1)$. For an algebraically closed field $k$ we consider the $k$-valued point $x=\bigl(0,\infty,\FG_{C_k},\tau= \left(\begin{smallmatrix} 0 & 1 \\ t & 0 \end{smallmatrix}\right)\bigr)$ of $\nabla_2\scrH^1(C,\FG)\times_{C^2}(C\setminus D)^2$. Then there is no $\psi_2\colon\FG\times_C D_k\isoto\FG\times_C D_k$ with $\psi_2\circ\tau|_{D_k}=\sigma^*\psi_2$. Namely $\psi_2$ equals multiplication with an element $g\in(\FG\times_C D)(k)=G_0(k)$ and then $\psi_2^{-1}\circ\sigma^*\psi_2$ equals multiplication with $g^{-1}\sigma^*g$ which lies in $\bigl\{\left(\begin{smallmatrix} * & 0 \\ 0 & * \end{smallmatrix}\right)\bigr\}\subset G_0$. On the other hand, $\tau|_{D_k}=\left(\begin{smallmatrix} 0 & 1 \\ 1 & 0 \end{smallmatrix}\right)$. This proves the non-existence of $\psi_2$, and hence the non-existence of a point of $\nabla_\ulI\scrH^1_D(C,\FG)$ above $x$.

\end{example}

\begin{remark}\label{RemIndepOfRep}
Let us explain to what extent the results of Theorem~\ref{nHisArtin} depend on the choice of the representation $\rho$. The stacks $\CG r_{\FG,I_j,r}$ and all stacks involving a bound by $\ul\omega$ depend in their definition on $\rho$. The others are independent. However, the structure of $\nabla_\ulI\scrH^1_D(C,\FG)=\dirlim\nabla_\ulI^{\ul\omega}\scrH^1_D(C,\FG)$ as an ind-DM-stack over $(C\setminus D)^I$ depends on $\rho$. Nevertheless, one can choose a covering of $\scrH^1_D(C,\FG)$ by quasi-compact open substacks $U$; see Theorem~\ref{Bun_G}. If we choose a different representation $\rho'$ and corresponding coweights $\ul\omega'$, the restriction of $\nabla_\ulI^{\ul\omega'}\scrH^1_D(C,\FG)$ to $U$ is quasi-compact by Theorem~\ref{nHisArtin}. So above $U$ the morphism $\nabla_\ulI^{\ul\omega'}\scrH^1_D(C,\FG)\,\longto\,\nabla_\ulI\scrH^1_D(C,\FG)\,=\,\dirlim\nabla_\ulI^{\ul\omega}\scrH^1_D(C,\FG)$ factors through some $\nabla_\ulI^{\ul\omega}\scrH^1_D(C,\FG)$ by Definition~\ref{DefIndAlgStack}\ref{DefIndAlgStack_B}. In that sense the structure of $\nabla_\ulI\scrH^1_D(C,\FG)$ as an ind-DM-stack over $(C\setminus D)^I$ is independent of the representation $\rho$.

Another more intrinsic way to obtain the strcture of ind-DM-stack on $\nabla_\ulI\scrH^1_D(C,\FG)$ would be to use an analog of $\ul\omega$ for the group $\FG$. Since the fibers of $\FG$ over $C$ are not reductive in general this analog cannot be given by coweights. Instead one should work in the affine flag variety of $\FG$. To be more precise, in the local situation in Chapter~\ref{LoopsAndSht} we consider bounds for local $\BP_\nu$-shtukas, see Definition~\ref{DefBDLocal}, and we apply these in Chapter~\ref{Uniformization Theorem} to global $\FG$-shtukas whose characteristic sections $(s_1,\ldots,s_n)\colon S\to C^n$ factor through the (formal spectrum of a) complete local ring of a closed point $\ul\nu\in C^n$; see Definition~\ref{bdglobal}. These bounds are given by closed formal subschemes of the local affine flag varieties; see after Definition~\ref{RZ space}. If one wants to avoid the restriction on the caracteristic sections, one can work with the global affine flag variety and use it to define bounds that then provide the structure of ind-DM-stack on $\nabla_\ulI\scrH^1_D(C,\FG)$ in a way intrinsic to the group $\FG$. For more details we refer to \cite{EsmailSomayeh_LocModel}.
\end{remark}

\begin{remark}\label{RemDrinfeldVarshavsky}
Our moduli spaces for global $\FG$-shtukas generalize Varshavsky's \cite{Var} moduli stacks $FBun$, which in turn are a generalization of the moduli spaces of $F$-sheaves $FSh_{D,r}$ considered by Drinfeld~\cite{Drinfeld1} and $\text{Cht}^r_N$ considered by Lafforgue~\cite{LafforgueL02} in their proof of the Langlands correspondence for $\FG=\GL_2$ (resp.\ $\FG=\GL_r$). Namely Varshavsky considers the situation where $G$ is a split reductive group over $\BF_q$ and $\FG=G\times_{\BF_q}C$ is constant. Let $T$ be a maximal split torus in $G$ and let $\Lambda$ be a finite generating system of the monoid of dominant weights $X^*(T)_\dom$, dominant with respect to the choice of a Borel subgroup $B\subset G$ containing $T$. Let $\olB$ be the opposite Borel. For all $\lambda\in\Lambda$ let $V_\lambda:=\bigl(\Ind_\olB^G(-\lambda)_\dom\bigr)\dual$ be the Weyl module of highest weight $\lambda$ and set \mbox{$V:=\bigoplus_{\lambda\in\Lambda}V_\lambda$}. The representation $\rho\colon G\to\GL(V)\into\SL(V\oplus\wedge^\topol V\dual)$ is faithful by \cite[Proposition~3.14]{H-V} and we take it as the representation fixed before Remark~\ref{RemRelAffineGrass} using Proposition~\ref{PropQAffineQuot}\ref{PropQAffineQuot_D}. Varshavsky considers an $n$-tuple of dominant coweights $\ul{\wt\omega}=(\wt\omega_1,\ldots,\wt\omega_n)$ of $G$ and the induced coweights $\rho\ul{\wt\omega}=(\rho\circ\wt\omega_1,\ldots,\rho\circ\wt\omega_n)$ of $\SL(V\oplus\wedge^\topol V\dual)$. Then his stack $FBun_{D,n,\ul{\wt\omega}}$, see \cite[Proposition~2.16]{Var}, coincides with our stack $\nabla_n^{\rho\ul{\wt\omega}}\scrH^1_D(C,\FG)$. The stacks $FSh_{D,r}$ and $\text{Cht}^r_N$ of Drinfeld and Lafforgue are obtained by taking $G=\GL_r$, $n=2$, the dominant coweights $\wt\omega_1=(1,0,\ldots,0)$ and $\wt\omega_2=(0,\ldots,0,-1)$ of $\GL_r$, and using the correspondence between $\GL_r$-torsors and locally free sheaves of rank $r$; see Remark~\ref{RemVect}.

Varshavsky's moduli stacks were further generalized by Bao Ch\^au Ng\^o and Ng\^o Dac Tu\^an~\cite{NgoNgo,NgoDac13} who dropped the assumption that $\FG$ is constant split and allowed $\FG$ to be a smooth group scheme over $C$ all of whose fibers are reductive. Other variants considered before include the moduli spaces $\CE\ell\ell_{C,\mathscr{D},I}$ of $\mathscr{D}$-elliptic sheaves of Laumon, Rapoport and Stuhler~\cite{LRS}, and their generalizations by L.~Lafforgue~\cite{LafforgueL97}, Ng\^o~\cite{Ngo06} and Spie{\ss}~\cite{Spiess10}, who take $\FG$ as the group of units in a maximal (resp.\ hereditary) order $\mathscr{D}$ of a central division algebra over $Q$, as well as the moduli spaces $\AbSh^{r,d}_H$ of abelian $\tau$-sheaves of the second author~\cite{Har1}, who takes $\FG=\GL_r$, $n=2$ and $\wt\omega_1=(d,0,\ldots,0)$ and $\wt\omega_2=(0,\ldots,0,-d)$. Both \cite{LRS} and \cite{Har1} require in addition that the characteristic section $\charsect_2\colon S\to C$ factors through a fixed place $\infty\in C$ and that the associated local $\GL_r$-shtuka at $\infty$ is isoclinic in the sense of \cite{Har1}, that is, basic in the sense of \cite{H-V}. All these moduli spaces are special cases of our $\nabla_n^{\ul\omega}\scrH^1_D(C,\FG)$.

Varshavsky's stacks $FBun$ were recently used by V.~Lafforgue~\cite{Lafforgue12} to prove Langlands parameterization for split connected reductive groups over the function field $Q$. In the non-split case, Lafforgue~\cite[\S\,12.3.2]{Lafforgue12} chooses a parahoric model $\FG$ of the connected reductive group over $Q$ and generalizes Varshavsky's moduli stack to a moduli stack ${\rm Cht}^{(I_1,\ldots,I_k)}_{N,I}$ which equals our $\nabla_\ulI\scrH^1_D(C,\FG)$ when $N=D$, and hence is an ind-DM-stack over $C^I$ locally of ind-finite type by Theorem~\ref{nHisArtin}. Lafforgue~\cite[\S\,12.1]{Lafforgue12} truncates the stacks by bounding the degree of the $\FG$-bundles as follows. He fixes a representation $\theta\colon\FG\onto\FG^\ad\into\GL(\CV_0)$ and a dominant cocharacter $\mu=(\mu_1\ge\ldots\ge\mu_r)$ of $\SL_r$, where $r=\rk\CV_0$, and considers the sub-stack $\scrH^1(C,\FG)^{\le\mu}$ from Remark~\ref{RemCoveringH1}; see \cite[\S\,11.1 and (1.3)]{Lafforgue12}. Since $\FG^\ad$ is semi-simple the representation $\theta$ factors through $\FG\onto\FG^\ad\into\SL(\CV_0)\into\GL(\CV_0)$, because its geometric generic fiber does; use \cite[27.5~Theorem~(d)]{Humphreys75}. Therefore the stacks $\scrH^1(C,\FG^\ad)^{\le\mu}$ are of finite type over $\BF_q$ by Remark~\ref{RemCoveringH1}.

In addition, V.~Lafforgue considers representations $W$ of $({}^L\FG)^I$, where ${}^L\FG$ is the Langlands dual group of $\FG_Q:=\FG\times_C\Spec Q$. For every such representation he defines the stack ${\rm Cht}_{N,I,W}^{\ulI,\le\mu}\,=\,\bigcup_{\ul\omega}\nabla^{\ul\omega}_\ulI\scrH^1_D(C,\FG)^{\le\mu}$, again with $N=D$, where $\rho\colon\FG\into\SL(\CV)$ is a fixed faithful representation (as before Remark~\ref{RemRelAffineGrass}) and the union runs over a finite family of $n$-tuples $\ul\omega$ of coweights of $\SL_{\rk\CV}$ related to $W$. More precisely, when $\FG$ is constant split the $\ul\omega$ are of the form $\ul\omega=\rho\ul{\wt\omega}$, where the $\ul{\wt\omega}$ run over those $n$-tuples of dominant coweights of $\FG$ whose Weyl modules occur in $W$. When $\FG$ is non-split this set is described in \cite[\S\,12.3.1]{Lafforgue12}. In this case the perverse sheaves associated with $W$ are not necessarily IC-sheaves. So Lafforgue does not need to care about the exact support of these sheaves and can use any bound to get Artin stacks locally of finite type. Our Theorem~\ref{nHisArtin} (or Remark~\ref{RemCoveringH1}) shows that $\nabla^{\ul\omega}_\ulI\scrH^1_D(C,\FG^\ad)^{\le\mu}$ is a Deligne-Mumford stack of finite type and separated over $(C\setminus D)^I$. Finally Lafforgue fixes a cocompact lattice $\Xi\subset Z(\BA)/Z(Q)$, where $Z$ is the center of $\FG_Q$, and shows that ${\rm Cht}_{N,I,W}^{\ulI,\le\mu}/\Xi$ is a Deligne-Mumford stack of finite type; see \cite[Lemma~12.19]{Lafforgue12}.
\end{remark}

\begin{remark}
Varshavsky~\cite[Proposition~2.16]{Var} proves our Theorem~\ref{nHisArtin} for a constant split reductive group $\FG=G\times_{\BF_q}C$ by a different technique. He proceeds by augmenting the level $D$ and introducing bounds on the degree of $B$-structures; see Remark~\ref{RemCoveringH1}. In this way he obtains open substacks of $\scrH^1_D(C,\FG)$ which are representable by schemes, and he can prove that $\nabla_n^{\ul\omega}\scrH^1_D(C,\FG)$ is locally a quotient of a scheme by a finite group, so in particular a separated Deligne-Mumford stack. For general flat group schemes $\FG$ the existence of $B$-structures has been established so far only when $\FG$ is parahoric with quasi-split and semi-simple generic fiber; see Heinloth~\cite[Corollary~26]{Heinloth}. For this reason we use a different argument to prove Theorem~\ref{nHisArtin}.
\end{remark}

\begin{remark}
Like in the case of Shimura varieties the question whether $\nabla_n^{\ul\omega}\scrH^1_D(C,\FG)$ is proper over $(C\setminus D)^I$ depends on the group $\FG$, but unlike to that case it also depends on additional aspects. For example Drinfeld's and Lafforgue's moduli stack for $\FG=\GL_r$ is not proper and this results in considerable efforts to compactify it; see \cite[Chapter~III]{LafforgueL02}. On the other hand the stack $\CE\ell\ell_{C,\mathscr{D},I}$ of Laumon, Rapoport and Stuhler, is proper outside the ramification locus of $\mathscr{D}$; see \cite[Theorem~6.1]{LRS}. Here $\FG$ is the group of units of a maximal order $\mathscr{D}$ in a central division algebra, $n=2$, and the bound on the relative position of $\CG$ and $\sigma^*\CG$ under $\tau$ is non-trivial but as small as possible. If one allows larger bounds as considered by \cite{LafforgueL97,Ngo06,LauDiss,Lau07} one has to increase the ramification of $\mathscr{D}$ to retain the properness of the moduli space. Otherwise it can actually happen that the moduli spaces are not proper; see \cite[Theorem~A]{Lau07} for the precise statement. This phenomenon is unknown from the theory of Shimura varieties.
\end{remark}

\section{Loop Groups and Local $\BP$-Shtukas}\label{LoopsAndSht}
 \setcounter{equation}{0}

In this chapter we let $\BP$ be a smooth affine group scheme over $\BD:=\Spec\BaseOfD\dbl z\dbr$. We let $\dot\BD:=\Spec\BaseOfD\dpl z\dpr$ and $\genericG:=\BP\times_\BD\dot{\BD}$ be the generic fiber of $\BP$. We are mainly interested in the situation where we have an isomorphism $\BD\cong\Spec A_\nu$ for a place $\nu$ of $C$ and where $\BP=\BP_\nu:=\FG\times_C\Spec A_\nu$, assuming that $\FG$ is smooth above $\nu$.

The \emph{group of positive loops associated with $\BP$} is the infinite dimensional affine group scheme $L^+\BP$ over $\BaseOfD$ whose $R$-valued points for an $\BaseOfD$-algebra $R$ are 
\[
L^+\BP(R):=\BP(R\dbl z\dbr):=\BP(\BD_R):=\Hom_\BD(\BD_R,\BP)\,.
\]
The \emph{group of loops associated with $\genericG$} is the $\fpqc$-sheaf of groups $L\genericG$ over $\BaseOfD$ whose $R$-valued points for an $\BaseOfD$-algebra $R$ are 
\[
L\genericG(R):=\genericG(R\dpl z\dpr):=\genericG(\dot{\BD}_R):=\Hom_{\dot\BD}(\dot\BD_R,\genericG)\,,
\]
where we write $R\dpl z\dpr:=R\dbl z \dbr[\frac{1}{z}]$ and $\dot{\BD}_R:=\Spec R\dpl z\dpr$. Let $\scrH^1(\Spec\BaseOfD,L^+\BP)\,:=\,[\Spec\BaseOfD/L^+\BP]$ (resp.\ $\scrH^1(\Spec\BaseOfD,L\genericG)\,:=\,[\Spec\BaseOfD/L\genericG]$) denote the classifying space of $L^+\BP$-torsors (resp.\ $L\genericG$-torsors). It is a stack fibered in groupoids over the category of $\BaseOfD$-schemes $S$ whose category $\scrH^1(\Spec\BaseOfD,L^+\BP)(S)$ of $S$-valued points consists of all $L^+\BP$-torsors (resp.\ $L\genericG$-torsors) on $S$. The inclusion of sheaves $L^+\BP\subset L\genericG$ gives rise to the natural morphism 
\begin{equation}\label{EqLoopTorsor}
L\colon\scrH^1(\Spec\BaseOfD,L^+\BP)\longto \scrH^1(\Spec\BaseOfD,L\genericG),~\CL_+\mapsto \CL:=L\CL_+\,.
\end{equation}

\begin{definition}\label{localSht}
Let $S\in\Nilp_{\BaseOfD\dbl\zeta\dbr}$ and let $\hat{\sigma}:=\hat{\sigma}_S$ be the $\BaseOfD$-Frobenius of $S$. A \emph{local $\BP$-shtuka over $S\in \Nilp_{\BaseOfD\dbl\zeta\dbr}$} is a pair $\ul\CL = (\CL_+,\tauLoc)$ consisting of an $L^+\BP$-torsor $\CL_+$ on $S$ and an isomorphism of the associated $LP$-torsors $\tauLoc\colon  \hat{\sigma}^\ast \CL \to\CL$ from \eqref{EqLoopTorsor}. 
A local $\BP$-shtuka $(\CL_+,\tauLoc)$ is called \emph{\'etale} if $\tauLoc$ comes from an isomorphism of $L^+\BP$-torsors $\hat{\sigma}^\ast\CL_+\isoto \CL_+$.
We denote the stack fibered in groupoids over $\Nilp_{\BaseOfD\dbl\zeta\dbr}$ which classifies local $\BP$-shtukas by $\Sht_{\BP}^{\BD}$.
\end{definition}

\begin{definition}\label{quasi-isogeny L}
A \emph{quasi-isogeny} $f\colon\ul\CL\to\ul\CL'$ between two local $\BP$-shtukas $\ul{\CL}:=(\CL_+,\tauLoc)$ and $\ul{\CL}':=(\CL_+' ,\tauLoc')$ over $S$ is an isomorphism of the associated $L\genericG$-torsors $f \colon  \CL \to \CL'$ satisfying $f\circ\tauLoc=\tauLoc'\circ\hat{\sigma}^{\ast}f$. We denote by $\QIsog_S(\ul{\CL},\ul{\CL}')$ the set of quasi-isogenies between $\ul{\CL}$ and $\ul{\CL}'$ over $S$, and we write $\QIsog_S(\ul\CL):=\QIsog_S(\ul\CL,\ul\CL)$ for the quasi-isogeny group of $\ul\CL$. 
\end{definition}

As in the theory of $p$-divisible groups, also our quasi-isogenies are rigid; see \cite[\PropRigidityLocal]{AH_Local}. Moreover, local $\BP$-shtukas possess moduli spaces in the following sense. For a scheme $\SSS$ in $\Nilp_{\BaseOfD\dbl\zeta\dbr}$ let $\bar{\SSS}$ denote the closed subscheme $\Var_\SSS(\zeta)\subseteq \SSS$. On the other hand for a scheme $\bar \TTT$ over $\BaseOfD$ we set $\wh{\TTT}:=\bar \TTT\whtimes_{\Spec\BaseOfD}\Spf\BaseOfD\dbl\zeta\dbr$. Then $\wh \TTT$ is a $\zeta$-adic formal scheme with underlying topological space $\bar \TTT=\Var_{\wh \TTT}(\zeta)$.

\begin{definition}\label{RZ space}
With a given local $\BP$-shtuka $\ul\BL$ over an $\BaseOfD$-scheme $\bar{\TTT}$ we associate the functor
\begin{eqnarray*}
\CM_{\ul\BL}\colon  \Nilp_{\wh{\TTT}} &\to&  \Sets\\
\SSS &\mapsto & \big\{\text{Isomorphism classes of }(\ul{\CL},\bar{\delta})\colon\;\text{where }\ul{\CL}~\text{is a local $\BP$-shtuka}\\ 
&&~~~~ \text{over $\SSS$ and }\bar{\delta}\colon  \ul{\CL}_{\bar{\SSS}}\to \ul\BL_{\bar{\SSS}}~\text{is a quasi-isogeny  over $\bar{\SSS}$}\big\}. 
\end{eqnarray*}
Here we say that $(\ul\CL,\bar\ppsi)$ and $(\ul\CL',\bar\ppsi')$ are isomorphic if $\bar\ppsi^{-1}\circ\bar\ppsi'$ lifts to an isomorphism $\ul\CL'\to\ul\CL$. The group $\QIsog_{\bar \TTT}(\ul\BL)$ of quasi-isogenies of $\ul\BL$ acts on the functor $\CM_{\ul\BL}$ via $j\colon(\ul\CL,\bar\delta)\mapsto(\ul\CL,j\circ\bar\delta)$ for $j\in\QIsog_{\bar \TTT}(\ul\BL)$.
\end{definition}

We proved in \cite[\ThmModuliSpX]{AH_Local} that $\CM_{\ul\BL}$ is representable by an ind-scheme, called an \emph{unbounded Rapoport-Zink space}. For this recall that the affine flag variety $\SpaceFl_\BP$ is defined to be the $fpqc$-sheaf associated with the presheaf
$$
R\;\longmapsto\; L\genericG(R)/L^+\BP(R)\;=\;\BP\left(R\dpl z \dpr \right)/\BP\left(R\dbl z\dbr\right)
$$ 
on the category of $\BaseOfD$-algebras. It is represented by an ind-scheme which is ind-quasi-projective over $\BaseOfD$, and hence ind-separated and of ind-finite type over $\BaseOfD$; see \cite[Theorem~1.4]{PR2}. If $\BP$ is parahoric in the sense of Bruhat and Tits, see \cite[D\'efinition~5.2.6]{B-T} and \cite{H-R}, then $\SpaceFl_\BP$ is ind-projective over $\BaseOfD$ by \cite[Corollary~1.3]{Richarz13}. Viewing the formal scheme $\wh\TTT=\bar\TTT\whtimes_\BaseOfD\Spf \BaseOfD\dbl\zeta\dbr$ as the ind-scheme $\dirlim\bar\TTT\times_\BaseOfD\Spec\BaseOfD[\zeta]/(\zeta^{m})$ we may form the fiber product $\wh{\SpaceFl}_{\BP,\wh\TTT}:=\SpaceFl_{\BP}\whtimes_\BaseOfD\wh\TTT$ in the category of ind-schemes (see \cite[7.11.1]{B-D}).

\begin{theorem}[{\cite[\ThmModuliSpX]{AH_Local}}] \label{ThmModuliSpX}
The functor $\CM_{\ul\BL}$ from Definition~\ref{RZ space} is represented by an ind-scheme, ind-quasi-projective, hence ind-separated and of ind-finite type over $\wh \TTT=\bar \TTT\whtimes_{\BaseOfD}\Spf\BaseOfD\dbl\zeta\dbr$. If $\BP$ is parahoric in the sense of Bruhat and Tits, see \cite[D\'efinition~5.2.6]{B-T} and \cite{H-R}, then $\CM_{\ul\BL}$ is ind-projective over $\wh \TTT$.

If $\ul\BL$ is trivialized by an isomorphism $\alpha\colon\ul\BL\isoto\bigl((L^+\BP)_{\bar \TTT},b\hat\sigma^*\bigr)$ over $\bar\TTT$ with $b\in L\genericG(\bar \TTT)$ then $\CM_{\ul\BL}$ is represented by the ind-scheme $\wh\SpaceFl_{\BP,\wh\TTT}:=\SpaceFl_\BP\whtimes_\BaseOfD\wh \TTT$.
\end{theorem}

If one in addition imposes a boundedness condition on the Hodge polygon of the local $\BP$-shtukas, and assumes $\bar\TTT=\Spec\BaseFldOfLocSht$ for a field $\BaseFldOfLocSht$, then one even obtains a formal scheme locally formally of finite type over $\Spf\BaseFldOfLocSht\dbl\zeta\dbr$. Here a formal scheme over $\Spf\BaseFldOfLocSht\dbl\zeta\dbr$ in the sense of \cite[I$_{new}$, 10]{EGA} is called \emph{locally formally of finite type} if it is locally noetherian and adic and its reduced subscheme is locally of finite type over $\BaseFldOfLocSht$. To give more details we recall the following definition from \cite[Definitions~4.5 and 4.8]{AH_Local}.

\begin{definition}\label{DefBDLocal}
\begin{enumerate}
\item We fix an algebraic closure $\BaseOfD\dpl\zeta\dpr^\alg$ of $\BaseOfD\dpl\zeta\dpr$, and consider pairs $(R,\hat Z_R)$, where $R/\BaseOfD\dbl\zeta\dbr$ is a finite extension of discrete valuation rings such that $R\subset\BaseOfD\dpl\zeta\dpr^\alg$, and where $\hat Z_R\subset \wh\SpaceFl_{\BP,R}:=\SpaceFl_\BP\whtimes_{\BaseOfD}\Spf R$ is a closed ind-subscheme. Two such pairs $(R,\hat Z_R)$ and $(R',\hat Z'_{R'})$ are \emph{equivalent} if for some finite extension of discrete valuation rings $\wt R/\BaseOfD\dbl\zeta\dbr$ with $R,R'\subset \wt R$ the two closed ind-subschemes $\hat{Z}_R\whtimes_{\Spf R}\Spf\wt R$ and $\hat{Z}'_{R'}\whtimes_{\Spf R'}\Spf\wt R$ of $\wh\SpaceFl_{\BP,\wt R}$ are equal. By \cite[Remark~4.6]{AH_Local} this then holds for all such rings $\wt R$.
\item \label{DefBDLocal_A}
A \emph{bound} is an equivalence class $\hat Z:=[(R,\hat Z_R)]$ of pairs $(R,\hat Z_R)$ as above such that all $\hat{Z}_R\subset\wh{\SpaceFl}_{\BP,R}$ are stable under the left $L^+\BP$-action, and the \emph{special fiber} $Z_R:=\hat{Z}_R\whtimes_{\Spf R}\Spec \kappa_R$ is a  quasi-compact subscheme of $\SpaceFl_\BP\whtimes_{\BaseOfD}\kappa_R$ where $\kappa_R$ is the residue field of $R$. By \cite[Remark~4.10]{AH_Local} this implies that the $\hat Z_R$ are formal schemes in the sense of \cite[I$_{\rm new}$]{EGA}.

\item \label{DefBDLocal_B}
\newcommand{\BaseFld}{\ol\BaseOfD}
The \emph{reflex ring} $R_{\hat Z}$ of a bound $\hat Z=[(R,\hat Z_R)]$ is the intersection of the fixed field of the group $\{\gamma\in\Aut_{\BaseOfD\dbl\zeta\dbr}(\BaseOfD\dpl\zeta\dpr^\alg)\colon \gamma(\hat{Z})=\hat{Z}\,\}$ in $\BaseOfD\dpl\zeta\dpr^\alg$ with all the finite extensions $R\subset\BaseOfD\dpl\zeta\dpr^\alg$ of $\BaseOfD\dbl\zeta\dbr$ over which a representative $\hat{Z}_R$ of $\hat{Z}$ exists. We write $R_{\hat Z}=\kappa\dbl\xi\dbr$ and call its fraction field $E:=E_{\hat Z}=\kappa\dpl\xi\dpr$ the \emph{reflex field} of $\hat{Z}$. We let $\breve R_{\hat Z}:=\BaseFld\dbl\xi\dbr$ and $\breve E:=\breve E_{\hat Z}:=\BaseFld\dpl\xi\dpr$ be the completions of their maximal unramified extensions, where $\BaseFld$ is an algebraic closure of the finite field $\kappa$.
 
\item \label{DefBDLocal_C}
Let $\hat Z=[(R,\hat Z_R)]$ be a bound with reflex ring $R_{\hat Z}$. Let $\CL_+$ and $\CL_+'$ be $L^+\BP$-torsors over a scheme $S\in \Nilp_{R_{\hat Z}}$ and let $\delta\colon L\CL_+\isoto L\CL_+'$ be an isomorphism of the associated $L\genericG$-torsors. We consider an \'etale covering $S'\to S$ over which trivializations $\alpha\colon\CL_+\isoto(L^+\BP)_{S'}$ and $\alpha'\colon\CL_+'\isoto(L^+\BP)_{S'}$ exist. Then the automorphism $\alpha'\circ\delta\circ\alpha^{-1}$ of $(L\genericG)_{S'}$ corresponds to a morphism $S'\to L\genericG\whtimes_{\BaseOfD}\Spf R_{\hat Z}$. We say that $\delta$ is \emph{bounded by $\hat{Z}$} if for every such trivialization and for every finite extension $R$ of $\BaseOfD\dbl\zeta\dbr$ over which a representative $\hat Z_R$ of $\hat Z$ exists the induced morphism
\[
S'\whtimes_{R_{\hat Z}}\Spf R\longto L\genericG\whtimes_{\BaseOfD}\Spf R\longto \wh{\SpaceFl}_{\BP,R}
\]
factors through $\hat{Z}_R$. Furthermore, we say that a local $\BP$-shtuka $\ul\CL=(\CL_+,\hat\tau)$ is \emph{bounded by $\hat{Z}$} if $\hat\tau$ is bounded by $\hat{Z}$.
\end{enumerate}
\end{definition}

\begin{remark}
(a) The reflex field $E_{\hat{Z}}$ of $\hat Z$ is always a finite extension of $\BaseOfD\dpl\zeta\dpr$. For a detailed explanation of our definition of reflex ring and a comparison with the number field case see \cite[Remark~4.7]{AH_Local}. We do not know whether in general $\hat Z$ has a representative over the reflex ring. In contrast, the equivalence class of the special fibers $Z_R:=\hat{Z}_R\whtimes_{\Spf R}\Spec \kappa_R$ always has a representative $Z\subset\SpaceFl_\BP\whtimes_{\BF_q}\Spec\kappa$ over the residue field $\kappa$ of the reflex ring $R_{\hat Z}$, because the Galois descent for closed ind-subschemes of $\SpaceFl_\BP$ is effective. We call $Z$ the \emph{special fiber} of $\hat Z$. It is a quasi-projective scheme over $\kappa$ by \cite[Lemma~5.4]{H-V} because $\SpaceFl_\BP$ is ind-quasi-projective. It is even projective over $\kappa$ if $\BP$ is parahoric, because then $\SpaceFl_\BP$ is ind-projective.

\medskip\noindent 
(b) The condition of Definition~\ref{DefBDLocal}\ref{DefBDLocal_C} is satisfied for \emph{all} trivializations $\alpha$ and $\alpha'$ and for \emph{all} such finite extensions $R$ of $\BF_q\dbl\zeta\dbr$ if and only if it is satisfied for \emph{one} trivialization and for \emph{one} such finite extension. Indeed, by the $L^+\BP$-invariance of $\hat Z$ the definition is independent of the trivializations. That one finite extension suffices follows from \cite[Remark~4.6]{AH_Local}.
\end{remark}

We recall the following

\begin{proposition}[{\cite[Proposition~4.11]{AH_Local}}]\label{PropBoundedClosed}
Let $\CL_+$ and $\CL'_+$ be $L^+\BP$-torsors over a scheme $S\in\Nilp_{\BaseOfD\dbl\zeta\dbr}$ and let $\delta\colon \CL\isoto\CL'$ be an isomorphism of the associated $L\genericG$-torsors. Let $\hat Z$ be a bound. Then the condition that $\delta$ is bounded by $\hat Z$ is represented by a closed subscheme of $S$.
\end{proposition}

To formulate the ind-representability in the bounded case we consider the following important special situation. Let $\hat{Z}$ be a bound with reflex ring $R_{\hat{Z}}=\kappa\dbl\xi\dbr$ and special fiber $Z\subset\SpaceFl_\BP\whtimes_\BaseOfD\Spec\kappa$. Let $\ul\BL=\bigl((L^+\BP)_\BaseFldOfLocSht,b\hat{\sigma}^*\bigr)$ be a trivialized local $\BP$-shtuka over a field $\BaseFldOfLocSht$ in $\Nilp_{\BaseOfD\dbl\zeta\dbr}$ with $b\in L\genericG(\BaseFldOfLocSht)$. Assume that $b$ is decent with integer $s$ in the sense of \cite[\EqDecency]{AH_Local}, and let $\ell\subset\BaseFldOfLocSht^\alg$ be the compositum of the residue field $\kappa$ of $R_{\hat{Z}}$ and the finite field extension of $\BaseOfD$ of degree $s$. Then $b\in L\genericG(\ell)$ by \cite[\RemDecent]{AH_Local}. So $\ul\BL$ is defined over $\bar T=\Spec\ell$ and we may replace $\BaseFldOfLocSht$ by $\ell$. Then $\CM_{\ul\BL}$ is defined over $\Spf\ell\dbl\zeta\dbr$. We remark that $\CM_{\ul\BL}$ only depends on the isogeny class of $\ul\BL$. Notice that if $\BaseFldOfLocSht$ is algebraically closed any local $\BP$-shtuka over $\BaseFldOfLocSht$ is trivial and isogenous to a decent one by \cite[\RemDecent]{AH_Local}. Also $\ell\dbl\xi\dbr$ is the finite unramified extension of $R_{\hat{Z}}$ with residue field $\ell$. We recall the following

\renewcommand{\BaseFldOfLocSht}{\ell}

\begin{theorem}[{\cite[\ThmRRZSp]{AH_Local}}] \label{ThmRRZSp}
In the above situation if $\BP$ is a smooth affine group scheme over $\BD$ with connected reductive generic fiber, the functor 
\begin{eqnarray*}
\CM_{\ul\BL}^{\hat{Z}}\colon (\Nilp_{\BaseFldOfLocSht\dbl\xi\dbr})^o &\longto & \Sets\\
S&\longmapsto & \Bigl\{\,\text{Isomorphism classes of }(\ul\CL,\bar\ppsi)\in\CM_{\ul\BL}(S)\colon\ul\CL\text{~is bounded by $\hat{Z}$}\Bigr\}
\end{eqnarray*}
is ind-representable by a formal scheme over $\Spf \BaseFldOfLocSht\dbl\xi\dbr$ which is locally formally of finite type and separated. It is called a \emph{bounded Rapoport-Zink space for local $\BP$-shtukas}. Its underlying reduced subscheme equals the associated \emph{affine Deligne--Lusztig variety}, which is the reduced closed ind-subscheme $X_Z(\ul\BL)\subset\SpaceFl_\BP\whtimes_\BaseOfD\Spec\BaseFldOfLocSht$ whose $K$-valued points (for any field extension $K$ of $\BaseFldOfLocSht$) are given by
\begin{equation}
\label{EqADLV}
X_Z(\ul\BL)(K)\;:=\;X_Z(b)(K)\;:=\;\big\{ g\in \SpaceFl_\BP(K)\colon g^{-1}\,b\,\hat\sigma^\ast(g) \in Z(K)\big\}.
\end{equation}
In particular $X_Z(\ul\BL)$ is a scheme locally of finite type over $\BaseFldOfLocSht$. 
\end{theorem}

\begin{remark}\label{RemJActsOnRZ}
The group $\QIsog_{\BaseFldOfLocSht}(\ul\BL)$ of quasi-isogenies of $\ul\BL$ acts on $\RZ$ via $j\colon(\ul\CL,\bar\delta)\mapsto(\ul\CL,j\circ\bar\delta)$. Since $\ul\BL=\bigl((L^+\BP)_\BaseFldOfLocSht,b\hat{\sigma}^*\bigr)$ is trivialized and decent, $\QIsog_{\BaseFldOfLocSht}(\ul\BL)=J_b(\BaseOfD\dpl z\dpr)$ where $J_b$ is the connected algebraic group over $\BaseOfD\dpl z\dpr$ which is defined by its functor of points that assigns to an $\BaseOfD\dpl z\dpr$-algebra $R$ the group
\begin{equation}\label{EqGroupJ}
J_b(R):=\bigl\{\,j \in \genericG(R\otimes_{\BaseOfD\dpl z\dpr} {\BaseFldOfLocSht\dpl z\dpr})\colon j^{-1}b\hat{\sigma}(j)=b\,\bigr\}\,,
\end{equation}
see \cite[\RemJb]{AH_Local}.
In particular, $\QIsog_{\BaseFldOfLocSht}(\ul\BL)=J_b(\BaseOfD\dpl z\dpr)\subset L\genericG(\ell)$ and this group acts on $X_Z(b)$ via $j\colon g\mapsto j\cdot g$ for $j\in J_b(\BaseOfD\dpl z\dpr)$.
\end{remark}

%
%

\section{Unbounded Uniformization}\label{UnboundedUnif}
\setcounter{equation}{0}

Analogously to the functor which assigns to an abelian variety $A$ over a $\BZ_p$-scheme its $p$-divisible group $A[p^\infty]$ we introduced in \cite[\SectGlobalLocalFunctor]{AH_Local} a global-local functor which assigns to a global $\FG$-shtuka an $n$-tuple of local $\BP_{\nu_i}$-shtukas. The reason for the $n$-tuple is that our global $\FG$-shtukas have $n$ characteristic places, whereas abelian varieties only have one characteristic place. We consider the following situation.

\begin{definition}\label{DefGlobalLocalFunctor}
Fix a tuple $\ul\nu:=(\nu_i)_{i=1\ldots n}$ of places on $C$ with $\nu_i\ne\nu_j$ for $i\ne j$. We require throughout this chapter that $\FG$ is smooth above every $\nu_i$. Let $A_{\ul\nu}$ be the completion of the local ring $\CO_{C^n,\ul\nu}$ of $C^n$ at the closed point $\ul\nu$, and let $\BF_{\ul\nu}$ be the residue field of the point $\ul\nu$. Then $\BF_{\ul\nu}$ is the compositum of the finite fields $\BF_{\nu_i}$ inside $\BF_q^\alg$, and $A_{\ul\nu}\cong\BF_{\ul\nu}\dbl\zeta_1,\ldots,\zeta_n\dbr$ where $\zeta_i$ is a uniformizing parameter of $C$ at $\nu_i$. Let the stack 
\[
\nabla_n\scrH^1(C,\FG)^{\ul\nu}\;:=\;\nabla_n\scrH^1(C,\FG)\whtimes_{C^n}\Spf A_{\ul\nu}
\]
be the formal completion of the ind-DM-stack $\nabla_n\scrH^1(C,\FG)$ along $\ul\nu\in C^n$; see \cite[Appendix~A]{Har1} for an explanation of this concept. It is an ind-DM-stack over $\Spf A_{\ul\nu}$, that is, an ind-DM-stack fibered over the category $\Nilp_{A_{\ul\nu}}$. It is ind-separated and locally of ind-finite type over $\BF_q$ (and over $\Spf A_{\ul\nu}$) by Theorem~\ref{nHisArtin}, with structure of ind-DM-stack given as follows. The formal spectrum $\Spf A_{\ul\nu}$ can be viewed as the ind-scheme $\dirlim\Spec A_{\ul\nu}/(\zeta_1,\ldots,\zeta_n)^j$. Then $\nabla_n\scrH^1(C,\FG)^{\ul\nu}$ is the limit of separated Deligne-Mumford stacks locally of finite type over $\BF_q$
\[
\nabla_n\scrH^1(C,\FG)^{\ul\nu}\;=\;\lim_{\underset{\ul\omega,j}{\longrightarrow}}\nabla_n^{\ul\omega}\scrH^1_D(C,\FG)\times_{C^n}\Spec A_{\ul\nu}/(\zeta_1,\ldots,\zeta_n)^j.
\]
Note that a stack fibered over $\Nilp_{A_{\ul\nu}}$ is the natural generalization of a scheme over $\Spf A_{\ul\nu}$; see for example \cite{Hakim72}.

The \emph{global-local functor} $\wh\Gamma_{\nu_i}\colon \nabla_n\scrH^1(C,\FG)^{\ul\nu}(S)\to \Sht_{\BP_{\nu_i}}^{\Spec A_{\nu_i}}(S)$ from \cite[\DefGlobalLocalFunctor]{AH_Local} assigns to a global $\FG$-shtuka $\ul\CG$ over $S$ its local $\BP_{\nu_i}$-shtuka $\wh\Gamma_{\nu_i}(\ul\CG)$ at $\nu_i$. The \emph{global-local functor} 
\[
\ul{\wh\Gamma}:=\prod_i\wh\Gamma_{\nu_i}\colon\;\nabla_n\scrH^1(C,\FG)^{\ul\nu}(S)\;\longto\; \prod_i \Sht_{\BP_{\nu_i}}^{\Spec A_{\nu_i}}(S)
\]
assigns to $\ul\CG$ the $n$-tuple $\bigl(\wh\Gamma_{\nu_i}(\ul\CG)\bigr)_i$.
Both functors also transform quasi-isogenies into quasi-isogenies.
\end{definition}

Note that at a place $\nu$ outside the characteristic places $\nu_i$ we also associated with $\ul\CG$ an \'etale local $\wt\BP_\nu$-shtuka $L^+_\nu(\ul\CG)$ in \cite[\RemTateF]{AH_Local}. Here $\wt\BP_\nu:=\Res_{\BF_\nu/\BF_q}\BP_\nu$ is the Weil restriction. We proved the following result in \cite[\PropLocalIsogeny{} and \RemLocalIsogeny]{AH_Local}.

\begin{proposition}\label{PropLocalIsogeny}
Let $\ul\CG\in\nabla_n\scrH^1(C,\FG)^{\ul\nu}(\SSS)$ be a global $\FG$-shtuka over $\SSS$ and let $\nu\in C$ be a place. If $\nu\in\ul\nu$ consider a quasi-isogeny of local $\BP_\nu$-shtukas $ f\colon\ul{\CL}{}'_\nu\to\wh\Gamma_\nu(\ul\CG)$. If $\nu\notin\ul\nu$ consider a quasi-isogeny of local $\wt\BP_\nu$-shtukas $ f\colon\ul{\CL}{}'_\nu\to L^+_\nu(\ul\CG)$ and assume that $\ul{\CL}{}'_\nu$ is \'etale. Then there exists a unique global $\FG$-shtuka $\ul\CG'\in\nabla_n\scrH^1(C,\FG)^{\ul\nu}(\SSS)$ and a unique quasi-isogeny $g\colon\ul\CG'\to\ul\CG$ which is an isomorphism outside $\nu$, such that the local $\BP_\nu$-shtuka (resp.\ $\wt\BP_\nu$-shtuka) associated with $\ul\CG'$ is $\ul{\CL}{}'_\nu$ and the quasi-isogeny of local $\BP_\nu$-shtukas (resp.\ $\wt\BP_\nu$-shtukas) induced by $g$ is $ f$. We denote $\ul\CG'$ by $ f^*\ul\CG$.
\end{proposition}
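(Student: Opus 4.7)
The plan is to construct $\ul\CG'$ by glueing the restriction of $\ul\CG$ away from $\nu$ with a local $\FG$-torsor at $\nu$ obtained from $\ul\CL'_\nu$, using Beauville--Laszlo glueing along the lines of the cartesian diagram from \cite{B-L} (compare the global/local setup of \cite[\UnboundedUnif]{AH_Local}). Concretely, we set $\dot\CG':=\CG|_{C_S\setminus\{\nu\}\times_{\BF_q}S}$ and choose $g$ to be the identity here; this forces $\dot{\ul\CG}'=\dot{\ul\CG}$ with its Frobenius. At the place $\nu$ we prescribe the formal completion of $\CG'$ by the $L^+\BP_\nu$-torsor (or rather Weil-restricted $L^+\wt\BP_\nu$-torsor) underlying $\ul\CL'_\nu$, while the quasi-isogeny $f$ provides an isomorphism between the associated loop torsors of $\ul\CL'_\nu$ and $\wh\Gamma_\nu(\ul\CG)$ (resp.\ $L^+_\nu(\ul\CG)$), which is exactly the glueing datum needed between $\dot\CG'$ and the new formal completion.

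Next I would check that the patched sheaf is a $\FG$-torsor on all of $C_S$. Since $\FG$ is smooth and affine over $C$ and $\ul\CG$ extends over $C_S$, this reduces to Beauville--Laszlo-type glueing of torsors under a smooth affine group scheme along a closed subscheme defined by a Cartier divisor, which is effective étale locally; the formation is functorial and characterized by its restrictions to $C_S\setminus\{\nu\}$ and to the formal neighborhood of $\nu$. Then I would transfer the Frobenius structure: on $C_S\setminus\{\nu\}_S$ we simply take $\tauGlob$ of $\ul\CG$, and on the formal completion at $\nu$ we take the Frobenius of $\ul\CL'_\nu$. The two agree on the overlap precisely because $f$ is a quasi-isogeny of local shtukas, i.e.\ it intertwines the two Frobenii on the associated loop torsors. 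Hence the glued data define a global $\FG$-shtuka $\ul\CG'$ with the same characteristic sections as $\ul\CG$, and $g$ becomes a quasi-isogeny $\ul\CG'\to\ul\CG$ which is an isomorphism outside $\nu$ and induces $f$ at $\nu$.

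Uniqueness is the easy part: any other pair $(\ul\CG'',g'')$ with the same properties gives a quasi-isogeny $g^{-1}g''\colon\ul\CG''\to\ul\CG'$ that is an isomorphism away from $\nu$ and becomes the identity after applying $\wh\Gamma_\nu$ (resp.\ $L^+_\nu$). Using the Beauville--Laszlo description this forces $g^{-1}g''$ to be an isomorphism, and by the rigidity of quasi-isogenies \cite[\PropGlobalRigidity]{AH_Local} we may check this on the reduced fibre, where it follows from the fact that glueing data are determined by their restrictions. In the case $\nu\notin\ul\nu$, the input $\ul\CL'_\nu$ is étale by assumption, so the induced Frobenius on the formal neighborhood of $\nu$ is an isomorphism of $L^+\wt\BP_\nu$-torsors; hence $\tauGlob'$ remains an isomorphism outside the graphs $\Gamma_{\charsect_i}$, confirming that $\ul\CG'$ is again a global $\FG$-shtuka with the correct characteristics and that $L^+_\nu(\ul\CG')=\ul\CL'_\nu$.

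The main technical obstacle is verifying the Beauville--Laszlo glueing in the required generality, namely for $\FG$-torsors where $\FG$ is a parahoric Bruhat--Tits group scheme (not necessarily reductive) and the glueing happens along a possibly non-reduced closed subscheme cut out at $\nu$ over a base $S\in\Nilp_{A_{\ul\nu}}$. The smoothness and affineness of $\FG$, together with the fact that $\ul\CG$ already extends and provides a reference torsor, make this workable: one reduces to effective descent of affine schemes with smooth group action, for which \cite[\S6]{BLR} and Proposition~\ref{formal torsor prop} suffice. Everything else---the compatibility of Frobenii, the identification of $\wh\Gamma_\nu(\ul\CG')$ with $\ul\CL'_\nu$, and uniqueness---follows formally once the glueing is in place.
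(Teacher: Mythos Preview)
Your proposal is correct and follows essentially the same approach as the paper's proof (which is relegated to the companion article \cite[\PropLocalIsogeny]{AH_Local}, but whose content is visible in the suppressed Lemma~\ref{LemmaBL} and the accompanying argument in the source). Both proofs use Beauville--Laszlo glueing to replace the formal completion of $\CG$ at $\nu$ by the $L^+\wt\BP_\nu$-torsor underlying $\ul\CL'_\nu$, with the quasi-isogeny $f$ furnishing the glueing isomorphism of the associated loop torsors; then both check that the Frobenius $\tauGlob$ extends over the new torsor (using that $\ul\CL'_\nu$ is \'etale when $\nu\notin\ul\nu$, respectively that the local Frobenius is an isomorphism outside $\Var(\Fa_{\nu,0})$ when $\nu\in\ul\nu$). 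One small note: your reference to Proposition~\ref{formal torsor prop} points to material that is commented out in this paper and lives in \cite{AH_Local}, so in a cleaned-up version you should cite the companion paper directly.
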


As we shall see in the next theorem, the unbounded Rapoport-Zink space appears as the uniformization space for $\nabla_n\scrH^1(C,\FG)$. In the present article this is only an auxiliary result for us, in which we construct the uniformization morphism. No bounds are needed for this purpose and the construction even works in the relative situation. This uniformization morphism will be used in our main result (Theorem~\ref{Uniformization2}) on the uniformization in the bounded case. We retain the notation of Definition~\ref{DefGlobalLocalFunctor}.

\begin{theorem}\label{Uniformization1}
Let $\bar \TTT$ be a scheme over $\Spec A_{\ul\nu}/(\zeta_1,\ldots,\zeta_n)$, and let $\ul{\CG}_0\in\nabla_n\scrH^1(C,\FG)^{\ul\nu}(\bar \TTT)$ be a global $\FG$-shtuka. Let $(\ul{\BL}_i)_i:=\wh{\ul\Gamma}(\ul{\CG}_0)$ be the tuple of local $\BP_{\nu_i}$-shtukas over $\bar\TTT$ where $\wh{\ul\Gamma}$ is the global-local functor from Definition~\ref{DefGlobalLocalFunctor}. Consider the unbounded Rapoport-Zink spaces $\CM_{\ul{\BL}_i}$ which are ind-quasi-projective ind-schemes over $\bar\TTT\whtimes_{\BF_{\nu_i}}\Spf A_{\nu_i}$ by Theorem~\ref{ThmModuliSpX} and the product $\prod_i \CM_{\ul{\BL}_i}:=\CM_{\ul{\BL}_1}\whtimes_{\bar \TTT}\ldots\whtimes_{\bar \TTT}\CM_{\ul{\BL}_n}$ which is an ind-quasi-projective ind-scheme over $\wh \TTT:=\bar \TTT\whtimes_{\BF_{\ul\nu}}\Spf A_{\ul\nu}$. Then there is a natural morphism of ind-DM-stacks over $\wh \TTT$
\begin{align}
\Psi_{\ul\CG_0}\colon  \prod_i \CM_{\ul{\BL}_i} \;\longto \enspace & \nabla_n\scrH^1(C,\FG)^{\ul\nu}\whtimes_{\BF_{\ul\nu}} \bar \TTT\;=\;\nabla_n\scrH^1(C,\FG)\whtimes_{C^n} \wh \TTT,\\
\bigl(\ul\CL_i,\delta_i\colon\ul\CL_i\to\ul\BL_i\bigr)_i \;\longmapsto \enspace & \delta_n^*\circ\ldots\circ\delta_1^*\,\ul\CG_0 \nonumber
\end{align}
which is ind-proper and formally \'etale.
\end{theorem}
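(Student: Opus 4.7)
The plan is to construct $\Psi_{\ul\CG_0}$ by a Beauville--Laszlo type gluing via the global-local functor, and then to deduce both properties from the rigidity of quasi-isogenies (locally in \PropRigidityLocal\ and globally in \cite[\PropGlobalRigidity]{AH_Local}). An $S$-point of the source is a tuple $(\ul\CL'_i,\bar\delta_i)_i$ with $\ul\CL'_i$ a local $\BP_{\nu_i}$-shtuka over $S$ and $\bar\delta_i\colon\ul\CL'_i|_{\bar S}\to\ul\CL_i|_{\bar S}$ a quasi-isogeny over $\bar S$. The structure morphism $S\to\wh\TTT$ projects to $f\colon S\to\bar\TTT$ and to the tautological $S$-point of $\Spf A_{\ul\nu}$. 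Pulling back $\ul\CG_0$ along $f$ gives a global shtuka $f^*\ul\CG_0$ over $S$ whose local shtuka at $\nu_i$ is $f^*\ul\CL_i$. Since a quasi-isogeny depends only on the $L\genericG_{\nu_i}$-torsor structure and not on the chosen characteristic, \PropRigidityLocal\ lifts each $\bar\delta_i$ uniquely to a quasi-isogeny $\delta_i\colon\ul\CL'_i\to f^*\ul\CL_i$ over $S$. Iterated application of \PropLocalIsogeny\ at $\nu_1,\ldots,\nu_n$ then produces a global $\FG$-shtuka $\ul\CG:=\delta_n^*\cdots\delta_1^*f^*\ul\CG_0$ over $S$ with $\wh\Gamma_{\nu_i}(\ul\CG)=\ul\CL'_i$, whose characteristic sections are inherited from the $\ul\CL'_i$ and therefore match the tautological $S$-point of $\Spf A_{\ul\nu}\subset C^n$. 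The pair $(\ul\CG,f)$ is the required image of $\Psi_{\ul\CG_0}$ in the target.

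For the formally \'etale property, given a closed immersion $S\hookrightarrow S'$ defined by a locally nilpotent ideal, an $S$-point $x=(\ul\CL'_i,\bar\delta_i)_i$ of the source with image $y=(\ul\CG,f)$, and a lift $y'=(\ul\CG',f')$ of $y$ to $S'$, set $\ul\CK_i:=\wh\Gamma_{\nu_i}(\ul\CG')$, a local $\BP_{\nu_i}$-shtuka over $S'$ restricting to $\ul\CL'_i$. Using \PropRigidityLocal\ for the thickening $\bar S\hookrightarrow\bar{S'}$, lift $\bar\delta_i$ uniquely to a quasi-isogeny $\bar\epsilon_i$ over $\bar{S'}$. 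That the resulting $S'$-point $(\ul\CK_i,\bar\epsilon_i)_i$ is mapped to $y'$ by $\Psi_{\ul\CG_0}$ is forced by \cite[\PropGlobalRigidity]{AH_Local} applied to the constructed lift versus $\ul\CG'$, while uniqueness of the lift of $x$ is immediate from the same two rigidity statements.

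For ind-properness, write $\prod_i\ul\CM_{\ul\CL_i}=\dirlim\cM_a$ as a colimit of quasi-projective subschemes $\cM_a$ over $\wh\TTT$ (Theorem~\ref{ThmModuliSpX}), chosen as pullbacks of projective Schubert varieties in the affine flag variety $\SpaceFl_{\BP_{\nu_i}}$. Properness of each $\cM_a$ over its image in the target reduces, via the valuative criterion, to the statement that for a DVR $R\in\Nilp_{\wh\TTT}$ with fraction field $K$, a $K$-point of $\cM_a$, and an $R$-lift $(\ul\CG_R,f_R)$ of the target image, the quasi-isogeny extends over $R$ because the relevant Schubert variety inside $\wh\SpaceFl_{\BP_{\nu_i}}\whtimes_\BaseOfD\wh\TTT$ is projective over $\wh\TTT$ and the uniqueness of the lift is ensured by the already-established formally \'etale property. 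The main obstacles are to verify that \PropLocalIsogeny\ produces a global shtuka whose characteristic section is inherited from $\ul\CL'_i$ rather than from the constant characteristic of $f^*\ul\CG_0$ (requiring care in distinguishing the base characteristic $\zeta_i=0$ on $\bar\TTT$ from the tautological characteristic $S\to\Spf A_{\ul\nu}$), and to confirm that in the valuative criterion the extension stays inside the chosen piece $\cM_a$.
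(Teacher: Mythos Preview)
Your construction of $\Psi_{\ul\CG_0}$ and your argument for the formally \'etale property are essentially the paper's: lift the $\bar\delta_i$ to $S$ by rigidity \cite[\PropRigidityLocal]{AH_Local}, iterate Proposition~\ref{PropLocalIsogeny}, and recover the lift in the infinitesimal criterion the same way.

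The gap is in ind-properness. You claim that the pieces $\cM_a$ can be taken as ``pullbacks of projective Schubert varieties'' in $\SpaceFl_{\BP_{\nu_i}}$, and that the quasi-isogeny extends over the DVR because such a Schubert variety is projective over $\wh\TTT$. But for a parahoric $\BP_{\nu_i}$ the affine flag variety $\SpaceFl_{\BP_{\nu_i}}$ is in general only ind-\emph{quasi}-projective, not ind-projective; its Schubert varieties are quasi-projective, not projective. Indeed, Theorem~\ref{ThmModuliSpX}\ref{ThmModuliSpX_B} asserts ind-projectivity only when $\BP$ is reductive (equivalently when $\SpaceFl_\BP$ is ind-projective), which is precisely what can fail at the ramified places of $\FG$. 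So your valuative-criterion step does not go through: nothing forces the point $g_i\in L\genericG_{\nu_i}(L)$ to extend to $R$ inside $\SpaceFl_{\BP_{\nu_i}}$.

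The paper circumvents this by pushing forward along a faithful representation $\rho\colon\FG\hookrightarrow\GL(\CV)$ as in Proposition~\ref{PropGlobalRep}. Since $\GL(\CV)$ is reductive, $\prod_i\ul\CM_{\rho_*\ul\CL_i}$ \emph{is} ind-projective over $\wh\TTT$ (Theorem~\ref{ThmModuliSpX}\ref{ThmModuliSpX_B}), and $\nabla_n\scrH^1(C,\GL(\CV))^{\ul\nu}$ is ind-separated (Theorem~\ref{nHisArtin}), so the valuative criterion yields an $R$-point $\alpha$ upstairs given by some $\wt g_i\in L\GL_r(R)$. The final point is that $L\genericG_{\nu_i}$ is a \emph{closed} sub-ind-scheme of $L\GL_r$, so $\wt g_i$, extending $g_i\in L\genericG_{\nu_i}(L)$, already lies in $L\genericG_{\nu_i}(R)$; this gives the desired lift $\tilde\alpha$ to $\prod_i\ul\CM_{\ul\CL_i}$. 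Your sketch is missing exactly this reduction to the reductive case.
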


\begin{proof}
By rigidity of quasi-isogenies \cite[\PropRigidityLocal]{AH_Local} the functors $\CM_{\ul{\BL}_i}$ are naturally isomorphic to the functors
\begin{eqnarray*}
\SSS &\longmapsto & \big\{\text{Isomorphism classes of}~(\ul{\CL}_i,\delta_i)\colon \;\text{where }\ul{\CL}_i~\text{is a local $\BP_{\nu_i}$-shtuka over $\SSS$ }\\ 
&&~~~~~~~~~\text{and}~\delta_i\colon  \ul{\CL}_i\to \ul{\BL}_{i,\SSS}~\text{is a quasi-isogeny of local $\BP_{\nu_i}$-shtukas over $\SSS$}\big\}. 
\end{eqnarray*}
Let $\SSS\to\wh \TTT$ be a $\wh \TTT$-scheme. In particular via the morphism $\SSS\to\wh \TTT\to\bar \TTT\whtimes_{\BF_\ul\nu}\Spf\BF_{\ul\nu}\dbl\zeta_i\dbr=\bar \TTT\whtimes_{\BF_{\nu_i}}\Spf A_{\nu_i}$ the scheme $\SSS$ lies in $\Nilp_{\bar \TTT\whtimes_{\BF_{\nu_i}}\Spf A_{\nu_i}}$. Let $\charsect_i\colon \SSS\to\Spf A_{\nu_i}\to C$ be the induced characteristic morphism. Take an element $(\ul{\CL}_i,\delta_i)_i$ of $\prod_i \CM_{\ul{\BL}_i}(\SSS)$ and the global $\FG$-shtuka $\delta_1^*\,\ul\CG_{0,\SSS}$ from Proposition~\ref{PropLocalIsogeny}. Since $\nu_i\ne\nu_j$ for $i\ne j$ the local $\BP_{\nu_i}$-shtuka $\wh\Gamma_{\nu_i}(\delta_1^*\,\ul\CG_{0,\SSS})$ equals $\ul\BL_i$ for $i\ge2$. We may therefore iterate this procedure for $i=2,\ldots,n$ to obtain a global $\FG$-shtuka $\delta_n^\ast\circ\ldots\circ\delta_1^*\,\ul\CG_{0,\SSS}$ in $\nabla_n\scrH^1(C,\FG)^{\ul\nu}(\SSS)$. It satisfies $\wh\Gamma_{\nu_i}(\delta_n^\ast\circ\ldots\circ\delta_1^*\,\ul\CG_{0,\SSS})=\ul\CL_i$. Sending $(\ul{\CL}_i,\delta_i)_i$ to $\delta_n^\ast\circ\ldots\circ\delta_1^*\,\ul\CG_{0,\SSS}$ establishes the morphism $\Psi_{\ul\CG_0}$.

That $\Psi_{\ul\CG_0}$ is formally \'etale is just another way of phrasing the rigidity of quasi-isogenies. We give more details. Let $\ul\CG$ be an $\SSS$-valued point of $\nabla_n\scrH^1(C,\FG)^{\ul\nu}\whtimes_{\BF_{\ul\nu}} \bar \TTT$. Let $\bar \SSS$ be a closed subscheme of $\SSS$ defined by a locally nilpotent sheaf of ideals. Further assume that $\ul\CG_{\bar \SSS}= \bar\delta_n^\ast\circ\ldots\circ\bar\delta_1^*\,\ul\CG_{0,\bar \SSS}$ for quasi-isogenies $\bar\delta_i\colon\wh\Gamma_{\nu_i}(\ul\CG_{\bar \SSS})\to\ul\BL_{i,\bar \SSS}$ as above, defined over $\bar \SSS$ and let $\bar g\colon\ul\CG_{\bar \SSS}\to\ul\CG_{0,\bar \SSS}$ be the corresponding quasi-isogeny from Proposition~\ref{PropLocalIsogeny} with $\wh\Gamma_{\nu_i}(\bar g)=\bar\delta_i$. Now these quasi-isogenies lift uniquely to quasi-isogenies $\delta_i\colon\wh\Gamma_{\nu_i}(\ul\CG)\to\ul\BL_{i,\SSS}$ and $g\colon\ul\CG\to\ul\CG_{0,\SSS}$ over $\SSS$ by rigidity for local \cite[\PropRigidityLocal]{AH_Local}, respectively global shtukas \cite[\PropGlobalRigidity]{AH_Local}. By uniqueness we must have $\wh\Gamma_{\nu_i}(g)=\delta_i$. Therefore the $\SSS$-valued point $\delta_n^\ast\circ\ldots\circ\delta_1^*\,\ul\CG_{0,\SSS}$ of $\nabla_n\scrH^1(C,\FG)^{\ul\nu}$ is equal to $\ul\CG$ by the uniqueness in Proposition~\ref{PropLocalIsogeny}.
 
It remains to verify that $\Psi_{\ul\CG_0}$ is ind-proper. Since $\prod_i \CM_{\ul{\BL}_i}$ is of ind-finite type over $\wh \TTT$ we can test the ind-properness of $\Psi_{\ul\CG_0}$ by the valuative criterion of properness; see \cite[Theorem 7.3]{L-M}. Let $R$ be a complete discrete valuation ring with fraction field $L$ and algebraically closed residue field. Let $\ul\CG$ be an $R$ valued point of $\nabla_n\scrH^1(C,\FG)^{\ul\nu}\whtimes_{\BF_{\ul\nu}}\bar\TTT$ and set $\wh{\ul\Gamma}(\ul\CG)=(\ul\CL_i)_i$. Since $R$ is strictly henselian, we may trivialize all local $\BP_{\nu_i}$-shtukas $\ul\CL_i\cong ((L^+\BP_{\nu_i})_R,b'_i\,\hat\sigma^*_{\nu_i})$ and $(\ul\BL_i)_R\cong ((L^+\BP_{\nu_i})_R,b_i\,\hat\sigma^*_{\nu_i})$ over $R$.
Consider an $L$-valued point $\ul x$ of $\prod_i \CM_{\ul{\BL}_i}$ which maps to $\ul\CG_L$ under the morphism $\Psi_{\ul\CG_0}$ and represent it as the tuple $\bigl(((L^+\BP_{\nu_i})_L,b'_i\,\hat\sigma^*_{\nu_i}),g_i\bigr)_i$, where $g_i$ lies in $L\genericG_{\nu_i}(L)$. We may take a faithful representation $\rho\colon  \FG\to \GL(\CV)$, where $\CV$ is a vector bundle over $C$ of rank $r$; see  Proposition~\ref{PropGlobalRep}\ref{PropGlobalRep_A}. Let $\rho_\ast\ul\CG_0$ be the induced global $\GL(\CV)$-shtuka over $\bar\TTT$. Note that we want to show that there is a unique morphism $\tilde{\alpha}$ which fits into the following commutative diagram
\[
\xymatrix @C+3pc @R+1pc {
\Spec L \ar[d]\ar[r]^{\ul x}  & \prod_i \CM_{\ul{\BL}_i}\ar[d]^{\raisebox{4ex}{$\SC\Psi_{\ul\CG_0}$}} \ar[r]^{\rho_*} & \ar[d]^{\Psi_{\rho_\ast\ul\CG_0}}\prod_i \CM_{\rho_*\ul{\BL}_i}\\
\Spec R \ar@{-->}[ur]^{\tilde{\alpha}}\ar[urr]^{\quad\qquad\qquad\qquad\qquad\qquad{}_{\SC\alpha}} \ar[r] & \nabla_n\scrH^1(C,\FG)^{\ul\nu}\whtimes_{\BF_{\ul\nu}} \bar \TTT\ar[r] \ar[r]^{\rho_*}\ar[d] & \nabla_n\scrH^1(C,\GL(\CV))^{\ul\nu}\whtimes_{\BF_{\ul\nu}} \bar \TTT \ar[d]\\
& \wh\TTT \ar@{=}[r] & \wh\TTT.
}
\]
The horizontal arrows in the right commutative diagram are induced by the representation $\rho$. In addition the existence and uniqueness of the morphism $\alpha$ follows from the fact that $\prod_i \CM_{\rho_*\ul{\BL}_i}$ is ind-proper over $\wh\TTT$ by Theorem~\ref{ThmModuliSpX} because $\GL(\CV)$ is reductive, and that $\nabla_n\scrH^1(C,\GL(\CV))^{\ul\nu}\whtimes_{\BF_{\ul\nu}}\bar T$ is ind-separated over $\wh\TTT$ by Theorem~\ref{nHisArtin}. The $R$-valued point $\alpha$ is given by a tuple of the form $\bigl(\left(\rho_\ast(L^+\BP_{\nu_i})_R,\rho(b'_i)\hat\sigma^*_{\nu_i}\right),\wt{g}_i\bigr)_i$, where $\wt{g}_i$ lies in $L\!\GL_r(R)$. Since $L\genericG_{\nu_i}$ is closed in $L\!\GL_r$ and $\wt{g}_i$ extends $g_i$ over $R$ the element $\wt{g}_i$ lies in fact in $L\genericG_{\nu_i}(R)$. Thus $\wt{g}_i$ produces an extension of $\bigl(((L^+\BP_{\nu_i})_L,b'_i\hat\sigma^*_{\nu_i}),g_i\bigr)_i$ over $R$. This gives the desired morphism $\tilde{\alpha}$. Note that the commutativity of the diagram
\[
\xymatrix @C+3pc {
 & \prod_i \CM_{\ul{\BL}_i}\ar[d]^{\SC\Psi_{\ul\CG_0}}\\
\Spec R\ar[r]\ar@{-->}[ur]^{\tilde{\alpha}} & \nabla_n\scrH^1(C,\FG)^{\ul\nu}\whtimes_{\BF_{\ul\nu}} \bar \TTT\\
}
\]
follows from the ind-separatedness of the stack $\nabla_n\scrH^1(C,\FG)^{\ul\nu}\whtimes_{\BF_{\ul\nu}} \bar \TTT$; see Theorem~\ref{nHisArtin}.
\end{proof}

\begin{remark}\label{quasi-isogenylocus}
We observe that the image of $\Psi_{\ul\CG_0}$ lies inside the quasi-isogeny locus of $\ul\CG_0$ in $\nabla_n\scrH^1(C,\FG)^{\ul\nu}$. Indeed, by the above construction, starting with an $\SSS$-valued point $\ul x=(\ul{\CL}_i,\delta_i)_i$ of $\prod_i \CM_{\ul{\BL}_i}(\SSS)$ there is a unique quasi-isogeny $\delta_{\ul x}\colon \Psi_{\ul\CG_0}(\ul x)= \delta_n^\ast\circ \ldots\circ\delta_1^*\,\ul\CG_{0,\SSS} \to \ul\CG_{0,\SSS}$ which is an isomorphism outside the $\nu_i$ with $\left(\wh{\ul\Gamma}(\Psi_{\ul\CG_0}(\ul x)), \wh{\ul\Gamma}(\delta_{\ul x}) \right)=\ul x$ by Proposition~\ref{PropLocalIsogeny}.
\end{remark}

%
%

\section{Tate Modules of Global $\FG$-Shtukas}\label{GalRepSht}
\setcounter{equation}{0}

From now on we assume that the group scheme $\FG$ is smooth over $C$ and all its fibers are connected.

Fix an $n$-tuple $\ul\nu:=(\nu_1,\ldots ,\nu_n)$ of pairwise different places on $C$ and recall the notation from Definition~\ref{DefGlobalLocalFunctor}. Let $C':=C\setminus\{\nu_1,\ldots,\nu_n\}$. Let $\BO^{\ul\nu}:=\prod_{\nu\notin\ul\nu}A_\nu$ be the ring of integral adeles of $C$ outside $\ul\nu$ and $\BA^{\ul\nu}:=\BO^{\ul\nu}\otimes_{\CO_C}Q$ the ring of adeles of $C$ outside $\ul\nu$. The group $\FG(\BO^{\ul\nu})$ acts through Hecke correspondences on the tower $\nabla_n\scrH^1_D(C,\FG)^{\ul\nu}$. We want to extend this to an action of $\FG(\BA^{\ul\nu})$\lang{; see Remark~\ref{RemHeckeCorr}(b) below}. For this purpose we generalize the notion of level structures on global $\FG$-shtukas in this chapter. Let $S$ be a scheme in $\Nilp_{A_{\ul\nu}}$ and let $\ul\CG\in\nabla_n\scrH^1(C,\FG)^{\ul\nu}(S)$ be a global $\FG$-shtuka over $S$. For a finite subscheme $D$ of $C$ set $D_S:=D\times_{\BF_q}S$ and let $\ul\CG|_{D_S}:=\ul\CG\times_{C_S}D_S$ denote the pullback of $\ul\CG$ to $D_S$.
Let $\Rep_{\BO^{\ul\nu}} \FG$ be the category of representations of $\FG$ in finite free $\BO^{\ul\nu}$-modules $V$. More precisely, $\Rep_{\BO^{\ul\nu}} \FG$ is the category of $\BO^{\ul\nu}\,$-morphisms \mbox{$\rho\colon\FG\times_C\Spec\BO^{\ul\nu}\to\GL_{\BO^{\ul\nu}}(V)$}. Assume that $S$ is connected, fix a geometric base point $\bar{s}$ of $S$, and let $Funct^\otimes(\Rep_{\BO^{\ul\nu}} \FG,\; \FM od_{\BO^{\ul\nu} [\pi_1^\et(S,\bar{s})]})$ denote the category of tensor functors from $\Rep_{\BO^{\ul\nu}} \FG$ to the category $\FM od_{\BO^{\ul\nu}[\pi_1^\et(S,\bar{s})]}$ of $\BO^{\ul\nu}[\pi_1^\et(S,\bar{s})]$-modules. We define the (\emph{dual}) \emph{Tate functors} as follows
\begin{eqnarray}\label{tatefunctor}
\check{\CT}_{-}\colon \nabla_n\scrH^1(C,\FG)^{\ul\nu}(S) \;&\longto &\; Funct^\otimes (\Rep_{\BO^{\ul\nu}}\FG\,,\,\FM od_{\BO^{\ul\nu}[\pi_1^\et(S,\bar{s})]})\,\\
\ul\CG \;&\longmapsto &\; \bigl(\check{\CT}_{\ul\CG}\colon  \rho \mapsto \invlim[D\subset C'](\rho_\ast\ul\CG|_{D_{\bar{s}}})^\tau\bigr),\nonumber\\
\check{\CV}_{-}\colon \nabla_n\scrH^1(C,\FG)^{\ul\nu}(S) \;&\longto &\; Funct^\otimes (\Rep_{\BO^{\ul\nu}}\FG\,,\,\FM od_{\BA^{\ul\nu}[\pi_1^\et(S,\bar{s})]})\,\label{rationaltatefunctor}\\ 
\ul\CG \;&\longmapsto &\; \bigl(\check{\CV}_{\ul\CG}\colon  \rho \mapsto \invlim[D\subset C'](\rho_\ast\ul\CG|_{D_{\bar{s}}})^\tau\otimes_{\BO^{\ul\nu}} \BA^{\ul\nu}\bigr).\nonumber
\end{eqnarray}
Here $D_{\bar s}$ is finite over $\bar s=\Spec k$ for an algebraically closed field $k$, and $\rho_*\ul\CG|_{D_{\bar s}}$ is equivalent to $(M,\tau_M)$ where $M$ is a free $\CO_{D_{\bar s}}$-module of rank equal to $\dim\rho$, and $\tau_M\colon\sigma^*M\isoto M$ is an isomorphism of $\CO_{D_{\bar s}}$-modules. Then $(\rho_\ast\ul\CG|_{D_{\bar{s}}})^\tau:=\{m\in M\colon \tau_M(\sigma^*m)=m\}$ denotes the $\tau$-invariants. They form a free $\CO_D$-module of rank equal to $\dim\rho$ equipped with a continuous action of the \'etale fundamental group $\pi_1^\et(S,\bar{s})$. This definition is independent of $\bar{s}$ up to a change of base point. By \cite[\RemTateF]{AH_Local} there is a canonical isomorphism $\check{\CT}_{\ul\CG}(\rho)=\invlim[D\subset C'](\rho_\ast\ul\CG|_{D_{\bar{s}}})^\tau\cong\prod_{\nu\in C'} \check{\CT}_{L^+_\nu(\ul\CG)}(\rho_\nu)$ where $\rho=(\rho_\nu)$ and where $L^+_\nu(\ul\CG)$ is the \'etale local $\wt\BP_\nu$-shtuka associated with $\ul\CG=(\CG,\tauGlob)$ at a place $\nu\in C$ outside the characteristic places $\ul\nu$. The functor $\check{\CV}_{-}$ moreover transforms quasi-isogenies into isomorphisms. In fact, $\check{\CV}_{\ul\CG}$ extends to a tensor functor in $Funct^\otimes (\Rep_{\BA^{\ul\nu}}\FG\,,\,\FM od_{\BA^{\ul\nu}[\pi_1^\et(S,\bar{s})]})$ by the usual arguments, see for example \cite[Definition~7.1 and Remark~7.3]{HV3}, but we will not need this here. 

\begin{definition}\label{level structure}
Let $\omega_{\BO^{\ul\nu}}^\circ\colon \Rep_{\BO^{\ul\nu}}\FG \to \FM od_{\BO^{\ul\nu}}$ and $\omega^\circ:=\omega_{\BO^{\ul\nu}}^\circ\otimes_{\BO^{\ul\nu}}\BA^{\ul\nu}\colon \Rep_{\BO^{\ul\nu}}\FG \to \FM od_{\BA^{\ul\nu}}$ denote the forgetful functors. For a global $\FG$-shtuka $\ul\CG$ over $S$ let us consider the sets of isomorphisms of tensor functors $\Isom^{\otimes}(\omega_{\BO^{\ul\nu}}^\circ,\check{\CT}_{\ul{\CG}})$ and $\Isom^{\otimes}(\omega^\circ,\check{\CV}_{\ul{\CG}})$. These sets are non-empty by Lemma~\ref{LSisnonempty} below. Note that $\FG(\BA^{\ul\nu})=\Aut^\otimes(\omega^\circ)$ and $\FG(\BO^{\ul\nu})=\Aut^\otimes(\omega_{\BO^{\ul\nu}}^\circ)$ by the generalized Tannakian formalism \cite[Corollary~5.20]{Wed}, because $\BO^{\ul\nu}$ is a Pr\"ufer ring; see \cite[Definition~10.17]{Larsen}. By the definition of the Tate functor, $\Isom^{\otimes}(\omega_{\BO^{\ul\nu}}^\circ,\check{\CT}_{\ul{\CG}})$ admits an action of $\pi_1^\et(S,\bar{s})\times\FG(\BO^{\ul\nu})$ where $\FG(\BO^{\ul\nu})$ acts through $\omega_{\BO^{\ul\nu}}^\circ$ and $\pi_1^\et(S,\bar{s})$ acts through $\check{\CT}_{\ul\CG}$. Also $\Isom^{\otimes}(\omega^\circ,\check{\CV}_{\ul{\CG}})$ admits an action of $\pi_1^\et(S,\bar{s})\times\FG(\BA^{\ul\nu})$. 
\end{definition}

\begin{lemma}\label{LSisnonempty}
The sets $\Isom^{\otimes}(\omega_{\BO^{\ul\nu}}^\circ,\check{\CT}_{\ul{\CG}})$ and $\Isom^{\otimes}(\omega^\circ,\check{\CV}_{\ul{\CG}})$ are non-empty. $\Isom^{\otimes}(\omega_{\BO^{\ul\nu}}^\circ,\check{\CT}_{\ul{\CG}})$, respectively $\Isom^{\otimes}(\omega^\circ,\check{\CV}_{\ul{\CG}})$, is a principal homogeneous space under the group $\FG(\BO^{\ul\nu})$, respectively $\FG(\BA^{\ul\nu})$.
\end{lemma}

\begin{proof}
Using the injective map $\Isom^{\otimes}(\omega_{\BO^{\ul\nu}}^\circ,\check{\CT}_{\ul{\CG}})\longto\Isom^{\otimes}(\omega^\circ,\check{\CV}_{\ul{\CG}}),\;\gamma\mapsto\gamma_Q:=\gamma\otimes_{\BO^{\ul\nu}}\BA^{\ul\nu}$ it suffices to prove the non-emptiness of $\Isom^{\otimes}(\omega_{\BO^{\ul\nu}}^\circ,\check{\CT}_{\ul{\CG}})$. Since the question only depends on the pullback of $\ul\CG$ to the geometric base point $\bar s$ we may assume that $S=\Spec\AlgClFld$ for an algebraically closed field $\AlgClFld$. For each place $\nu\in C'$ we consider the \'etale local $\wt\BP_\nu$-shtuka $L^+_\nu(\ul\CG)$ associated with $\ul\CG$ as in \cite[\RemTateF]{AH_Local}. By our assumption that $\FG$ is smooth with connected fibers, \cite[\CorEtIsTrivial]{AH_Local} yields an isomorphism $L^+_\nu(\ul\CG)\isoto\bigl((L^+\wt\BP_\nu)_\AlgClFld,1\!\cdot\!\hat\sigma^*\bigr)=:\ul\BL_\nu$. For the local $\wt\BP_\nu$-shtuka $\ul\BL_\nu$ the Tate functor $\check{\CT}_{\ul\BL_\nu}$ equals the forgetful functor $\Rep_{A_\nu}\BP_\nu\to\FM od_{A_\nu}$. Then $\omega_{\BO^{\ul\nu}}^\circ=\prod_{\nu\in C'}\check{\CT}_{\ul\BL_\nu}\cong\prod_{\nu\in C'}\check{\CT}_{L^+_\nu(\ul\CG)}\cong\check{\CT}_{\ul\CG}$ provides an isomorphism in $\Isom^{\otimes}(\omega_{\BO^{\ul\nu}}^\circ,\check{\CT}_{\ul{\CG}})$. If $\gamma,\gamma'\in\Isom^{\otimes}(\omega_{\BO^{\ul\nu}}^\circ,\check{\CT}_{\ul{\CG}})$ then $\gamma^{-1}\circ\gamma'\in \Aut^\otimes(\omega_{\BO^{\ul\nu}}^\circ)=\FG(\BO^{\ul\nu})$. This proves the lemma.
\end{proof}

\begin{corollary}\label{CorToLSisnonempty}
Let $m\in\BN_0$ be a multiple of $[\BF_{\ul\nu}\colon\BF_q]$. For every global $\FG$-shtuka $\ul\CG\in(\nabla_n\scrH^1(C,\FG)\times_{C^n}\Spec\BF_{\ul\nu})(S)$ the map 
\[
\tau_\CG^m\;:=\;\tau_\CG\circ\sigma^*\tau_\CG\circ\ldots\circ\sigma^{(m-1)*}\tau_\CG\colon\sigma^{m*}\ul\CG\;\longto\;\ul\CG
\]
is a quasi-isogeny, called the \emph{$q^m$-Frobenius isogeny} of $\ul\CG$. If $\gamma\in\Isom^{\otimes}(\omega^\circ,\check{\CV}_{\ul{\CG}})$ then $\sigma^{m*}(\gamma)=\check\CV_{\tau_\CG^m}^{-1}\circ\gamma$ in $\Isom^{\otimes}(\omega^\circ,\check{\CV}_{\sigma^{m*}\ul{\CG}})$.
\end{corollary}

\begin{proof}
The global $\FG$-shtuka $\sigma^{m*}\ul\CG$ is obtained by pulling back $\ul\CG$ under the absolute $q^m$-Frobenius $\sigma^m=\Frob_{q^m,S}\colon S\to S$. Note that the characteristic sections $s_i$ of $\ul\CG$ satisfy $s_i\circ\Frob_{q^m,S}=\Frob_{q^m,\BF_{\ul\nu}}\circ s_i=s_i$, because $(s_1,\ldots,s_n)\colon S\to C^n$ factors through $\Spec\BF_{\ul\nu}\in C^n$ and $\Frob_{q^m,\BF_{\ul\nu}}=\id_{\BF_{\ul\nu}}$. So $\ul\CG$ and $\sigma^{m*}\ul\CG$ have the same characteristic and $\tau_\CG^m$ is a quasi-isogeny which is an isomorphism over $(C\setminus\ul\nu)_S$ and satisfies $\tau_\CG\circ\sigma^*(\tau_\CG^m)=\tau_\CG^m\circ\tau_{\sigma^{m*}\CG}$.

The last equality can be proved as in Lemma~\ref{LSisnonempty}. Namely, by the proof of the lemma $\gamma$ arises as $\gamma=(\gamma_\nu)_\nu$ from a collection of quasi-isogenies $\gamma_\nu\colon\ul\BL_\nu:=\bigl((L^+\wt\BP_\nu)_\AlgClFld,1\!\cdot\!\hat\sigma^*\bigr)\to L^+_\nu(\ul\CG)$ of local $\wt\BP_\nu$-shtukas at every place $\nu\in C\setminus\ul\nu$. Then $\sigma^{m*}\gamma_\nu\colon\ul\BL_\nu=\sigma^{m*}\ul\BL_\nu\to L^+_\nu(\sigma^{m*}\ul\CG)$ satisfies $L^+_\nu(\tau_\CG^m)\circ\sigma^{m*}\gamma_\nu=\gamma_\nu\circ\tau_{\BL_\nu}^m=\gamma_\nu$, and hence $\check\CV_{\tau_\CG^m}\circ\sigma^{m*}(\gamma)=\gamma$.
\end{proof}

\begin{definition}\label{DefRatLevelStr}
\begin{enumerate}
\item 
For a compact open subgroup $H\subseteq \FG(\BA^{\ul\nu})$ we define a \emph{rational $H$-level structure} $\bar\gamma$ on a global $\FG$-shtuka $\ul\CG$ over a connected scheme $S\in\Nilp_{A_{\ul\nu}}$ as a $\pi_1^\et(S,\bar{s})$-invariant $H$-orbit $\bar\gamma=\gamma H$ in $\Isom^{\otimes}(\omega^\circ,\check{\CV}_{\ul{\CG}})$. For a non-connected scheme $S$ we make a similar definition choosing a base point on each connected component and a rational $H$-level structure on the restriction to each connected component separately.
\item 
We denote by $\nabla_n^H\scrH^1(C,\FG)^{\ul\nu}$ the category fibered in groupoids \lang{over $\Nilp_{A_\ul\nu}$} whose $S$-valued points $\nabla_n^H\scrH^1(C,\FG)^{\ul\nu}(S)$ is the category whose objects are tuples $(\ul\CG,\gamma H)$, consisting of a global $\FG$-shtuka $\ul\CG$ in $\nabla_n\scrH^1(C,\FG)^{\ul\nu}(S)$ together with a rational $H$-level structure $\gamma H$, and whose morphisms are quasi-isogenies of global $\FG$-shtukas that are isomorphisms at the characteristic places $\nu_i$ and are compatible with the $H$-level structures.  
\end{enumerate}
\end{definition}

This definition of level structures generalizes our initial Definition~\ref{Global Sht} according to the following

\begin{theorem}\label{H_DL-Str} 
Let $D\subset C$ be a finite subscheme disjoint from $\ul\nu$, and consider the compact open subgroup $H_D:=\ker\bigl(\FG(\BO^{\ul\nu})\to\FG(\CO_D)\bigr)$ of $\FG(\BA^{\ul\nu})$. Then there is a canonical isomorphism of stacks
$$
\nabla_n\scrH^1_D(C,\FG)^{\ul\nu}\enspace \isoto \enspace\nabla_n^{H_D}\scrH^1(C,\FG)^{\ul\nu}.
$$
\end{theorem}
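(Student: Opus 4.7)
The plan is to construct the functor $\Phi\colon\nabla_n\scrH_D^1(C,\FG)^{\ul\nu}\to\nabla_n^{H_D}\scrH^1(C,\FG)^{\ul\nu}$ sending a $D$-level structure to its associated $H_D$-level structure, and to verify that $\Phi$ is essentially surjective and fully faithful. The core input is the tannakian formalism for $\FG$-torsors together with the function-field Riemann--Hilbert correspondence for \'etale local shtukas (\PropTateEquiv{} and \PropTateEquivP{}). The crucial fact is that since $D\cap\ul\nu=\emptyset$, the Frobenius $\tauGlob$ restricts to an isomorphism at $D_S$, so the pair $(\CG|_{D_S},\tauGlob|_{D_S})$ is controlled tannakianly by the \'etale local $\wt\BP_\nu$-shtukas $L^+_\nu(\ul\CG)$ reduced modulo $z_\nu^{d_\nu}$ at the closed points $\nu$ of $D$ (with multiplicity $d_\nu$).

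Construction of $\Phi$. Given $(\ul\CG,\psi)$, for each $\rho\in\Rep_{\BA^{\ul\nu}}\FG$ and each $\nu\in\Supp(D)$, the datum $\psi$ induces a Frobenius-compatible trivialization of $\rho_\ast L^+_\nu(\ul\CG)$ modulo $z^{d_\nu}$. By \PropTateEquiv{} applied representation-wise, this corresponds to a trivialization (with trivial $\pi_1$-action) of $\check\CT_{L^+_\nu(\ul\CG)}(\rho)$ modulo $z^{d_\nu}$. At each $\nu\in C'\smallsetminus\Supp(D)$ I choose, using Lemma~\ref{LSisnonempty}, an arbitrary integral tensor trivialization. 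Assembled over $\nu\in C'$ via the decomposition $\check\CT_{\ul\CG}(\rho)\cong\prod_\nu\check\CT_{L^+_\nu(\ul\CG)}(\rho_\nu)$ recalled after \eqref{rationaltatefunctor}, these data give $\gamma\in\Isom^\otimes(\check\CT_{\ul\CG},\omega^\circ_{\BA^{\ul\nu}})$. The $H_D$-orbit $\bar\gamma:=H_D\gamma$ is independent of the auxiliary choices (since at each $\nu\notin\Supp(D)$ the $\nu$-component of $H_D$ is all of $\FG(A_\nu)$) and is $\pi_1$-invariant since the associated Galois representation lands in $H_D=\ker\bigl(\FG(\BA^{\ul\nu})\to\FG(\CO_D)\bigr)$ by construction. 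One sets $\Phi(\ul\CG,\psi):=(\ul\CG,\bar\gamma)$ and extends to morphisms in the evident way.

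Essential surjectivity is established by an integralization procedure: given $(\ul\CG,\bar\gamma)$, fix $\gamma_0\in\Isom^\otimes(\check\CT_{\ul\CG},\omega^\circ_{\BA^{\ul\nu}})$ by Lemma~\ref{LSisnonempty}, write a representative of $\bar\gamma$ as $\gamma=g\gamma_0$ with $g\in\FG(\BA^{\ul\nu}_Q)$, and apply \PropLocalIsogeny{} at each $\nu\in C'$ to obtain a global $\FG$-shtuka $\ul\CG'$ together with a quasi-isogeny $\ul\CG'\to\ul\CG$ that is an isomorphism at $\ul\nu$ and through which $\bar\gamma$ pulls back to an integral orbit $\bar\gamma'$ on $\ul\CG'$. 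The mod-$D$ reduction of any representative of $\bar\gamma'$ is then well-defined on the orbit (since $H_D$ is the kernel of reduction modulo $D$) and $\pi_1$-equivariant with trivial action; reversing the Riemann--Hilbert dictionary place by place produces a Frobenius-compatible trivialization $\psi'\colon\CG'|_{D_S}\isoto\FG\times_CD_S$ with $\Phi(\ul\CG',\psi')\cong(\ul\CG,\bar\gamma)$ in $\nabla_n^{H_D}\scrH^1(C,\FG)^{\ul\nu}$. Fully faithfulness follows because any quasi-isogeny $\ul\CG\to\ul\CG'$ which is an isomorphism at $\ul\nu$ and which preserves integral $H_D$-level structures must preserve the integral Tate module at every place of $C'$; hence by \PropTateEquiv{} and \PropTateEquivP{} it induces isomorphisms on all $L^+_\nu(\ul\CG)$ for $\nu\in C'$, and so by \PropLocalIsogeny{} is itself an isomorphism of the underlying $\FG$-torsors preserving $\psi$. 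The main technical obstacle will be the tannakian bookkeeping at the torsor level (upgrading representation-wise statements to statements about $\FG$-torsors) and checking that the integralization step combines with Frobenius-compatibility and $\pi_1$-equivariance functorially in $S$.
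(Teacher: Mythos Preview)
Your plan is essentially the paper's own argument: construct the functor via the Tate module, integralize a given rational $H_D$-level using a chosen $\gamma_0\in\Isom^\otimes(\check\CT_{\ul\CG},\omega^\circ_{\BA^{\ul\nu}})$ and \PropLocalIsogeny, then recover the $D$-level structure tannakianly and deduce full faithfulness from integrality of $H_D$. Two small points worth tightening: in the integralization step, $g\in\FG(\BA_Q^{\ul\nu})$ is non-integral at only finitely many places $x_1,\dots,x_r$, so \PropLocalIsogeny{} is applied only there (not ``at each $\nu\in C'$''); and the reconstruction of $\psi'$ is exactly the tannakian step you flag---the paper does it by observing that the induced tensor isomorphism $\eta\colon\CM^S_{\CG'}\isoto\CM^S_{\FG\times_C C_S}$ over $D_S$ lies in $\Aut^\otimes(\omega^\circ_C)(\CO_{D_{S'}})=\FG(D_{S'})$ by Wedhorn's tannakian duality over Pr\"ufer rings, which then descends to $\psi'$ over $S$.
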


\begin{proof}
Let $(\ul\CG, \psi)$ be an object in $\nabla_n\scrH^1_D(C,\FG)^{\ul\nu}(S)$ where $\psi$ is a $D$-level structure; see Definition~\ref{DefD-LevelStr}. For any representation $\rho$ in $\Rep_{\BO^{\ul\nu}}\FG$ the isomorphism $\psi\colon \ul\CG|_{D_S}\isoto \FG\times_C D_S$ with $\psi\circ\tau|_{D_S}=\sigma^*\psi$ induces an isomorphism $\rho_*\psi\colon\rho_\ast\ul\CG|_{D_S}\isoto \GL_{\dim\rho,D_S}$ with $\rho_*\psi^{-1}\colon\omega^\circ_{\BO^{\ul\nu}}(\rho)\otimes_{\BO^{\ul\nu}} \CO_D\isoto\check{\CT}_{\ul\CG}(\rho)\otimes_{\BO^{\ul\nu}} \CO_D$ and consequently we obtain an isomorphism $\bar{\gamma}\colon \omega^\circ_{\BO^{\ul\nu}}\otimes_{\BO^{\ul\nu}} \CO_D\isoto\check{\CT}_{\ul\CG}\otimes_{\BO^{\ul\nu}} \CO_D$ of tensor functors with $\bar\gamma(\rho):=\rho_*\psi^{-1}$. By Lemma~\ref{LSisnonempty} there exists a $\gamma\in\Isom^{\otimes}(\omega_{\BO^{\ul\nu}}^\circ,\check{\CT}_{\ul{\CG}})$ be a lift of $\bar{\gamma}$ to $\BO^{\ul\nu}$ and we let $\gamma_Q\in\Isom^{\otimes}(\omega^\circ,\check{\CV}_{\ul{\CG}})$ be the isomorphism induced from $\gamma$. Then the coset $\gamma_Q H_D$ only depends on $\psi$ and we define the morphism 
\begin{equation}\label{EqMissingTheorem}
\nabla_n\scrH^1_D(C,\FG)^{\ul\nu}(S) \longto \nabla_n^{H_D}\scrH^1(C,\FG)^{\ul\nu}(S)
\end{equation}
by sending $(\ul\CG, \psi)$ to $(\ul\CG,\gamma_Q H_D)$.

Let us show that this functor is essentially surjective. Let $(\ul\CG,\gamma_Q H_D)$ be an object of the category $\nabla_n^{H_D}\scrH^1(C,\FG)^{\ul\nu}$. By Lemma~\ref{LSisnonempty} we may choose an isomorphism \mbox{$\beta\colon\omega_{\BO^\ul\nu}^{\circ}\isoto \check{\CT}_{\ul\CG}$}. The automorphism $\beta_Q^{-1}\gamma_Q \in Aut^\otimes (\omega^\circ)$ corresponds to an element $g\in\FG(\BA^{\ul\nu})$ by Lemma~\ref{LSisnonempty}. We may write $g:=(g_{x_1}, \ldots, g_{x_r},g^{\ul x})\in \FG(Q_{x_1}) \times \ldots \times \FG(Q_{x_r}) \times \FG(\BO^{\ul\nu, \ul x})$ for suitable $r$ and $\ul x=(x_i)_i\in (C')^r$. Then we set $g':=(1,\ldots,1,g^{\ul x})^{-1}\cdot g=(g_{x_1},\ldots, g_{x_r},1)$ and $\beta':= \beta\cdot(1,\ldots,1,g^{\ul x})\in \Isom^{\otimes}(\omega_{\BO^{\ul\nu}}^\circ,\check{\CT}_{\ul{\CG}})$. For each $x_i$ let $L^+_{x_i}(\ul\CG)$ be the \'etale local $\wt\BP_{x_i}$-shtuka associated with $\ul\CG$ by \cite[\RemTateF]{AH_Local}. We consider the automorphism $\delta:=(\delta_{x_1},\ldots,\delta_{x_r},1):=\beta'_Q\cdot(g')^{-1}\cdot(\beta'_Q)^{-1}=\beta'_Q\gamma_Q^{-1} \in\Aut^\otimes (\check{\CV}_{\ul\CG})$ and in particular its components $\delta_{x_i}\in\Aut^\otimes\bigl(\check{\CV}_{L^+_{x_i}(\ul\CG)}\bigr)$. By \cite[\PropTateEquivP]{AH_Local} there is a quasi-isogeny $ f_i\colon L^+_{x_i}(\ul\CG)\to L^+_{x_i}(\ul\CG)$ with $\check{\CV}_{L^+_{x_i}(\ul\CG)}( f_i)=\delta_{x_i}$. 

From Proposition~\ref{PropLocalIsogeny} we obtain a global $\FG$-shtuka $\ul\CG'=  f_r^*\circ\ldots\circ  f_1^\ast\, \ul\CG$ together with a quasi-isogeny $f\colon\ul\CG'\to\ul\CG$ which is an isomorphism outside the $x_i$, such that the induced quasi-isogeny of the associated \'etale local $\wt\BP_{x_i}$-shtukas is $ f_i$. Then the rational Tate functor $\check{\CV}$ from \eqref{rationaltatefunctor} takes $f$ to
$$
\check{\CV}_f\,=\,\delta\,=\, \beta'_Q\cdot(g')^{-1}\cdot(\beta'_Q)^{-1}\,=\,\beta'_Q\gamma_Q^{-1}.
$$
Consider the pair $(\ul\CG',\beta_Q' H_D)$ consisting of the global $\FG$-shtuka $\ul\CG'=(\CG',\tau')$ together with the level structure $\beta'\colon \check{\CT}_{\ul\CG'}\to \omega_{\BO^{\ul\nu}}^\circ$ defined over $\BO^{\ul\nu}$. Note that this is quasi-isogenous to $(\ul\CG,\gamma_Q H_D)$ under $f$ and that $f$ is an isomorphism outside the $x_i$. 

We now show that $(\ul\CG',\beta_Q' H_D)=((\CG',\tauGlob'),\beta_Q' H_D)$ actually comes from a pair $(\ul\CG', \psi)$ in $\nabla_n\scrH^1_D(C,\FG)^{\ul\nu}$. Consider the category $\Rep_C\FG$ of representations of $\FG$ in finite locally free $\CO_C$-modules, the category $\Vect_C$ of finite locally free $\CO_C$-modules, and the natural forgetful functor $\omega^\circ_C\colon\Rep_C\FG\to\Vect_C$. The composition of the functor $\cdot\otimes_{\CO_C}\BO^{\ul\nu}\colon\Rep_C\FG\to\Rep_{\BO^{\ul\nu}}\FG$ followed by the fiber functor $\omega^\circ_{\BO^{\ul\nu}}$ equals $\omega^\circ_C\otimes_{\CO_C}\BO^{\ul\nu}$. Also consider the category $\FF\FM od_{D_S}$ of finite locally free sheaves on $D_S$, and the functor
$$
\CN^S_{-} \colon \scrH^1(C,\FG)(S)\to Funct^\otimes(\Rep_C\FG, \FF\FM od_{D_S}),\quad \CG\longmapsto\bigl(\CN^S_\CG\colon\rho\mapsto\rho_*\CG\otimes_{\CO_{C_S}}\CO_{D_S}\bigr).
$$ 
In particular $\CN^S_{\FG\times_C C_S}=\omega^\circ_C\otimes_{\CO_C}\CO_{D_S}$. For a representation $\rho\in\Rep_C\FG$ the ``finite shtuka'' $\rho_*\ul\CG'\times_{C_S}D_S$ over $S$ consists of the locally free sheaf $N:=\rho_*\CG'\otimes_{\CO_{C_S}}\CO_{D_S}=\CN^S_{\CG'}(\rho)$ on $D_S$ together with the isomorphism $\rho_*\tauGlob'\colon\s N\isoto N$. Let $\wt S\to S$ be the finite \'etale Galois covering corresponding to the kernel of $\pi_1^\et(S,\bar s)\to\Aut_{\CO_D}(\check\CT_{\ul\CG'}(\rho)\otimes_{\BO^{\ul\nu}}\CO_D)$, $g\mapsto(\beta')^{-1}\circ g(\beta')$. Then $\pi_1^\et(S,\bar s)$ acts on $\check\CT_{\ul\CG'}(\rho)\otimes_{\BO^{\ul\nu}}\CO_D$ through its quotient $\Gal(\wt S/S)$. As in the proof of \cite[\PropTateEquiv]{AH_Local} there is an isomorphism $\CN^{\wt S}_{\CG'}(\rho):=\rho_*\CG'\otimes_{\CO_{C_S}}\CO_{D_{\wt S}}\cong(\check\CT_{\ul\CG'}(\rho)\otimes_{\BO^{\ul\nu}}\CO_D)\otimes_{\BF_q}\CO_{\wt S}$ which is compatible with the action of $\Gal(\wt S/S)$ and the action of Frobenius through $\rho_*\tauGlob'$ on the left and $\id\otimes\sigma$ on the right. Therefore the $\pi_1^\et(S,\bar s)$-invariant coset $\beta'_Q H_D\subset\Isom^\otimes(\omega^\circ_{\BO^{\ul\nu}},\check\CT_{\ul\CG'})$ induces an isomorphism $\CN^{\wt S}_{\FG\times_C C_S}(\rho)\isoto\CN^{\wt S}_{\CG'}(\rho)$ which descends to $S$. In this way $\beta'$ induces an isomorphism of tensor functors $\eta\colon\CN^S_{\FG\times_C C_S}\isoto\CN^S_{\CG'}$.

We want to show that this isomorphism comes from a level structure $\psi$. Since $D$ is finite and $\FG$ is smooth there is an \'etale covering $S'\to S$ and a trivialization $\alpha\colon(\FG\times_C D_{S'},b\cdot\s)\isoto\ul\CG'\times_{C_S}D_{S'}$ for an element $b\in\FG(D_{S'})$, compare the proof of Theorem~\ref{nHisArtin}. The composition of tensor isomorphisms $\eta_{S'}^{-1}\circ\CN^{S'}_\alpha\in\Aut^\otimes(\CN^{S'}_{\FG\times_C C_S})\,=\,\Aut^\otimes(\omega^\circ_C)(\CO_{D_{S'}})\,=\,\FG(D_{S'})$ is given by an element $\psi'\in\FG(D_{S'})$ by \cite[Corollary~5.20]{Wed}. The latter induces an isomorphism $\psi'\colon(\FG\times_C D_{S'},b\cdot\s)\isoto(\FG\times_C D_{S'},1\cdot\s)$. This isomorphism descends to the desired level structure $\psi\colon \ul\CG'\times_{C_S}D_S\isoto(\FG\times_C D_S,1\cdot\s)$.

Analyzing this construction further also shows that the functor \eqref{EqMissingTheorem} is fully faithful. This proves the theorem.
\end{proof}

\begin{remark}\label{RemIndAlgStack}
(a) Let $(\ul\CG,\gamma H)\in\nabla_n^H\scrH^1(C,\FG)^{\ul\nu}(S)$. Then every choice of a representative $\gamma\in\Isom^{\otimes}(\omega^\circ,\check{\CV}_{\ul{\CG}})$ of the $H$-level structure $\gamma H$ induces a representation of the \'etale fundamental group
\begin{equation}\label{EqRepOfpi1}
\rho_{\ul\CG,\gamma}\colon \pi_1^\et(S,\bar s)\;\longto\; H\,,\quad g\;\longmapsto\;\gamma^{-1}\circ g(\gamma)\;=:\;\rho_{\ul\CG,\gamma}(g)\,.
\end{equation}
Indeed, $\rho_{\ul\CG,\gamma}(gg')=\gamma^{-1}\circ g(\gamma)\circ g\bigl(\gamma^{-1}\circ g'(\gamma)\bigr)=\rho_{\ul\CG,\gamma}(g)\cdot\rho_{\ul\CG,\gamma}(g')$, because $\gamma^{-1}\circ g'(\gamma)$ lies in $H$ on which $\pi_1^\et(S,\bar s)$ acts trivially. Replacing $\gamma$ by $\gamma h$ for $h\in H$ yields $\rho_{\ul\CG,\gamma}=\Int_h\circ\rho_{\ul\CG,\gamma h}$, where $\Int_h$ is conjugation by $h$ on $H$.

\medskip\noindent
(b) For any compact open subgroup $H\subseteq\FG(\BA^{\ul\nu})$ and any element $h\in\FG(\BA^{\ul\nu})$ there is an isomorphism $\nabla_n^H\scrH^1(C,\FG)^{\ul\nu}\;\isoto\;\nabla_n^{h^{-1}Hh}\scrH^1(C,\FG)^{\ul\nu},\;(\ul\CG,\gamma H)\mapsto\bigl(\ul\CG,\gamma h(h^{-1}Hh)\bigr)$. 
\end{remark}

\begin{theorem}\label{ThmLSGGsht1}
\begin{enumerate}
\item \label{ThmLSGGsht1_A}
For any compact open subgroup $H\subseteq\FG(\BA^{\ul\nu})$ the stack $\nabla_n^H\scrH^1(C,\FG)^{\ul\nu}$ is an ind-DM-stack, ind-separated and locally of ind-finite type over $\Spf A_{\ul\nu}$. 
\item \label{ThmLSGGsht1_B}
If $\wt H\subset H\subseteq\FG(\BA^{\ul\nu})$ are compact open subgroups then the forgetful morphism
\[
\nabla_n^{\wt H}\scrH^1(C,\FG)^{\ul\nu}\;\longto\;\nabla_n^H\scrH^1(C,\FG)^{\ul\nu},\quad (\ul\CG,\gamma\wt H)\;\longmapsto\;(\ul\CG,\gamma H)
\]
is finite \'etale and surjective. 
\item \label{ThmLSGGsht1_C}
If $\wt H$ is a normal subgroup of $H$ then the group $H/\wt H$ acts on $\nabla_n^{\wt H}\scrH^1(C,\FG)^{\ul\nu}$ from the right via $h\wt H\colon(\ul\CG,\gamma\wt H)\mapsto(\ul\CG,\gamma h\wt H)$ for $h\wt H\in H/\wt H$. The stack $\nabla_n^H\scrH^1(C,\FG)^{\ul\nu}$ is canonically isomorphic to the stack quotient $\bigl[\nabla_n^{\wt H}\scrH^1(C,\FG)^{\ul\nu}\big/(H/\wt H)\bigr]$ and $\nabla_n^{\wt H}\scrH^1(C,\FG)^{\ul\nu}$ is a right $H \slash \wt H$-torsor over $\nabla_n^H\scrH^1(C,\FG)^{\ul\nu}$ under the forgetful morphism. 
\end{enumerate}
\end{theorem}

\begin{proof}
\ref{ThmLSGGsht1_B} Let $(\ul\CG,\gamma H)\in\nabla_n^H\scrH^1(C,\FG)^{\ul\nu}(S)$ for a connected scheme $S$. Consider the finite $\pi_1^\et(S,\bar s)$-set 
\begin{equation}\label{EqFinitePi1Set}
\{\,\text{$\wt H$-level structures }\tilde\gamma\wt H\text{ on }\ul\CG\text{ with }\tilde\gamma\in\Isom^{\otimes}(\omega^\circ,\check{\CV}_{\ul{\CG}})\text{ satisfying }\tilde\gamma H=\gamma H\,\}\,.
\end{equation}
This set is finite, because after choosing a representative $\gamma\in\Isom^{\otimes}(\omega^\circ,\check{\CV}_{\ul{\CG}})$, it becomes bijective to $H/\wt H$ under the map $h\mapsto\gamma h\wt H$ for $h\in H/\wt H$; see Lemma~\ref{LSisnonempty}. Let $S'\to S$ be the finite \'etale covering space corresponding to \eqref{EqFinitePi1Set}, that is 
\[
F^\et_{S,\bar s}(S')\;:=\;S'\times_S\bar s\;=\;\{\,\text{$\wt H$-level structures }\tilde\gamma\wt H\text{ on }\ul\CG\text{ satisfying }\tilde\gamma H=\gamma H\,\}\,,
\]
where $F^\et_{S,\bar s}\colon S'\mapsto S'\times_S\bar s$ denotes the fiber functor from finite \'etale covering spaces of $S$ to finite $\pi_1^\et(S,\bar s)$-sets. The choice of a representative $\gamma$ corresponds to an element $\gamma\wt H\in F^\et_{S,\bar s}(S')$, and hence to the choice of a base point $\bar s'$ of $S'$ lifting $\bar s$. Then $\gamma\wt H$ is $\pi_1^\et(S',\bar s')$-invariant, and hence a rational $\wt H$-level structure on $\ul\CG$ over $S'$. This defines an $S$-morphism 
\begin{equation}\label{EqEtaleCovSp}
S'\;\longto\;\nabla_n^{\wt H}\scrH^1(C,\FG)^{\ul\nu}\times_{\nabla_n^H\scrH^1(C,\FG)^{\ul\nu}}S
\end{equation}
which we will show to be an isomorphism. For this purpose let $f\colon T\to S$ be a connected scheme over $S$. A $T$-valued point of the fiber product in \eqref{EqEtaleCovSp} is given by a rational $\wt H$-level structure $\tilde\gamma\wt H$ on $f^*\ul\CG$ lifting $\gamma H$. This means that $\tilde\gamma\in\Isom^{\otimes}(\omega^\circ,\check{\CV}_{f^*\ul{\CG},\bar t})$ and the $\wt H$-orbit $\tilde\gamma\wt H$ is $\pi_1^\alg(T,\bar t)$-invariant where $\bar t$ is a geometric base point of $T$ and $\check{\CV}_{f^*\ul{\CG},\bar t}$ is the fiber over $\bar t$. We must show that $\tilde\gamma\wt H$ arises from a uniquely determined $S$-morphism $T\to S'$. Moving the base point $\bar s$ we may assume that $f(\bar t)=\bar s$, and hence $\check{\CV}_{f^*\ul{\CG},\bar t}=\check{\CV}_{\ul{\CG}}$. Consider the finite \'etale covering space $S'\times_S T\to T$. Then 
\[
F^\et_{T,\bar t}(S'\times_S T)\;=\;F^\et_{S,\bar s}(S')\;=\;\{\,\text{$\wt H$-level structures }\tilde\gamma\wt H\text{ on }\ul\CG\text{ satisfying }\tilde\gamma H=\gamma H\,\}\,.
\]
The $\wt H$-level structure $\tilde\gamma\wt H$ on $f^*\ul\CG$ is an element of this set, because $\tilde\gamma\wt H$ lifts $\gamma H$, that is $\tilde\gamma H=\gamma H$. In particular, this element $\tilde\gamma\wt H$ defines a $\pi_1^\et(T,\bar t)$-equivariant map from the one-element set $\{\bar t\}=F^\et_{T,\bar t}(T)$ to $F^\et_{T,\bar t}(S'\times_S T)$. By \cite[Expos\'e~V, \S\,7 and Th\'eor\`eme~4.1]{SGA1} this map corresponds to a uniquely determined $T$-morphism $T\to S'\times_S T$. The projection $T\to S'$ onto the first component is the desired $S$-morphism which induces the rational $\wt H$-level structure $\tilde\gamma\wt H$ over $T$.

This proves that \eqref{EqEtaleCovSp} is an isomorphism. And therefore the forgetful morphism $\nabla_n^{\wt H}\scrH^1(C,\FG)^{\ul\nu}\longto\nabla_n^H\scrH^1(C,\FG)^{\ul\nu}$ is finite \'etale and surjective.

\medskip\noindent
\ref{ThmLSGGsht1_C} Let us next assume that $\wt H\subset H$ is normal. Then Remark~\ref{RemIndAlgStack}(b) defines a right action of $H/\wt H$ on $\nabla_n^{\wt H}\scrH^1(C,\FG)^{\ul\nu}$. By definition the stack quotient $\bigl[\nabla_n^{\wt H}\scrH^1(C,\FG)^{\ul\nu}\big/(H/\wt H)\bigr]$ is the category fibered in groupoids \lang{over $\Nilp_{A_\ul\nu}$} whose $S$-valued points are pairs $\bigl(S'\to S, (\ul\CG,\gamma\wt H)\bigr)$, where $S'\to S$ is an $H/\wt H$-torsor and $(\ul\CG,\gamma\wt H)\in\nabla_n^{\wt H}\scrH^1(C,\FG)^{\ul\nu}(S')$ defines a morphism $S'\to\nabla_n^{\wt H}\scrH^1(C,\FG)^{\ul\nu}$ which is equivariant for the action of $H/\wt H$; see \cite[Tag~\href{http://stacks.math.columbia.edu/tag/04WL}{04WL}]{StacksProject}. The isomorphism
\begin{equation}\label{EqStackQuot}
\nabla_n^H\scrH^1(C,\FG)^{\ul\nu}\;\isoto\;\bigl[\nabla_n^{\wt H}\scrH^1(C,\FG)^{\ul\nu}\big/(H/\wt H)\bigr]\,,\quad (\ul\CG,\gamma H)\;\longmapsto\;\bigl(S'\to S,(\ul\CG,\gamma\wt H)\bigr)
\end{equation}
is given as follows. Let $S':=\nabla_n^{\wt H}\scrH^1(C,\FG)^{\ul\nu}\times_{\nabla_n^H\scrH^1(C,\FG)^{\ul\nu}}\,S$. By the proof of \ref{ThmLSGGsht1_B} this means that $S'\to S$ is the finite \'etale covering space corresponding to the $\pi_1^\et(S,\bar s)$-set from \eqref{EqFinitePi1Set}. This set is in bijection with $H/\wt H$ on which $\pi_1^\et(S,\bar s)$ acts via the representation $\rho_{\ul\CG,\gamma}\colon \pi_1^\et(S,\bar s)\to H\onto H/\wt H$ from \eqref{EqRepOfpi1}. In particular, $S'\to S$ is a Galois covering with Galois group $H/\wt H$. The projection $S'\to\nabla_n^{\wt H}\scrH^1(C,\FG)^{\ul\nu}$ onto the first factor is $H/\wt H$-equivariant and establishes the map \eqref{EqStackQuot}. Its inverse is defined as follows. The object $(\ul\CG,\gamma\wt H)\in\nabla_n^{\wt H}\scrH^1(C,\FG)^{\ul\nu}(S')$ defines the object $(\ul\CG,\gamma H)\in\nabla_n^H\scrH^1(C,\FG)^{\ul\nu}(S')$ which descends to $S$ by \'etale descent for $S'\to S$. By construction the canonical morphism $\nabla_n^{\wt H}\scrH^1(C,\FG)^{\ul\nu}\;\isoto\;\bigl[\nabla_n^{\wt H}\scrH^1(C,\FG)^{\ul\nu}\big/(H/\wt H)\bigr]$ is finite \'etale surjective and an $H/\wt H$-torsor.

\medskip\noindent
\ref{ThmLSGGsht1_A} The intersection $H_1:=H\cap\FG(\BO^{\ul\nu})$ has finite index in $H$, because it is open and $H$ is compact. Thus the intersection $H_2:=\bigcap_{h\in H/H_1}h H_1 h^{-1}\subset H_1\subset \FG(\BO^{\ul\nu})$ is compact open, normal in $H$, and of finite index in $\FG(\BO^{\ul\nu})$. There is a proper closed subscheme $D\subset C$ with $H_D\subset H_2$, and this is a normal subgroup because $H_D$ is normal in $\FG(\BO^{\ul\nu})$. Therefore, statement \ref{ThmLSGGsht1_A} holds for $\nabla_n^{H_D}\scrH^1(C,\FG)^{\ul\nu}$ by Theorems~\ref{H_DL-Str} and \ref{nHisArtin}, see Definition~\ref{DefGlobalLocalFunctor}. Consequently also $\nabla_n^{H_2}\scrH^1(C,\FG)^{\ul\nu}$ and $\nabla_n^H\scrH^1(C,\FG)^{\ul\nu}$ are ind-DM-stacks locally of ind-finite type over $\Spf A_{\ul\nu}$ by \ref{ThmLSGGsht1_C} because they are obtained as stack quotients by finite groups. They are ind-separated over $\Spf A_{\ul\nu}$, because the forgetful morphisms in \ref{ThmLSGGsht1_C} are finite surjective with ind-separated source.
\end{proof}

%
%

\section{The Uniformization Theorem in the Bounded Case}\label{Uniformization Theorem}
\setcounter{equation}{0}

We still assume that the group scheme $\FG$ is smooth over $C$ and all its fibers are connected. In addition we assume that the generic fiber of $\FG$ is reductive. Our aim in this chapter is to study the morphism $\Theta$ from \eqref{EqUnifIntro} in the introduction. We first describe its target.

\begin{definition}\label{bdglobal}
Fix an $n$-tuple $\ul\nu=(\nu_i)$ of places on the curve $C$ with $\nu_i\ne\nu_j$ for $i\ne j$. Let  $\ulHZ:=(\hat{Z}_i)_i$ be an $n$-tuple of bounds $\hat{Z}_i$ in $\wh{\SpaceFl}_{\BP_{\nu_i}}$ in the sense of Definition~\ref{DefBDLocal} with reflex rings $R_i:=R_{\hat{Z}_i}=\kappa_i\dbl\xi_i\dbr$. Let $\kappa$ be the compositum of the $\kappa_i$ in an algebraic closure of $\BF_q$, and set $R_{\ulHZ}:=\kappa\dbl\xi_1,\ldots,\xi_n\dbr$. It is a finite extension of $A_{\ul\nu}$. Let $\ul\CG$ be a global $\FG$-shtuka in $\nabla_n\scrH^1(C,\FG)^{\ul\nu}(S)$ over a scheme $S\in\Nilp_{R_{\ulHZ}}$. Then $S$ is a scheme in $\Nilp_{R_i}$ via the inclusion $\kappa_i\dbl\xi_i\dbr\into R_{\ulHZ}$. We say that \emph{$\ul\CG$ is bounded by $\ulHZ:=(\hat{Z}_i)_i$} if for every $i$ the associated local $\BP_{\nu_i}$-shtuka $\wh\Gamma_{\nu_i}(\ul\CG)$ is bounded by $\hat{Z}_i$. We denote by $\nabla_n^{H,\ulHZ}\scrH^1(C,\FG)^{\ul\nu}$ the substack of $\nabla_n^H\scrH^1(C,\FG)^{\ul\nu}\whtimes_{A_\ul\nu}\Spf R_{\ulHZ}$ consisting of global $\FG$-shtukas bounded by $\ulHZ$. 
\end{definition}

\begin{proposition}\label{PropSpecialFiberIsDM}
The special fiber $\nabla_n^{H,\ulHZ}\scrH^1(C,\FG)^{\ul\nu}\times_{R_{\ulHZ}}\Spec\kappa$ is a Deligne-Mumford stack locally of finite type and separated over $\Spec\kappa$. 
\end{proposition}

\begin{proof}
As in the proof of Theorem~\ref{ThmLSGGsht1}\ref{ThmLSGGsht1_A} it suffices to treat the case where $H=H_D\subset\FG(\BA^{\ul\nu})$, because taking stack quotients by finite groups preserves the $2$-category of Deligne-Mumford stacks locally of finite type and separated over $\Spec\kappa$. We fix a representation $\rho\colon\FG\into\SL(\CV_0)$ with $\rk\CV_0=:r$ as before Remark~\ref{RemRelAffineGrass}. Then we show that $\nabla_n^{H_D,\ulHZ}\scrH^1(C,\FG)^{\ul\nu}\times_{R_{\ulHZ}}\Spec\kappa$ is a closed substack of $\nabla_n^{\ul\omega}\scrH_D^1(C,\FG)\times_{(C\setminus D)^n}\Spec\kappa$ for a suitable tuple $\ul\omega=(\omega_i)_{i=1\ldots n}$ of coweights of $\SL_r$; see Definition~\ref{DefNablaOmegaH}. The representation $\rho$ induces an immersion $\SpaceFl_{\BP_{\nu_i}}\into\SpaceFl_{\SL_r}$ of the associated affine flag varieties. Let $z_i$ be a uniformizing parameter of $A_{\nu_i}$. Since $Z_i\subset\SpaceFl_{\BP_{\nu_i}}\times_{\BF_{\nu_i}}\Spec\kappa_i$ is a quasi-compact scheme, there is an integer $N_i\ge0$ such that all entries of the universal matrix over $L\SL_r\times_{\SpaceFl_{\SL_r}}Z_i$ have pole order at most $N_i$ with respect to $z_i$. We set $\omega_{i,\ell}:=-N_i$ for all $\ell=2,\ldots,r$ and $\omega_{i,1}:=(r-1)N_i$. Then every global $\FG$-shtuka $\ul\CG$ over a $\kappa$-scheme $S$ which is bounded by $\ulHZ$ belongs to $\nabla_n^{\ul\omega}\scrH^1(C,\FG)(S)$. By Proposition~\ref{PropBoundedClosed} (and Theorem~\ref{H_DL-Str}) the stack $\nabla_n^{H_D,\ulHZ}\scrH^1(C,\FG)^{\ul\nu}\times_{R_{\ulHZ}}\Spec\kappa$ is a closed substack of $\nabla_n^{\ul\omega}\scrH_D^1(C,\FG)\times_{(C\setminus D)^n}\Spec\kappa$, and hence a Deligne-Mumford stack locally of finite type and separated over $\Spec\kappa$ by Theorem~\ref{nHisArtin}.
\end{proof}

\begin{remark}\label{Rem5.2}
Likewise, by Proposition~\ref{PropBoundedClosed} the entire stack $\nabla_n^{H,\ulHZ}\scrH^1(C,\FG)^{\ul\nu}$ is a closed ind-substack of $\nabla_n^H\scrH^1(C,\FG)^{\ul\nu}$, and hence an ind-DM-stack over $\Spf R_{\ulHZ}$ which is ind-separated and locally of ind-finite type by Theorem~\ref{ThmLSGGsht1}. In contrast to Proposition~\ref{PropSpecialFiberIsDM} we do not know whether $\nabla_n^{H,\ulHZ}\scrH^1(C,\FG)^{\ul\nu}$ is the formal completion of a Deligne-Mumford stack (or at least a formal algebraic Deligne-Mumford stack over $\Spf R_{\ulHZ}$) without further conditions on the $\hat{Z}_i$. If we fix a representation $\rho\colon\FG\into\SL(\CV_0)$ with $\rk\CV_0=:r$ as before Remark~\ref{RemRelAffineGrass} and let $\ul\omega=(\omega_i)_{i=1\ldots n}$ be a tuple of coweights of $\SL_r$, see Definition~\ref{DefNablaOmegaH}, we can consider the closed substack $\nabla_n^{H,\ulHZ,\ul\omega}\scrH^1(C,\FG)^{\ul\nu}$ of $\nabla_n^{H,\ulHZ}\scrH^1(C,\FG)^{\ul\nu}$ on which $\rho_*\ul\CG$ is bounded by $\ul\omega$ as in \eqref{EqBounded1}. If $H=H_D$ then $\nabla_n^{H_D,\ulHZ,\ul\omega}\scrH^1(C,\FG)^{\ul\nu}\;=\;\nabla_n^{H_D,\ulHZ}\scrH^1(C,\FG)^{\ul\nu}\times_{\nabla_n\scrH^1(C,\FG)}\nabla_n^{\ul\omega}\scrH^1(C,\FG)$. By arguing as in Proposition~\ref{PropSpecialFiberIsDM} the stack $\nabla_n^{H,\ulHZ,\ul\omega}\scrH^1(C,\FG)^{\ul\nu}$ is a locally noetherian, adic formal algebraic Deligne-Mumford stack over $\Spf \breve R_{\ul{\hat Z}}$; use \cite[Proposition~A.14]{Har1}. In particular, $\nabla_n^{H,\ulHZ,\ul\omega}\scrH^1(C,\FG)^{\ul\nu}\whtimes_{\Spf \breve R_{\ul{\hat Z}}}\Spec\breve R_{\ul{\hat Z}}/(\xi_1,\ldots,\xi_n)^j$ is an (algebraic) Deligne-Mumford stack separated and locally of finite type over $\Spec\breve R_{\ul{\hat Z}}/(\xi_1,\ldots,\xi_n)^j$ for every $j$. So similarly to Definition~\ref{DefGlobalLocalFunctor}, the structure of ind-DM-stack on $\nabla_n^{H,\ulHZ}\scrH^1(C,\FG)^{\ul\nu}$ is given as the limit
\[
\nabla_n^{H,\ulHZ}\scrH^1(C,\FG)^{\ul\nu}\;=\;\dirlim[\ul\omega,j]\nabla_n^{H,\ulHZ,\ul\omega}\scrH^1(C,\FG)^{\ul\nu}\whtimes_{\Spf \breve R_{\ul{\hat Z}}}\Spec\breve R_{\ul{\hat Z}}/(\xi_1,\ldots,\xi_n)^j.
\]

We do not need this here, but $\nabla_n^{H,\ulHZ}\scrH^1(C,\FG)^{\ul\nu}$ will be a formal algebraic Deligne-Mumford stack over $\Spf R_{\ulHZ}$ if we impose a condition of the following form. 
\begin{itemize}
\item There is a faithful representation $\rho\colon \FG\into\SL(\CV_0)$ for a vector bundle $\CV_0$ of rank $r$ on $C$ such that on $\hat{Z}_i$ the universal matrix induced from the morphism $\SpaceFl_{\BP_{\nu_i}}\into\SpaceFl_{\SL_r}$ has poles with respect to $\Gamma_{s_i}$ bounded by some integer $N_i$ for every $\nu_i\in\ul\nu$.
\end{itemize}
Without such a condition on $\ulHZ$ the problem is the following. $\nabla_n^{H,\ulHZ}\scrH^1(C,\FG)^{\ul\nu}$ is the limit of an inductive system of algebraic stacks. One has to find a compatible inductive system of presentations for these algebraic stacks such that this inductive system forms an ind-scheme. Then one could argue as in \cite[\RemBdIsFormSch]{AH_Local} that this ind-scheme is in fact a formal scheme and consequently $\nabla_n^{H,\ulHZ}\scrH^1(C,\FG)^{\ul\nu}$ is a formal algebraic stack of DM-type in the sense of \cite[Appendix~A]{Har1}. 
\end{remark}

\begin{point}\label{Point7.4}
We keep the notation of Definition~\ref{bdglobal} and let $\BaseFldInSectUnif$ be an algebraic closure of the residue field $\BF_{\ul\nu}$ of $\ul\nu\in C^n$. Then $(\charsect_1,\ldots,\charsect_n)\colon\Spec\BaseFldInSectUnif\to C^n$ has characteristic $\ul\nu$. Let $\breve R_{\ulHZ}:=\BaseFldInSectUnif\dbl\xi_1,\ldots,\xi_n\dbr$ be the completion of the maximal unramified extension of $R_{\ulHZ}$. Let $\ul{\CG}_0$ be a global $\FG$-shtuka over $\BaseFldInSectUnif$, and let $I_{\ul\CG_0}\!(Q)$ denote the (abstract) group $\QIsog_\BaseFldInSectUnif(\ul\CG_0)$ of quasi-isogenies of $\ul\CG_0$; see Definition~\ref{quasi-isogenyGlob}.
\end{point}

\begin{proposition}\label{Prop7.1}
Let $S$ be a connected non-empty scheme in $\Nilp_{\breve R_{\ulHZ}}$. Then the natural group homomorphism $\QIsog_\BaseFldInSectUnif(\ul\CG_0)\to\QIsog_{S}(\ul\CG_{0,S})$, $g\mapsto g_{S}$ is an isomorphism. 
\end{proposition}

\begin{proof}
Since $S$ is non-empty the group homomorphism is injective. To prove surjectivity we consider a quasi-isogeny $f\in\QIsog_{S}(\ul\CG_{0,S})$. Since the question is local on $S$ we may assume that $S=\Spec B$ is affine. Let $D\subset C$ be a divisor such that $f$ is an automorphism of $\CG_{0,S}|_{C_{S}\setminus D_{S}}$. We set $C':=C\setminus(D\cup\ul\nu)$ and choose a finite flat ring homomorphism $\pi^*\colon\BF_q[t]\into\CO_C(C')$ of some degree $d$. We also choose a representation $\rho\colon\FG\into\GL(\CV)$ for a locally free sheaf $\CV$ on $C$ of rank $r$ as in Proposition~\ref{PropGlobalRep}\ref{PropGlobalRep_A}. The restriction to $C'_\BaseFldInSectUnif$ of the global $\GL_r$-shtuka $\rho_*\ul\CG_0$ corresponds to a locally free sheaf $M$ of rank $r$ on $C'_\BaseFldInSectUnif$ together with an isomorphism $\tau\colon\sigma^*M\isoto M$, because $C'\cap\ul\nu=\emptyset$. The quasi-isogeny $\rho_*f$ corresponds to an automorphism $f\colon M\otimes_\BaseFldInSectUnif B\isoto M\otimes_\BaseFldInSectUnif B$ with $f\tau=\tau\sigma^*(f)$. Via $\pi^*$ we view $M$ as a (locally) free $\BaseFldInSectUnif[t]$-module of rank $rd$. We choose a basis and write $\tau$ and $f$ as matrices $T\in\GL_{rd}(\BaseFldInSectUnif[t])$ and $F\in\GL_{rd}(B[t])$ satisfying $FT=T\sigma^*(F)$. We expand $T=\sum_iT_it^i$ and $T^{-1}=\sum_iT'_it^i$ and $F=\sum_i F_it^i$ in powers of $t$. Then $FT=T\sigma^*(F)$ is equivalent to the equations $F_m=\sum_{i+j+\ell=m}T_i\sigma^*(F_j)T'_\ell$ and $\sigma^*(F_m)=\sum_{i+j+\ell=m}T'_iF_jT_\ell$ for all $m$, where the matrix $\sigma^*(F_m)$ is obtained from $F_m$ by raising all entries to the $q$-th power. This shows that the entries of the matrices $F_m$ satisfy finite \'etale equations over $\BaseFldInSectUnif$. As $\BaseFldInSectUnif$ is algebraically closed and $S$ is connected we must have $F\in\GL_{rd}(\BaseFldInSectUnif[t])\subset\GL_{rd}(B[t])$. This implies that $f=g_{S}$ for a quasi-isogeny $g\in\QIsog_{\BaseFldInSectUnif}(\ul\CG_0)$.
\end{proof}

\begin{point}\label{Point7.6}
We next describe the source of the morphism $\Theta$ from \eqref{EqUnifIntro}. Keep the situation of \ref{Point7.4} and let $(\ul{\BL}_i)_{i=1\ldots n}:=\wh{\ul{\Gamma}}(\ul\CG_0)$ denote the associated tuple of local $\BP_{\nu_i}$-shtukas over $\BaseFldInSectUnif$, where $\wh{\ul{\Gamma}}$ is the global-local functor from Definition~\ref{DefGlobalLocalFunctor}. Let $\CM_{\ul\BL_i}^{\hat{Z}_i}$ denote the Rapoport-Zink space of $\ul\BL_i$ with underlying topological space $X_{Z_i}(\ul\BL_i)$, see Theorem~\ref{ThmRRZSp}. Let $\breve\CM_{\ul\BL_i}^{\hat{Z}_i}$ and $\breve X_{Z_i}(\ul\BL_i)$ be their base changes to $\breve R_i:=\BaseFldInSectUnif\dbl\xi_i\dbr$, respectively to $\BaseFldInSectUnif$. By Theorem~\ref{ThmRRZSp} the product $\prod_i \breve\CM_{\ul{\BL}_i}^{\hat{Z}_i}:=\breve\CM_{\ul{\BL}_1}^{\hat{Z}_1}\whtimes_\BaseFldInSectUnif\ldots\whtimes_\BaseFldInSectUnif\breve\CM_{\ul{\BL}_n}^{\hat{Z}_n}$ is a formal scheme locally formally of finite type over $\Spf\breve R_1\whtimes_\BaseFldInSectUnif\ldots\whtimes_\BaseFldInSectUnif\Spf\breve R_n=\Spf\BaseFldInSectUnif\dbl\xi_1,\ldots,\xi_n\dbr=\Spf \breve R_{\ulHZ}$ and the product $\prod_i \breve X_{Z_i}(\ul\BL_i):=\breve X_{Z_1}(\ul\BL_1)\times_\BaseFldInSectUnif\ldots\times_\BaseFldInSectUnif\breve X_{Z_n}(\ul\BL_n)$ is a scheme locally of finite type over $\Spec\BaseFldInSectUnif$. 

Recall that the group $J_{\ul{\BL}_i}(Q_{\nu_i})=\QIsog_\BaseFldInSectUnif(\ul{\BL}_i)$ of quasi-isogenies of $\ul\BL_i$ over $\BaseFldInSectUnif$ acts naturally on $\breve\CM_{\ul{\BL}_i}^{\hat{Z}_i}$ and on $\breve X_{Z_i}(\ul\BL_i)$; see Remark~\ref{RemJActsOnRZ}. Especially we see that the group $I_{\ul\CG_0}\!(Q)$ acts on $\prod_i \breve\CM_{\ul{\BL}_i}^{\hat{Z}_i}$ and $\prod_i \breve X_{Z_i}(\ul\BL_i)$ via the natural group homomorphism 
\begin{equation}\label{EqI(Q)}
I_{\ul\CG_0}\!(Q) \;\longto\; \prod_i J_{\ul{\BL}_i}(Q_{\nu_i}),\quad \eta\;\longmapsto\; (\eta_i)_i\;:=\;\ul{\wh\Gamma}(\eta)\;:=\;\bigl(\wh\Gamma_{\nu_i}(\eta)\bigr)_i
\end{equation}
by sending $(\ul\CL_i,\delta_i)_i$ to $(\ul\CL_i,\wh\Gamma_{\nu_i}(\eta)\circ\delta_i)_i$.

The group $I_{\ul\CG_0}\!(Q)$ also acts naturally on $\check{\CV}_{\ul{\CG}_0}$ and $\Isom^{\otimes}(\omega^\circ,\check{\CV}_{\ul{\CG}_0})$ by sending $\gamma\in\Isom^{\otimes}(\omega^\circ,\check{\CV}_{\ul{\CG}_0})$ to $\check\CV_\eta\circ\gamma$ for $\eta\in I_{\ul\CG_0}\!(Q)$. After choosing an element $\gamma_0\in\Isom^{\otimes}(\omega_{\BO^{\ul\nu}}^\circ,\check{\CT}_{\ul{\CG}_0})$, this defines a morphism 
\begin{equation}\label{EqEpsilon}
\epsilon\colon  I_{\ul\CG_0}\!(Q) \;\longto\; Aut^\otimes(\omega^\circ)\;\cong\;\FG(\BA^{\ul\nu}),\quad\eta\mapsto\gamma_0\circ\check{\CV}_\eta\circ\gamma_0^{-1}.
\end{equation}

To state the properties of the source of $\Theta$ we say that a formal algebraic Deligne-Mumford stack $\CX$ over $\Spf\breve R_{\ulHZ}$ (see \cite[Definition~A.5]{Har1}) is \emph{$\CJ$-adic} for a sheaf of ideals $\CJ\subset\CO_\CX$, if for some (any) presentation $X\to\CX$ the formal scheme $X$ is $\CJ\CO_X$-adic, that is, $\CJ^r\CO_X$ is an ideal of definition of $X$ for all $r$. We then call $\CJ$ an \emph{ideal of definition of $\CX$}. We say that $\CX$ is \emph{locally formally of finite type} if $\CX$ is locally noetherian, adic, and if the closed substack defined by the largest ideal of definition (see \cite[A.7]{Har1}) is an algebraic stack locally of finite type over $\Spec\BaseFldInSectUnif$.
\end{point}

\begin{proposition}\label{PropQuotientByI}
The quotient of $\prod_i \breve\CM_{\ul\BL_i}^{\hat{Z}_i}\times \Isom^{\otimes}(\omega^\circ,\check{\CV}_{\ul{\CG}_0})/H$ by the abstract group $I_{\ul\CG_0}\!(Q)$ exists as a locally noetherian, adic formal algebraic Deligne-Mumford stack locally formally of finite type over $\Spf\breve R_{\ulHZ}$ and is of the form
\begin{equation}\label{EqSourceOfTheta}
I_{\ul\CG_0}\!(Q) \big{\backslash}\bigl(\prod_i \breve\CM_{\ul\BL_i}^{\hat{Z}_i}\times \Isom^{\otimes}(\omega^\circ,\check{\CV}_{\ul{\CG}_0})/H\bigr) \enspace\cong\enspace \coprod_{\bar\gamma} \Gamma_{\bar\gamma}\big{\backslash}\prod_i \breve\CM_{\ul\BL_i}^{\hat{Z}_i}\,.
\end{equation}
Here $\bar\gamma:=\gamma H\in\Isom^{\otimes}(\omega^\circ,\check{\CV}_{\ul{\CG}_0})/H$ runs through a set of representatives for the countable double coset $I_{\ul\CG_0}\!(Q) \backslash\Isom^{\otimes}(\omega^\circ,\check{\CV}_{\ul{\CG}_0})/H\cong \epsilon\bigl(I_{\ul\CG_0}\!(Q)\bigr) \backslash \FG(\BA^{\ul\nu})/H$, and 
$$
\Gamma_{\bar\gamma}\;:=\; I_{\ul\CG_0}\!(Q)\cap \bigl(\prod_i J_{\ul{\BL}_i}(Q_{\nu_i})\times \gamma H \gamma^{-1}\bigr)\;\subset\; \prod_i J_{\ul{\BL}_i}(Q_{\nu_i})
$$
is a subgroup, which is discrete for the product of the $\nu_i$-adic topologies, and separated in the profinite topology, that is, for every $1\ne g \in \Gamma_{\bar\gamma}$ there is a normal subgroup of finite index in $\Gamma_{\bar\gamma}$ that does not contain $g$. In particular the closed substack of \eqref{EqSourceOfTheta} defined by the largest ideal of definition is the Deligne-Mumford stack locally of finite type over $\Spec\BaseFldInSectUnif$ given by
\begin{equation}\label{EqReducedSourceOfTheta}
I_{\ul\CG_0}\!(Q) \big{\backslash}\bigl(\prod_i \breve X_{Z_i}(\ul\BL_i)\times \Isom^{\otimes}(\omega^\circ,\check{\CV}_{\ul{\CG}_0})/H\bigr) \enspace\cong\enspace \coprod_{\bar\gamma} \Gamma_{\bar\gamma}\big{\backslash}\prod_i \breve X_{Z_i}(\ul\BL_i)\,.
\end{equation}
\end{proposition}

\begin{proof}
We first prove that the group homomorphisms $\wh\Gamma_{\nu_i}$ and $\epsilon$ from \eqref{EqI(Q)} and \eqref{EqEpsilon} are injective. We consider a faithful representation $\rho\colon\FG\into\GL(\CV)$ as in Proposition~\ref{PropGlobalRep}\ref{PropGlobalRep_A}. Then $\rho_*\eta$ induces a quasi-isogeny of the vector bundle $M$ associated with $\rho_*\ul\CG_0$. If $\eta$ lies in the kernel of $\epsilon$ then its restriction to $C_\BaseFldInSectUnif\setminus\ul\nu$ is the identity on $M$ because of \cite[\PropTateEquiv]{AH_Local}. Therefore $\eta$ must be the identity. This proves that $\epsilon$ is injective. On the other hand, $\wh\Gamma_{\nu_i}(M)$ is the completion of $M$ at the graph $\Gamma_{s_i}$ of $s_i\colon\Spec\BaseFldInSectUnif\to C$. Since this completion functor is faithful, also $\wh\Gamma_{\nu_i}\colon I_{\ul\CG_0}\!(Q) \to J_{\ul{\BL}_i}(Q_{\nu_i})$ is injective. Thus we get the following injective morphism
$$
(\ul{\wh\Gamma},\epsilon)\colon I_{\ul\CG_0}\!(Q)\longto \prod_i J_{\ul{\BL}_i}(Q_{\nu_i})\times \FG(\BA^{\ul\nu})
$$
and we identify $I_{\ul\CG_0}\!(Q)$ with its image. We claim that $I_{\ul\CG_0}\!(Q)$ is a discrete subgroup of $\prod_i J_{\ul\BL_i}(Q_{\nu_i})\times \FG(\BA^{\ul\nu})$. To show this we take an open subgroup $U\subset\prod_i\Aut_\BaseFldInSectUnif(\ul\BL_i)$ and consider the open subgroup $U\times\FG(\BO^{\ul\nu})\subset \prod_i J_{\ul\BL_i}(Q_{\nu_i})\times \FG(\BA^{\ul\nu})$. Since $\FG(\BO^{\ul\nu})=\gamma_0\Aut^{\otimes}(\check{\CT}_{\ul{\CG}_0})\gamma_0^{-1}$ the elements of $I_{\ul\CG_0}\!(Q)\cap \bigl(U\times\FG(\BO^{\ul\nu})\bigr)$ give automorphisms of the global $\FG$-shtuka $\ul\CG_0:=(\CG,\tau_0)$. Then the finiteness of $I_{\ul\CG_0}\!(Q)\cap \bigl(U\times\FG(\BO^{\ul\nu})\bigr)$ follows from Corollary~\ref{CorAutFinite}. 

Now choose an element $\bar\gamma:=\gamma H\in \Isom^{\otimes}(\omega^\circ,\check{\CV}_{\ul{\CG}_0})/H$, set $h:=\gamma_0^{-1}\gamma\in\FG(\BA^{\ul\nu})$ and let
\begin{align*}
\Gamma_{\bar\gamma} & \;:=\; \ul{\wh\Gamma}\Bigl(I_{\ul\CG_0}\!(Q)\cap \bigl(\ul{\wh\Gamma}^{-1}\bigl(\prod_i J_{\ul{\BL}_i}(Q_{\nu_i})\bigr)\times \gamma H \gamma^{-1}\bigr)\Bigr) \\
& \;:=\; \ul{\wh\Gamma}\Bigl(I_{\ul\CG_0}\!(Q)\cap (\ul{\wh\Gamma},\epsilon)^{-1}\bigl( \prod_i J_{\ul{\BL}_i}(Q_{\nu_i})\times hHh^{-1}) \Bigr) \;\subset\;\prod_i J_{\ul{\BL}_i}(Q_{\nu_i})
\end{align*}
be the image under the injective homomorphism $\ul{\wh\Gamma}$ from \eqref{EqI(Q)}. Since the intersection $\FG(\BO^{\ul\nu})\cap hHh^{-1}$ has finite index in $\FG(\BO^{\ul\nu})$ and in $hHh^{-1}$, the intersection $I_{\ul\CG_0}\!(Q)\cap \bigl(U\times hHh^{-1}\bigr)\cong\Gamma_{\bar\gamma}\cap U$ is also finite, and hence the subgroup $\Gamma_{\bar\gamma}\subset\prod_i J_{\ul{\BL}_i}(Q_{\nu_i})$ is discrete. The group $\Gamma_{\bar\gamma}$ is separated, because for every element $1\ne\eta\in\Gamma_{\bar\gamma}$ there is a normal subgroup $\wt H\subset H$ of finite index such that $h\wt Hh^{-1}$ does not contain the element $\epsilon(\eta)\ne1$. Therefore $\Gamma_{\bar\gamma}\big{\backslash}\prod_i \breve\CM_{\ul\BL_i}^{\hat{Z}_i}$ and $I_{\ul\CG_0}\!(Q) \big{\backslash}\bigl(\prod_i \breve\CM_{\ul\BL_i}^{\hat{Z}_i}\times \Isom^{\otimes}(\omega^\circ,\check{\CV}_{\ul{\CG}_0})/H\bigr)$ are formal algebraic Deligne-Mumford stacks by \cite[\quotisfstack]{AH_Local}. That they are Deligne-Mumford and the last assertion about \eqref{EqReducedSourceOfTheta} follow from the proof of \cite[\quotisfstack]{AH_Local} where it is shown that \eqref{EqReducedSourceOfTheta} (respectively \eqref{EqSourceOfTheta}) are locally the stack quotient of a (formal) scheme by a finite group.
\end{proof}

\begin{point}\label{Point7.8}
There is a further group acting on the source and the target of the morphism $\Theta$ from \eqref{EqUnifIntro}, namely the group $\prod_iZ(Q_{\nu_i})$ where $Z\subset\FG\times_C\Spec Q$ is the center. Indeed, $Z(Q_{\nu_i})$ lies in the center of $\FG(Q_{\nu_i})=L\genericG_{\nu_i}(\BF_{\nu_i})$. Writing $\ul\BL_i\cong\bigl((L^+\BP_{\nu_i})_\BaseFldInSectUnif,b_i\hat\sigma^*_{\nu_i}\bigr)$ with $b_i\in L\genericG_{\nu_i}(\BaseFldInSectUnif)$, there is an inclusion
\begin{align}
\label{EqActionCenter1}
Z(Q_{\nu_i}) \enspace\longinto\enspace & J_{\ul{\BL}_i}(Q_{\nu_i})\;=\;\bigl\{\,j\in L\genericG_{\nu_i}(\BaseFldInSectUnif)\colon j\, b_i=b_i\,\hat\sigma_{\nu_i}(j)\,\bigr\}\;=\;\QIsog_\BaseFldInSectUnif(\ul\BL_i)\,, \\
c_i \quad\enspace\longmapsto\enspace & \qquad\qquad\qquad\; c_i=\hat\sigma_{\nu_i}(c_i) \nonumber
\end{align}
through which $c_i\in Z(Q_{\nu_i})$ acts on $\breve\CM_{\ul{\BL}_i}^{\hat{Z}_i}$ via $c_i\colon (\ul\CL_i,\delta_i)\mapsto (\ul\CL_i,c_i\delta_i)$. This action can also be described in a different way. For any local $\BP_{\nu_i}$-shtuka $\ul\CL_i$ over a scheme $S\in\Nilp_{A_{\nu_i}}$ we claim that $c_i$ induces an element $c_i\in\QIsog_S(\ul\CL_i)$ of the quasi-isogeny group of $\ul\CL_i$. Namely, over an \'etale covering $S'\to S$ we can choose a trivialization $\alpha\colon\ul\CL_{i,S'}\isoto\bigl((L^+\BP_{\nu_i})_{S'},b'_i\hat\sigma^*_{\nu_i}\bigr)$ and then $c_i=\hat\sigma_{\nu_i}(c_i)$ implies $c_i\,b'_i=b'_i\,\hat\sigma_{\nu_i}(c_i)$, that is $\alpha^{-1}\circ c_i\circ\alpha\in\QIsog_{S'}(\ul\CL_i)$. To show that this quasi-isogeny descends to $S$ let $pr_1,pr_2\colon S'':=S'\times_S S' \to S'$ be the projections onto the first and second factor and set $g:=pr_2^*\alpha\circ pr_1^*\alpha^{-1}\in L^+\BP_{\nu_i}(S'')$. Then $g \circ pr_1^*c_i\circ g^{-1}=pr_1^*c_i = pr_2^*c_i$ implies
\[
pr_1^*(\alpha^{-1}\circ c_i\circ\alpha)\;=\;pr_2^*\alpha^{-1}\circ g \circ pr_1^*c_i\circ g^{-1}\circ pr_2^*\alpha\;=\;pr_2^*(\alpha^{-1}\circ c_i\circ\alpha)\,,
\]
and this shows that $\alpha^{-1}\circ c_i\circ\alpha$ descends to a quasi-isogeny of $\ul\CL_i$ over $S$ which we denote by $c_i\in\QIsog_S(\ul\CL_i)$. If moreover we are given a quasi-isogeny $\delta_i\colon\ul\CL_i\to\ul\BL_{i,S}$, that is if $(\ul\CL_i,\delta_i)\in\CM_{\ul\BL_i}(S)$, then $\delta_i$ is automatically compatible with the quasi-isogenies $c_i\in\QIsog_S(\ul\CL_i)$ and $c_i\in\QIsog_\BaseFldInSectUnif(\ul\BL_i)$, that is $\delta_i\circ c_i=c_i\circ\delta_i$. Indeed, using the trivialization $\alpha$ over $S'$ from above, $\delta_i$ corresponds to $g_i:=\delta_i\circ\alpha^{-1}\in L\genericG_{\nu_i}(S')$, and then
\[
\delta_i\circ c_i\;:=\;\delta_i\circ(\alpha^{-1}\circ c_i\circ\alpha)\;=\;g_i\circ c_i\circ\alpha\;=\;c_i\circ g_i\circ\alpha\;=\;c_i\circ\delta_i\,.
\]
This shows that the action of $c_i\in Z(Q_{\nu_i})$ on $\breve\CM_{\ul{\BL}_i}^{\hat{Z}_i}$ is given as
\begin{equation}\label{EqActionCenter3}
c_i\colon\breve\CM_{\ul{\BL}_i}^{\hat{Z}_i}\;\longto\;\breve\CM_{\ul{\BL}_i}^{\hat{Z}_i}\,,\quad (\ul\CL_i,\delta_i)\;\longmapsto\; (\ul\CL_i,c_i\circ\delta_i)\;=\;(\ul\CL_i,\delta_i\circ c_i)\,. 
\end{equation}
This action commutes with the action of $\eta\in I_{\ul\CG_0}\!(Q)$, because $\eta$ and $(c_i)_i$ act on $\prod_i \breve\CM_{\ul\BL_i}^{\hat{Z}_i}\times \Isom^{\otimes}(\omega^\circ,\check{\CV}_{\ul{\CG}_0})/H$ by 
\[
(\ul{\CL}_i,\delta_i)_i \times \gamma H\;\longmapsto\;(\ul{\CL}_i,\wh\Gamma_{\nu_i}(\eta)\circ\delta_i\circ c_i)_i \times \check{\CV}_\eta\gamma H\,.
\]

On the other hand $(c_i)_i\in\prod_i Z(Q_{\nu_i})$ also acts on $\nabla_n^{H,\ulHZ}\scrH^1(C,\FG)^{\ul\nu}$ as follows. Let $(\ul\CG,\gamma H)$ be in $\nabla_n^{H,\ulHZ}\scrH^1(C,\FG)^{\ul\nu}(S)$ and consider the associated local $\BP_{\nu_i}$-shtukas $\ul\CL_i:=\wh\Gamma_{\nu_i}(\ul\CG)$. We have seen above that $c_i$ induces an element $c_i\in\QIsog_S(\ul\CL_i)$ of the quasi-isogeny group of $\ul\CL_i$. By Proposition~\ref{PropLocalIsogeny} there is a uniquely determined global $\FG$-shtuka $c_n^*\circ\ldots\circ c_1^*\,\ul\CG$ and a quasi-isogeny $c\colon c_n^*\circ\ldots\circ c_1^*\,\ul\CG\to\ul\CG$ which is an isomorphism outside the $\nu_i$ and satisfies $\wh\Gamma_{\nu_i}(c)=c_i$. We can now define the action of $(c_i)_i\in\prod_i Z(Q_{\nu_i})$ on $(\ul\CG,\gamma H)\in\nabla_n^{H,\ulHZ}\scrH^1(C,\FG)^{\ul\nu}(S)$ as
\begin{equation}\label{EqActionCenter2}
(c_i)_i\colon\;(\ul\CG,\gamma H)\;\longmapsto\;(c_n^*\circ\ldots\circ c_1^*\,\ul\CG\,,\,\check\CT_{c}^{-1}\gamma H)\,.
\end{equation}
\end{point}

\begin{point}\label{Point7.9}
Source and target of $\Theta$ carry a \emph{Weil-descent datum} for the ring extension $R_{\ulHZ}\subset\breve R_{\ulHZ}=\BaseFldInSectUnif\dbl\xi_1,\ldots,\xi_n\dbr$, compare~\cite[Definition~3.45]{RZ}. We explain what this means. Consider the $R_{\ulHZ}$-automorphism $\lambda$ of $\breve R_{\ulHZ}$ given by
\[
\lambda\colon \xi_i\;\longmapsto\;\xi_i \quad\text{and}\quad \lambda|_\BaseFldInSectUnif\;=\;\Frob_{\#\kappa,\BaseFldInSectUnif}\colon x\;\longmapsto\;x^{\#\kappa}\quad\text{for }x\in\BaseFldInSectUnif\,.
\]
For a scheme $(S,\theta)\in\Nilp_{\breve R_{\ulHZ}}$ where $\theta\colon S\to\Spf\breve R_{\ulHZ}$ denotes the structure morphism of the scheme $S$ we denote by $S_{[\lambda]}\in\Nilp_{\breve R_{\ulHZ}}$ the pair ($S,\lambda\circ\theta)$. For a stack $\CH$ over $\Spf\breve R_{\ulHZ}$ we consider the stack $\CH^\lambda$ defined by 
\[
\CH^\lambda(S)\;:=\;\CH(S_{[\lambda]})\,.
\]
Then a \emph{Weil-descent datum} on $\CH$ is an isomorphism of stacks $\CH\isoto\CH^\lambda$, that is an equivalence $\CH(S)\isoto\CH(S_{[\lambda]})$ for every $S\in\Nilp_{\breve R_{\ulHZ}}$ compatible with morphisms in $\Nilp_{\breve R_{\ulHZ}}$.

On $\nabla_n^{H,\ulHZ}\scrH^1(C,\FG)^{\ul\nu}\whtimes_{R_{\ulHZ}} \Spf\breve R_{\ulHZ}$ the canonical Weil-descent datum is given by the identity
\begin{equation}\label{EqDescentDatumOnNablaH}
\id\colon\nabla_n^{H,\ulHZ}\scrH^1(C,\FG)^{\ul\nu}(S)\;\isoto\;\nabla_n^{H,\ulHZ}\scrH^1(C,\FG)^{\ul\nu}(S_{[\lambda]})\,,\quad(\ul\CG,\gamma H)\;\longmapsto\;(\ul\CG,\gamma H)
\end{equation}
because under the inclusion $\Nilp_{\breve R_{\ulHZ}}\into\Nilp_{R_{\ulHZ}}$ we have $S=S_{[\lambda]}$ in $\Nilp_{R_{\ulHZ}}$.

On $\breve\CM_{\ul\BL_i}^{\hat{Z}_i}$ we consider the Weil descent datum given by
\begin{align}\label{EqDescentDatumSource}
\breve\CM_{\ul\BL_i}^{\hat{Z}_i}(S)\;\isoto\; & \breve\CM_{\ul\BL_i}^{\hat{Z}_i}(S_{[\lambda]}),\\
(\ul\CL_i,\delta_i\colon\ul\CL_i\to\ul\BL_{i,S})\;\longmapsto\; & (\ul\CL_i,\theta^*(\tau_{\BL_i}^{-[\kappa\colon\BF_q]})\circ\delta_i\colon\ul\CL_i\to\ul\BL_{i,S_{[\lambda]}})\,.\nonumber
\end{align}
Here we observe that $\BL_{i,S}:=\theta^*\ul\BL_i$ and $\ul\BL_{i,S_{[\lambda]}}:=(\lambda\circ\theta)^*\ul\BL_i=\theta^*\lambda^*\ul\BL_i=\theta^*\sigma^{[\kappa\colon\BF_q]*}\ul\BL_i$, and that $\tau_{\BL_i}^{-[\kappa\colon\BF_q]}\colon\ul\BL_i\to\sigma^{[\kappa\colon\BF_q]*}\ul\BL_i$ is a quasi-isogeny. 

We do not discuss the question whether these Weil descent data are effective. In the analogous situation for $p$-divisible groups this is true and proved by Rapoport and Zink in \cite[Theorem~3.49]{RZ}. Their argument uses a morphism $\BG_m\to\FG$, which might not exist in our setup.

Moreover, on $\prod_i \breve\CM_{\ul\BL_i}^{\hat{Z}_i}\times \Isom^{\otimes}(\omega^\circ,\check{\CV}_{\ul{\CG}_0})/H$ we consider the product of the Weil Descent data \eqref{EqDescentDatumSource} with the identity on $\Isom^{\otimes}(\omega^\circ,\check{\CV}_{\ul{\CG}_0})/H$. This Weil descent datum commutes with the action of $\eta\in I_{\ul\CG_0}\!(Q)$ by the following diagram
\[
\xymatrix @C+2pc @R=1pc {
(\ul\CL_i,\delta_i) \ar@{|->}[r]^-{\TS\eta} \ar@{|->}[dd]^(0.45){\TS\text{descent datum}} & (\ul\CL_i,\theta^*(\wh\Gamma_{\nu_i}(\eta))\circ\delta_i) \ar@{|->}[dd]^(0.45){\TS\text{descent datum}} \\ \\
(\ul\CL_i,\theta^*(\tau_{\BL_i}^{-[\kappa\colon\BF_q]})\circ\delta_i) \ar@{|->}[rd]^-{\TS\eta} & (\ul\CL_i,\theta^*(\tau_{\BL_i}^{-[\kappa\colon\BF_q]})\circ\theta^*(\wh\Gamma_{\nu_i}(\eta))\circ\delta_i) \ar@{=}[d]\\
& \;(\ul\CL_i,(\lambda\theta)^*(\wh\Gamma_{\nu_i}(\eta))\circ\theta^*(\tau_{\BL_i}^{-[\kappa\colon\BF_q]})\circ\delta_i)\,,
}
\]
as $\theta^*(\tau_{\BL_i}^{-[\kappa\colon\BF_q]}\circ\wh\Gamma_{\nu_i}(\eta))=\theta^*(\sigma^{[\kappa\colon\BF_q]*}(\wh\Gamma_{\nu_i}(\eta))\circ\tau_{\BL_i}^{-[\kappa\colon\BF_q]})=(\lambda\theta)^*(\wh\Gamma_{\nu_i}(\eta))\circ\theta^*(\tau_{\BL_i}^{-[\kappa\colon\BF_q]})$. This defines a Weil descent datum on $\CY:=I_{\ul\CG_0}\!(Q) \big{\backslash}\bigl(\prod_i \breve\CM_{\ul\BL_i}^{\hat{Z}_i}\times \Isom^{\otimes}(\omega^\circ,\check{\CV}_{\ul{\CG}_0})/H\bigr)$ by
\begin{align}\label{EqDescentDatumSourceModI}
\CY(S) \;\isoto\; & \CY^\lambda(S)\;=\;\CY(S_{[\lambda]}) \\
(\ul\CL_i,\delta_i)_i\times \gamma H\;\longmapsto\; & (\ul\CL_i,\theta^*(\tau_{\BL_i}^{-[\kappa\colon\BF_q]})\circ\delta_i)_i\times \gamma H\,. \nonumber
\end{align}
\end{point}

\begin{point}\label{Point7.10}
For every multiple $m\in\BN_0$ of $[\kappa_i:\BF_q]$ the special fiber $\breve\CM_{\ul\BL_i}^{\hat{Z}_i}\whtimes_{R_i}\Spec\BaseFldInSectUnif$ of $\breve\CM_{\ul\BL_i}^{\hat{Z}_i}$ carries a Frobenius endomorphism $\Phi_m$ defined as follows. Consider the absolute $q^m$-Frobenius $\sigma^m:=\Frob_{q^m,S}\colon S\to S$ on a $\BaseFldInSectUnif$-scheme $S$ and a pair $(\ul\CL_i,\delta_i)\in\breve\CM_{\ul\BL_i}^{\hat{Z}_i}(S)$. It induces the left horizontal morphisms in the diagram
\[
\xymatrix @C+1pc {
S \ar[d]_{\TS\Frob_{q^m,S}} \ar[rrrrr]^-{\TS(\ul\CL_i,\delta_i)} \ar[drrrrr]_(.35){\TS(\sigma^{m*}\ul\CL_i,\sigma^{m*}\delta_i)\qquad} & & & & &  \breve\CM_{\ul\BL_i}^{\hat{Z}_i}\whtimes_{R_i}\Spec\BaseFldInSectUnif \ar[d]_{\TS\Frob_{q^m}} \ar[r] & \Spec\BaseFldInSectUnif \ar[d]_{\TS\Frob_{q^m,\BaseFldInSectUnif}} \\
S \ar[rrrrr]_-{\TS(\ul\CL_i,\delta_i)} & & & & & \breve\CM_{\ul\BL_i}^{\hat{Z}_i}\whtimes_{R_i}\Spec\BaseFldInSectUnif \ar[r] & \Spec\BaseFldInSectUnif 
}
\]
Let $\theta\colon S\to\Spec\BaseFldInSectUnif$ be the structure morphism of $S$. Then the upper $S$ viewed as a scheme over the lower $\Spec\BaseFldInSectUnif$ has structure morphism $\Frob_{q^m,\BaseFldInSectUnif}\circ\theta$. So in terms of \ref{Point7.9} with $\lambda:=\lambda_m:=\Frob_{q^m,\BaseFldInSectUnif}$ the upper $S$ is $S_{[\lambda_m]}$ over the lower $\Spec\BaseFldInSectUnif$. Since $m$ is a multiple of $[\kappa_i\colon\BF_q]$ and the reduced subscheme $Z_i$ of the bound $\hat{Z}_i$ is defined over $\kappa_i$ the Frobenius $\tau_{\sigma^{m*}\CL_i}=\sigma^{m*}\tau_{\CL_i}$ lies in $\sigma^{m*}Z_i=Z_i$. This means that also $\sigma^{m*}\ul\CL_i$ is bounded by $\hat{Z}_i$, and therefore the diagonal arrow $(\sigma^{m*}\ul\CL_i,\sigma^{m*}\delta_i)$ lies in $\breve\CM_{\ul\BL_i}^{\hat{Z}_i}(S_{[\lambda_m]})=\bigl(\breve\CM_{\ul\BL_i}^{\hat{Z}_i}\bigr)^{\lambda_m}(S)$. Via the Weil descent datum \eqref{EqDescentDatumSource} it is mapped to $(\sigma^{m*}\ul\CL_i, \theta^*(\tau_{\BL_i}^m)\circ\sigma^{m*}\delta_i)$ which lies in $\breve\CM_{\ul\BL_i}^{\hat{Z}_i}(S)$. We therefore define the $q^m$-Frobenius endomorphism of $\breve\CM_{\ul\BL_i}^{\hat{Z}_i}\whtimes_{R_i}\Spec\BaseFldInSectUnif$ as
\begin{align}\label{EqFrobOnRZ}
\Phi_m\colon\;\breve\CM_{\ul\BL_i}^{\hat{Z}_i}\whtimes_{R_i}\Spec\BaseFldInSectUnif \;\longto\; & \bigl(\breve\CM_{\ul\BL_i}^{\hat{Z}_i}\whtimes_{R_i}\Spec\BaseFldInSectUnif\bigr) \\
(\ul\CL_i,\delta_i) \;\longmapsto\; & (\sigma^{m*}\ul\CL_i, \tau_{\BL_i}^m\circ\sigma^{m*}\delta_i)\;=\;(\sigma^{m*}\ul\CL_i, \delta_i\circ\tau_{\CL_i}^m)\,. \nonumber
\end{align}
If $m$ is a multiple of $[\kappa\colon\BF_q]$ the product of the $q^m$-Frobenius morphisms $\Phi_m$ of the $\breve\CM_{\ul\BL_i}^{\hat{Z}_i}\whtimes_{R_i}\Spec\BaseFldInSectUnif$ from \eqref{EqFrobOnRZ} with the identity on $\Isom^{\otimes}(\omega^\circ,\check{\CV}_{\ul{\CG}_0})/H$ gives a $q^m$-Frobenius morphism
\begin{align}\label{EqFrobOnSource}
\Phi_m\colon\;\bigl(\prod_i\breve\CM_{\ul\BL_i}^{\hat{Z}_i}\whtimes_{R_i}\Spec\BaseFldInSectUnif\bigr) & \times \Isom^{\otimes}(\omega^\circ,\check{\CV}_{\ul{\CG}_0})/H\;\longto\; \\
& \longto\; \bigl(\prod_i\breve\CM_{\ul\BL_i}^{\hat{Z}_i}\whtimes_{R_i}\Spec\BaseFldInSectUnif\bigr)\times \Isom^{\otimes}(\omega^\circ,\check{\CV}_{\ul{\CG}_0})/H\,.\nonumber
\end{align}
It directly follows that $\Phi_m$ commutes with the action of $\eta\in I_{\ul\CG_0}\!(Q)$, because the composition is given by 
\[
(\ul{\CL}_i,\delta_i)_i \times \gamma H\;\longmapsto\;(\ul{\CL}_i,\wh\Gamma_{\nu_i}(\eta)\circ\delta_i\circ\tau_{\CL_i}^m)_i \times \check{\CV}_\eta\gamma H\,.
\]
This defines the $q^m$-Frobenius endomorphism $\Phi_m$ of 
$I_{\ul\CG_0}\!(Q) \big{\backslash}\bigl(\prod_i \breve\CM_{\ul\BL_i}^{\hat{Z}_i}\times \Isom^{\otimes}(\omega^\circ,\check{\CV}_{\ul{\CG}_0})/H\bigr)$
.

Likewise for every multiple $m\in\BN_0$ of $[\kappa\colon\BF_q]$ the stack $\nabla_n^{H,\ulHZ}\scrH^1(C,\FG)^{\ul\nu}\whtimes_{R_{\ulHZ}} \Spec\BaseFldInSectUnif$ carries the relative $q^m$-Frobenius, which is an endomorphism, because this stack arises by base change from $\Spec\kappa$. This $q^m$-Frobenius endomorphism is given 
\begin{align}\label{EqFrobOnTarget}
\Phi_m\colon \nabla_n^{H,\ulHZ}\scrH^1(C,\FG)^{\ul\nu}\whtimes_{R_{\ulHZ}} \Spec\BaseFldInSectUnif\;\longto\; & \nabla_n^{H,\ulHZ}\scrH^1(C,\FG)^{\ul\nu}\whtimes_{R_{\ulHZ}} \Spec\BaseFldInSectUnif \nonumber \\
(\ul\CG,\gamma H) \;\longmapsto\; & (\sigma^{m*}\ul\CG,\sigma^{m*}(\lambda)H)\,.
\end{align}
Since $m$ is a multiple of $[\kappa_i\colon\BF_q]$ and the reduced subscheme $Z_i$ of the bound $\hat{Z}_i$ is defined over $\kappa_i$ the Frobenius $\wh\Gamma_{\nu_i}(\tau_{\sigma^{m*}\CG})=\sigma^{m*}\wh\Gamma_{\nu_i}(\tau_{\CG})$ lies in $\sigma^{m*}Z_i=Z_i$. This means that also $\sigma^{m*}\ul\CG$ is bounded by $\ulHZ$.
\end{point}

\forget{
\begin{point}\label{Point7.10}
For every multiple $m\in\BN_0$ of $[\kappa_i:\BF_q]$ the special fiber $\breve\CM_{\ul\BL_i}^{\hat{Z}_i}\whtimes_{R_i}\Spec\BaseFldInSectUnif$ of $\breve\CM_{\ul\BL_i}^{\hat{Z}_i}$ carries a Frobenius endomorphism $\Phi_m$ given by
\begin{align}\label{EqFrobOnRZ}
\Phi_m\colon\;\breve\CM_{\ul\BL_i}^{\hat{Z}_i}\whtimes_{R_i}\Spec\BaseFldInSectUnif \;\longto\; & \sigma^{m*}\bigl(\breve\CM_{\ul\BL_i}^{\hat{Z}_i}\whtimes_{R_i}\Spec\BaseFldInSectUnif\bigr)\;:=\;\bigl(\breve\CM_{\ul\BL_i}^{\hat{Z}_i}\whtimes_{R_i}\Spec\BaseFldInSectUnif\bigr)\underset{\BaseFldInSectUnif,\sigma^m}{\times}\Spec\BaseFldInSectUnif \nonumber \\
(\ul\CL_i,\delta_i) \;\longmapsto\; & (\sigma^{m*}\ul\CL_i, \tau_{\BL_i}^m\circ\sigma^{m*}\delta_i)\;=\;(\sigma^{m*}\ul\CL_i, \delta_i\circ\tau_{\CL_i}^m)\,.
\end{align}
If $m$ is a multiple of $[\kappa\colon\BF_q]$ the product of the $q^m$-Frobenius morphisms $\Phi_m$ of $\breve\CM_{\ul\BL_i}^{\hat{Z}_i}\whtimes_{R_i}\Spec\BaseFldInSectUnif$ from \eqref{EqFrobOnRZ} with the identity on $\Isom^{\otimes}(\omega^\circ,\check{\CV}_{\ul{\CG}_0})/H$ gives a $q^m$-Frobenius morphism
\begin{align}\label{EqFrobOnSource}
\Phi_m\colon\;\bigl(\prod_i\breve\CM_{\ul\BL_i}^{\hat{Z}_i}\whtimes_{R_i}\Spec\BaseFldInSectUnif\bigr) & \times \Isom^{\otimes}(\omega^\circ,\check{\CV}_{\ul{\CG}_0})/H\;\longto\; \\
& \longto\; \bigl(\prod_i\breve\CM_{\ul\BL_i}^{\hat{Z}_i}\whtimes_{R_i}\Spec\BaseFldInSectUnif\bigr)\times \Isom^{\otimes}(\omega^\circ,\check{\CV}_{\ul{\CG}_0})/H\,.\nonumber
\end{align}
\end{point}
}

\begin{theorem}\label{Uniformization2} 
Keep the above notation and consider a compact open subgroup $H\subset \FG (\BA^{\ul\nu})$.
\begin{enumerate}
\item \label{Uniformization2_A} 
The morphism $\Psi_{\ul\CG_0}$ from Theorem~\ref{Uniformization1} induces an $I_{\ul\CG_0}\!(Q)$-invariant morphism
\begin{equation}\label{EqUnifMorphPrime}
\Theta'\colon \prod_i \breve\CM_{\ul\BL_i}^{\hat{Z}_i}\times \Isom^{\otimes}(\omega^\circ,\check{\CV}_{\ul{\CG}_0})/H\;\longto\; \nabla_n^{H,\ulHZ}\scrH^1(C,\FG)^{\ul\nu}\whtimes_{R_{\ulHZ}} \Spf\breve R_{\ulHZ}\,,
\end{equation}
where $I_{\ul\CG_0}\!(Q)$ acts trivially on the target and diagonally on the source as described in \ref{Point7.6}. Furthermore, this morphism factors through a morphism 
\begin{equation}\label{EqUnifMorph}
\Theta\colon  I_{\ul\CG_0}\!(Q) \big{\backslash}\bigl(\prod_i \breve\CM_{\ul\BL_i}^{\hat{Z}_i}\times \Isom^{\otimes}(\omega^\circ,\check{\CV}_{\ul{\CG}_0})/H\bigr) \;\longto\; \nabla_n^{H,\ulHZ}\scrH^1(C,\FG)^{\ul\nu}\whtimes_{R_{\ulHZ}} \Spf\breve R_{\ulHZ}
\end{equation}
of ind-DM-stacks over $\Spf\breve R_{\ulHZ}$, which is a monomorphism in the sense that the functor $\Theta$ is fully faithful, or equivalently that its diagonal is an isomorphism. Both morphisms are ind-proper and formally \'etale.

\item \label{Uniformization2_B}
Let $\{T_j\}$ be a set of representatives of $I_{\ul\CG_0}\!(Q)$-orbits of the irreducible components of the scheme $\prod_i\breve X_{Z_i}(\ul\BL_i)\times \Isom^{\otimes}(\omega^\circ,\check{\CV}_{\ul{\CG}_0})/H$ which is locally of finite type over $\BaseFldInSectUnif$. Then the image $\Theta'(T_j)$ of $T_j$ under $\Theta'$ is a closed substack with the reduced structure and each $\Theta'(T_j)$ intersects only finitely many others. Let $\CZ$ be the union of the $\Theta'(T_j)$. Its underlying set is the isogeny class of $\ul\CG_0$, that is the set of all $(\ul\CG,\gamma H)$ for which $\ul\CG$ is isogenous to $\ul\CG_0$. Let $\nabla_n^{H,\ulHZ}\scrH^1(C,\FG)^{\ul\nu}_{/\CZ}$ be the formal completion of $\nabla_n^{H,\ulHZ}\scrH^1(C,\FG)^{\ul\nu}\whtimes_{R_{\ulHZ}} \Spf\breve R_{\ulHZ}$ along $\CZ$; see Remark~\ref{RemHeckeCorr}(a). Then $\Theta$ induces an isomorphism of locally noetherian, adic formal algebraic Deligne-Mumford stacks locally formally of finite type over $\Spf\breve R_{\ulHZ}$
$$
\Theta_{\CZ}\colon  I_{\ul\CG_0}\!(Q) \big{\backslash}\bigl(\prod_i \breve\CM_{\ul\BL_i}^{\hat{Z}_i}\times \Isom^{\otimes}(\omega^\circ,\check{\CV}_{\ul{\CG}_0})/H\bigr)\;\isoto\; \nabla_n^{H,\ulHZ}\scrH^1(C,\FG)^{\ul\nu}_{/\CZ}\,,
$$
and in particular of the underlying Deligne-Mumford stacks
$$
\Theta_{\CZ}\colon  I_{\ul\CG_0}\!(Q) \big{\backslash}\bigl(\prod_i \breve X_{Z_i}(\ul\BL_i)\times \Isom^{\otimes}(\omega^\circ,\check{\CV}_{\ul{\CG}_0})/H\bigr)\;\isoto\; \CZ
$$
which are locally of finite type and separated over $\Spec\BaseFldInSectUnif$.

\item \label{Uniformization2_C}
The morphisms $\Theta'$, $\Theta$ and $\Theta_\CZ$ are compatible with the following actions on source and target: the action of $\prod_i Z(Q_{\nu_i})$ described in \ref{Point7.8}, the action of $\FG(\BA^{\ul\nu})$ through Hecke-corres\-ponden\-ces, see Remark~\ref{RemHeckeCorr}(b) below, and the Weil descent data described in \ref{Point7.9}. For every multiple $m\in\BN_0$ of $[\kappa\colon\BF_q]$ the base changes of $\Theta'$, $\Theta$ and $\Theta_\CZ$ to $\Spec\BaseFldInSectUnif$ are compatible with the $q^m$-Frobenius endomorphisms $\Phi_m$ from \eqref{EqFrobOnSource} and \eqref{EqFrobOnTarget}. 
\end{enumerate}
\end{theorem}

As an explanation of the theorem and a preparation for its proof we begin with a

\begin{remark}\label{RemHeckeCorr}
(a) Notice that the $T_j$ correspond bijectively to the irreducible components of the Deligne-Mumford stack \eqref{EqReducedSourceOfTheta} which is locally of finite type over $\Spec\BaseFldInSectUnif$. Since $\Theta$ is a monomorphism by part~\ref{Uniformization2_A} of the theorem, each $\Theta'(T_j)$ intersects only finitely many others. The restriction of $\Theta'$ to $T_j$ is proper, because $T_j$ is quasi-compact by \cite[\CorQC]{AH_Local} and $\Theta'$ is ind-proper by part~\ref{Uniformization2_A} of the theorem. So the $\Theta'(T_j)$ are closed substacks. Reasoning as in \cite[6.22]{RZ} we may form the formal completion along their union $\CZ$. It is defined by requiring that its category $\nabla_n^{H,\ulHZ}\scrH^1(C,\FG)^{\ul\nu}_{/\CZ}(S)$ of $S$-valued points is the full subcategory given by
\[
\bigl\{\,f\colon S\to\nabla_n^{H,\ulHZ}\scrH^1(C,\FG)^{\ul\nu}\whtimes_{R_{\ulHZ}} \Spf\breve R_{\ulHZ}, \enspace\text{such that } f|_{S_\red}\text{ factors through }\CZ\,\bigr\}\,,
\]
where $S_\red$ is the underlying reduced closed subscheme. A priory this formal completion is only an ind-DM-stack over $\Spf\breve R_{\ulHZ}$, but it will follow from Theorem~\ref{Uniformization2}\ref{Uniformization2_B} that it is a locally noetherian, adic formal algebraic Deligne-Mumford stack locally formally of finite type over over $\Spf\breve R_{\ulHZ}$. Note that it follows immediately that the natural morphism
\[
\nabla_n^{H,\ulHZ}\scrH^1(C,\FG)^{\ul\nu}_{/\CZ}\;\longto\;\nabla_n^{H,\ulHZ}\scrH^1(C,\FG)^{\ul\nu}\whtimes_{R_{\ulHZ}} \Spf\breve R_{\ulHZ}
\]
is a monomorphism and formally \'etale, because for an affine scheme $S=\Spec B\in\Nilp_{\breve R_{\ulHZ}}$ and an ideal $I\subset B$ with $I^2=(0)$ one has $S_\red=(\Spec B/I)_\red$.

\medskip\noindent
(b) The action of $h\in\FG(\BA^{\ul\nu})$ by Hecke correspondences is explicitly given as follows. Let $H,H'\subset\FG(\BA^{\ul\nu})$ be compact open subgroups. Then the Hecke correspondences $\pi(h)_{H'\!,H}$ are given by the diagrams
\begin{equation}\label{EqHeckeSource}
\xymatrix @C=-4pc {
& \prod_i \breve\CM_{\ul\BL_i}^{\hat{Z}_i}\times \Isom^{\otimes}(\omega^\circ,\check{\CV}_{\ul{\CG}_0})/(hHh^{-1}\cap H') \ar[dl] \ar[dr] \\
\prod_i \breve\CM_{\ul\BL_i}^{\hat{Z}_i}\times \Isom^{\otimes}(\omega^\circ,\check{\CV}_{\ul{\CG}_0})/H& & \prod_i \breve\CM_{\ul\BL_i}^{\hat{Z}_i}\times \Isom^{\otimes}(\omega^\circ,\check{\CV}_{\ul{\CG}_0})/H'\ar@{-->}[ll]\\
& (\ul{\CL}_i,\delta_i)_i \times \gamma(hHh^{-1}\cap H')\ar@{|->}[dl] \ar@{|->}[dr]\\
(\ul{\CL}_i,\delta_i)_i \times \gamma hH & & (\ul{\CL}_i,\delta_i)_i \times \gamma H'
}
\end{equation}
and
\begin{equation}\label{EqHeckeTarget}
\xymatrix @C=0pc {
& \nabla_n^{(hHh^{-1}\cap H'),\ulHZ}\scrH^1(C,\FG)^{\ul\nu} \ar[dl] \ar[dr] \\
\nabla_n^{H,\ulHZ}\scrH^1(C,\FG)^{\ul\nu} & & \nabla_n^{H'\!,\ulHZ}\scrH^1(C,\FG)^{\ul\nu} \ar@{-->}[ll] \\
& (\ul\CG,\gamma(hHh^{-1}\cap H')) \ar@{|->}[dl] \ar@{|->}[dr]\\
(\ul\CG,\gamma hH) & & (\ul\CG,\gamma H')
}
\end{equation}
A special case for $H'\subset H$ and $h=1$ are the forgetful morphisms 
\[
\pi(1)_{H'\!,H}\colon\prod_i \breve\CM_{\ul\BL_i}^{\hat{Z}_i}\times \Isom^{\otimes}(\omega^\circ,\check{\CV}_{\ul{\CG}_0})/H'\to\prod_i \breve\CM_{\ul\BL_i}^{\hat{Z}_i}\times \Isom^{\otimes}(\omega^\circ,\check{\CV}_{\ul{\CG}_0})/H
\]
and $\pi(1)_{H'\!,H}\colon\nabla_n^{H'\!,\ulHZ}\scrH^1(C,\FG)^{\ul\nu}\to\nabla_n^{H,\ulHZ}\scrH^1(C,\FG)^{\ul\nu}$, which are finite \'etale and surjective; see Theorem~\ref{ThmLSGGsht1}\ref{ThmLSGGsht1_B}.
\end{remark}

\smallskip

\begin{proof}[Proof of Theorem~\ref{Uniformization2}\ref{Uniformization2_A} and \ref{Uniformization2_C}.]
For the empty closed subscheme $D=\emptyset\subset C$ recall that  $H_\emptyset=\FG(\BO^{\ul\nu})$ and $\nabla_n^{H_\emptyset}\scrH^1(C,\FG)^{\ul\nu}\cong\nabla_n\scrH_\emptyset^1(C,\FG)^{\ul\nu}=\nabla_n\scrH^1(C,\FG)^{\ul\nu}$ by Theorem~\ref{H_DL-Str}. Consider the following diagram of ind-DM-stacks in which the map $\Psi_{\ul\CG_0}$ in the bottom row was introduced in Theorem~\ref{Uniformization1} 
\begin{equation}\label{Eq_Psi}
\xymatrix @C+4pc @R=0.2pc {
\prod_i \breve\CM_{\ul\BL_i}^{\hat{Z}_i}\ar[r]^{\TS\Psi_{\ul\CG_0}\qquad\qquad} \ar[dddd] & \nabla_n^{H_\emptyset,\ulHZ}\scrH^1(C,\FG)^{\ul\nu}\whtimes_{R_{\ulHZ}} \Spf\breve R_{\ulHZ} \ar[dddd]\\ \\ \\ \\
\prod_i \breve\CM_{\ul\BL_i}\ar[r]^{\TS\Psi_{\ul\CG_0}\qquad\qquad} & \nabla_n\scrH^1(C,\FG)^{\ul\nu}\whtimes_{R_{\ulHZ}} \Spf\breve R_{\ulHZ}\\
\bigl(\ul\CL_i,\delta_i\colon\ul\CL_i\to\ul\BL_i\bigr)_i \ar@{|->}[r] & \delta_n^*\circ\ldots\circ\delta_1^*\,\ul\CG_0\;.
}
\end{equation}
Since $\delta_n^*\circ\ldots\circ\delta_1^*\,\ul\CG_0$ is bounded by $\ul{\hat Z}$ if and only if every $\ul\CL_i\cong\wh\Gamma_{\nu_i}(\delta_n^*\circ\ldots\circ\delta_1^*\,\ul\CG_0)$ is bounded by $\hat{Z}_i$, the diagram \eqref{Eq_Psi} is 2-cartesian. In particular the morphism in the upper row, which we again call $\Psi_{\ul\CG_0}$, is ind-proper and formally \'etale. Consider an $S$-valued point $(\ul{\CL}_i,\delta_i)_i$ of $\prod_i \breve\CM_{\ul\BL_i}^{\hat{Z}_i}$ and let $\ul\CG:=\delta_n^*\circ\ldots\circ\delta_1^*\,\ul\CG_{0,S}$ denote its image under $\Psi_{\ul\CG_0}$. By Proposition~\ref{PropLocalIsogeny} there is a unique quasi-isogeny $\delta\colon  \ul\CG \to \ul\CG_{0,S}$ which is an isomorphism outside the $\nu_i$ and satisfies $\wh{\ul\Gamma}(\delta)=(\delta_i)_i$. This induces an isomorphism $\check{\CT}_\delta\colon  \check{\CT}_{\ul\CG} \isoto \check{\CT}_{\ul\CG_0}$ of tensor functors, see \eqref{tatefunctor}. Since $\delta$ is defined over $S$, this isomorphism $\check{\CT}_\delta$ is equivariant for the action of $\pi_1^\et(S,\bar s)$ which acts on $\check{\CT}_{\ul\CG_0}$ through the map $\pi_1^\et(S,\bar s)\to \pi_1^\et(\Spec\BaseFldInSectUnif,\bar s)=(1)$, that is trivially. In particular, the $H$-orbit $\check{\CT}_\delta^{-1}\gamma H$ of the tensor isomorphism $\check{\CT}_\delta^{-1}\gamma\colon\omega^\circ \isoto\check{\CV}_{\ul\CG}$ is invariant under $\pi_1^\et(S,\bar s)$. Now sending $(\ul{\CL}_i,\delta_i)_i \times \gamma H$ to $(\ul\CG,\check{\CT}_\delta^{-1}\gamma H)$ defines the morphism
\begin{alignat}{3}
\Theta'\colon  \prod_i \breve\CM_{\ul\BL_i}^{\hat{Z}_i} & \times \Isom^{\otimes}(\omega^\circ,\check{\CV}_{\ul{\CG}_0}) {\slash}H \enspace & \longto & \enspace \nabla_n^{H,\ulHZ}\scrH^1(C,\FG)^{\ul\nu}\whtimes_{R_{\ulHZ}} \Spf\breve R_{\ulHZ} \\
(\ul{\CL}_i,\delta_i)_i\; & \times \quad \gamma H \enspace & \longmapsto & \enspace \quad (\ul\CG,\check{\CT}_\delta^{-1}\gamma H)\;. \nonumber
\end{alignat}
It sends the $\BaseFldInSectUnif$-valued point $(\ul\BL_i,\id)_i\times \gamma H$ to $(\ul\CG_0,\gamma H)$ and is obviously equivariant for the action of $\prod_i Z(Q_{\nu_i})$ given in \eqref{EqActionCenter3} and \eqref{EqActionCenter2}, and the action of $\FG(\BA^{\ul\nu})$ through Hecke correspondences given in \eqref{EqHeckeSource} and \eqref{EqHeckeTarget}. 

The morphism $\Theta'$ is compatible with the Weil descent data \eqref{EqDescentDatumOnNablaH} and \eqref{EqDescentDatumSource}, because for $(S,\theta)\in\Nilp_{\breve R_{\ulHZ}}$ and $S_{[\lambda]}=(S,\lambda\theta)\in\Nilp_{\breve R_{\ulHZ}}$ the $S$-valued point $(\ul\CL_i,\delta_i)_i\times \gamma H$, respectively the $S_{[\lambda]}$-valued point $(\ul\CL_i,\theta^*(\tau_{\BL_i}^{-[\kappa\colon\BF_q]})\circ\delta_i)_i\times\gamma H$ of the source of $\Theta'$ are sent to $(\ul\CG,\check\CT_\delta^{-1}\theta^*(\gamma) H)$ and $(\ul\CG',\check\CT_{\delta'}^{-1}(\lambda\theta)^*(\gamma) H)$, respectively, where $\ul\CG:=\delta_n^*\circ\ldots\circ\delta_1^*\,\ul\CG_{0,S}$ and $\ul\CG':=(\theta^*\tau_{\BL_n}^{-[\kappa\colon\BF_q]}\delta_n)^*\circ\ldots\circ(\theta^*\tau_{\BL_1}^{-[\kappa\colon\BF_q]}\delta_1)^*\,\ul\CG_{0,S_{[\lambda]}}$, and $\delta\colon\ul\CG\to\ul\CG_{0,S}:=\theta^*\ul\CG_0$ and $\delta'\colon\ul\CG'\to\ul\CG_{0,S_{[\lambda]}}:=(\lambda\circ\theta)^*\ul\CG_0=\theta^*\lambda^*\ul\CG_0=\theta^*(\sigma^{[\kappa\colon\BF_q]*}\ul\CG_0)$ are the quasi-isogenies with $\wh\Gamma_{\nu_i}(\delta)=\delta_i$ and $\wh\Gamma_{\nu_i}(\delta')=\theta^*(\tau_{\BL_i}^{-[\kappa\colon\BF_q]})\circ\delta_i$, which are isomorphisms outside $\ul\nu$. Then $\phi:=\delta^{-1}\circ\theta^*(\tau_{\CG_0}^{[\kappa\colon\BF_q]})\circ\delta'\colon\ul\CG'\to\ul\CG$ is a quasi-isogeny by Corollary~\ref{CorToLSisnonempty} with $\wh\Gamma_{\nu_i}(\phi)=\id_{\ul\CL_i}$ and $\check\CV_\phi\circ\check\CT_{\delta'}^{-1}\circ\theta^*\lambda^*(\gamma) H=\check\CT_\delta^{-1}\theta^*(\gamma) H$, because $\lambda^*(\gamma)=\sigma^{[\kappa\colon\BF_q]*}(\gamma)$ and $\check\CV_{\tau_{\CG_0}^{[\kappa\colon\BF_q]}}\circ\sigma^{[\kappa\colon\BF_q]*}(\gamma)=\gamma$. Since the Weil descend datum on the source of $\Theta'$ commutes with the action of $I_{\ul\CG_0}\!(Q)$ this also proves the compatibility of $\Theta$ with the Weil descend data \eqref{EqDescentDatumOnNablaH} and \eqref{EqDescentDatumSourceModI}. Finally, the target $\nabla_n^{H,\ulHZ}\scrH^1(C,\FG)^{\ul\nu}_{/\CZ}$ carries the Weil descend datum \eqref{EqDescentDatumOnNablaH} induced from $\nabla_n^{H,\ulHZ}\scrH^1(C,\FG)^{\ul\nu}\whtimes_{R_{\ulHZ}} \Spf\breve R_{\ulHZ}$. The reason is that if a morphism $S_\red\to\nabla_n^{H,\ulHZ}\scrH^1(C,\FG)^{\ul\nu}\whtimes_{R_{\ulHZ}} \Spf\breve R_{\ulHZ}$ given by $(\ul\CG,\gamma H)$ factors through $\CZ=\im(\Theta')$, then also the morphism $(S_{[\lambda]})_\red\to\nabla_n^{H,\ulHZ}\scrH^1(C,\FG)^{\ul\nu}\whtimes_{R_{\ulHZ}} \Spf\breve R_{\ulHZ}$ given by $(\ul\CG,\gamma H)$ factors through $\CZ=\im(\Theta')$, because $\Theta'$ commutes with the Weil descend data. This shows that also $\Theta_\CZ$ is compatible with the Weil descend data \eqref{EqDescentDatumOnNablaH} and \eqref{EqDescentDatumSourceModI}.

We also prove that $\Theta'$ commutes with the $q^m$-Frobenius endomorphisms $\Phi_m$. Let $\ul y:=(\ul\CL_i,\delta_i)_i\times \gamma H$ be an $S$-valued point of $\bigl(\prod_i\breve\CM_{\ul\BL_i}^{\hat{Z}_i}\whtimes_{R_i}\Spec\BaseFldInSectUnif\bigr) \times \Isom^{\otimes}(\omega^\circ,\check{\CV}_{\ul{\CG}_0})/H$. The images of this point and of $\Phi_m(\ul y)=(\sigma^{m*}\ul\CL_i, \tau_{\BL_i}^m\circ\sigma^{m*}\delta_i)_i\times\gamma H$ in $\nabla_n^{H,\ulHZ}\scrH^1(C,\FG)^{\ul\nu}$ are given by $\Theta'(\ul y)=(\ul\CG,\check\CT_\delta^{-1}\gamma H)$ and $\Theta'\circ\Phi_m(\ul y)=(\ul\CG',\check\CT_{\delta'}^{-1}\gamma H)$, respectively, where $\ul\CG:=\delta_n^*\circ\ldots\circ\delta_1^*\,\ul\CG_{0,S}$ and $\ul\CG':=(\tau_{\BL_n}^m\sigma^{m*}\delta_n)^*\circ\ldots\circ(\tau_{\BL_1}^m\sigma^{m*}\delta_1)^*\,\ul\CG_{0,S}$, and $\delta\colon\ul\CG\to\ul\CG_{0,S}$ and $\delta'\colon\ul\CG'\to\ul\CG_{0,S}$ are the quasi-isogenies with $\wh\Gamma_{\nu_i}(\delta)=\delta_i$ and $\wh\Gamma_{\nu_i}(\delta')=\tau_{\BL_i}^m\circ\sigma^{m*}\delta_i$, which are isomorphisms outside $\ul\nu$. We obtain for the image $\Phi_m\circ\Theta'(\ul y)=(\sigma^{m*}\ul\CG,\sigma^{m*}(\check\CT_\delta^{-1}\gamma) H)$, which comes with the quasi-isogeny $\sigma^{m*}(\delta)\colon\sigma^{m*}\ul\CG\to\sigma^{m*}\ul\CG_{0,S}$. Then $\phi:=\sigma^{m*}(\delta)^{-1}\circ\tau_{\CG_0}^{-m}\circ\delta'\colon\ul\CG'\to\sigma^{m*}\ul\CG$ is a quasi-isogeny by Corollary~\ref{CorToLSisnonempty} with $\wh\Gamma_{\nu_i}(\phi)=\id_{\sigma^{m*}\ul\CL_i}$ and $\check\CV_\phi\circ\check\CT_{\delta'}^{-1}\gamma H=\check\CT_{\sigma^{m*}\delta}^{-1}\circ\check\CV_{\tau_{\CG_0}^m}^{-1}\circ\gamma H=\sigma^{m*}(\check\CT_\delta^{-1}\gamma) H$, because $\check\CV_{\tau_{\CG_0}^{m}}^{-1}\circ\gamma=\sigma^{m*}(\gamma)$. This proves $\Theta'\circ\Phi_m=\Phi_m\circ\Theta'$. Since $\Phi_m$ commutes with the action of $I_{\ul\CG_0}\!(Q)$ this also proves that $\Theta$ commutes with the $\Phi_m$. Finally, the target $\nabla_n^{H,\ulHZ}\scrH^1(C,\FG)^{\ul\nu}_{/\CZ}$ carries the $q^m$-Frobenius endomorphism $\Phi_m$ induced from \eqref{EqFrobOnTarget} on $\nabla_n^{H,\ulHZ}\scrH^1(C,\FG)^{\ul\nu}\whtimes_{R_{\ulHZ}} \Spf\breve R_{\ulHZ}$. The reason is that if a morphism $S_\red\to\nabla_n^{H,\ulHZ}\scrH^1(C,\FG)^{\ul\nu}\whtimes_{R_{\ulHZ}} \Spf\breve R_{\ulHZ}$ given by $(\ul\CG,\gamma H)$ factors through $\CZ=\im(\Theta')$, then also the morphism $S_\red\to\nabla_n^{H,\ulHZ}\scrH^1(C,\FG)^{\ul\nu}\whtimes_{R_{\ulHZ}} \Spf\breve R_{\ulHZ}$ given by $(\sigma^{m*}\ul\CG,\sigma^{m*}(\lambda)H)$ factors through $\CZ=\im(\Theta')$, because $\Theta'$ commutes with the $\Phi_m$. This shows that also $\Theta_\CZ$ is compatible with the $q^m$-Frobenius endomorphisms $\Phi_m$ \eqref{EqFrobOnTarget} and \eqref{EqFrobOnSource}. So we have already proved \ref{Uniformization2_C}.

The group $I_{\ul\CG_0}\!(Q)$ acts on the source of the morphism $\Theta'$ by sending an $S$-valued point $(\ul{\CL}_i,\delta_i)_i \times \gamma H$ to $(\ul{\CL}_i,\wh\Gamma_{\nu_i}(\eta)\delta_i)_i \times \check{\CV}_\eta\gamma H$ for $\eta\in I_{\ul\CG_0}\!(Q)$. These two $S$-valued points are mapped under $\Theta'$ to global $\FG$-shtukas with $H$-level structure $(\ul\CG,\check{\CT}_\delta^{-1}\gamma H)$ and $(\ul{\wt\CG},\check{\CT}_{\tilde\delta}^{-1}\check{\CV}_\eta\gamma H)$ over $S$, where $\delta\colon\ul\CG:=\delta_n^*\circ\ldots\circ\delta_1^*\,\ul\CG_0\to\ul\CG_0$ is the isogeny satisfying $\wh{\Gamma}_{\nu_i}(\delta)=\delta_i$ and $\tilde\delta\colon\ul{\wt\CG}:=(\wh\Gamma_{\nu_n}(\eta)\delta_n)^*\circ\ldots\circ(\wh\Gamma_{\nu_1}(\eta)\delta_1)^*\,\ul\CG_0\to\ul\CG_0$ is the isogeny satisfying $\wh{\Gamma}_{\nu_i}(\tilde\delta)=\wh\Gamma_{\nu_i}(\eta)\delta_i$. Since $\check{\CV}_{\tilde\delta^{-1}\eta\delta}\circ\check{\CT}_\delta^{-1}\gamma H=\check{\CT}_{\tilde\delta}^{-1}\check{\CV}_\eta\gamma H$ these two global $\FG$-shtukas with $H$-level structure are isomorphic via the quasi-isogeny $\tilde\delta^{-1}\eta\delta\colon\ul\CG\to\ul{\wt\CG}$, which is an isomorphism at the $\nu_i$, because $\wh{\Gamma}_{\nu_i}(\tilde\delta^{-1}\eta\delta)=(\wh\Gamma_{\nu_i}(\eta)\delta_i)^{-1}\circ\wh\Gamma_{\nu_i}(\eta)\delta_i=\id$. In other words, $\Theta'$ is invariant under the action of $I_{\ul\CG_0}\!(Q)$ and factors through the morphism $\Theta$ from \eqref{EqUnifMorph} of ind-DM-stacks. Then Theorem~\ref{Uniformization2}\ref{Uniformization2_A} follows from Lemmas~\ref{LemmaThetaismono} and \ref{LemmaFormEtale} below. 
\end{proof}

\begin{lemma} \label{LemmaThetaismono}
We use the abbreviations $Y_1:=\prod_i \breve\CM_{\ul\BL_i}^{\hat{Z}_i}\times \Isom^{\otimes}(\omega^\circ,\check{\CV}_{\ul{\CG}_0})/H$ and $Y_2:=\prod_i \breve X_{Z_i}(\ul\BL_i)\times \Isom^{\otimes}(\omega^\circ,\check{\CV}_{\ul{\CG}_0})/H$. Then for $j=1$ or $j=2$ the action of $I_{\ul\CG_0}\!(Q)$ on $Y_j$ induces an isomorphism of stacks 
\[
I_{\ul\CG_0}\!(Q) \times Y_j \enspace := \enspace \coprod_{I_{\ul\CG_0}\!(Q)} Y_j \enspace \isoto \enspace Y_j \underset{\nabla_n^{H,\ulHZ}\scrH^1(C,\FG)^{\ul\nu}\whtimes_{R_{\ulHZ}} \Spf\breve R_{\ulHZ}}{\times} Y_j\,,
\]
where the map to the first copy of $Y_j$ is the identity and the map to the second copy is given by the action of $I_{\ul\CG_0}\!(Q)$ on $Y_j$. In particular, $\Theta$ is a monomorphism in the sense stated in Theorem~\ref{Uniformization2}\ref{Uniformization2_A}. 
\end{lemma}

\begin{proof}
That the two definitions of a monomorphism given in Theorem~\ref{Uniformization2}\ref{Uniformization2_A} are equivalent follows from \cite[Tag~\href{https://stacks.math.columbia.edu/tag/04Z7}{04Z7}]{StacksProject}.
The morphism is well defined by the $I_{\ul\CG_0}\!(Q)$-equivariance of $\Theta'$. To describe its inverse, consider a connected scheme $S\in\Nilp_{\breve R_{\ul{\hat Z}}}$ and two $S$-valued points of $Y_j$ 
$$
\ul y:=\bigl((\ul{\CL}_i,\delta_i)_i,\gamma H \bigr)~~~~ \text{and}~~~~ \ul y':=\bigl((\ul{\CL}_i',\delta'_i)_i,\gamma'H \bigr) 
$$  
which under $\Theta'$ are mapped to global $\FG$-shtukas $(\ul\CG,\check{\CT}_\delta^{-1} \gamma H)$ and $(\ul\CG',\check{\CT}_{\delta'}^{-1} \gamma'H)$ with $H$-level structures, where $\delta\colon\ul\CG \to \ul\CG_{0,S}$ and $\delta'\colon  \ul\CG'\to \ul\CG_{0,S}$ are the canonical quasi-isogenies which are isomorphisms outside $\ul\nu$ with $\wh{\ul{\Gamma}}(\delta)=(\delta_i)_i$ and $\wh{\ul{\Gamma}}(\delta')=(\delta'_i)_i$. Assume that $(\ul\CG,\check{\CT}_\delta^{-1} \gamma H)$ and $(\ul\CG',\check{\CT}_{\delta'}^{-1} \gamma'H)$ are isomorphic in $\nabla_n^{H,\ulHZ}\scrH^1(C,\FG)^{\ul\nu}(S)$ via a quasi-isogeny $\phi\colon\ul\CG\to \ul\CG'$ which is an isomorphism at the $\nu_i$ and compatible with the $H$-level structures, that is $\check{\CV}_\phi\circ\check{\CT}_\delta^{-1} \gamma H=\check{\CT}_{\delta'}^{-1} \gamma'H$; see Definition~\ref{DefRatLevelStr}. Consider the quasi-isogeny $\eta:=\delta'\phi \delta^{-1}$ from $\ul\CG_{0,S}$ to itself. By Proposition~\ref{Prop7.1} we may view $\eta$ as an element of $I_{\ul\CG_0}\!(Q)$.

Between the associated local $\BP_{\nu_i}$-shtukas we consider the corresponding quasi-isogenies 
$$
\xymatrix @C+2pc {
\ul\CL_i \ar[r]^{\wh\Gamma_{\nu_i}(\phi)} \ar[d]_{\delta_i} & \ul\CL_i' \ar[d]_{\delta'_i} \\
\ul\BL_{i,S} \ar[r]^{\wh\Gamma_{\nu_i}(\eta)} & \ul\BL_{i,S}.
}
$$
Since $\phi\colon\ul\CG\to \ul\CG'$ is an isomorphism at the $\nu_i$ the quasi-isogenies $\wh\Gamma_{\nu_i}(\phi)$ are isomorphisms. This shows that $\eta\cdot(\ul\CL_i,\delta_i):=(\ul\CL_i,\wh\Gamma_{\nu_i}(\eta)\circ\delta_i)\cong(\ul\CL'_i,\delta'_i)$ in $\breve\CM_{\ul\BL_i}^{\hat{Z}_i}(S)$. Moreover, $\eta$ sends $\gamma H\in\Isom^{\otimes}(\omega^\circ,\check{\CV}_{\ul{\CG}_0})/H$ to $\check\CV_\eta\circ\gamma H=\check\CT_{\delta'}\check\CV_\phi \check\CT_\delta^{-1}\circ\gamma H=\gamma'H$. This proves that $\eta\cdot\ul y=\ul y'$ and thus $\Theta$ is a monomorphism. Moreover, $(\eta,\ul y)$ maps to $(\ul y,\ul y')$.
\end{proof}

\begin{lemma}\label{LemmaFormEtale}
The morphisms $\Theta'$ and $\Theta$ from \eqref{EqUnifMorphPrime} and \eqref{EqUnifMorph} are formally \'etale and ind-proper.
\end{lemma}

\begin{proof}
If $H'\subset H$ is a normal subgroup, then dividing out the action of $H/H'$ on source and target of the morphism $\Theta$ (respectively $\Theta'$) for $H'$ yields the morphism $\Theta$ (respectively $\Theta'$) for $H$, because $\Theta$ and $\Theta'$ are compatible with the action of $h\in H\subset\FG(\BA^{\ul\nu})$ by the Hecke correspondences $\pi_{H'\!,H'}(h)$. Using the argument of Theorem~\ref{ThmLSGGsht1}\ref{ThmLSGGsht1_A}, which produces normal subgroups $H_2\subset H$ and $H_D\subset H_2$, it suffices to prove the lemma for $H=H_D$, where $D\subset C$ is a proper closed subscheme. Fix an element $\gamma H_D\in\Isom^{\otimes}(\omega^\circ,\check{\CV}_{\ul{\CG}_0})/H_D$ and consider the component $\prod_i \breve\CM_{\ul\BL_i}^{\hat{Z}_i}\times \{\gamma H_D\}$ of $\prod_i \breve\CM_{\ul\BL_i}^{\hat{Z}_i}\times \Isom^{\otimes}(\omega^\circ,\check{\CV}_{\ul{\CG}_0})/H_D$. The morphism 
\begin{alignat}{3}\label{EqTheta'InProof}
\Theta'\colon\prod_i \breve\CM_{\ul\BL_i}^{\hat{Z}_i} & \times \{ \gamma H_D\}\enspace & \longto & \enspace\nabla_n^{H_D,\ulHZ}\scrH^1(C,\FG)^{\ul\nu}\whtimes_{R_{\ulHZ}} \Spf\breve R_{\ulHZ}\,, \\
(\ul{\CL}_i,\delta_i)_i\enspace & \times \;\; \gamma H_D\enspace & \longmapsto & \enspace \quad(\ul\CG,\check{\CT}_\delta^{-1}\gamma H_D) \nonumber
\end{alignat}
is formally \'etale, because its composition with the finite \'etale forgetful morphism $\pi(1)_{H_D,H_\emptyset}$ from Remark~\ref{RemHeckeCorr}(b) is the morphism $\Psi_{\ul\CG_0}\colon\prod_i \breve\CM_{\ul\BL_i}^{\hat{Z}_i}\longto\nabla_n^{H_\emptyset,\ulHZ}\scrH^1(C,\FG)^{\ul\nu}\whtimes_{R_{\ulHZ}} \Spf\breve R_{\ulHZ}$ from \eqref{Eq_Psi} which is formally \'etale. This proves that $\Theta'$ is formally \'etale. For the same reason $\Theta'$ satisfies the valuative criterion for properness \cite[Th\'eor\`eme~7.3]{L-M}.

To prove that $\Theta$ is formally \'etale consider a component $\Gamma_{\bar\gamma}\backslash\prod_i \breve\CM_{\ul\BL_i}^{\hat{Z}_i}$ of the source of $\Theta$ for $\ol\gamma=\gamma H_D$, see Proposition~\ref{PropQuotientByI}, and a commutative diagram of solid arrows
\begin{equation}\label{EqDiagInProof}
\xymatrix {
\olT \ar[r]\ar[d] & \Gamma_{\bar\gamma}\backslash\prod_i \breve\CM_{\ul\BL_i}^{\hat{Z}_i} \ar[d]^{\TS\Theta} \\
T \ar[r] \ar@{-->}[ru]^{\TS ?} & \nabla_n^{H_D,\ulHZ}\scrH^1(C,\FG)^{\ul\nu}\whtimes_{R_{\ulHZ}} \Spf\breve R_{\ulHZ}
}
\end{equation}
where $\olT\subset T$ is a closed subscheme defined by a sheaf of ideals $\CI\subset \CO_T$ with $\CI^2=(0)$. We use the notation of the proof of \cite[Proposition~4.27]{AH_Local}. There $\Gamma_{\bar\gamma}\backslash\prod_i \breve\CM_{\ul\BL_i}^{\hat{Z}_i}$ is the union of open substacks $(\Gamma'_x\backslash\Gamma_{\bar\gamma})\backslash V_x$ where $\Gamma'_x\subset\Gamma_{\bar\gamma}$ is a normal subgroup of finite index, $V_x=\bigcup_{\beta\in\Gamma'_x\backslash\Gamma_{\bar\gamma}}\,\beta\!\cdot\! U_x$ is an open formal subscheme of $\Gamma'_x\backslash\prod_i \breve\CM_{\ul\BL_i}^{\hat{Z}_i}$, and $\beta\!\cdot\! U_x\subset\prod_i \breve\CM_{\ul\BL_i}^{\hat{Z}_i}$ is a formal open subscheme such that $\beta\!\cdot\! U_x\to\Gamma'_x\backslash\prod_i \breve\CM_{\ul\BL_i}^{\hat{Z}_i}$ is an open immersion. Let $\olT_x$ be the preimage of $(\Gamma'_x\backslash\Gamma_{\bar\gamma})\backslash V_x$ in $\olT$ and let $T_x$ be the open subscheme of $T$ with underlying topological space $\olT_x$. Its base change
\[
\olT'_x\;:=\;\olT_x\times_{(\Gamma'_x\backslash\Gamma_{\bar\gamma})\backslash V_x} V_x\;\longto\;\olT
\]
is finite \'etale $\Gamma'_x\backslash\Gamma_{\bar\gamma}$-Galois. By \cite[Expos\'e I, Th\'eor\`eme~8.3 and Expos\'e~IX, Proposition~2.4]{SGA1} there exists a finite \'etale $\Gamma'_x\backslash\Gamma_{\bar\gamma}$-Galois cover $T'_x\to T_x$ with $T'_x\times_{T_x}\olT_x=\olT'_x$. Under the projection map $\olT'_x\to V_x$ we let $\olT'_{x,\beta}$ be the preimage of $\beta\!\cdot\!U_x\subset V_x$ and we let $T'_{x,\beta}$ be the open subscheme of $T'_x$ with underlying topological space $\olT'_{x,\beta}$. Then we obtain a morphism $\olT'_{x,\beta}\to \beta\!\cdot\!U_x\subset \prod_i \breve\CM_{\ul\BL_i}^{\hat{Z}_i}=\prod_i \breve\CM_{\ul\BL_i}^{\hat{Z}_i}\times\{\gamma H_D\}$, and since the morphism $\Theta'$ from \eqref{EqTheta'InProof} is formally \'etale, it lifts uniquely to a morphism $f_\beta\colon T'_{x,\beta}\to \beta\!\cdot\!U_x\into V_x$. For two different $\beta,\beta'$ the two maps $f_\beta$ and $f_{\beta'}$ from $T'_{x,\beta}\cap T'_{x,\beta'}$ to $\beta\!\cdot\!U_x\cap\beta'\!\cdot\!U_x\subset\beta\!\cdot\!U_x\subset V_x$ coincide by uniqueness, because their restrictions to $\olT'_{x,\beta}\cap\olT'_{x,\beta'}$ do. Thus the morphisms $f_\beta$ for all $\beta$ glue to give a uniquely determined morphism $f\colon T'_x\to V_x$. This morphism is $\Gamma'_x\backslash\Gamma_{\bar\gamma}$-equivariant because its restriction to $\olT'_x$ is. Dividing out $\Gamma'_x\backslash\Gamma_{\bar\gamma}$ yields a uniquely determined morphism $T_x\to \Gamma_{\bar\gamma}\backslash\prod_i \breve\CM_{\ul\BL_i}^{\hat{Z}_i}$. For all $x$ the latter morphisms glue to produce the dashed arrow in diagram~\eqref{EqDiagInProof}. This proves that $\Theta$ is formally \'etale.

To show that $\Gamma_{\bar\gamma}\backslash\prod_i \breve\CM_{\ul\BL_i}^{\hat{Z}_i}\to\nabla_n^{H_D,\ulHZ}\scrH^1(C,\FG)^{\ul\nu}\whtimes_{R_{\ulHZ}} \Spf\breve R_{\ulHZ}$ satisfies the valuative criterion for properness \cite[Th\'eor\`eme~7.3]{L-M} we use that $\Gamma'_x\backslash\prod_i \breve\CM_{\ul\BL_i}^{\hat{Z}_i}\to\Gamma_{\bar\gamma}\backslash\prod_i \breve\CM_{\ul\BL_i}^{\hat{Z}_i}$ is finite \'etale $\Gamma'_x\backslash\Gamma_{\bar\gamma}$-Galois. In the analogous diagram to \eqref{EqDiagInProof} with $\olT$ replaced by $\Spec K$ and $T$ replaced by $\Spec R$ for a valuation ring $R$ with fraction field $K$, there are $x,\beta$ and a finite field extension $K'$ of $K$ such that the morphism $\Spec K \to\Gamma_{\bar\gamma}\backslash\prod_i \breve\CM_{\ul\BL_i}^{\hat{Z}_i}$ lifts to $\Spec K'\to \beta\!\cdot\!U_x$. By the ind-properness of $\Theta'$ from \eqref{EqTheta'InProof} this lifts to $\Spec R'\to \prod_i \breve\CM_{\ul\BL_i}^{\hat{Z}_i}\to\Gamma_{\bar\gamma}\backslash\prod_i \breve\CM_{\ul\BL_i}^{\hat{Z}_i}$ and proves the valuative criterion for properness. 

Finally $\Theta'$ and $\Theta$ are ind-proper, because their sources $\prod_i \breve\CM_{\ul\BL_i}^{\hat{Z}_i}$ and $\Gamma_{\bar\gamma}\backslash\prod_i \breve\CM_{\ul\BL_i}$ are an ind-closed ind-subscheme of the ind-quasi-projective ind-scheme $\prod_i \breve\CM_{\ul\BL_i}^{\hat{Z}_i}$, respectively a quotient of that.
\end{proof}

\begin{proof}[Proof of Theorem~\ref{Uniformization2}\ref{Uniformization2_B}.]
We already started with the proof in Remark~\ref{RemHeckeCorr}(a). Let us next prove that the underlying set of $\CZ$ is the isogeny class of $\ul\CG_0$. Every $K$-valued point $(\ul\CG,\gamma H)$ of $\CZ$ for a field $K$ lies in the image of $\Theta'$, and hence is of the form $\ul\CG=\delta_n^*\circ\ldots\circ\delta_1^*\,\ul\CG_{0,K}$ with an isogeny $\delta\colon\ul\CG\to\ul\CG_{0,K}$. This shows that $\CZ$ is contained in the isogeny class of $\ul\CG_0$. Conversely, let $(\ul\CG',\gamma'H)$ be a $K$-valued point of $\nabla_n^{H_D,\ulHZ}\scrH^1(C,\FG)^{\ul\nu}\whtimes_{R_{\ulHZ}}\Spec\BaseFldInSectUnif$ in the isogeny class of $\ul\CG_0$, and let $\delta'\colon\ul\CG'\to\ul\CG_{0,K}$ be an isogeny. Let $\ul\CL_i:=\wh\Gamma_{\nu_i}(\ul\CG')$ and $\delta_i:=\wh\Gamma_{\nu_i}(\delta')\colon\ul\CL_i\to\ul\BL_i$ and $\gamma H:=\check\CV_{\delta'}\gamma'H\in\Isom^{\otimes}(\omega^\circ,\check{\CV}_{\ul{\CG}_0})/H$. Then $(\ul\CL_i,\delta_i)\times \gamma H$ is a $K$-valued point of the source of $\Theta'$ which is mapped under $\Theta'$ to $(\ul\CG,\check\CT_\delta^{-1}\gamma H)$, where $\ul\CG:=\delta_n^*\circ\ldots\circ\delta_1^*\,\ul\CG_{0,K}$ and $\delta\colon\ul\CG\to\ul\CG_{0,K}$ is the isogeny with $\wh\Gamma_{\nu_i}(\delta)=\delta_i$ which is an isomorphism outside $\ul\nu$. The isogeny $\delta^{-1}\delta'\colon\ul\CG'\to\ul\CG$ satisfies $\wh\Gamma_{\nu_i}(\delta^{-1}\delta')=\id$ and $\check\CV_{\delta^{-1}\delta'}\circ\gamma' H=\check\CT_\delta^{-1}\gamma H$, and so $(\ul\CG',\gamma'H)\cong(\ul\CG,\gamma H)$ in $\nabla_n^{H_D,\ulHZ}\scrH^1(C,\FG)^{\ul\nu}(K)$. The point $(\ul\CL_i,\delta_i)\times \gamma H$ lies on an irreducible component of $\prod_i \breve X_{Z_i}(\ul\BL_i)\times \Isom^{\otimes}(\omega^\circ,\check{\CV}_{\ul{\CG}_0})/H$ belonging to the $I_{\ul\CG_0}\!(Q)$-orbit of some irreducible component $T_j$. By the $I_{\ul\CG_0}\!(Q)$-equivariance of $\Theta'$ we can move the point $(\ul\CL_i,\delta_i)\times \gamma H$ to $T_j$ and then its image $(\ul\CG',\gamma'H)$ under $\Theta'$ lies in $\Theta'(T_j)\subset\CZ$ as desired.

To prove that $\CZ$ is separated over $\Spec\BaseFldInSectUnif$ we use the valuative criterion \cite[Proposition~7.8]{L-M}. Let $R$ be a valuation ring containing $\BaseFldInSectUnif$ and consider two morphisms $f_1,f_2\colon\Spec R\to \CZ$ whose restrictions $f_{i,K}$ to the fraction field $K$ of $R$ are isomorphic in $\CZ(K)$. We must show that $f_1\cong f_2$ in $\CZ(R)$. The $K$-valued point $f_{1,K}\cong f_{2,K}$ lies on $\Theta'(T_j)\subset\CZ$ for some $j$. Since $\Theta'(T_j)$ is a closed substack of $\nabla_n^{H_D,\ulHZ}\scrH^1(C,\FG)^{\ul\nu}\whtimes_{R_{\ulHZ}}\Spec\BaseFldInSectUnif$, also the two morphisms $f_1,f_2$ factor through $\Theta'(T_j)$. Since $\nabla_n^{H_D,\ulHZ}\scrH^1(C,\FG)^{\ul\nu}\whtimes_{R_{\ulHZ}}\Spec\BaseFldInSectUnif$ is separated over $\BaseFldInSectUnif$, also $\Theta'(T_j)$ is separated over $\BaseFldInSectUnif$, and so $f_1\cong f_2$. Thus $\CZ$ is separated over $\BaseFldInSectUnif$.

Now we prove that $\Theta_\CZ$ is an isomorphism. The morphism $\Theta_\CZ$ is locally of ind-finite presentation because its source $\CY:=I_{\ul\CG_0}\!(Q) \big{\backslash}\bigl(\prod_i \breve\CM_{\ul\BL_i}^{\hat{Z}_i}\times \Isom^{\otimes}(\omega^\circ,\check{\CV}_{\ul{\CG}_0})/H\bigr)$, as a locally noetherian, adic formal algebraic Deligne-Mumford stack locally formally of finite type, is locally of ind-finite presentation over $\Spf\breve R_{\ul{\hat Z}}$. Since $\nabla_n^{H,\ulHZ}\scrH^1(C,\FG)^{\ul\nu}_{/\CZ}\to\nabla_n^{H,\ulHZ}\scrH^1(C,\FG)^{\ul\nu}\whtimes_{R_{\ulHZ}} \Spf\breve R_{\ulHZ}$ is a monomorphism of ind-DM-stacks by Remark~\ref{RemHeckeCorr}(a), and $\Theta$ is formally \'etale and satisfies the valuative criterion for properness \cite[Th\'eor\`eme~7.3]{L-M} by Lemma~\ref{LemmaFormEtale}, it follows that also $\Theta_\CZ$ is formally \'etale and satisfies the valuative criterion for properness. 

We fix a representation $\rho\colon\FG\into\SL(\CV)$ as before Remark~\ref{RemRelAffineGrass} and a tuple $\ul\omega=(\omega_i)_{i=1\ldots n}$ of coweights of $\SL_r$, and consider the closed substacks $\nabla_n^{H,\ulHZ,\ul\omega}\scrH^1(C,\FG)^{\ul\nu}$ of $\nabla_n^{H,\ulHZ}\scrH^1(C,\FG)^{\ul\nu}$ from Remark~\ref{Rem5.2}, which are locally noetherian, adic formal algebraic Deligne-Mumford stacks over $\Spf \breve R_{\ul{\hat Z}}$. Their formal completion along $\CZ$
\[
\CX^{\ul\omega}\;:=\;\nabla_n^{H,\ulHZ}\scrH^1(C,\FG)^{\ul\nu}_{/\CZ}\times_{\nabla_n^{H,\ulHZ}\scrH^1(C,\FG)^{\ul\nu}}\nabla_n^{H,\ulHZ,\ul\omega}\scrH^1(C,\FG)^{\ul\nu}
\]
are closed substacks of $\nabla_n^{H,\ulHZ}\scrH^1(C,\FG)^{\ul\nu}_{/\CZ}$ and likewise locally noetherian, adic formal algebraic Deligne-Mumford stacks over $\Spf \breve R_{\ul{\hat Z}}$ by \cite[Proposition~A.14]{Har1}. We may write
\[
\CX\;:=\;\nabla_n^{H,\ulHZ}\scrH^1(C,\FG)^{\ul\nu}_{/\CZ}\;=\;\dirlim\CX^{\ul\omega}\,.
\]
The base change $\CY^{\ul\omega}:=\CY\times_\CX\CX^{\ul\omega}$ is a closed formal algebraic substack of $\CY$. Since $\CX^{\ul\omega}$ and $\CY^{\ul\omega}$ are locally noetherian, adic formal algebraic stacks they have maximal ideals of definition $\CI_{\ul\omega}$ and $\CJ_{\ul\omega}$ containing $(\xi_1,\ldots,\xi_n)$. For positive integers $m$ consider the algebraic substacks $\CX_m^{\ul\omega}:=\Var(\CI_{\ul\omega}^m)$ and $\CY_m^{\ul\omega}:=\Var(\CJ^m_{\ul\omega})\subset\CY^{\ul\omega}$. They are Deligne-Mumford stacks locally of finite type over $\Spec\breve R_{\ul{\hat Z}}/(\xi_1,\ldots,\xi_n)^m$, because $\CY_m^{\ul\omega}\subset\CY$ is closed, and $\CX_m^{\ul\omega}$ is a closed substack of the Deligne-Mumford stack $\nabla_n^{H,\ulHZ,\ul\omega}\scrH^1(C,\FG)^{\ul\nu}\times_{\breve R_{\ul{\hat Z}}}\Spec\breve R_{\ul{\hat Z}}/(\xi_1,\ldots,\xi_n)^m$ which is locally of finite type over $\Spec\breve R_{\ul{\hat Z}}/(\xi_1,\ldots,\xi_n)^m$. By definition of $\CI_{\ul\omega}$ and $\CJ_{\ul\omega}$ the stacks $\CX_1^{\ul\omega}$ and $\CY_1^{\ul\omega}$ are reduced. Moreover, $\Theta_\CZ$ induces a morphism $\Theta_\CZ\colon \CY_1^{\ul\omega} \to \CX_1^{\ul\omega}$, because if $Y^{\ul\omega}\onto\CY_1^{\ul\omega}$ is a presentation, then $Y^{\ul\omega}$ is a reduced scheme, and hence $\Theta_\CZ$ induces a morphism $Y^{\ul\omega}\to(\CZ\times_{\CX}\CX^{\ul\omega})_\red=\CX_1^{\ul\omega}$ which descends to a morphism $\Theta_\CZ\colon\CY_1^{\ul\omega}\to\CX_1^{\ul\omega}$. In particular, $\Theta_\CZ^*(\CI_{\ul\omega})\subset\CJ_{\ul\omega}$. This shows that $\Theta_\CZ$ induces a morphism $\CY_m^{\ul\omega}\to\CX_m^{\ul\omega}$ for every $m$, which is locally of finite presentation as a morphism between Deligne-Mumford stacks locally of finite type over $\Spec\breve R_{\ul{\hat Z}}/(\xi_1,\ldots,\xi_n)^m$.

Now part~\ref{Uniformization2_B} will follow from Lemmas~\ref{LemmaThetaRedQC} and \ref{Thetaisadic} below. More precisely, by Lemma~\ref{Thetaisadic} we have $\Theta_\CZ^*(\CI_{\ul\omega})=\CJ_{\ul\omega}$. Then $\CY_m^{\ul\omega}\to\CX_m^{\ul\omega}$ is obtained from $\CY\to\CX$ by base change, and hence is a formally \'etale morphism locally of finite presentation of algebraic stacks. Since $\Theta$ is a monomorphism by part~\ref{Uniformization2_A} and $\CX\to\nabla_n^{H,\ulHZ}\scrH^1(C,\FG)^{\ul\nu}$ is a monomorphism by Remark~\ref{RemHeckeCorr}(a), also $\CY\to\CX$ is a monomorphism. In particular, it is relatively representable by an \'etale monomorphism of schemes; see \cite[Corollaire~8.1.3 and Th\'eor\`eme~A.2]{L-M}. In addition $\CY_m^{\ul\omega}\to\CX_m^{\ul\omega}$ is surjective by Lemma~\ref{LemmaThetaRedQC}, hence an isomorphism by \cite[IV$_4$, Th\'eor\`eme~17.9.1]{EGA}. As this holds for all $m$ and all $\ul\omega$ we conclude that $\Theta_\CZ:\CY\to\CX$ is an isomorphism of stacks.
\end{proof}

\begin{lemma}\label{LemmaThetaRedQC}
The induced morphism $\Theta_\CZ\colon \CY_m^{\ul\omega} \to \CX_m^{\ul\omega}$ is quasi-compact and surjective.
\end{lemma}

\begin{proof}
The assertion only depends on the underlying topological spaces $|\CX_m^{\ul\omega}|=|\CX_1^{\ul\omega}|$ and $|\CY_m^{\ul\omega}|=|\CY_1^{\ul\omega}|$; see \cite[Chapitre~5]{L-M}. So we may assume that $m=1$. The morphism $\Theta_\CZ\colon \CY_1^{\ul\omega} \to \CX_1^{\ul\omega}=(\CZ\times_{\CX}\CX^{\ul\omega})_\red$ is surjective by definition of $\CZ$ as the image of $\Theta'$.

We prove that the morphism is quasi-compact. For this purpose we may pass to finer level. Namely, we choose a proper closed subscheme $D\subset C$ with $H_D\subset H$ and use on source and target of $\Theta_\CZ$ the finite \'etale and surjective morphisms $\pi(1)_{H_D,H}$ from Remark~\ref{RemHeckeCorr}(b). Then the morphism $\Theta_\CZ\colon \CY_1^{\ul\omega} \to \CX_1^{\ul\omega}$ for $H_D$ is obtained by base change from the one for $H$. So we may assume $H=H_D$ from now on. Let $S$ be a quasi-compact scheme and $f\colon S\to\CX_1^{\ul\omega}$ be a morphism given by a global $\FG$-shtuka $(\ul\CG',\beta'_Q H_D)$ with $H_D$-level structure over $S$, where $\beta'\in\Isom^\otimes(\omega^\circ_{\BO^{\ul\nu}},\check\CT_{\ul\CG'})$; see the proof of Theorem~\ref{H_DL-Str}. We must show that $S\times_{\CX_1^{\ul\omega}}\CY_1^{\ul\omega}$ is quasi-compact. Consider the topological space $|\CZ|$ underlying $\CZ$ and the set $\{T_j\}_{j\in J}$ of representatives of $I_{\ul\CG_0}\!(Q)$-orbits of the irreducible components of the scheme $\prod_i\breve X_{Z_i}(\ul\BL_i)\times \Isom^{\otimes}(\omega^\circ,\check{\CV}_{\ul{\CG}_0})/H_D$ from Theorem~\ref{Uniformization2}\ref{Uniformization2_B}. Every point $z\in|\CZ|$ lies only on finitely many $\Theta'(T_j)$, see Remark~\ref{RemHeckeCorr}(a). Let $J(z)\subset J$ be the set with $j\in J(z)$ if and only if $z\in\Theta'(T_j)$. The open substack
\[
U_z\;:=\;\bigcup_{j\in J(z)}\Theta'(T_j)\;\setminus\bigcup_{j\notin J(z)}\Theta'(T_j)\;\subset\;\CZ
\]
\renewcommand{\ell}{l}
contains $z$, and hence all the $U_z$ cover $\CZ$. Let $s\in S$ be a point and set $z=f(s)\in|\CZ|$. The preimage $f^{-1}(U_{f(s)})$ of $U_{f(s)}$ in $S$ contains $s$. We choose an affine open neighborhood $S_s$ of $s$ in $S$ which is contained in $f^{-1}(U_{f(s)})$. Then the $S_s$ cover $S$, and hence already $S=S_{s_1}\cup\ldots\cup S_{s_r}$ for finitely many points $s_\ell\in S$, because $S$ is quasi-compact. The scheme $S'$ defined as the finite disjoint union
\[
S'\;:=\;\coprod_{\ell=1}^r\;\coprod_{j\in J(f(s_\ell))} S_{s_\ell} \underset{f,\CZ,\Theta'}{\times} T_j
\]
is quasi-compact, because $\Theta'\colon T_j\to\CZ$ is proper by Remark~\ref{RemHeckeCorr}(a) and so $S_{s_\ell} \times_\CZ T_j$ is proper over the affine scheme $S_{s_\ell}$. The projection $S'\to S$ is surjective, because every point $s\in S$ lies in one $S_{s_\ell}$, and then $f(s)\in U_{f(s_\ell)}\subset\bigcup_{j\in J(f(s_\ell))}\Theta'(T_j)$ has a preimage in one of the $T_j$. On every component $S'_{\ell,j}:= S_{s_\ell} \times_\CZ T_j$ of $S'$ the projection onto $T_j$ defines a tuple $(\ul\CL_i,\delta_i)\in\breve\CM_{\ul\BL_i}^{\hat{Z}_i}(S_{\ell,j})$ for $i=1,\ldots,n$. Let $\ul\CG:=\delta_n^*\circ\ldots\circ\delta_1^*\,\ul\CG_{0,S_{\ell,j}}$ and let $\delta\colon\ul\CG\to\ul\CG_{0,S_{\ell,j}}$ be the induced quasi-isogeny. By definition of $\Theta'$ via the morphism $\Psi_{\ul\CG_0}$, there is an isomorphism $\alpha\colon\ul\CG'\isoto\ul\CG$ of global $\FG$-shtukas over $S_{\ell,j}$. Then $\gamma_{\ell,j}H_D:=\check\CT_{\delta\alpha}\circ\beta_Q'H_D\in\Isom^{\otimes}(\omega^\circ,\check{\CV}_{\ul{\CG}_0})/H_D$. The projection $S_{\ell,j}\to T_j$ together with the element $\gamma_{\ell,j}H_D$ defines the upper horizontal morphism in the commutative diagram
\[
\xymatrix @C+1pc {
S' \ar@{->>}[d] \ar[rr] & & \prod_i\breve X_{Z_i}(\ul\BL_i)\times \Isom^{\otimes}(\omega^\circ,\check{\CV}_{\ul{\CG}_0})/H_D \ar[d]^{\TS\Theta'} \\
S \ar[r]^{\TS f} & \CX_1^{\ul\omega} \ar[r] & \nabla_n^{H_D,\ulHZ}\scrH^1(C,\FG)^{\ul\nu}\whtimes_{R_{\ulHZ}} \Spf\breve R_{\ulHZ}\;.
}
\]
Then we can consider the surjective morphisms 
\[
\xymatrix @C=-4pc @R+1pc {
I_{\ul\CG_0}\!(Q) \times S' \ar[r]^{\TS\sim\qquad\qquad} &  **{!L(0.5) !U(0.5)}
\objectbox{\; S' \underset{\nabla_n^{H_D,\ulHZ}\scrH^1(C,\FG)^{\ul\nu}\whtimes_{R_{\ulHZ}} \Spf\breve R_{\ulHZ}}{\times} \prod_i\breve X_{Z_i}(\ul\BL_i)\times \Isom^{\otimes}(\omega^\circ,\check{\CV}_{\ul{\CG}_0})/H_D} \ar@{->>}[d] \\
S\times_{\CX_1^{\ul\omega}}\CY_1^{\ul\omega} & \ar@{->>}[l] S'\times_{\CX_1^{\ul\omega}}\CY_1^{\ul\omega} 
}
\]
in which the isomorphism in the upper row comes from Lemma~\ref{LemmaThetaismono}. By the $I_{\ul\CG_0}\!(Q)$-equivariance of $\Theta'$ we obtain a surjective map $S'\onto S\times_{\CX_1^{\ul\omega}}\CY_1^{\ul\omega}$, and hence $S\times_{\CX_1^{\ul\omega}}\CY_1^{\ul\omega}$ is quasi-compact by \cite[Tag~\href{https://stacks.math.columbia.edu/tag/04YC}{04YC}]{StacksProject} as desired.
\end{proof}

\begin{lemma}\label{Thetaisadic}
For every $\ul\omega$ we have $\Theta_\CZ^*(\CI_{\ul\omega})=\CJ_{\ul\omega}$. In particular the morphism $\Theta_\CZ\colon\CY^{\ul\omega}\to \CX^{\ul\omega}$ of formal algebraic stacks is adic. 
\end{lemma}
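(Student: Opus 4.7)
The plan is to prove the identity by identifying the closed substack $V(\CJ)$ cut out by $\CJ:=\Theta_\CZ^*(\CI_{\ul\omega})$ with the reduced substack $V(\CJ_{\ul\omega})=\CY^{\ul\omega}_\red$. The key inputs will be the formal \'etaleness of $\Theta_\CZ$ established in part~\ref{Uniformization2_A}, together with the fact that $\Theta_\CZ$ is locally of ind-finite presentation: this should follow because $\CY^{\ul\omega}\hookrightarrow\CY$ is a closed immersion and $\CY$ is a formal algebraic stack locally formally of finite type over $\Spf\BaseFldInSectUnif\dbl\ul\zeta\dbr$ by \cite[\quotisfstack]{AH_Local}, while $\CX^{\ul\omega}$ is locally of finite type over $\Spf A_{\ul\nu}$ by Theorem~\ref{nHisArtin}.

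The main step will be to base change $\Theta_\CZ$ along the closed immersion $\CX^{\ul\omega}_\red\hookrightarrow\CX^{\ul\omega}$, obtaining a morphism $V(\CJ)\to\CX^{\ul\omega}_\red$ of honest algebraic stacks (not merely formal, since $\CX^{\ul\omega}_\red$ is algebraic). The base change inherits formal \'etaleness and local finite presentation, hence will be \'etale in the classical sense. Consequently $V(\CJ)$ is itself an algebraic stack, \'etale over the reduced algebraic stack $\CX^{\ul\omega}_\red$, and therefore both algebraic and reduced: the first because \'etaleness is preserved, the second because \'etale morphisms preserve reducedness. In particular this will already show that $\CJ$ is an ideal of definition of $\CY^{\ul\omega}$ and that $V(\CJ)$ is a reduced closed substack of $\CY^{\ul\omega}$.

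To conclude, I will observe that $V(\CJ)$ shares the underlying topological space with $\CY^{\ul\omega}$, since $|\CX^{\ul\omega}_\red|=|\CX^{\ul\omega}|$ and $V(\CJ)=\CY^{\ul\omega}\times_{\CX^{\ul\omega}}\CX^{\ul\omega}_\red$. Combined with the reducedness just established, this will force $V(\CJ)=\CY^{\ul\omega}_\red$, giving the desired identity $\CJ=\CJ_{\ul\omega}$ and, directly from the definition of an adic morphism, the adicness of $\Theta_\CZ\colon\CY^{\ul\omega}\to\CX^{\ul\omega}$. The only real obstacle I anticipate is the bookkeeping required to confirm that $\Theta_\CZ$ is locally of ind-finite presentation in a precise enough sense to enable the classical ``formally \'etale $+$ locally of finite presentation $=$ \'etale'' implication after base change; however, since the decisive base change lands on $\CX^{\ul\omega}_\red$, an honest algebraic stack, this sidesteps the subtleties of the purely formal setting.
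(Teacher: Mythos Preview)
Your approach has a genuine gap at the step where you assert that $V(\CJ)=\CY^{\ul\omega}\times_{\CX^{\ul\omega}}\CX^{\ul\omega}_\red$ is an honest algebraic stack. A~priori this fiber product is only the ind-algebraic stack $\dirlim_m\bigl(\CY_m^{\ul\omega}\times_{\CX^{\ul\omega}}\CX^{\ul\omega}_\red\bigr)$, and the fact that the \emph{target} $\CX^{\ul\omega}_\red$ is algebraic does not by itself force the source to be. The implication ``formally \'etale $+$ locally of finite presentation $\Rightarrow$ \'etale'' that you want to invoke presupposes that the source is already an algebraic stack; otherwise ``locally of finite presentation'' has no clear meaning, and indeed the question of whether the inductive limit stabilizes is exactly the adicness statement you are trying to prove. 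So the sidestep you propose in the final paragraph does not work: the subtlety is not in the formal target but in the possibly non-algebraic source.

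The paper's proof supplies precisely the missing mechanism, but via different inputs than formal \'etaleness. It passes to a presentation $X^{\ul\omega}\to\CX^{\ul\omega}_\red$ and observes, using ind-properness (Theorem~\ref{Uniformization1}) together with the monomorphism Lemma~\ref{Thetaismono}, that each $\CY_m^{\ul\omega}\times_{\CX^{\ul\omega}}X^{\ul\omega}\to X^{\ul\omega}$ is a proper monomorphism, hence a closed immersion; since it is surjective onto the reduced scheme $X^{\ul\omega}$, it is an isomorphism for every $m$. This forces the inductive system to be constant, giving $\CY^{\ul\omega}\times_{\CX^{\ul\omega}}\CX^{\ul\omega}_\red=\CY^{\ul\omega}_\red$ directly. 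In short, you need the properness and the monomorphism property --- not just formal \'etaleness --- to pin down $V(\CJ)$.
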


\begin{proof} 
Let $X^{\ul\omega} \onto \CX_1^{\ul\omega}=\Var(\CI_{\ul\omega})$ be a presentation. By Lemma~\ref{LemmaThetaismono} and our arguments for the proof of Theorem~\ref{Uniformization2}\ref{Uniformization2_B} given above, we see that $\Theta_{\CZ,m}\colon\CY_m^{\ul\omega}\times_{\CX^{\ul\omega}}X^{\ul\omega}=\CY_m^{\ul\omega}\times_{\CX^{\ul\omega}_m}X^{\ul\omega} \to X^{\ul\omega}$ is a monomorphism locally of finite presentation satisfying the valuative criterion for properness. Since it is quasi-compact by Lemma~\ref{LemmaThetaRedQC} it is a proper monomorphism, hence a closed immersion of schemes by \cite[Corollaire~A.2.2]{L-M}
. Since $\Theta_{\CZ,m}$ is surjective by Lemma~\ref{LemmaThetaRedQC} and $X^{\ul\omega}$ is reduced, it must be an isomorphism for all $m$. Therefore $\CY_m^{\ul\omega}\times_{\CX^{\ul\omega}} X^{\ul\omega}=X^{\ul\omega}=\CY_1^{\ul\omega}\times_{\CX^{\ul\omega}} X^{\ul\omega}$ and $\CY^{\ul\omega}\times_{\CX^{\ul\omega}} X^{\ul\omega}=\dirlim\CY_m^{\ul\omega}\times_{\CX^{\ul\omega}} X^{\ul\omega}=\CY_1^{\ul\omega}\times_{\CX^{\ul\omega}} X^{\ul\omega}$. This shows that $\Var(\Theta_\CZ^*\CI_{\ul\omega})=\CY^{\ul\omega}\times_{\CX^{\ul\omega}}\CX_1^{\ul\omega}=\CY_1^{\ul\omega}\times_{\CX^{\ul\omega}}\CX_1^{\ul\omega}\subset\CY_1^{\ul\omega}=\Var(\CJ_{\ul\omega})$ as closed substacks of $\CY^{\ul\omega}$. Therefore $\CJ_{\ul\omega}\subset\Theta_\CZ^*(\CI_{\ul\omega})$. With the opposite inclusion established above $\CJ_{\ul\omega}=\Theta_\CZ^*(\CI_{\ul\omega})$, and hence $\Theta_\CZ$ is adic. 
\end{proof}

%
%

{\small

}

\vfill

\begin{minipage}[t]{0.5\linewidth}
\noindent
Esmail Arasteh Rad\\
Universit\"at M\"unster\\
Mathematisches Institut \\
Einsteinstr.~62\\
D -- 48149 M\"unster
\\ Germany
\\[1mm]
\end{minipage}
\begin{minipage}[t]{0.45\linewidth}
\noindent
Urs Hartl\\
Universit\"at M\"unster\\
Mathematisches Institut \\
Einsteinstr.~62\\
D -- 48149 M\"unster
\\ Germany
\\[1mm]
\href{http://www.math.uni-muenster.de/u/urs.hartl/index.html.en}{www.math.uni-muenster.de/u/urs.hartl/}
\end{minipage}


\begin{thebibliography}{GHKR2}
\addcontentsline{toc}{section}{References}

\bibitem[Alp14]{Alper14} J.~Alper: \emph{Adequate moduli spaces and geometrically reductive group schemes}, Algebr.\ Geom.\ {\bfseries 1} (2014), no.~4, 489--531; also available as \href{http://arxiv.org/abs/1005.2398}{arXiv:1005.2398}. 

\bibitem[AK80]{AltmanKleiman}A.~Altman, S.~Kleiman: \emph{Compactifying the Picard scheme}, Adv.\ in Math.\ {\bfseries 35} (1980), no.\ 1, 50--112.

\bibitem[Ana73]{Anantharaman73} S.~Anantharaman: \emph{Sch\'emas en groupes, espaces homog\`enes et espaces alg\'ebriques sur une base de dimension 1}, Bull.\ Soc.\ Math.\ France, Mem.\ {\bfseries 33} (1973), 5--79; also available at \href{http://numdam.org/numdam-bin/fitem?id=MSMF_1973__33__5_0}{http:/\!/numdam.org/numdam-bin/fitem?id=MSMF\ul{~}1973\ul{~}\hspace{0.3mm}\ul{~}33\ul{~}\hspace{0.3mm}\ul{~}5\ul{~}0}.

\bibitem[AH14]{AH_Local} E.~Arasteh Rad, U.~Hartl: \emph{Local $\BP$-shtukas and their relation to global $\FG$-shtukas}, Muenster J.~Math {\bfseries 7} (2014), 623--670; open access at \href{http://dx.doi.org/10.17879/58269757072}{http:/\hspace{-1mm}/miami.uni-muenster.de}.

\bibitem[AH16]{EsmailSomayeh_LocModel} M.E.~Arasteh Rad, S.~Habibi: \emph{Local models for the moduli stacks of global $\FG$-Shtukas}, preprint 2016 on \href{http://arxiv.org/abs/1605.01588}{arXiv:1605.01588}.





\bibitem[Beh91]{Beh} K.\ Behrend: \emph{The lefschetz trace formula for the moduli stack of principal bundles}, PhD thesis, University of California, Berkeley, 1991; available at \href{http://www.math.ubc.ca/~behrend/thesis.pdf}{http:/\!/www.math.ubc.ca/$\sim$behrend/}.

\bibitem[BD]{B-D} A.~Beilinson, V.~Drinfeld: \emph{Quantization of Hitchin's integrable system and Hecke eigensheaves}, preprint on \href{http://www.math.uchicago.edu/~mitya/langlands/hitchin/BD-hitchin.pdf}{http:/\!/www.math.uchicago.edu/$\sim$mitya/langlands.html}.

\bibitem[BS97]{Blum-Stuhler} A.~Blum, U.~Stuhler: \emph{Drinfeld Modules and Elliptic Sheaves}, in Vector Bundles on Curves -- New Directions, pp.~110--193, LNM {\bfseries 1649}, Springer-Verlag, Berlin etc.\ 1997. 

\bibitem[Bor85]{Borel85} A.~Borel: \emph{On affine algebraic homogeneous spaces}, Arch.\ Math.\ {\bfseries 45} (1985), no.~1, 74-–78.  


\bibitem[BLR90]{BLR} S.~Bosch, W.~L\"utkebohmert, M.~Raynaud: \emph{N\'eron models}, Ergebnisse der Mathematik und ihrer Grenzgebiete (3) {\bfseries 21}, Springer-Verlag, Berlin, 1990. 

\bibitem[Bou70]{BourbakiAlgebra} N.~Bourbaki: \emph{Alg\`ebre, Chapitres 1 \`a 3}, Hermann, Paris 1970. 

\bibitem[BZ95]{BZ95} J.-F.~Boutot, T.~Zink: \emph{The $p$-adic uniformization of Shimura curves}, preprint 95--107, Univ.\ Bielefeld (1995); also available at \href{http://www.math.uni-bielefeld.de/~zink/p-adicuni.ps}{http:/\!/www.math.uni-bielefeld.de/$\sim$zink/}.

\bibitem[BT72]{B-T} F.~Bruhat, J.~Tits: \emph{Groupes r\'eductifs sur un corps local}, Inst.\ Hautes \'Etudes Sci.\ Publ.\ Math.\ {\bfseries 41} (1972), 5--251.
\bibitem[Bre19]{BreutmannFunctoriality} P.~Breutmann: \emph{Functoriality of Moduli Spaces of Global $\BG$-Shtukas}, Preprint 2019 on \href{http://arxiv.org/abs/1902.10602}{arXiv:1902.10602}

\bibitem[BT84]{BT84} F.~Bruhat, J.~Tits: \emph{Groupes r\'eductifs sur un corps local II, Sch\'emas en groupes, Existence d'une donn\'ee radicielle valu\'ee}, Inst.\ Hautes \'Etudes Sci.\ Publ.\ Math.\ {\bfseries 60} (1984) 197--376; also available at \href{http://www.numdam.org/numdam-bin/fitem?id=PMIHES_1984__60__5_0}{http:/\!/www.numdam.org/numdam-bin/fitem?id=PMIHES\ul{~}1984\ul{~}\hspace{0.3mm}\ul{~}60\ul{~}\hspace{0.3mm}\ul{~}5\ul{~}0}.

\bibitem[\v{C}er76]{Cerednik} V.~\v{C}erednik: \emph{Uniformization of algebraic curves by discrete arithmetic subgroups of $\PGL_2(k_w)$ with compact quotients},  Math.\ USSR.\ Sb.\ {\bfseries 29} (1976), 55--78.

\bibitem[CGP10]{CGP} B.~Conrad, O.~Gabber, G.~Prasad: \emph{Pseudo-reductive groups}, New Mathematical Monographs, 17, Cambridge University Press, Cambridge, 2010. 

\bibitem[Dri74]{Drinfeld} V.G.~Drinfeld: \emph{Elliptic Modules}, Math.\ USSR-Sb.\ {\bfseries 23} (1974), 561--592. 

\bibitem[Dri76]{Drinfeld2} V.~G.\ Drinfeld: \emph{Coverings of $p$-adic symmetric domains}, Funct.\ Anal.\ Appl. {\bfseries 10} (1976), 107--115. 

\bibitem[Dri77]{Drinfeld77} V.G.~Drinfeld: \emph{A proof of Langlands' global conjecture for ${\rm GL}(2)$ over a function field}, Funct.\ Anal.\ Appl.\ {\bfseries 11} (1977), no.~3, 223--225. 

\bibitem[Dri87]{Drinfeld1} V.\ G.\ Drinfeld, \emph{Moduli varieties of $F$-sheaves}, Func.\ Anal.\ and Appl.\ {\bfseries 21} (1987), 107--122.

\bibitem[Dri89]{Drinfeld89} V.G.~Drinfeld: \emph{Cohomology of compactified manifolds of modules of F-sheaves of Rank 2}. J.~of Math.\ Sci.\ {\bfseries 46} (1989) no.~2, 1789--1821.

\bibitem[DS95]{DS95} V.~Drinfeld, C.~Simpson: \emph{$B$-structures on $G$-bundles and local triviality}, Math.\ Res.\ Lett.\ {\bfseries 2} (1995), no.~6, 823--829.  

\bibitem[Eis95]{Eisenbud} D.~Eisenbud: \emph{Commutative Algebra with a View Toward Algebraic Geometry}, GTM {\bfseries 150}, Springer-Verlag, Berlin etc.\ 1995.




\bibitem[EGA]{EGA} A.~Grothendieck: \emph{{\'E}lements de G{\'e}o\-m{\'e}trie Alg{\'e}\-brique}, Publ.\ Math.\ IHES {\bfseries 4}, {\bfseries 8}, {\bfseries 11}, {\bfseries 17}, {\bfseries 20}, {\bfseries 24}, {\bfseries 28}, {\bfseries 32}, Bures-Sur-Yvette, 1960--1967; see also Grundlehren {\bfseries 166}, Springer-Verlag, Berlin etc.\ 1971; also available at \href{http://www.numdam.org/numdam-bin/recherche?au=Grothendieck}{http:/\!/www.numdam.org/numdam-bin/recherche?au=Grothendieck}.

\bibitem[FGA]{FGA} A.~Grothendieck: \emph{Fondements de la G{\'e}o\-m{\'e}trie Alg{\'e}\-brique}, Extraits du S{\'e}mi\-naire Bourbaki 1957--1962, Secr{\'e}tariat math{\'e}matique, Paris 1962; also available at \href{http://www.numdam.org/numdam-bin/recherche?au=Grothendieck}{http:/\!/www.numdam.org/numdam-bin/recherche?au=Grothendieck}.

\bibitem[SGA 1]{SGA1} A.~Grothendieck: \emph{Rev{\^e}tements {\'e}tales et groupe fondamental}, LNM {\bfseries 224}, Springer-Verlag, Berlin-Heidelberg 1971; also available as \href{http://arxiv.org/abs/math/0206203}{arXiv:math/0206203}. 

\bibitem[SGA 3]{SGA3} M.~Demazure, A.~Grothendieck: \emph{SGA 3: Sch\'emas en Groupes I, II, III}, LNM {\bfseries 151}, {\bfseries 152}, {\bfseries 153}, Springer-Verlag, Berlin etc.\ 1970; also available at \href{http://library.msri.org/books/sga/sga/pdf/}{http:/\!/library.msri.org/books/sga/}.


\bibitem[HR03]{H-R} T.~Haines and M.~Rapoport: \emph{On parahoric subgroups}, appendix to \cite{PR2}; also available as \href{http://arxiv.org/abs/0804.3788}{arXiv:0804.3788}.

\bibitem[Hak72]{Hakim72} M.~Hakim: \emph{Topos annel\'es et sch\'emas relatifs}, Ergebnisse der Mathematik und ihrer Grenzgebiete, Band 64, Springer-Verlag, Berlin-New York, 1972.

\bibitem[Har05]{Har1} U.~Hartl: \emph{Uniformizing the Stacks of Abelian Sheaves}, in Number Fields and Function fields - Two Parallel Worlds, Papers from the 4th Conference held on Texel Island, April 2004, Progress in Math.\ {\bfseries 239}, Birkh\"auser-Verlag, Basel 2005, pp.~167--222; also available as \href{http://arxiv.org/abs/math/0409341}{arXiv:math.NT/0409341}.

\bibitem[Har11]{HartlPSp} U.~Hartl: \emph{Period Spaces for Hodge Structures in Equal Characteristic}, Annals of Math. {\bfseries 173}, n.~3 (2011), 1241--1358; also available as \href{http://arxiv.org/abs/math/0511686}{arXiv:math.NT/0511686}.

\bibitem[HH13]{HartlHellmann} U.~Hartl, E.~Hellmann: \emph{The universal familly of semi-stable $p$-adic Galois representations}, preprint 2013 on \href{http://arxiv.org/abs/1312.6371}{arXiv:math/1312.6371}.

\bibitem[HV11]{H-V} U.~Hartl, E.~Viehmann: \emph{The Newton stratification on deformations of local $G$-shtukas}, J.\ reine angew.\ Math.\ (Crelle) {\bfseries 656} (2011), 87--129; also available as \href{http://arxiv.org/abs/0810.0821}{arXiv:0810.0821}.

\bibitem[HV12]{HV2} U.~Hartl, E.~Viehmann: \emph{Foliations in deformation spaces of local $G$-shtukas}, Advances in Mathematics {\bfseries 229} (2012), 54--78; also available as \href{http://arxiv.org/abs/1002.2387}{arXiv:1002.2387}.

\bibitem[HV17]{HV3} U.~Hartl, E.~Viehmann: \emph{The generic fiber of moduli spaces of bounded local $G$-shtukas}, preprint 2017 on \href{http://arxiv.org/abs/1712.07936}{arXiv:math.AG/1712.07936}

\bibitem[Har77]{Hartshorne} R.~Hartshorne: \emph{Algebraic Geometry}, GTM {\bfseries 52}, Springer-Verlag, Berlin etc.\ 1977. 

\bibitem[Hau05]{Hausberger} T.~Hausberger: \emph{Uniformisations des Vari{\'e}t{\'e}s de Laumon-Rapoport-Stuhler et conjecture de Drinfeld-Carayol}, Ann.\ Inst.\ Fourier (Grenoble) {\bfseries 55} (2005), no.~4, 1285--1371. 

\bibitem[Hei10]{Heinloth} J.\ Heinloth: \emph{Uniformization of $\CG$-bundles}, Math.\ Ann.\ {\bfseries 347} (2010), 499--528; also available as \href{http://arxiv.org/abs/0711.4450}{arXiv:0711.4450}.

\bibitem[Hum75]{Humphreys75} J.~Humphreys: \emph{Linear algebraic groups}, Graduate Texts in Mathematics {\bfseries 21}, Springer-Verlag, New York-Heidelberg, 1975.





\bibitem[KR13]{KudlaRapoport} S.~Kudla, M.~Rapoport: \emph{New cases of $p$-adic uniformization}, preprint on \href{http://arxiv.org/abs/1302.3521}{arXiv:1302.3521}.

\bibitem[Laf97]{LafforgueL97} L.~Lafforgue: \emph{Chtoucas de Drinfeld et conjecture de Ramanujan-Petersson}, Ast{\'e}risque {\bfseries 243}, Soc.\ Math.\ France, Paris 1997. 

\bibitem[Laf02]{LafforgueL02} L.\ Lafforgue, \emph{Chtoucas de Drinfeld et correspondance de Langlands}, Invent.~ Math.~ 147 (2002), 1--241; also available at \href{http://www.ihes.fr/~lafforgue/math/fulltext.pdf}{http:/\!/www.ihes.fr/$\sim$lafforgue/}.

\bibitem[Laf18]{Lafforgue12} V.~Lafforgue: \emph{Chtoucas pour les groupes r\'eductifs et param\'etrisation de Langlands globale}, J.~Amer.\ Math.\ Soc.~{\bfseries 31} (2018), no.~3, 719--891; also available as \href{http://arxiv.org/abs/1209.5352}{arXiv:1209.5352}.



\bibitem[Lan56]{Lang} S.~Lang: \emph{Algebraic groups over finite fields}, Amer.\ J.\ Math.\ {\bfseries 78} (1956), 555--563.

\bibitem[LM71]{Larsen} M.~Larsen, P.~McCarthy: \emph{Multiplicative theory of ideals}, Pure and Applied Mathematics, Vol.\ {\bfseries 43}, Academic Press, New York-London, 1971. 

\bibitem[Lau07]{Lau07} E.~Lau: \emph{On degenerations of $\mathscr{D}$-shtukas}, Duke Math.~J.\ {\bfseries 140} (2007), no.~2, 351--389. 

\bibitem[Lau04]{LauDiss} E.~Lau: \emph{On generalised $\mathscr{D}$-shtukas}, Dissertation, University of Bonn, Bonner Mathematische Schriften {\bfseries 369}, Universit\"at Bonn, Mathematisches Institut, Bonn, 2004; available at \href{http://bib.math.uni-bonn.de/downloads/bms/BMS-369.pdf}{http:/\!/bib.math.uni-bonn.de/downloads/bms/BMS-369.pdf}.

\bibitem[LM00]{L-M} G.\ Laumon, L.\ Moret-Bailly: \emph{Champs algebriques}, Ergeb.\ Math. Grenzgebiete 39, Springer, Berlin, 2000. 

\bibitem[LRS93]{LRS} G.~Laumon, M.~Rapoport, U.~Stuhler: \emph{$\mathscr{D}$-elliptic sheaves and the Langlands correspondence}, Invent.\ Math.\ {\bfseries 113} (1993), 217--338; also available at \href{http://www.math.uni-bonn.de/ag/alggeom/preprints/Dellipticsheaves.pdf}{http:/\!/www.math.uni-bonn.de/ag/alggeom/}.

\bibitem[Lev13]{Lev13} B.~Levin: \emph{$G$-valued flat deformations and local models of Shimura varieties}, PhD thesis 2013, Stanford University, Stanford, California; available at \href{http://stanford.edu/~bwlevin/LevinThesis.pdf}{http:/\!/stanford.edu/$\sim$bwlevin/LevinThesis.pdf}.

\bibitem[Mil80]{Milne} J.~Milne: \emph{\'Etale cohomology}, Princeton Mathematical Series 33, Princeton University Press, Princeton, N.J., 1980.


\bibitem[Ng\^o06]{Ngo06} B.C.~Ng\^o: \emph{$\CalD$-chtoukas de Drinfeld \`a modifications sym\'etriques et identit\'e de changement de base}, Ann.\ Sci.\ \'Ecole Norm.\ Sup.\ (4) {\bfseries 39} (2006), 197--243; also available at \href{http://www.math.uchicago.edu/~ngo/chbase.pdf}{http:/\!/www.math.uchicago.edu/$\sim$ngo/}.

\bibitem[NN08]{NgoNgo} B.C.~Ng\^o, T.~Ng\^o Dac: \emph{Comptage de $G$-chtoucas: La partie r\'eguli\`ere elliptique}, J.~Inst.\ of Math.\ Jussieu {\bfseries 7} (2008), n.~1, 181--203; also available at \href{http://www.math.uchicago.edu/~ngo/comptage.pdf}{http:/\!/www.math.uchicago.edu/$\sim$ngo/}.

\bibitem[Ng\^o13]{NgoDac13} T.~Ng\^o Dac: \emph{Comptage des $G$-chtoucas: la partie elliptique}, Compos.\ Math.\ {\bfseries 149} (2013), no.~12, 2169--2183. 


\bibitem[PR08]{PR2} G.~Pappas, M.~Rapoport: \emph{Twisted loop groups and their affine flag varieties}, Advances in Math.\ {\bfseries 219} (2008), 118--198; also available as \href{http://arxiv.org/abs/math/0607130}{arXiv:math/0607130}.


\bibitem[RZ96]{RZ} M.~Rapoport, T.~Zink: \emph{Period Spaces for $p$-divisible Groups\/}, Ann.\ Math.\ Stud.\ {\bfseries 141}, Princeton University Press, Princeton 1996; also available at \href{http://www.math.uni-bonn.de/ag/alggeom/preprints/Rapoport_Period%20spaces.pdf}{http:/\!/www.math.uni-bonn.de/ag/alggeom/}.





\bibitem[Ric77]{Richardson77} R.~Richardson: \emph{Affine coset spaces of reductive algebraic groups}, Bull.\ London Math.\ Soc.\ {\bfseries 9} (1977), no.~1, 38–-41. 


\bibitem[Ric13]{Richarz13} T.~Richarz: \emph{Affine Grassmannians and Geometric Satake Equivalences}, preprint on \href{http://arxiv.org/abs/1311.1008}{arxiv:1311.1008}.

\bibitem[Ser68]{Serre68} J.-P.~Serre: \emph{Groupes de Grothendieck des sch\'emas en groupes r\'eductifs d\'eploy\'es}, Inst.\ Hautes \'Etudes Sci.\ Publ.\ Math.\ {\bfseries 34} (1968), 37--52; also available at \href{http://www.numdam.org/numdam-bin/fitem?id=PMIHES_1968__34__37_0}{http:/\!/www.numdam.org/numdam-bin/fitem?id=PMIHES\ul{~}1968\ul{~}\hspace{0.3mm}\ul{~}34\ul{~}\hspace{0.3mm}\ul{~}37\ul{~}0}. 

\bibitem[Spi10]{Spiess10} M.~Spie{\ss}: \emph{Twists of Drinfeld-Stuhler modular varieties}, Doc.\ Math.\ 2010, Extra volume: Andrei A.~Suslin sixtieth birthday, 595--654; available at \href{https://www.math.uni-bielefeld.de/documenta/vol-suslin/spiess.pdf}{https:/\!/www.math.uni-bielefeld.de/documenta/vol-suslin/spiess.pdf}.

\bibitem[Spr09]{Springer09} T.A.~Springer: \emph{Linear algebraic groups}, Birkh\"auser Boston, Inc., Boston, MA, 2009. 

\bibitem[StPr]{StacksProject} J.~de Jong, et.\ al.: \emph{Stacks Project}, \href{http://stacks.math.columbia.edu/}{http://stacks.math.columbia.edu/}, Version 9f0d63a, compiled on Jan 26, 2013.

\bibitem[Var98]{Varshavsky98} Y.~Varshavsky: \emph{$P$-adic uniformization of unitary Shimura varieties}, Inst.\ Hautes \'Etudes Sci.\ Publ.\ Math.\ {\bfseries 87} (1998), 57--119; also available as \href{http://arxiv.org/abs/math/9909143}{arXiv:math/9909143}.

\bibitem[Var04]{Var} Y.\ Varshavsky: \emph{Moduli spaces of principal $F$-bundles}, Selecta Math.\ (N.S.) {\bfseries 10} (2004),  no.\ 1, 131--166; also available as \href{http://arxiv.org/abs/math/0205130}{arXiv:math/0205130}.

\bibitem[Wan11]{Wang} J.~Wang: \emph{The moduli stack of $G$-bundles}, preprint 2011 on \href{http://arxiv.org/abs/1104.4828}{arXiv:1104.4828}.

\bibitem[Wed04]{Wed} T.~Wedhorn, \emph{On Tannakian duality over valuation rings}, J. Algebra {\bfseries 282} (2004), no.~2, 575--609; also available at \href{http://www2.math.uni-paderborn.de/fileadmin/Mathematik/People/wedhorn/publications/Tannakvalsubmit.pdf}{http:/\!/www2.math.uni-paderborn.de/people/torsten-wedhorn}.



\end{thebibliography}
\end{document}